\def\th@plain{%
  \thm@notefont{}
  \itshape 
}
\def\th@definition{%
  \thm@notefont{}
  \normalfont 
}
\setlist[enumerate]{label=(\roman*),leftmargin=0.8cm}
\newtheorem{proposition}{Proposition}[section]
\newtheorem{lemma}[proposition]{Lemma}
\newtheorem{theorem}[proposition]{Theorem}
\newtheorem{corollary}[proposition]{Corollary}
\newtheorem{conjecture}[proposition]{Conjecture}
\theoremstyle{definition}
\newtheorem{remark}[proposition]{Remark}
\newtheorem{definition}[proposition]{Definition}
\newtheorem{example}[proposition]{Example}
\numberwithin{equation}{section} \setcounter{tocdepth}{1}
\DeclareMathOperator{\End}{End}
\DeclareMathOperator{\Id}{Id}
\DeclareMathOperator{\tr}{tr}
\DeclareMathOperator{\rk}{rk}
\DeclareMathOperator{\ch}{ch}
\DeclareMathOperator{\Euc}{Euc}
\DeclareMathOperator{\GL}{GL}
\DeclareMathOperator{\Uni}{U}
\DeclareMathOperator{\Aut}{Aut}
\DeclareMathOperator{\Ima}{Im}
\DeclareMathOperator{\Lie}{Lie}
\DeclareMathOperator{\FS}{FS}
\renewcommand{\phi}{\varphi}
\newcommand{\caT}{\mathcal{T}}
\DeclareMathOperator{\Hom}{Hom}
\renewcommand{\Re}{\operatorname{Re}}
\renewcommand{\Im}{\operatorname{Im}}
\newcommand{\R}{\mathbb{R}}
\newcommand{\C}{\mathbb{C}}
\newcommand{\Z}{\mathbb{Z}}
\newcommand{\G}{\mathcal{G}}
\newcommand{\pr}{\mathbb{P}}
\let\oldepsilon\epsilon
\renewcommand{\epsilon}{\varepsilon}
\renewcommand{\H}{\mathcal{H}}
\newcommand{\HH}{\mathbb{H}}
\newcommand{\D}{\mathcal{D}}
\newcommand{\scA}{\mathcal{A}}
\renewcommand{\deg}{\operatorname{deg}}
\newcommand{\deriv}[2]{\frac{d #1}{d #2}} 
\newcommand{\pderiv}[2]{\frac{\partial #1}{\partial #2}} 
\newcommand{\im}{\operatorname{im}}
\newcommand{\contr}{\Lambda}
\newcommand{\ddb}{i\partial \bar\partial}
\newcommand{\scL}{\mathcal{L}}
\newcommand{\mfg}{\mathfrak{g}}
\newcommand{\F}{\mathcal{F}}
\newcommand{\mfk}{\mathfrak{k}}
\newcommand{\scD}{\mathcal{D}}
\newcommand{\db}{\overline\partial}
\newcommand{\mfu}{\mathfrak{u}}
\newcommand{\ddbar}{\partial\overline{\partial}}
\newcommand{\dbard}{\overline{\partial} \partial}
\newcommand{\del}{\partial}
\DeclareMathOperator{\Coh}{Coh}
\DeclareMathOperator{\Td}{Td}
\newcommand{\Lap}{\Delta}
\DeclareMathOperator{\aut}{\mathfrak{aut}}
\newcommand{\Gr}{\operatorname{Gr}}
\newcommand{\delbar}{\overline{\partial}}
\let\oldchi\chi
\renewcommand{\chi}{\protect\raisebox{\depth}{$\oldchi$}}
\renewcommand*{\eqref}[1]{%
	\hyperref[{#1}]{\textup{\tagform@{\ref*{#1}}}}%
}
\let\oldtocsection=\tocsection
\let\oldtocsubsection=\tocsubsection
\let\oldtocsubsubsection=\tocsubsubsection
\renewcommand{\tocsection}[2]{\hspace{0em}\oldtocsection{#1}{#2}}
\renewcommand{\tocsubsection}[2]{\hspace{2em}\oldtocsubsection{#1}{#2}}
\renewcommand{\tocsubsubsection}[2]{\hspace{4.5em}\oldtocsubsubsection{#1}{#2}}
\title[$Z$-critical connections and Bridgeland stability conditions]{$Z$-critical connections and Bridgeland stability conditions}
\author[Ruadha\'i Dervan, John Benjamin McCarthy and Lars Martin Sektnan]{Ruadha\'i Dervan, John Benjamin McCarthy and Lars Martin Sektnan}
\address{Ruadha\'i Dervan, DPMMS, Centre for Mathematical Sciences, Wilberforce Road, Cambridge CB3 0WB, United Kingdom}
\address{School of Mathematics and Statistics, University of Glasgow, University Place, Glasgow G12 8QQ, United Kingdom}\email{ruadhai.dervan@glasgow.ac.uk}
\address{John Benjamin McCarthy, Department of Mathematics, South Kensington Campus
London, SW7 2AZ, United Kingdom}
\email{j.mccarthy18@imperial.ac.uk}
\address{Lars Martin Sektnan, Institut for Matematik, Aarhus University, 8000, Aarhus C, Denmark}
\address{Department of Mathematical Sciences, University of Gothenburg, 412 96 Gothenburg, Sweden}
\email{sektnan@chalmers.se}
\begin{document}
\normalem

\begin{abstract} We associate geometric partial differential equations on holomorphic vector bundles to Bridgeland stability conditions. We call solutions to these equations $Z$-critical connections, with $Z$ a central charge. Deformed Hermitian Yang--Mills connections are a special case. We explain how our equations arise naturally through infinite dimensional moment maps. 

Our main result shows that in the large volume limit, a sufficiently smooth holomorphic vector bundle admits a $Z$-critical connection if and only if it is asymptotically $Z$-stable. Even for the deformed Hermitian Yang--Mills equation, this provides the first examples of solutions in higher rank. 

\end{abstract}

\maketitle

\setcounter{tocdepth}{2}
\tableofcontents

\section{Introduction}

The Hitchin--Kobayashi correspondence, established in the 1980s \cite{donaldson1985anti, uhlenbeck1986existence}, states that a simple holomorphic vector bundle admits a Hermite--Einstein metric if and only if it is slope stable. This links a purely algebro-geometric notion, namely slope stability, with the existence of solutions to a geometric partial differential equation, namely the Hermite--Einstein equation. This correspondence has had many spectacular applications, for example to the moduli theory of holomorphic vector bundles, and to four-dimensional topology.

A much more recent notion of stability arises from Bridgeland's stability conditions \cite{bridgeland2007stability}. There are many aspects to Bridgeland's notion of a stability condition, but for us the most important point will be that for a given stability condition, one can still ask for a vector bundle to be stable. Bridgeland stability conditions have themselves had many spectacular applications in algebraic geometry, and these applications have emphasised that there is typically no canonical choice of stability condition \cite{bayer-survey}. Indeed, many of these applications arise from the very fact that one can vary the stability condition. 

Motivated by the Hitchin--Kobayashi correspondence, it is natural to ask whether or not there is a differential-geometric counterpart of Bridgeland stability conditions. This is the question we answer in the present work. To a given stability condition, we associate a geometric partial differential equation in such a way that existence of solutions to these equations on a given holomorphic vector bundle should be equivalent to stability of the bundle. In fact, our recipe requires fewer hypotheses than the full set required in Bridgeland's definition of a stability condition, and so our work is completely unreliant on the difficult problem of actually \emph{constructing} Bridgeland stability conditions.

We briefly describe the recipe, more precise details are provided in \cref{sec:prelims}. A primary input into a Bridgeland stability condition is a \emph{central charge}; this is a function which assigns to each coherent sheaf a complex number. The central charges that we consider are \emph{polynomial central charges}, which were introduced in the work of Bayer \cite{bayer}. These take the form $$Z_{\Omega,k}(E) := \int_X \sum_{d=0}^n \rho_d k^d [\omega]^d \cdot \ch(E) \cdot U,$$ with $[\omega]$  a K\"ahler class on the $n$-dimensional compact K\"ahler manifold $X$, $\ch(E)$ the total Chern character of the sheaf $E$, $U$ a unipotent operator, $\rho_d$ a sequence of non-zero complex numbers with $\Im(\rho_n) > 0$,  and $k$ a positive integer. Assuming $E$ is a holomorphic vector bundle, taking Chern--Weil representatives of the bundle $E$ defined using a Hermitian metric with Chern connection $A$, an arbitrary representative $\tilde U$ of $U$ and a K\"ahler metric $\omega \in [\omega]$, one obtains a complex $(n,n)$-form $\tilde Z_{\Omega,k}(A)$ representing $Z_{\Omega,k}(E).$ The equation we associate takes the form $$\Im \left( e^{-i\varphi(E)} \tilde{Z}_{\Omega,k}(A) \right) = 0,$$ which can be viewed as a fully non-linear partial differential equation on the space of  Hermitian metrics on $E$ (or equivalently, on the space of integrable connections compatible with a fixed Hermitian metric). The topological constant $\phi$ is defined so that the \emph{integral} of the equation vanishes, a necessary condition for the existence of a solution. We call a connection solving our equation a $Z$-\emph{critical connection}, emphasising the role played by the choice of central charge; we then call the  equation the $Z$-\emph{critical equation}.

Before turning to our main results, which justify our recipe, we mention that there is one central charge for which there is a well-studied associated partial differential equation. This is the central charge $$Z(E) = -\int_X e^{-i[\omega]}\ch(E),$$ to which is associated the \emph{deformed Hermitian Yang--Mills equation}  $$   \Im \left(e^{-i\phi(E)} \left( \omega \otimes \Id_E - \frac{F_A}{2\pi}\right)^n\right) = 0,$$ where $F_A$ is the curvature of the connection $A$. This equation was originally derived in the physics literature for the case of line bundles or Abelian gauge group \cite{LYZ,marino2000nonlinear}, where it is related to problems in Type IIB string theory. Mathematically, the equation arises through SYZ mirror symmetry as the dual to the special Lagrangian equation on a Calabi--Yau manifold. In higher rank, the equation given above was suggested by Collins--Yau \cite[\S 8.1]{collins2018moment}. There are by now many important results known about deformed Hermitian Yang--Mills connections on holomorphic line bundles which have largely motivated our work, beginning with the foundational results of Jacob--Yau \cite{jacob2017special}, and including for example a moment map interpretation of the equation due to Collins--Yau \cite[\S 2]{collins2018moment}, a complete analytic solution of the existence problem, due to Collins--Jacob--Yau \cite{collins20151}, and a complete algebro-geometric solution, due to Chen \cite{chen2019j,chen2020supercritical}. All of these results concern the equation on holomorphic line bundles, and until the present work nothing has been known in higher rank.

We now turn to our main results. Note that a polynomial central charge $Z_{\Omega,k}(E)$ generates a sequence of central charges by varying $k$. Our first main result concerns the \emph{large volume limit}, when $k \gg 0$ is taken to be large. This is the most important regime in string theory; the equations are not expected to be physically relevant away from this limit. This is also the regime considered by Bayer in his work on polynomial stability conditions \cite{bayer}.

\begin{restatable}{theorem}{intromainthm}\label{thm:intromainthm} A simple, sufficiently smooth holomorphic vector bundle admits a uniformly bounded family of $Z_k$-critical connections for all $k \gg 0$ if and only if it is asymptotically $Z$-stable with respect to subbundles. \end{restatable}

This solves the existence problem for sufficiently smooth bundles in the large volume regime, and provides a direct analogue of the Hitchin--Kobayashi correspondence for $Z$-critical connections and Bridgeland stability conditions. We briefly explain the terminology, and refer to \cref{sec:perturbation} for fully precise definitions and statements. The notion of asymptotic $Z$-stability is a variant of Bridgeland and Bayer's stability conditions, which means that for each holomorphic subbundle $F \subset E$ we have for all $k \gg 0$ strict inequality \begin{equation}\label{zstable}\arg Z_{\Omega,k}(F) <\arg Z_{\Omega,k}(E),\end{equation} where $\arg$ denotes the argument of a complex number; this is distinct from the notions used by Bridgeland and Bayer. The fact that we consider only subbundles rather than arbitrary subsheaves is justified by our assumption that, following the terminology of Leung (introduced in the context of Gieseker stability \cite{leung1997einstein}), $E$ is \emph{sufficiently smooth}. As we show, asymptotic $Z$-stability with respect to subsheaves implies slope semistability, and sufficient smoothness means that the slope polystable degeneration of the slope semistable vector bundle is itself a holomorphic vector bundle (rather than a reflexive sheaf, in general); this assumption is crucial to the analysis. When the vector bundle is actually \emph{slope stable} rather than slope semistable, the obstructions we encounter in the proof of \cref{thm:intromainthm} vanish, producing the following:

\begin{corollary} For any polynomial central charge $Z_{\Omega,k}$, a slope stable vector bundle $E$ admits $Z_k$-critical connections for all $k \gg 0.$

\end{corollary}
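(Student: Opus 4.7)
The corollary follows by verifying the hypotheses of the main theorem for a slope stable bundle $E$ and then observing that the obstruction theory in the proof collapses. Slope stability implies simplicity, since a non-scalar endomorphism of $E$ would furnish a destabilising subsheaf through its image or kernel. Sufficient smoothness is trivially satisfied: the slope polystable degeneration of a slope stable bundle is the bundle itself.

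To verify asymptotic $Z$-stability, I would fix a holomorphic subbundle $F \subset E$ of intermediate rank and expand both $Z_{\Omega,k}(F)/\rk F$ and $Z_{\Omega,k}(E)/\rk E$ as polynomials in $k$. The leading terms are purely imaginary and agree up to a positive factor (they are both controlled by $\rho_n \int_X [\omega]^n$), so the difference in arguments vanishes to leading order in $k^{-1}$, and the first nontrivial contribution is governed by $\mu(F) - \mu(E)$. Strict slope stability then yields $\arg Z_{\Omega,k}(F) < \arg Z_{\Omega,k}(E)$ for all $k$ sufficiently large. Uniformity of the bound in $F$ follows from Grothendieck's boundedness of the family of subsheaves of bounded slope, so only finitely many discrete invariants of $F$ need to be controlled at leading order.

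With these inputs, the main theorem directly applies. In fact, the proof simplifies considerably in this setting, which is the content of the preceding remark: the Donaldson--Uhlenbeck--Yau theorem produces a Hermite--Einstein connection on $E$, which solves the leading-order term of the $Z_k$-critical equation in $k^{-1}$. Slope stability makes this connection irreducible, so the kernel and cokernel of the linearised operator are both spanned by $\Id_E$. The resulting scalar obstruction is absorbed by the topological normalisation of $\varphi(E)$, and an implicit function theorem argument then produces $Z_k$-critical connections perturbatively in $k^{-1}$. The main obstacle in the proof of the main theorem, namely controlling how nontrivial automorphisms of the slope polystable degeneration couple to the subleading terms of the $Z_k$-critical equation, is entirely vacuous in the slope stable case, which is why no stability input beyond slope stability is required.
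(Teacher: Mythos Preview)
Your proposal is correct, and its second half is exactly the paper's proof. The paper treats this case directly in Section~5.1 (Theorem~\ref{thm:stabilityimpliesexistencestable}): slope stability gives a (weak) Hermite--Einstein metric via Donaldson--Uhlenbeck--Yau, this solves the leading-order equation, simplicity makes the linearisation (the bundle Laplacian) invertible modulo $\langle \Id_E\rangle$, and the inverse function theorem closes the argument. Asymptotic $Z$-stability is never invoked.

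Your first half---verifying asymptotic $Z$-stability from slope stability and then appealing to the main theorem---is a valid but more circuitous route. The expansion you describe is essentially the reverse of the proof of Lemma~\ref{lemma:slopesemistable}, and it does show $\phi_k(F) < \phi_k(E)$ for $k\gg 0$ whenever $\mu(F)<\mu(E)$. Two small remarks: the leading coefficient $\rho_n$ need not be purely imaginary (one only has $\Im(\rho_n)>0$), though your conclusion that the leading arguments agree is still correct since $Z_k(F)/\rk F$ and $Z_k(E)/\rk E$ share the same top-order term; and the uniformity in $F$ you extract from Grothendieck boundedness is not needed, since asymptotic $Z$-stability is defined subsheaf by subsheaf. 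The point of the paper's organisation is that in the stable case none of this is necessary: the obstructions, which live in $\aut(\Gr(E))/\langle \Id_E\rangle$, are vacuous because $\Gr(E)=E$ is simple.
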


We emphasise that these results are new even in the case of deformed Hermitian Yang--Mills connections. In particular, this gives the first construction of deformed Hermitian Yang--Mills connections on a vector bundle of rank at least two. It is worth remarking that there has been no derivation of the deformed Hermitian Yang--Mills equation on a holomorphic vector bundle of rank at least two, either in the physical literature or the mathematical literature, and our results thus give strong mathematical justification that the higher rank deformed Hermitian Yang--Mills equation suggested by Collins--Yau is indeed the appropriate equation. In general, the criteria of \cref{thm:intromainthm} can be explicitly checked in concrete examples, and we will later use it to give examples of slope semistable bundles which do (and do not) admit $Z$-critical connections for especially interesting choices of central charge $Z$.

Having provided algebro-geometric motivation for our notion of a $Z$-critical connection through the link with Bridgeland stability conditions, we turn to the differential-geometric motivation, which arises  from the theory of moment maps.

\begin{theorem}\label{intro:moment}
	Fix a Hermitian vector bundle $(E,h)$. Consider the action of the group $\G$ of unitary gauge transformations on the space $\scA(h)$ of $h$-unitary connections inducing integrable complex structures on $E$. Then for each polynomial central charge $Z_{\Omega}$, there exists a form $\eta_{Z_{\Omega}}$ on $\scA(h)$ such that the map $$D: A \to \Im \left( e^{-i\varphi(E)} \tilde{Z}_{\Omega}(E)(A) \right)$$ is a moment map for the $\G$-action. Moreover, $\eta_{Z_{\Omega}}$ is a K\"ahler metric on the locus of $\scA(h)$ consisting of subsolutions.
\end{theorem}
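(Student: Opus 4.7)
The plan is to adapt the transgression argument of Atiyah--Bott and Donaldson for the Hermite--Einstein equation, and its extension by Collins--Yau to the deformed Hermitian Yang--Mills equation on line bundles, to the setting of a general polynomial central charge. Recall that the tangent space to $\scA(h)$ at $A$ consists of $\mathfrak{u}(E)$-valued $1$-forms $a$, that the natural complex structure $J$ on $\scA(h)$ acts by $Ja = i(a^{0,1}-a^{1,0})$, and that the gauge group $\G$ acts with fundamental vector field $X_\xi(A) = d_A \xi$ for $\xi \in \Lie(\G) = \Gamma(\mathfrak{u}(E))$. The pairing used to view $D(A)$ as a $\Lie(\G)^\ast$-valued function is $\langle D(A), \xi\rangle = \int_X \tr(D(A)\,\xi)$.

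The first step is to construct $\eta_{Z_\Omega}$ directly from the central charge. I would work on the product $\scA(h) \times X$ equipped with the tautological connection $\mathbb{A}$ whose restriction to $\{A\}\times X$ is $A$; its curvature $\mathbb{F}$ decomposes into components of each bidegree in the $\scA(h)$ and $X$ directions. Applying Chern--Weil to the characteristic polynomial defining $\tilde Z_\Omega$ produces a closed complex form $\tilde Z_\Omega(\mathbb{A})$ on the product. The form $\eta_{Z_\Omega}$ is then defined by taking the $2$-form component in the $\scA(h)$ direction of $\Im(e^{-i\varphi(E)}\tilde Z_\Omega(\mathbb{A}))$ and integrating over $X$. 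Closedness is inherited from closedness of $\tilde Z_\Omega(\mathbb{A})$, and gauge invariance from the equivariance of Chern--Weil.

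The moment map identity then follows from a Cartan calculus argument. Using that $\tilde Z_\Omega(\mathbb{A})$ is closed and applying $\mathcal{L}_{X_\xi} = d\,\iota_{X_\xi} + \iota_{X_\xi}\,d$ on $\scA(h) \times X$, the $2$-form part contracted with $X_\xi$ equals $d_{\scA(h)}$ applied to the $0$-form part paired with $\xi$, modulo an integration-by-parts term in $X$ handled by $X_\xi(A) = d_A \xi$ and the self-adjointness of $d_A$ with respect to $\tr$. Taking imaginary parts and integrating over $X$ yields the desired identity $d\langle D(\cdot),\xi\rangle = \iota_{X_\xi}\eta_{Z_\Omega}$.

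The main obstacle is the K\"ahler property of $\eta_{Z_\Omega}$ on the subsolution locus. Compatibility with $J$ follows from the bidegree decomposition of $\tilde Z_\Omega(\mathbb{A})$ and the appearance of $\Im$, but positivity of the associated Hermitian form $a \mapsto \eta_{Z_\Omega}(a, Ja)$ is a pointwise algebraic statement in which the curvature $F_A$ enters polynomially of degree up to $n$, rather than linearly as in the Hermite--Einstein case. I would diagonalise $F_A^{1,1}$ with respect to $\omega$, write $\eta_{Z_\Omega}(a, Ja)$ explicitly in these coordinates, and define the subsolution locus to be precisely the open set where the resulting quadratic form in $a$ is positive definite. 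Identifying this with the natural analytic subsolution condition for the $Z$-critical equation, and verifying positivity in full generality in the presence of all Chern characters $\ch_d$, the unipotent twist $U$, and the high powers of $[\omega]$, is the substantive linear-algebraic calculation, generalising the Lagrangian-phase analysis familiar from the deformed Hermitian Yang--Mills equation.
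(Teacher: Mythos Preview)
Your transgression approach via the universal connection on $\scA(h)\times X$ is a legitimate alternative to the paper's route and would yield the same form $\eta_{Z_\Omega}$. The paper instead writes down the Hermitian pairing explicitly, as
\[
\langle a,b\rangle_A = -\int_X \tr\big[\Im(e^{-i\varphi(E)}\alpha(A,-)),\,a,\,b^*\big]_{\mathrm{sym}},
\]
where $\alpha(A,\beta)$ is the formal derivative of $\tilde Z_\Omega(E)$ in the curvature (replace one $F_A$ in each monomial by $\beta$, summed over positions), and then proves closedness by a direct computation with constant vector fields and Stokes' theorem, following Leung. The moment map identity is likewise proved by one integration by parts moving $\bar\partial_A$ off $\psi$. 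Your universal-connection argument packages the same computations more conceptually; unpacking the $(2,2n)$-component of $\tilde Z_\Omega(\mathbb{A})$ recovers exactly the symmetrised expression above.

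Where your proposal goes astray is in locating the difficulty. You treat the K\"ahler property on the subsolution locus as ``the main obstacle'' and propose to diagonalise $F_A^{1,1}$ against $\omega$ and carry out a Lagrangian-phase-type analysis. This is both unnecessary and, in higher rank, not well-posed: $F_A$ is $\End(E)$-valued and cannot be diagonalised pointwise against $\omega$ in the way one does for line bundles. In the paper the issue dissolves by \emph{definition}: a subsolution is declared to be a connection for which the pointwise $(n,n)$-form $\tr[\Im(e^{-i\varphi(E)}\alpha(A,-)),u,u^*]_{\mathrm{sym}}$ is nowhere-vanishing for every nowhere-vanishing $u\in\Omega^{0,1}(\End E)$. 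This is precisely the condition that the integrand of $\langle\cdot,\cdot\rangle_A$ is definite, so positivity of $\eta_{Z_\Omega}$ on the subsolution locus is immediate. The substantive content attached to this definition lies elsewhere: the paper shows the same condition implies ellipticity of the $Z$-critical equation and governs existence on surfaces, which is what justifies calling it a subsolution. So rather than a hard linear-algebraic verification, what you need is the explicit symmetrised-trace form of the pairing, after which the K\"ahler claim is tautological.
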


This extends a result of Collins--Yau, who gave a moment map interpretation for the deformed Hermitian Yang--Mills equation on line bundles \cite[\S 2]{collins2018moment}. A general approach to moment map problems for holomorphic vector bundles was suggested by Thomas \cite[p. 737]{clay}, and our strategy begins in a similar direction. From the moment map setup, one obtains several formal consequences, such as a topological moduli space of solutions; this moduli space suggests differential-geometric approach to studying moduli of Bridgeland stable objects.

A very important aspect of the above is the \emph{subsolution} hypothesis, which ensures $\eta_{Z_{\Omega}}$  is a K\"ahler metric (in general, the moment map condition is meant in a formal sense). In the theory of deformed Hermitian Yang--Mills connections on holomorphic line bundles, a central role is played by Collins--Jacob--Yau's notion of a \emph{subsolution} \cite[\S 3, \S 8]{collins20151}, which is a condition requiring positivity of an associated $(n-1,n-1)$-form, closely analogous  to  notions  of Song--Weinkove \cite{song2008convergence}, Sz\'ekelyhidi \cite{szekelyhidi2018fully} and Guan \cite{guan2014second} for other partial differential equations. We introduce a notion of a subsolution for the $Z$-critical equation even in higher rank; we expect that, at least in a suitable phase range of values of $\arg Z_{\Omega}(E)$, the existence of a solution to the $Z$-critical equation implies the existence of a subsolution. We demonstrate two other important situations in which our notion of a subsolution arises; one is through question of ellipticity of the $Z$-critical equation (which cannot hold in complete generality), the other is through the following existence result for $Z$-critical connections on holomorphic line bundles over complex surfaces:

\begin{theorem}\label{thm:introsurfaces} Suppose $L \to X$ is a holomorphic line bundle, and suppose $Z_{\Omega}(L)$ satisfies the volume form hypothesis. Then the following are equivalent:
\begin{enumerate}
\item $L$ admits a $Z$-critical connection;
\item $L$ admits a subsolution;
\item for all curves $C \subset X$ we have $\arg Z_{\Omega, C}(L) > \arg Z_{\Omega,X}(L)$.
\end{enumerate}
\end{theorem}

The volume form hypothesis, as well as the remaining terminology, is explained in \cref{sec:momentmaps}. This result generalises work of Jacob--Yau on  deformed Hermitian Yang--Mills connections \cite[Theorem 1.2]{jacob2017special}, and provides explicit examples of $Z$-critical connections away from the large-volume limit, in terms of an algebro-geometric notion of stability involving \emph{subvarieties} rather than \emph{subbundles}. The proof reduces to a complex Monge--Amp\`ere equation, for which the existence of solutions is fully understood through Yau's solution of the Calabi conjecture \cite{yau}. We are optimistic that many of the techniques which were later developed in the study of the deformed Hermitian Yang--Mills equation on line bundles may be adapted to our equations, though certainly many new ideas are needed as several new analytic issues arise. In higher rank, away from the large volume limit it seems highly nontrivial to understand the existence of $Z$-critical connections, due to the fact that the $Z$-critical equation is non-linear in the \emph{curvature} of the connection.

The proof of \cref{thm:intromainthm} relies on the Hitchin--Kobayashi correspondence, which produces a Hermite--Einstein metric on the polystable degeneration $\Gr(E)$ of $E$ \cite{donaldson1985anti, uhlenbeck1986existence}. While one cannot expect to perturb this Hermite--Einstein metric on $\Gr(E)$ to a $Z_k$-critical connection on $E$ in general, we show that the obstruction to this can precisely be understood in terms of asymptotic $Z$-stability. We use a new strategy to solve this problem, which we expect to be applicable to other obstructed moment map problems. Our approach is to consider a $k$-dependent \emph{family} of gradient flows to solve an associated finite-dimensional problem, where we precisely see asymptotic $Z$-stability enter as the relevant obstruction. This step uses the moment map interpretation of the $Z_k$-critical equation proven as \cref{intro:moment}. We then construct approximate solutions to the $Z_k$-critical equation, and study in detail the mapping properties of the linearisation of the $Z_k$-critical operator to produce solutions of the $Z_k$-critical equation equation for $k \gg 0$.

We mention influential work of Leung, which proves an analogous result to \cref{thm:intromainthm} for Gieseker stable vector bundles (which are hence slope semistable) and his almost Hermite--Einstein metrics \cite{leung1997einstein}. Leung's work appears to contain gaps, however, especially when the graded object has several components; this case is only briefly sketched by Leung. No details are given on the application of the implicit function theorem, and the difficulties caused by having isomorphic components in the graded object are not discussed (which, in our associated finite-dimensional problem, leads to the relevant groups being general reductive Lie groups rather than  complex tori). Thus we are lead to use a different approach to Leung. In the case when the associated graded object of $E$ has only two components, a more direct strategy (unreliant on gradient flows and moment maps) succeeds. The argument in this simpler case is analogous to  work of the third author and Tipler \cite{sektnan2020hermitian} on a different problem, and the direct proof of this case is detailed in the PhD thesis of the second author \cite{mccarthy}.

We remark that we do use Leung's ideas to show that the symplectic form used in Theorem \ref{intro:moment} is actually closed \cite{leung1998symplectic}, and also to show that the existence of $Z$-critical connections implies asymptotic $Z$-stability with respect to subbundles; our proof of the the latter is modelled on Leung's proof that the existence of almost Hermite--Einstein metrics implies Gieseker stability  \cite[\S 3]{leung1997einstein}. In any case, one should expect that the techniques going into the study of Gieseker stability and almost Hermite--Einstein metrics are necessarily different to those used to study $Z$-critical connections. This is because Gieseker stability is a genuine notion of stability arising from finite-dimensional geometric invariant theory, and typically does \emph{not} induce a Bridgeland stability condition, meaning almost Hermite--Einstein metrics do not fit into our framework.

As well as the central charge $Z(E) = -\int_X e^{-i[\omega]}\ch(E),$ which corresponds to the deformed Hermitian Yang--Mills equation, string theorists are also interested in the central charge $$Z_{\mathrm{Todd}}(E) = -\int_X e^{-i[\omega]} e^{-\beta} \ch(E) \cdot \sqrt{\Td(X)},$$ which is thought of as the charge of a D-brane in Type IIB string theory. This was the form in which the precursor to Bridgeland stability was first introduced by Douglas \cite{douglas2005stability}, then known as $\Pi$-stability (see for example \cite[\S 2.3]{aspinwall2002d} and the references therein for more discussion of the relations to string theory). Through our recipe this central charge corresponds to the equation \begin{equation}\label{eqn:introtodd} \Im(e^{-i\phi(E)}\tilde{Z}_{\mathrm{Todd}}(E) ) = 0.\end{equation} This equation is equivalent to one which has previously been considered by the physicists Enger-L\"utken \cite{enger-lutken} (as we learned near the completion of our work), who studied the case of a Calabi--Yau threefold and who proved much of \cref{thm:intromainthm} on Calabi--Yau threefolds for $Z_{\mathrm{Todd},k}$ and \eqref{eqn:introtodd} assuming the B-field $\beta$ vanishes; we note that their work is modelled on Leung's, and hence the same comments as above apply.

\subsection{Outlook}

As mentioned above, Leung--Yau--Zaslow's mathematical derivation of the deformed Hermitian Yang--Mills equation on a line bundle arises through SYZ mirror symmetry \cite{LYZ}. In SYZ mirror symmetry, Calabi--Yau manifolds are expected to come in mirror pairs, in which the symplectic and complex geometry are interchanged \cite{SYZ}. The deformed Hermitian Yang--Mills equation was then shown by Leung--Yau--Zaslow to be the mirror equation to the \emph{special Lagrangian equation}, which is a non-linear partial differential equation on the space of Lagrangian submanifolds of the Calabi--Yau manifold. The central conjecture, primarily due to Thomas and Thomas--Yau \cite{thomas2001moment,thomas2001special}, and as it relates to Bridgeland stability by Joyce \cite{joyce2014conjectures}, then states that there is a Bridgeland stability condition on the Fukaya category $\scD^b \F(X,\omega)$ of a Calabi--Yau manifold $(X,\omega)$ such that stability corresponds to the existence of a special Lagrangian in the given isotopy class of Lagrangians. The following is the ``mirror'' conjecture to our main results, and suggests that there should be variants of the special Lagrangian condition for which the Thomas--Yau conjecture holds. 

\begin{conjecture}\label{conjecture:fukaya}
There is a class of stability conditions on the Fukaya category such that for each stability condition in this class, there is an associated partial differential equation on the space of Lagrangians such that stability of an isotopy class of Lagrangians is equivalent to the existence of a Lagrangian in the isotopy class solving the equation, in such a way that the special Lagrangian equation is a special case.

\end{conjecture}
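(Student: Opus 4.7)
The plan is to approach this conjecture through SYZ mirror symmetry, mirroring the strategy by which Leung--Yau--Zaslow derived the deformed Hermitian Yang--Mills equation \cite{LYZ}. In the semi-flat setting, one begins with a Calabi--Yau manifold $X$ equipped with a special Lagrangian torus fibration $\pi: X \to B$, and the mirror $X^\vee \to B$ is the dual torus fibration. Lagrangian sections of $\pi$ on the $A$-side correspond to holomorphic line bundles on $X^\vee$ on the $B$-side, and more generally multisections correspond to higher-rank bundles. The proposal is then to take the $Z$-critical equation on $X^\vee$ associated to each polynomial central charge $Z_\Omega$ from this paper, and apply the SYZ transform fibrewise to translate it into a geometric PDE on the corresponding Lagrangian multisection of $\pi$. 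The class of stability conditions on $\scD^b \F(X,\omega)$ should then be those pulled back via homological mirror symmetry from the polynomial Bridgeland stability conditions on $\scD^b \Coh(X^\vee)$ considered in the body of this paper.

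First I would carry this out explicitly in the semi-flat case for line bundles, verifying that the central charge $Z(E) = -\int_X e^{-i[\omega]} \ch(E)$ recovers the special Lagrangian equation, and then computing the transform for general $Z_\Omega$ to identify the resulting ``deformed special Lagrangian'' PDEs on Lagrangian sections. Next I would extend to higher rank, where the $Z$-critical equation involves a Hermitian connection on a rank $r$ bundle; the mirror should be an equation on an $r$-fold multisection, with the unitary gauge group on the $B$-side replaced by Hamiltonian isotopies on the $A$-side. A natural consistency check is that the large-volume limit on the $B$-side, where \cref{thm:intromainthm} applies, should translate under SYZ into the tropical or degeneration regime on the $A$-side, in which the PDE simplifies and in which the relation to stability becomes most tractable.

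The principal obstacle will be escaping the semi-flat regime: away from it one does not have a clean dictionary between bundles and Lagrangians, instanton corrections from holomorphic discs must be incorporated, and the Fukaya category carries serious foundational difficulties that have so far prevented the construction of Bridgeland stability conditions on it in any significant generality. A second difficulty is that the target of the PDE, namely the space of Lagrangians modulo Hamiltonian isotopy, is non-linear and infinite-dimensional, so the very formulation of an elliptic PDE analogous to the $Z$-critical equation must be done with care, likely by passing to generating functions in Weinstein neighbourhoods. Finally, establishing the ``if and only if'' part of the conjecture would presumably require an analogue of our large-volume perturbation arguments on the symplectic side: starting from a special Lagrangian representative one attempts to deform it to a $Z$-critical Lagrangian, with obstructions controlled by stability of the isotopy class in $\scD^b \F(X,\omega)$. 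Making this precise is likely the deepest part of the problem, and is inextricably tied up with the Thomas--Yau conjecture itself.
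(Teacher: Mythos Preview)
This statement is a \emph{conjecture}, and the paper does not prove it; it is presented explicitly as an open problem, with the authors noting that ``the conjecture is left deliberately vague'' and that ``there are many subtleties to this conjecture even in the case of special Lagrangians''. There is therefore no proof in the paper against which to compare your proposal.

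What the paper does offer is a short discussion following the conjecture, and your proposal is broadly consonant with it. You correctly identify that the relevant class of stability conditions should be those mirror to the polynomial central charges considered on the $B$-side, and your SYZ-based strategy is exactly the route by which Leung--Yau--Zaslow derived the deformed Hermitian Yang--Mills equation, which the paper cites as motivation. Your identification of the large-volume limit with a degeneration regime on the $A$-side also matches the paper's remark that the correspondence should be expected ``near the large complex structure limit, which is mirror to the large volume limit''.

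Two points from the paper's discussion are absent from your plan and would sharpen it. First, the authors observe that varying the $B$-field on the coherent-sheaf side corresponds, under mirror symmetry, to varying the complex structure on the symplectic side, and suggest this places the mirror equations in the realm of Hitchin's generalised geometry; your semi-flat computation should exhibit this concretely. Second, they invoke Smith's expectation that the Teichm\"uller space of compatible complex structures sits inside $\Stab(\scD^b\F(X,\omega))$, with each point giving the ordinary special Lagrangian condition; the class you construct should be, in some sense, transverse to this slice. Beyond these refinements, your honest enumeration of obstacles---instanton corrections away from the semi-flat regime, the foundational issues with Bridgeland stability on the Fukaya category, and the need for a symplectic analogue of the perturbation argument---accurately reflects why this remains a conjecture.
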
 

The class of stability conditions should be mirror to the polynomial stability conditions we consider for holomorphic vector bundles. The conjecture is left deliberately vague; as explained by Joyce, there are many subtleties to this conjecture even in the case of special Lagrangians \cite{joyce2014conjectures}. For example, it is reasonable to think that such a correspondence only holds near the large complex structure limit, which is mirror to the large volume limit we have considered for $Z$-critical connections. Furthermore, the stability condition near this limit should initially be viewed only as a limiting case of a Bridgeland-type stability condition, just as our asymptotic $Z$-stability is a simplified limiting version of Bayer's polynomial Bridgeland stability conditions. There is still much work to formalise the link between Bridgeland stability and the stability conditions that have previously appeared in relation to special Lagrangians and dHYM connections, and subsequently our $Z$-critical connections. 

We make two slightly more precise remarks. Firstly, the most straightforward way to vary a stability condition on $\scD^b\Coh(X)$, the derived category of coherent sheaves, is through the addition of a $B$-field, which requires complexifying the K\"ahler metric. On the symplectic side, the role of K\"ahler metrics and complex structures and interchanged, meaning that the analogus variant of the special Lagrangian condition would likely land in the realm of Hitchin's generalised geometry \cite{hitchin}. Secondly, Smith explains that one expects to see the Teichm\"uller space of compatible complex structures of the underlying symplectic manifold to appear as a distinguished subspace of the the space of stability conditions on the Fukaya category of the Calabi--Yau, and each stability condition arising in this manner would correspond to the special Lagrangian condition with respect to that choice of complex structure \cite[\S 3.1]{smith2017stability}. It is thus natural to expect that the slice of stability conditions we conjecturally describe in \cref{conjecture:fukaya} should be, in some sense, transversal to this slice inside the stability manifold.

Returning to the realm of complex geometry, we speculate about the existence of $Z$-critical connections away from the large volume regime. As demonstrated by our main results, in higher rank one should consider not only \emph{subsheaves} but also \emph{subvarieties}. 

\begin{conjecture} $E$ admits a $Z$-critical connection if and only if for all coherent sheaves $F$ supported on a subvariety $V\subset X$ with $F \subset E|_{V}$ and with quotient $Q = (E|_{V})/F$ one has $$\arg Z_{\Omega,V}(Q) > \arg Z_{\Omega,X}(E).$$
\end{conjecture}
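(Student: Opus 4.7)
The plan is to treat the forward (existence implies stability) and backward (stability implies existence) directions separately, using substantially different techniques. The forward direction should follow from a moment-map/integration strategy generalising Leung's argument (the same one adapted in the body of this paper to show that $Z$-critical existence implies asymptotic $Z$-stability with respect to subbundles), now extended to allow destabilising subsheaves supported on arbitrary subvarieties. The backward direction is far harder and is the main source of the conjectural status; the natural attack is a continuity method, taking as its starting point the large volume result of \cref{thm:intromainthm} and deforming the central charge.

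For the forward direction, fix a $Z$-critical connection $A$ on $E$ and a coherent subsheaf $F \subset E|_V$ with quotient $Q$. Let $\iota \colon V \hookrightarrow X$ be the inclusion, giving a short exact sequence $0 \to \mathcal{K} \to E \to \iota_* Q \to 0$ on $X$. The strategy is to build an associated one-parameter family of Hermitian metrics on $E$ degenerating towards the split structure $\mathcal{K} \oplus \iota_* Q$, pair the $Z$-critical equation $\Im(e^{-i\phi(E)}\tilde Z_\Omega(E)(A)) = 0$ with the Chern--Weil form of the resulting curvature, and pass to the limit using smoothed currents approximating integration over $V$. As in Leung's argument, integration by parts converts this into an inequality involving $\arg Z_{\Omega,V}(Q)$ and $\arg Z_{\Omega,X}(E)$; ruling out equality (except in the degenerate case where $F$ extends to a subbundle of matching phase, which reduces to the already-handled subbundle case) would then yield the strict inequality claimed.

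For the backward direction, the natural approach is to consider the path of central charges $Z_t = (1-t)Z + t Z_k$ with $k \gg 0$, so that at $t=1$ a $Z_k$-critical connection exists by \cref{thm:intromainthm}, once one verifies that the subvariety stability hypothesis specialises to asymptotic $Z_k$-stability with respect to subbundles by restricting to $V = X$ and smooth subbundles. One then attempts to propagate solutions from $t=1$ back to $t=0$ via the implicit function theorem at times where the linearisation of the $Z_t$-critical operator is invertible, exactly as in the final stage of the proof of \cref{thm:intromainthm}. Maintaining a subsolution throughout, in the sense of the K\"ahler locus of \cref{intro:moment}, ensures ellipticity and convexity of the relevant functional and provides the second-order estimates needed for compactness along the path; higher-order estimates would follow by standard bootstrapping.

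The principal obstacle is the backward direction, specifically the task of preserving uniform $C^0$ and $C^2$ estimates as $t$ decreases. Because the $Z$-critical equation is fully nonlinear in the full curvature $F_A$, outside the large volume regime even the correct notion of subsolution in higher rank is not fully understood, and one would have to show that the subvariety stability assumption prevents the subsolution cone from being exited along the deformation; this is essentially a positivity-of-currents problem, closely related to Nakai--Moishezon type characterisations. A secondary difficulty lies in the rigorous handling of currents supported on singular subvarieties $V$ in the forward direction, and in higher rank the lack of any reduction to a complex Monge--Amp\`ere equation (available for line bundles through \cref{thm:introsurfaces}) removes the most powerful analytic tool currently at our disposal.
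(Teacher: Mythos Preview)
The statement you are attempting to prove is not a theorem in the paper but a \emph{Conjecture}. The paper does not prove it, and indeed explicitly says immediately afterwards that ``the hypotheses are unlikely to be completely correct, and the conjecture is meant primarily to serve as a guide for future work. For example, it is almost certain that one needs to assume an analogue of the supercritical phase condition for the conjecture to hold, and more structure on the central charge and representative form $\tilde U \in U$ is likely to be needed in general.'' So there is no proof in the paper to compare your proposal against.

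Your write-up seems to recognise this: you speak of ``the main source of the conjectural status'' and identify genuine obstacles (uniform estimates along a continuity path, the lack of a robust higher-rank subsolution notion, handling currents on singular $V$). As a sketch of a programme it is reasonable and broadly in the spirit of the partial evidence the paper does provide (\cref{thm:intromainthm} for the large volume regime, \cref{thm:introsurfaces} for line bundles on surfaces, and the Leung-type argument in \cref{sec:leunganalogue}). But it is not a proof, and neither is anything in the paper; the authors themselves expect the statement to require additional hypotheses before it can even be made precise enough to attack.
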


As in our previous conjecture, the hypotheses are unlikely to be completely correct, and the conjecture is meant primarily to serve as a guide for future work. For example, it is almost certain that one needs to assume an analogue of the supercritical phase condition for the conjecture to hold, and more structure on the central charge and representative form $\tilde U \in U$ is likely to be needed in general. Away from an analogue of the supercritical phase, we expect that there is an analogue of the above conjecture which requires stability with respect to subobjects on $\D^b\Coh(X)$ and which links more closely with Bridgeland stability. There is reason to be optimistic, coming from Chen's proof of a uniform version of the above for deformed Hermitian Yang--Mills connections on line bundles \cite{chen2019j,chen2020supercritical}, as well as our results \cref{thm:intromainthm}, concerning the large volume limit of this Conjecture, and \cref{thm:introsurfaces}, concerning this Conjecture for line bundles over complex surfaces.

\vspace{4mm} \noindent {\bf Acknowledgements:} The authors thank the referee for their very helpful comments. The second author thanks Richard Thomas for drawing their attention to the deformed Hermitian Yang--Mills equations, and their PhD supervisor Simon Donaldson for many interesting and useful comments. The authors also thank Jacopo Stoppa for interesting discussions. RD was funded by a Royal Society University Research Fellowship. JBM was funded by the EPSRC and the London School of Geometry and Number Theory. LMS's postdoctoral position is supported by Villum Fonden, grant 0019098. The work was revised after LMS joined the University of Gothenburg, funded by a Marie Sklodowska-Curie Individual Fellowship funded from the European Union's Horizon 2020 research and innovation programme under grant agreement No 101028041. 
\section{Preliminaries}\label{sec:prelims}

\subsection{Stability conditions}\label{sec:stabconds}

Let $E$ be a vector bundle over a smooth projective variety $X$, or more generally a compact K\"ahler manifold. We begin by defining the notion of stability for $E$ relevant to our problem. Our notion is a slightly simpler variant of Bridgeland, Douglas and Bayer's stability conditions \cite{bridgeland2007stability, douglas-icm, bayer}. As in their work, the crucial input is a \emph{central charge}.

\begin{definition} A \emph{central charge} is a group homomorphism $$Z: K(X) \to \C$$ which sends a coherent sheaf to the semi-closed upper-half plane $\HH \subset \C$. Here $\HH = \{ z \in \C : z \in \R_{>0} \cdot e^{i \theta}, \theta \in (0,\pi] \}$ and $K(X)$ denotes the Grothendieck group of coherent sheaves on $X$. \end{definition}

Thus $Z$ associates a complex number to each coherent sheaf on $X$, and is \emph{additive in short exact sequences}. That $Z$ has image in the upper-half plane means that one can meaningfully compare the arguments of $Z(F)$ and $Z(F')$ for any coherent sheaves $F, F'$ on $X$.

\begin{definition} For each coherent sheaf $F$ on $X$ with $Z(F)\ne 0$, we define the \emph{phase} $$\phi(F) = \arg Z(F)$$ to be the argument of the complex number $Z(F)$. We say that a coherent sheaf $E$ with $Z(E) \ne 0$ is \emph{Z-stable} if for all proper coherent subsheaves $F \subset E$, $Z(F) \ne 0$ and $$\phi(F) < \phi(E).$$ \end{definition}

Semistability and instability can be defined analogously: $E$ is $Z$\emph{-semistable} if $\phi(F) \leq \phi(E)$ for all $F \subset E$, and $Z$-\emph{unstable} if it is not $Z$-semistable. The notion of $Z$-polystability, which will be irrelevant to our work, means that $E$ is a direct sum of $Z$-stable coherent sheaves of the same argument. 

\begin{remark}\label{non-trivialZ} We shall always tacitly assume that $Z(E) \neq 0$, and hence $\phi(E)$ is well-defined, in order to make sense of this definition; this a non-trivial topological constraint on $E$ and $Z$ which due to \cref{non-trivialphase} will always be satisfied in the situations of primary interest in the present work.
\end{remark}

We will restrict ourselves to a subclass of central charges, which were introduced by Bayer \cite[Theorem 3.2.2]{bayer}; see also related work of Toda \cite{toda}. These require the input of a \emph{stability vector}, defined as follows. 

\begin{definition}
	A \emph{stability vector} $\rho$ is a sequence $(\rho_0, \rho_1, \dots, \rho_n)\in (\C^*)^{n+1}$ of non-zero complex numbers such that $\Im(\rho_n)>0$ and

	$$\Im\left( \frac{\rho_{n-1}}{\rho_{n}} \right)> 0,$$
	where $n = \dim X$.
\end{definition}
\begin{remark}
Bayer's notion of a stability vector requires that 	
$$\Im\left( \frac{\rho_d}{\rho_{d+1}} \right)> 0$$
	for all $0\le d \le n-1$. In the present work, all results apply assuming this condition only for $d=n-1$, thus we have slightly relaxed the definition of a stability vector.
\end{remark}

The condition that $\Im(\rho_n)>0$ is essentially a normalisation condition, one can otherwise rotate the entire stability vector to ensure this holds. Bayer then defines his class of central charges using a stability vector, together with appropriate topological data. 

\begin{definition}
	Let $\Omega=([\omega], \rho, U)$ be the data given by
	\begin{enumerate}
		\item a K\"ahler class $[\omega]\in H^{1,1}(X,\R)$ (for example, the first Chern class of an ample line bundle),
		\item a stability vector $\rho=(\rho_0,\dots,\rho_n)$,
		\item and a unipotent operator $U\in H^*(X,\R)$ (that is, $U=1+N$ where $N\in H^{>0}(X,\R)$).
	\end{enumerate}
	Define $Z_{\Omega,k}:K(X) \to \C[k]$ by 
	$$Z_{\Omega,k}(E) := \int_X \sum_{d=0}^n \rho_d k^d [\omega]^d \cdot \ch(E) \cdot U.$$
	We call a central charge  $Z_{\Omega,k}$ of this form a \emph{polynomial central charge}. We denote  $$\phi_{k}(E) = \arg Z_{\Omega,k}(E)$$ the associated argument using the principal branch $\arg: \C^* \to (-\pi, \pi]$, which we call the \emph{phase}. We define the \emph{$Z$-slope} by
	$$\mu_{Z_k}(E) = -\frac{\Re Z_k(E)}{\Im Z_k(E)}$$
	when $\Im Z_k(E) > 0$, so that $\mu_{Z_k}(E) = - \cot(\phi_k(E))$. Define $\mu_{Z_k}(E) = +\infty$ if $\Im Z_k(E) = 0$ and $\Re Z_k(E) < 0$, and $-\infty$ when $\Re Z_k(E) > 0$. If $Z_k(E)=0$ define $\mu_{Z_k}(E) = +\infty$ and $\varphi_k(E) = \pi$. 
\end{definition}

The most important consequence of this definition for us is that for any fixed $\Omega$, one obtains a \emph{polynomial family} of central charges, which we view as a sequence depending on $k\in \Z_{\ge 0}$. Slightly abusing notation, we usually fix $\Omega$ and denote by $Z_k$  the induced central charge. Strictly speaking, for each $k$, the function $Z_k: K(X) \to \C$ may not be a central charge, as it may not send coherent sheaves to the semi-closed upper half plane $\HH \subset \C$. By definition of a stability vector, what is certainly true and all that will be important for us is that for each coherent sheaf $F$ on $X$, we have $\Im Z_k(F) > 0$ for $k\gg 0$; this is a consequence of our assumption $\Im(\rho_n)>0$. Rather than $Z_k$-stability itself, the notion most relevant to us is the following.

\begin{definition} We say that a coherent sheaf $E$ with $Z_k(E) \ne 0$ for all $k\gg 0$ is \emph{asymptotically $Z$-stable} if for all proper, non-zero coherent subsheaves $F \subset E$ there exists some $k_0$, possibly depending on $F$, such that we have $$\phi_k(F) < \phi_k(E)$$ for all $k \ge k_0 \gg 0.$\end{definition}

We note the following equivalent formulations of asymptotic $Z$-stability. 

\begin{lemma}\label{lem:equivalentstabilityconditions}
	Let $E\to X$ be a holomorphic vector bundle and $F \subset E$ a proper, non-zero coherent subsheaf with $Z_k(F) \ne 0$. Then the following conditions are equivalent.
	\begin{itemize}
		\item $\varphi_k(F) < \varphi_k(E)$ for all $k\gg 0$.
		\item $\Im \left( \frac{Z_k(F)}{Z_k(E)} \right) < 0$ for all $k\gg 0$.
		\item $\mu_{Z_k}(F) < \mu_{Z_k}(E)$ for all $k\gg 0$. 
	\end{itemize} 
	When $Z_k(F) = 0$ for all $k\gg 0$ then (i) and (iii) are still equivalent.
\end{lemma}
\begin{proof}
	The proof follows quickly from noting that by our assumption $\Im \rho_n > 0$ on the stability vector, the central charge $Z_k(E)$ lies in the upper-half plane for all $k\gg 0$. Then it is a standard fact that if $z=re^{i\theta},z'=r' e^{i\theta'}$ are non-zero complex numbers and $|\theta - \theta'| < \pi$ so that $z,z'$ lie in the same half-plane, then the sign of $\sin (\theta - \theta')$ is the same as the sign of $\theta - \theta'$. Since $\Im (z/z') = r/r' \sin (\theta - \theta')$ one concludes $\theta < \theta'$ if and only if $\Im (z/z') < 0$. Setting $z=Z_k(F), z'=Z_k(E)$ then the above equivalences follow quickly.
\end{proof}

One can again define analogous notions of \emph{asymptotic $Z$-semistability, asymptotic $Z$-instability} and \emph{asymptotic $Z$-polystability}. From now on, we fix a polynomial central charge $Z$.

\begin{remark}\label{rmk:stability-remarks} The notion of stability we are using here is not a standard one appearing in either the Bridgeland stability or physics literature, thus we make the following remarks.
\begin{enumerate}
\item The principal difference between our definition and Bridgeland's is that his stability conditions are defined on the bounded derived category of coherent sheaves, hence involve more complicated subcomplexes of $E$. He also requires several other conditions, such as his support property, that are irrelevant to our work. We thus emphasise that while it is in general extremely challenging to produce Bridgeland stability conditions, it is straightforward to generate  polynomial central charges.

\item Our notion of asymptotic $Z$-stability involves inequalities induced from \emph{polynomials} rather than \emph{numbers}; this is in part motivated by Bayer's work \cite{bayer}, and as he demonstrates  is useful in studing stability in the ``large volume limit'' of $k \gg 0$. Bayer considers \emph{polynomial stability conditions} which are a more involved variant of the above closer to Bridgeland's work \cite[\S 2]{bayer}. His work also involves  a \emph{perversity function} \cite[Definition 3.1.1]{bayer}, which enables him to instead assume a minor variant of our assumption that $\Im(\rho_n)>0$ and which is important in constructing stability conditions on $t$-structures defined by perverse sheaves; these play no role in our work.

\item Polynomial central charges are not the most general for which our results apply. For example, all that will be important to us is that $\Im(\rho_n)$ and $\Im (\rho_{n-1}/\rho_{n})$ are strictly positive, rather than the further conditions on the stability vector. Bayer uses these properties in many ways; one is in his proof that the space of polynomial stability conditions forms a complex manifold \cite[\S 8]{bayer}. We mirror  almost all of Bayer's conditions to link with his work. The only restriction we make that Bayer does not is that we assume our unipotent operator $U$ is \emph{real} rather than \emph{complex}; this seems more natural in our arguments, and includes the cases of primary interest in string theory. As we note in \cref{Ucondition}, this can also be relaxed somewhat.

\end{enumerate}
\end{remark}

As mentioned above, it is straightforward to produce the polynomial central charges relevant to our work. We give two examples.

\begin{example}\label{example:dhympsc}
	Fix a class $\beta \in H^{1,1}(X,\R)$ on the compact K\"ahler manifold $(X,[\omega])$. Then setting$$\rho_d = i^{n+1} \frac{(-i)^d}{d!}, \qquad U=e^{-\beta}$$induces the polynomial central charge
	$$Z_{k}(E) = i^{n+1} \int_X e^{-ik[\omega]} e^{-\beta} \ch(E).$$ In this context $\beta$ is usually called the \emph{$B$-field}. If one instead takes the operator 
	$$U=e^{-\beta} \cdot \sqrt{\Td(X)},$$where $\sqrt{\Td(X)}$ denotes the square root of the Todd class, defined for example through the natural power series, one obtains the polynomial central charge
	$$Z_{\textrm{Todd},k}(E) = i^{n+1}\int_X e^{-ik[\omega]} e^{-\beta} \ch(E) \cdot \sqrt{\Td(X)}.$$ These are the two central charges which are most important in string theory. The factor $i^{n+1}$ is there just to ensure that the condition $\Im \rho_n > 0$ is satisfied.
\end{example}

Recall that a coherent sheaf $F$ on a compact K\"ahler manifold $(X,[\omega])$ has an associated \emph{slope} $$\mu(F) = \frac{[\omega]^{n-1}.c_1(F)}{\rk F},$$ and a coherent sheaf $F$ is called \emph{slope semistable} (respectively \emph{slope stable}) if $$\mu(F) \leq \mu(E) \textrm{ (respectively } \mu(F)<\mu(E) \textrm{)}$$ for all coherent subsheaves $F \subset E$.

\begin{lemma}\label{lemma:slopesemistable}
	If a coherent sheaf $E$ is asymptotically $Z$-stable, then it is slope semistable. 
	
\end{lemma}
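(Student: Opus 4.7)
The plan is to unravel the definition by comparing the phases $\phi_k(F)$ and $\phi_k(E)$ in the large $k$ limit and isolating the first term in the expansion that depends on more than the rank.

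First I would expand $Z_{\Omega,k}(F)$ in powers of $k$. Separating by degree in cohomology, write $U = 1 + N_1 + N_2 + \cdots$ with $N_j \in H^{2j}(X,\mathbb{R})$ (using that $U$ is real), and let $V = \int_X [\omega]^n$ and $C = \int_X [\omega]^{n-1}\wedge N_1$. The top two coefficients in $k$ are
\begin{align*}
a_n(F) &:= \int_X [\omega]^n \ch_0(F) = V \cdot \rk F, \\
a_{n-1}(F) &:= \int_X [\omega]^{n-1}\big(\ch_1(F) + \ch_0(F)\, N_1\big) = \rk(F)\big(\mu(F) + C\big),
\end{align*}
so that $Z_{\Omega,k}(F) = \rho_n k^n a_n(F) + \rho_{n-1} k^{n-1} a_{n-1}(F) + O(k^{n-2})$. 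Factoring out the leading term gives
\[
Z_{\Omega,k}(F) = \rho_n k^n V\, \rk(F)\left(1 + \frac{1}{k}\cdot \frac{\rho_{n-1}}{\rho_n}\cdot\frac{\mu(F)+C}{V} + O(k^{-2})\right).
\]

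Next I would extract the argument. Since $\rk(F)V > 0$ and $\arg(1 + z/k) = \Im(z)/k + O(k^{-2})$ for $z\in\mathbb{C}$ fixed, this yields
\[
\phi_k(F) = \arg(\rho_n) + \frac{\Im(\rho_{n-1}/\rho_n)}{V\, k}\big(\mu(F) + C\big) + O(k^{-2}),
\]
and analogously for $E$. Subtracting,
\[
\phi_k(F) - \phi_k(E) = \frac{\Im(\rho_{n-1}/\rho_n)}{V\, k}\big(\mu(F) - \mu(E)\big) + O(k^{-2}).
\]
The constant $C$ cancels because the $N_1$ contribution to $a_{n-1}$ is proportional to the rank and hence drops out of the ratio $a_{n-1}/a_n$.

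Finally, the stability vector axiom gives $\Im(\rho_{n-1}/\rho_n) > 0$. If $\mu(F) > \mu(E)$, then the leading $1/k$ term in $\phi_k(F) - \phi_k(E)$ is strictly positive, so $\phi_k(F) > \phi_k(E)$ for all sufficiently large $k$, contradicting asymptotic $Z$-stability. Hence $\mu(F)\leq \mu(E)$ for every coherent subsheaf $F\subset E$, which is slope semistability. The only mild obstacle is bookkeeping the unipotent operator $U$ in the expansion and checking that its contribution to the leading nontrivial term is rank-proportional and so cancels; once that is observed, the rest is a direct asymptotic comparison.
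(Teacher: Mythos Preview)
Your proof is correct and follows essentially the same approach as the paper: both expand the central charge to subleading order in $k$, observe that the unipotent operator contributes a rank-proportional term that cancels, and then invoke $\Im(\rho_{n-1}/\rho_n)>0$ to compare slopes. The only cosmetic difference is that you expand $\arg Z_k$ directly, whereas the paper works with the equivalent condition $\Im\!\big(Z_k(F)/Z_k(E)\big)<0$ before expanding.
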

\begin{proof}
	
	Asymptotic $Z$-stability for $k\gg 0$ states that
	$$\phi_k(F) = \arg Z_{k}(F) < \phi_k(E)=\arg Z_{k}(E)$$
	for all coherent subsheaves $F\subset E$. By \cref{lem:equivalentstabilityconditions} this is equivalent to asking
	\begin{equation}\label{eq:imsuchandsuch}\Im\left(\frac{Z_k(F)}{Z_k(E)}\right) < 0.\end{equation}
	Consider the expansion
	\begin{equation}\label{eq:Zkexp}
	Z_k(E) = \rho_n k^n [\omega]^n \rk(E) + \rho_{n-1} k^{n-1} [\omega]^{n-1}.(c_1(E) + \rk(E) U_2) + O(k^{n-2})
	\end{equation}
	where $U_2$ denotes the degree two component of the class $U\in H^*(X, \R).$ For simplicity let us write 
	\begin{equation}\label{degU}\deg_U (E) := [\omega]^{n-1}.(c_1(E) + \rk(E) U_2).\end{equation}
	Then dividing we see
	$$\frac{Z_k(F)}{Z_k(E)} = \frac{\rk(F)}{\rk(E)} + k^{-1} \frac{\rho_{n-1}}{\rho_n} \left( \frac{\deg_U F \rk E - \deg_U E \rk F}{[\omega]^n (\rk E)^2}\right) + O(k^{-2}).$$
	Since $\rho$ is a stability vector we have
	$$\Im \left(\frac{\rho_{n-1}}{\rho_n}\right) > 0.$$
	As Equation \eqref{eq:imsuchandsuch} holds for all $k$, the leading order term of its expansion in $k$ is non-negative. Thus by taking imaginary parts, this inequality produces
	$$\deg_U F \rk E - \deg_U E \rk F \leq 0$$
	which is equivalent to asking
	$$\frac{\deg_U F}{\rk F} \leq \frac{\deg_U E}{\rk E}.$$
	Since $\deg_U (E) = \deg E + \rk E [\omega]^{n-1}.U_2$, this can be further simplified to the usual slope inequality
	$$\frac{\deg F}{\rk F} \leq \frac{\deg E}{\rk E}.$$ \end{proof}

\begin{remark}\label{Ucondition}The condition that $U$ (or rather its degree two component) is a \emph{real} operator is used in the proof above; in general, for $U$ complex, one needs to impose a further topological hypothesis on the imaginary part of its degree two component, which seems slightly unnatural. \end{remark}

The following justifies \cref{non-trivialZ}, and follows directly from Equation \eqref{eq:Zkexp}.

\begin{corollary}\label{non-trivialphase} For an arbitrary coherent sheaf $E$ with $\rk E > 0$ and polynomial central charge $Z_k$, $Z_k(E) \neq 0$ for $k \gg0$.
\end{corollary}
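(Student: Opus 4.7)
The plan is to show that $Z_k(E)$, viewed as a polynomial in $k$ with complex coefficients, is not identically zero; since a nonzero complex polynomial has only finitely many roots, the conclusion $Z_k(E) \neq 0$ for $k \gg 0$ follows immediately.

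First I would expand
\[
Z_k(E) = \sum_{d=0}^n \rho_d k^d a_d, \qquad a_d := \int_X [\omega]^d \cdot \ch(E) \cdot U \in \C.
\]
Because the stability vector axiom forces $\rho_d \neq 0$ for every $d$, it suffices to exhibit a single index $d$ with $a_d \neq 0$. Set $s := \dim \Supp(E)$. The key input is the standard fact that for a nonzero coherent sheaf of support dimension $s$, the Chern character $\ch(E)$ vanishes in all cohomological degrees strictly less than $2(n-s)$, and its degree $2(n-s)$ component $\ch_{n-s}(E)$ coincides with the fundamental cycle class of $E$, namely $\sum_i r_i [Z_i]$, where the $Z_i$ are the $s$-dimensional irreducible components of $\Supp(E)$ and $r_i \in \Z_{>0}$ are the corresponding generic ranks.

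Since $U = 1 + N$ with $N \in H^{>0}(X,\R)$, the component of $\ch(E) \cdot U$ in cohomological degree $2(n-s)$ is again just $\ch_{n-s}(E)$ (the only way to hit this degree is by pairing $\ch_{n-s}(E)$ with the constant $1$ part of $U$). Hence
\[
a_s = \int_X [\omega]^s \cdot \ch_{n-s}(E) = \sum_i r_i \int_{Z_i} [\omega|_{Z_i}]^s > 0,
\]
the strict positivity coming from the fact that $[\omega]$ restricts to a K\"ahler class on each $s$-dimensional subvariety $Z_i$. In particular $\rho_s a_s \neq 0$, so $Z_k(E)$ is a nonzero polynomial in $k$ of degree at least $s$, and we are done.

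There is no real obstacle here; the only mildly nontrivial ingredient is the identification of $\ch_{n-s}(E)$ with the fundamental cycle class of the support, which is a standard consequence of Grothendieck--Riemann--Roch applied to the inclusion of the support (or equivalently can be read off from the general theory of Chern characters of coherent sheaves). One could alternatively phrase the argument via Hilbert polynomials, but the direct positivity computation above avoids any appeal to $\Td(X)$ versus $U$.
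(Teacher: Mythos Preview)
Your proof is correct. The paper states this as a corollary without any explicit argument, immediately after the proof of \cref{lemma:slopesemistable}; the implicit reasoning is simply that the expansion computed there,
\[
Z_k(E) = \rho_n k^n [\omega]^n \rk(E) + O(k^{n-1}),
\]
has nonzero leading coefficient whenever $\rk(E)>0$, so $Z_k(E)$ is a nonzero polynomial in $k$. Your argument refines this by identifying, for a sheaf of arbitrary support dimension $s$, the $k^s$ coefficient as $\rho_s$ times the strictly positive number $\int_X [\omega]^s \cdot \ch_{n-s}(E)$, using that $\ch_{n-s}(E)$ is the fundamental cycle of the support and that $U$ is unipotent so does not disturb the lowest-degree Chern term. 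This is genuinely more complete: the paper's one-line justification only treats the positive-rank case directly, whereas your approach handles torsion sheaves uniformly. Both approaches ultimately reduce to ``nonzero polynomial has finitely many roots'', but you have done the extra work of locating the leading nonzero coefficient in general.
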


Recall that a slope semistable coherent sheaf $E$ admits a \emph{Jordan--H\"older} filtration \cite[Proposition 1.5.2]{huybrechts-lehn}, namely a filtration by coherent subsheaves $$0 = S_0 \subset S_1 \subset \hdots \subset S_l = E,$$ such that each $Q_j = S_j/S_{j-1}$ is slope stable and $$\mu(S_1) =\hdots =  \mu(S_{l-1}) = \mu(E).$$ We denote $$\Gr(E) = \bigoplus_{j=1}^l S_j/S_{j-1},$$ called the \emph{associated graded object}, which is \emph{slope polystable} in the sense that it is a direct sum of stable coherent sheaves of the same slope. The associated graded object of a slope semistable coherent sheaf is unique up to re-ordering, although the Jordan--H\"older filtration itself is not unique in general. 

\begin{corollary}\label{cor:polystabledegen} Let $Z$ be a polynomial central charge. An asymptotically $Z$-stable coherent sheaf admits a Jordan--H\"older filtration. \end{corollary}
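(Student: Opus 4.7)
The plan is to observe that this is a one-line consequence of \cref{lemma:slopesemistable} combined with the classical existence of Jordan--Hölder filtrations for slope semistable sheaves. First I would invoke \cref{lemma:slopesemistable} to conclude that any asymptotically $Z$-stable coherent sheaf $E$ is slope semistable with respect to the underlying Kähler class $[\omega]$. Then I would cite the standard result \cite[Proposition 1.5.2]{huybrechts-lehn}, which guarantees that every slope semistable coherent sheaf on a polarised compact Kähler manifold admits a Jordan--Hölder filtration whose successive quotients are slope stable of the same slope as $E$. Concatenating these two facts yields the corollary immediately.

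There is really no main obstacle here; the only thing to double-check is that the notion of slope used in \cref{lemma:slopesemistable}, namely $\mu(F) = [\omega]^{n-1}.c_1(F)/\rk F$, is the same notion of slope underlying the Jordan--Hölder existence result invoked from Huybrechts--Lehn, which it is. I would also briefly note, for clarity, that the Jordan--Hölder filtration produced in this way depends only on the polarisation $[\omega]$ appearing in the data $\Omega=([\omega],\rho,U)$ defining the polynomial central charge $Z$, and that although the filtration itself is not unique, the associated graded object $\Gr(E)$ is unique up to reordering of the summands, as recalled just above the statement of the corollary. This last remark is what licenses the subsequent use of $\Gr(E)$ elsewhere in the paper.
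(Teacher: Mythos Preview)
Your proposal is correct and matches the paper's approach exactly: the corollary is stated immediately after recalling that slope semistable sheaves admit Jordan--H\"older filtrations \cite[Proposition 1.5.2]{huybrechts-lehn}, and no separate proof is given precisely because it follows at once from \cref{lemma:slopesemistable}.
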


The following smoothness property introduced by Leung will be central in our analytic arguments.

\begin{definition}\cite[page 520]{leung1997einstein}  We say that a slope semistable holomorphic vector bundle $E$ is \emph{sufficiently smooth} if the associated graded object $\Gr(E)$ is itself a holomorphic vector bundle. \end{definition}

\begin{remark} We will primarily be interested in sufficiently smooth vector bundles in the present work. As we are considering \emph{asymptotic} $Z$-stability, it will then be most natural to consider stability with respect to \emph{subbundles}, rather than arbitrary coherent subsheaves. For clarity we will often then say that $E$ is \emph{asymptotically} $Z$\emph{-stable with respect to subbundles}. \end{remark}

The following so-called ``see-saw'' property explaining the properties of the arguments in short exact sequences will be used at several points. 

\begin{lemma}\label{lemma:seesaw}For any short exact sequence of coherent sheaves
	\begin{center}
		\begin{tikzcd}
		0\arrow{r} &F \arrow{r} &E \arrow{r} &G \arrow{r} &0
		\end{tikzcd}
	\end{center}
	with $\rk E > 0$, one has equivalences
	$$\varphi_k(F) \le \varphi_k(E) \text{ for all $k\gg 0$}\Longleftrightarrow \varphi_k(E) \le \varphi_k(G)\text{ for all $k\gg 0$}$$
	and
	$$\varphi_k(F) \ge \varphi_k(E) \text{ for all $k\gg 0$} \Longleftrightarrow \varphi_k(E) \ge \varphi_k(G)\text{ for all $k\gg 0$}.$$
	Furthermore equality holds if and only if equality holds above, except if $Z_k(F) = 0$ for all $k\gg 0$ in which case $\varphi_k(F) > \varphi_k(E)$ and $\varphi_k(E) = \varphi_k(G)$, or if $Z_k(G) = 0$ for all $k\gg 0$ in which case $\varphi_k(F) = \varphi_k(E)$ and $\varphi_k(E) < \varphi_k(G)$.
	
	The same equivalences hold when replacing the phase $\varphi_k$ by the $Z$-slope $\mu_{Z_k}$. 
\end{lemma}
\begin{proof} This is essentially the standard see-saw property for Bridgeland stability, and in our situation follows from the facts that the central charge $Z_k$ is additive in short exact sequences, being a group homomorphism from the K-group of $X$ to $\C$, and that the central charge has image in the upper-half plane $\HH \subset \C$ for all $k \gg 0$. 
	
The result is most easily proven using the $Z$-slope $\mu_{Z_k}$ and verifying the inequalities above. The only detail to check is in the case where $Z_k(F) = 0$ for all $k \gg 0$ or $Z_k(G) = 0$ for all $k \gg 0$. Both situations can easily be covered by using the expression 
$$\Im Z_k(F) (\mu_{Z_k}(F) - \mu_{Z_k}(E)) = \Im Z_k(G) (\mu_{Z_k}(E) - \mu_{Z_k}(G))$$
which follows from the additivity of $Z_k$ in short exact sequences and proceeding case by case.
\end{proof}

Using this we can establish simplicity of asymptotically $Z$-stable sheaves.

\begin{lemma}\label{lemma:simple}
	An asymptotically $Z$-stable vector bundle is simple.
\end{lemma}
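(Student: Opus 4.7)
The plan is to deduce simplicity from stability via the standard eigenvalue-and-see-saw argument. Let $\psi \in \Hom(E,E)$ be a nonzero endomorphism; the goal is to show $\psi = \lambda \Id_E$ for some $\lambda \in \C$. First I would fix any point $x \in X$ and look at the induced $\C$-linear endomorphism of the finite-dimensional fiber $E(x) = E_x \otimes_{\scO_{X,x}} k(x)$, which admits an eigenvalue $\lambda \in \C$. If $\psi - \lambda \Id_E$ vanishes as a sheaf map we are done, so assume $\phi' := \psi - \lambda \Id_E$ is nonzero. By the choice of $\lambda$ the map $\phi'$ is not injective on $E(x)$, so $K := \ker \phi'$ is a nonzero coherent subsheaf of $E$, and it is a proper subsheaf since $\phi' \neq 0$.

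Next I would verify that $I := \im \phi'$ is likewise a proper nonzero coherent subsheaf. It is nonzero because $\phi'\neq 0$; and were $I = E$, then $\phi' : E \to E$ would be a surjective endomorphism of a coherent sheaf on a noetherian scheme, hence an isomorphism by the classical ascending-chain-condition argument applied to $\ker \phi' \subset \ker (\phi')^2 \subset \cdots$, contradicting $K \neq 0$. Thus $K$ and $I$ are both proper nonzero coherent subsheaves of $E$, and the first isomorphism theorem identifies $E/K \cong I$, fitting into the short exact sequence
$$0 \longrightarrow K \longrightarrow E \longrightarrow I \longrightarrow 0.$$

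Finally I would apply asymptotic $Z$-stability twice: once to $K$, giving $\varphi_k(K) < \varphi_k(E)$ for $k \gg 0$, and once to $I$, giving $\varphi_k(I) < \varphi_k(E)$ for $k \gg 0$. The see-saw property (\cref{lemma:seesaw}) applied to the sequence above converts $\varphi_k(K) < \varphi_k(E)$ into $\varphi_k(E) < \varphi_k(I)$, which is incompatible with $\varphi_k(I) < \varphi_k(E)$. This contradiction forces $\phi' = 0$, i.e.\ $\psi = \lambda \Id_E$, so $\End(E) = \C \cdot \Id_E$. The only mildly nontrivial step is the intermediate claim that a surjective endomorphism of a coherent sheaf on a noetherian scheme is an isomorphism, which I would invoke as a standard fact; everything else is formal manipulation of the see-saw and the definition of asymptotic $Z$-stability.
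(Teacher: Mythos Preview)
Your proof is correct and follows essentially the same route as the paper: both use asymptotic $Z$-stability on the kernel and image of an endomorphism together with the see-saw property to force a contradiction, combined with the eigenvalue trick to pass from ``every nonzero endomorphism is an isomorphism'' to ``every endomorphism is a scalar.'' The only difference is organizational---the paper first proves that any nonzero $u$ is an isomorphism and then subtracts an eigenvalue, whereas you subtract the eigenvalue first and argue directly that $\phi'$ must vanish; one small caveat is that the ambient space is a compact K\"ahler manifold rather than a scheme, but the surjective-implies-injective fact for endomorphisms of coherent sheaves still holds there by the noetherian property of coherent analytic sheaves.
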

\begin{proof}
	This only uses the see-saw property, and therefore follows from a similar argument as for slope stable or Gieseker stable sheaves \cite[Corollary 1.2.8]{huybrechts-lehn}.  Explicitly, let $u: E \to E$ be a non-zero endomorphism and suppose $\ker u \ne 0$. Then by asymptotic $Z$-stability, since $\ker u \subset E$ we must have $$\varphi_k(\ker u) < \varphi_k(E)$$ for all $k\gg 0$. However, the see-saw property of \cref{lemma:seesaw} produces the inequality $$\varphi_k(E) < \varphi_k(E/\ker u) = \varphi_k(\im u)$$ for all $k\gg 0$, contradicting asymptotic $Z$-stability with respect to the subsheaf $\im u\subset E$. Thus $\ker u = 0$ and $u$ is injective. Therefore the induced morphism $\det u: \det E \to \det E$ is injective and therefore an isomorphism (being an injective morphism of line bundles of the same degree), so $u$ must be an isomorphism.
	
	It is then a standard fact that $H^0(X, \End E)=\C$. Indeed, let $x\in X$ be any point and $\lambda\in \C^*$ any eigenvalue of the invertible map $u_x: E_x \to E_x$, and denote by  $$\tilde u = u -\lambda \Id_E$$ the induced endomorphism. Then $\tilde u$ cannot be an isomorphism, as it has non-trivial kernel at $x$. Thus $\tilde u = 0$, implying that $u=\lambda \Id_E$ for some $\lambda \in \C^*$.
\end{proof}

\begin{remark} One could go further in developing the purely algebro-geometric theory of asymptotic $Z$-stability, for example by constructing Jordan--H\"older and Harder-Narasihman filtrations. Analogous results are proven by Bayer and Toda in somewhat different situations \cite{bayer, toda}. Such results play no role in our main results, which link instead with complex differential geometry, so we leave this to future work. \end{remark}

\subsubsection{An example} The inequalities used in the definition of asymptotic $Z$-stability can be calculated explicitly in concrete examples, with the interesting  case,  by  \cref{lemma:slopesemistable},  being that of a slope semistable vector bundle $E$ which is not slope polystable. If the bundle $E$ has Jordan--H\"older filtration given by the inclusion of a single subbundle $F$, it will follow from our main result  \cref{thm:intromainthm} that $E$ is asymptotically $Z$-stable if and only if $\phi_k(F)<\phi_k(E)$ for all $k \gg 0$. As we now demonstrate, this means that the resulting criterion can  actually be understood in practice. We describe the conclusions concerning $Z$-critical connections, even though the precise definition of such connections is postponed to \cref{sec:connections}.

\begin{example}
We consider a simple rank three bundle on $\mathbb{P}^2$ considered by Maruyama in the context of Gieseker stability, and we follow its presentation in \cite[p. 90]{okonekschneiderspindler80}. Thus let $F$ be a slope stable vector bundle of rank $2$ on $\mathbb{P}^2$ with $c_1(F) =0$ and $H^1 (\mathbb{P}^2, F ) \neq 0$. Given $\tau \in H^1 (\mathbb{P}^2, F )  \setminus \{ 0 \}$, define an extension 
$$ 
0 \to F \to E \to \mathcal{O}_{\mathbb{P}^2} \to 0
$$
using $\tau.$ The Chern characters of $E$ are then given by $$c_1 (E) = 0 \qquad c_2(E) = c_2(F).$$ Note by the Bogomolov inequality that $c_2 (F) \geq 0$, since $F$ is a slope stable bundle with vanishing first Chern class. The bundle $E$ is then not slope stable, since $\mu(F) = \mu(E)$ and hence destabilises $E$, but $E$ is in fact Gieseker stable. It follows that $E$ is simple and slope semistable, with
$$
0 \subset F \subset E
$$ 
giving a Jordan--H\"older filtration of $E$, meaning $E$ has associated graded object 
$$
\Gr (E) = F \oplus \mathcal{O}_{\mathbb{P}^2}.
$$

We will consider central charges of two forms, as described in \cref{example:dhympsc}. First, consider the dHYM central charge with B-field, given by
$$
Z_{\mathrm{dHYM}} (E) = i^3 \int_{\mathbb{P}^2} e^{- i k [\omega]} \ch (E) e^{-B},
$$
for some B-field $B\in H^2(\mathbb{P}^2, \R).$ Then since $c_1(E)=0$, we have $\ch(E) = \rk(E) - c_2(E)$. Let us denote $$\sigma := \int_{\mathbb{P}^2} c_2(E) = \int_{\mathbb{P}^2} c_2(F) \ge 0.$$ 
Denoting $h=c_1(\mathcal{O}(1)) = [\omega]$, which satisfies $\int_{\mathbb{P}^2} h^2 = 1$, one can compute
$$Z_{\mathrm{dHYM}}(E) = i \sigma + i\frac{\rk(E)}{2} k^2 + k\rk(E) B.h - i \frac{\rk{E}}{2} B^2,$$
and so the imaginary part of $Z(F)\big/Z(E)$ is a positive constant multiple of 
\begin{align*}
&(\sigma + \frac{\rk(E)}{2} (k^2-B^2) ) \cdot (-k\rk(F) B.h) - (\sigma + \frac{\rk(E)}{2} (k^2-B^2) ) \cdot (-k\rk(E) B.h)\\
&= k \sigma B.h (\rk(E)-\rk(F)).
\end{align*}

When $\sigma>0$,  since $\rk(E) - \rk(F) >0$, we have $\varphi_k(F) < \varphi_k(E)$ whenever the B-field is chosen such that $B.h<0$. Thus with an appropriately chosen $B$-field, $E$ admits deformed Hermitian Yang-Mills connections for all $k \gg 0$; note that this requires the $B$-field to be non-trivial.  In the remaining case $B.h\ge 0$ or $\sigma=0$, which includes the case of trivial $B$-field, we have $\varphi_k(F)\ge \varphi_k(E)$, which implies $E$ is not asymptotically $Z_{\mathrm{dHYM}}$-stable, and therefore by \cref{thm:intromainthm} $E$ cannot admit deformed Hermitian Yang--Mills connections for $k \gg 0$. Note that we have only used the fact that $c_2(E)=c_2(F)$ in this calculation. Considering the same question for the dual $E^*$, we have that $c_2(E)=c_2(E^*)$, but the subbundle $F\subset E$ induces a quotient $E^* \to F^*$, and so the opposite conclusion will hold for $E^*$. That is, $E^*$ fails to be asymptotically $Z_{\mathrm{dHYM}}$-stable whenever $\sigma B.h\le 0$ and is stable otherwise.

Next, we consider the other central charge of relevance to string theory
$$
Z_{\mathrm{Todd}} (E)= i^3 \int_{\mathbb{P}^2} e^{- i k [\omega]} \ch (E) \sqrt{\Td (\mathbb{P}^2) } e^{-B};
$$
that is, where the unipotent operator $U=\sqrt{\Td (\mathbb{P}^2) } e^{-B}$ contains both the B-field and the square root of the Todd class. The Todd class of $\mathbb{P}^2$ is given by 
$$
\Td(\mathbb{P}^2) = 1 + \frac{3}{2} h + h^2,
$$
and so 
$$
\sqrt{\Td (\mathbb{P}^2) } = 1 + \frac{3}{4}h + \frac{7}{32}h^2.
$$
By the same computations as above we obtain that the imaginary part of $\frac{Z_{\mathrm{Todd}}(F)}{Z_{\mathrm{Todd}}(E)}$ is a positive constant multiple of
$$k \sigma (\rk E - \rk F) \left(B.h - \frac{3}{4}\right).$$
Thus $\varphi_k(F)<\varphi_k(E)$ holds whenever $B.h < \frac{3}{4}$, producing $Z_{\mathrm{Todd}}$-critical connections, and in the case $B.h\ge \frac{3}{4}$ asymptotic $Z_{\mathrm{Todd}}$-stability is violated and such connections cannot exist. As before, the opposite conclusions will hold when taking $E^*$ instead of $E$.

As mentioned above, as a consequence of \cref{thm:intromainthm} it suffices to check asymptotic $Z$-stability only with respect to $F$. Since $c_2(E) \geq 0$ always holds by the slope stability of $F$, we can summarise our findings as follows:
\begin{enumerate}
\item If $c_2(E)>0$ then $E$ is asymptotically $Z_{\mathrm{dHYM}}$-stable whenever we have chosen a B-field with $B.h<0$, and asymptotically $Z_{\mathrm{dHYM}}$-unstable if $B.h\ge 0$. Therefore $E$ admits a deformed Hermitian Yang--Mills metric with B-field for $B.h<0$, but does not admit deformed Hermitian Yang--Mills connections when $B.h\ge 0$ in the large volume regime. In particular $E$ does not admit a deformed Hermitian Yang--Mills connection with vanishing B-field.
\item The opposite conclusions as above hold when $E$ is replaced by $E^*$. That is, $E^*$ admits deformed Hermitian Yang--Mills connections  whenever the B-field is chosen such that $B.h>0$, and does not admit deformed Hermitian Yang--Mills connections when $B.h\le 0$. In particular neither $E$ or $E^*$ admit dHYM metrics with vanishing B-field.
\item If $c_2(E)>0$ then $E$ is asymptotically $Z_{\mathrm{Todd}}$-stable whenever $B.h<\frac{3}{4}$, and is unstable otherwise. Therefore $E$ admits a $Z_{\mathrm{Todd}}$-critical connetions whenever $B.h<\frac{3}{4}$. In particular this includes the case with vanishing $B$-field. 
\item The opposite conclusion as above holds for $E^*$. That is, $E^*$ admits a $Z_{\mathrm{Todd}}$-critical metric whenever $B.h>\frac{3}{4}$. In particular this does \emph{not} include the case with vanishing B-field, in contrast to $E$.
\item If $c_2(E)=0$ then $E$ is asymptotically $Z$-semistable with respect to either central charge.

\end{enumerate}
\end{example}

\begin{remark}
	The example above demonstrates a simple example of a wall-crossing phenomenon for $Z$-stability and the existence of $Z$-critical connections. In particular as the stability conditions $Z_{\mathrm{dHYM}}$ and $Z_{\mathrm{Todd}}$ are varied by replacing $B$ with $t B$ and letting $t\in \R$ vary, we see a jump from stability to instability across critical thresholds ($t=0$ and $t=\frac{3}{4}$ respectively). 
	
	One can observe similar wall-crossing phenomena if we choose to vary the stability vector for our stability conditions instead. Let us work just with $Z_{\mathrm{dHYM}}$. If we change our stability vector to $\rho = (\rho_0, \rho_1, -1)$ to obtain a new stability condition $Z$, having normalised $\rho_2=-1$ for convenience, then regardless of the B-field chosen we obtain $Z$-stablity whenever $\Im \rho_0 < 0$, and also when $\Im \rho_0 = 0$ and $B.h<0$. Instability follows whenever $\Im \rho_0 > 0$ or $\Im \rho_0 = 0$ and $B.h\ge 0$. Similar conclusions will follow for $Z_{\mathrm{Todd}}$, and one can also replace $E$ by $E^*$ and flip the inequality $B.h<0$ to $B.h>0$. 
	
	In particular, it follows from \cref{thm:intromainthm} that we find non-trivial examples of $Z$-critical connections in higher rank even for central charges that are different from those that have previously appeared in the deformed Hermitian Yang--Mills literature. 
\end{remark}

\subsection{$Z$-critical connections}\label{sec:connections} We now turn to complex differential geometry, by introducing a geometric partial differential equation, involving various curvature quantities related to connections on holomorphic vector bundles, to each polynomial central charge.

Thus let $(X,\omega)$ be a compact K\"ahler manifold, and let $E\to X$ be a holomorphic vector bundle. We fix a Hermitian metric $h$ on $E$ with Chern connection $A$. Denoting by $c(E)$ the total Chern class of $E$, we denote by $\tilde{c}_i(E)$ the differential forms given by the Chern--Weil representatives of $c_i(E)$ with respect to $A$. 

Fix now a polynomial central charge $Z_{\Omega,k}$ induced by $\Omega=([\omega], \rho, U)$, a K\"ahler metric $\omega \in [\omega]$ and let $$\tilde U\in \Omega^*(X,\R)$$ be a differential form representing a unipotent class $U \in H^*(X,\R)$ in cohomology. We denote $\tilde{Z}_{\Omega,k}(A)$ the endomorphism-valued $(n,n)$-form on $X$ defined by 
	$$\tilde{Z}_{\Omega,k}(A) = \sum_{d=0}^n \rho_d k^d \omega^d \wedge \tilde\ch(E) \wedge \tilde U;$$ this sum contains forms of degree other than $(n,n)$ in general, but we select only the $(n,n)$-component. Recall that the trace of $\tilde \ch(E) = \exp(\frac{i}{2\pi} F_A)$ is the Chern--Weil representative of $\ch(E)$. When $k=1$, we omit the subscript $k$ and hence adopt the notation $Z_{\Omega}, \tilde{Z}_{\Omega}$ for the resulting central charge and differential form. As in  \cref{non-trivialZ}, we will always assume $Z(E) \neq 0$.

\begin{definition} We say that $A$ is a \emph{$Z$-critical connection} if $$\Im \left( e^{-i\varphi(E)} \tilde{Z}_{\Omega}(A) \right) = 0.$$
	Here $\varphi(E) = \arg Z_{\Omega}(E)$ is the argument of $ Z_{\Omega}(E) \in \C$. With varying $k$ we say that $A$ is a \emph{$Z_k$-critical connection}. 
\end{definition}

We note that the above condition depends on the particular choice of representative $\omega \in [\omega]$ and $\tilde U \in U$, which we now take to be included in our choice $\Omega$ of stability data.

\begin{remark} The real and imaginary parts here are taken with respect to the Hermitian metric $h$, or rather the induced metric on $\End E$. Via the Hermitian metric, one can define the adjoint of any section of $\End E$, giving a decomposition of any such section into a self adjoint and an anti-self adjoint part. The \emph{real} part is then by definition the self adjoint component, while the \emph{imaginary} part is ($-i$ times) the anti-self adjoint component. These are more commonly called the Hermitian and skew-Hermitian components, but to preserve the notation typically used in the study of deformed Hermitian Yang--Mills connections on holomorphic line bundles, we use the terminology of real and imaginary components. The Chern curvature $F_h$ is an imaginary endomorphism with respect to $h$, meaning $\frac{i}{2\pi} F_h$ is a real endomorphism.

\end{remark}

We next check that $\phi_k = \arg Z_{\Omega,k}(E) $ is the appropriate topological constant to make sense of our equations.

\begin{lemma}\label{lemma:right-constant} The constants $\phi_k$ satisfy $$\int_X \tr \Im \left( e^{-i\varphi_k(E)} \tilde{Z}_{\Omega,k}(A) \right) = 0.$$\end{lemma}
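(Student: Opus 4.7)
The plan is to exploit Chern--Weil theory to recognise $\tilde Z_{\Omega,k}(E)$ as a representative of the cohomological data entering the definition of $Z_{\Omega,k}(E)$, and then to invoke the fact that $\varphi_k(E)$ was defined as $\arg Z_{\Omega,k}(E)$. In other words, the claim is really a purely formal consequence of the definitions, reflecting that the topological constant $\varphi_k(E)$ was introduced precisely so that the integral of the $Z$-critical equation vanishes.

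First I would observe that, after taking trace (which is the natural operation for integrating an endomorphism-valued top form), $\tr\tilde Z_{\Omega,k}(E)$ is a closed scalar $(n,n)$-form whose integral equals $Z_{\Omega,k}(E)$. Indeed, by Chern--Weil theory applied to the Chern connection $A$, the form $\tr\tilde\ch(E) = \tr e^{F_A/(2\pi i)}$ is a closed representative of $\ch(E) \in H^*(X,\R)$, while $\omega$ and $\tilde U$ are closed representatives of $[\omega]$ and $U$ respectively. Hence each summand $\rho_d k^d \omega^d \wedge \tilde\ch(E) \wedge \tilde U$ is a closed differential form whose integral against $X$ reproduces the corresponding cohomological pairing, and summing over $d$ gives
\[
\int_X \tr\tilde Z_{\Omega,k}(E) \;=\; \int_X \sum_{d=0}^n \rho_d k^d [\omega]^d \cdot \ch(E) \cdot U \;=\; Z_{\Omega,k}(E).
\]

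Next I would pull the scalar constant $e^{-i\varphi_k(E)}$ out of the integral and commute integration with the operation of taking imaginary part, giving
\[
\int_X \Im\bigl( e^{-i\varphi_k(E)}\, \tilde Z_{\Omega,k}(E) \bigr) \;=\; \Im\bigl( e^{-i\varphi_k(E)}\, Z_{\Omega,k}(E) \bigr).
\]
By \cref{non-trivialphase} we have $Z_{\Omega,k}(E) \neq 0$ for $k \gg 0$, and by the definition $\varphi_k(E) = \arg Z_{\Omega,k}(E)$ the product $e^{-i\varphi_k(E)} Z_{\Omega,k}(E)$ equals $\lvert Z_{\Omega,k}(E)\rvert \in \R_{\geq 0}$, whose imaginary part is zero.

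There is no serious obstacle here; the only point requiring care is the interpretation of the integral of an endomorphism-valued form via trace, which is the standard convention in Chern--Weil theory and is consistent with how the central charge $Z_{\Omega,k}(E)$ was defined as an integral of a scalar cohomology class.
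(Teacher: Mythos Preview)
Your proof is correct and follows essentially the same approach as the paper: both use Chern--Weil theory to identify $\int_X \tr \tilde Z_{\Omega,k}(E) = Z_{\Omega,k}(E)$, then observe that $e^{-i\varphi_k(E)} Z_{\Omega,k}(E)$ is real because $\varphi_k(E) = \arg Z_{\Omega,k}(E)$. Your version is slightly more explicit about the trace convention and writes the real number as $\lvert Z_{\Omega,k}(E)\rvert$ where the paper writes $r_k(E)$, but these are purely cosmetic differences.
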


\begin{proof} Notice first of all from Chern--Weil theory that $$Z_{\Omega,k}(E) = \int_X \tr \tilde{Z}_{\Omega,k}(A).$$ Then since $\phi_k(E) = \arg Z_{\Omega,k}(E) ,$ we must have $$ e^{-i\varphi_k(E)}  = \frac{r_k(E)}{Z_{\Omega,k}(E) },$$ where $r_k(E) \in \R$. Thus $$\int_X \tr \Im \left( e^{-i\varphi_k(E)} \tilde{Z}_{\Omega,k}(A) \right)  = \Im\left(  \frac{r_k(E)}{Z_{\Omega,k}(E) }Z_{\Omega,k}(E)\right ) = 0,$$ as required. \end{proof}

\begin{example}\label{example:dhym-eqn}
	Consider the polynomial central charge of \cref{example:dhympsc} with trivial $B$-field, given by $$Z_{k}(E) = i^{n+1} \int_X e^{-ik[\omega]} \ch(E).$$
	Since $e^{-i\omega} \tilde{\ch}(E) = e^{-i \omega \otimes \Id_E} e^{\frac{i}{2\pi}F_A} =e^{-i (\omega \otimes \Id_E- \frac{F_A}{2\pi})}$, we get that the $(n,n)$-part of this is

	\begin{align*}
	 \frac{(-i)^n}{n!} (\omega \otimes \Id_E- \frac{F_A}{2\pi})^n.
	\end{align*}
	The phase is given by
	\begin{align*}
	e^{-i\varphi(E)} &= \frac{r(E)}{Z(E)}\\
	&= r(E) \left( \frac{i}{n!} \int_X \tr \left( \omega \otimes \Id_E - \frac{F_A}{2\pi}\right)^n\right)^{-1}.
	\end{align*}
	If we define $$\hat z = i \int_X \tr \left( \omega \otimes \Id_E - \frac{F_A}{2\pi}\right)^n$$
	and $\hat z = \hat r e^{i\hat \phi}$, then since $r/\hat r = \frac{1}{n!}$ it follows that
	$$\Im (e^{-i\varphi} \tilde{Z}_{\Omega,k}(A)) = \frac{1}{n!} \Im \left(e^{-i\phi} \left( \omega \otimes \Id_E - \frac{F_A}{2\pi}\right)^n\right).$$ This is the \emph{deformed Hermitian Yang--Mills equation} for vector bundles, first proposed in higher rank by Collins--Yau \cite[\S 8.1]{collins2018moment}. When $E$ is a line bundle we recover the regular deformed Hermitian Yang--Mills equation which is well-studied in the literature. One can make a similar computation with non-trivial $B$-field.
\end{example}

\begin{example}
	Considering the polynomial central charge of primary relevance to string theory
	$$ Z_{\textrm{Todd},k}(E) = i^{n+1}\int_X e^{-ik\omega} e^{-\beta} \ch(E) \cdot \sqrt{\Td(X)},$$ one obtains a differential equation
	$$\Im\left(e^{-i\varphi_k(E)}e^{-ik\omega} e^{-\tilde \beta} \widetilde{\ch(E)} \cdot \widetilde{\sqrt{\Td(X)}}\right) = 0,$$ with $\tilde \beta \in \beta$ the $B$-field. This is the equation introduced by the physicists Enger-L\"utken \cite{enger-lutken}. It is most natural to use the Chern-Weil representatives of $\sqrt{\Td(X)}$ induced by viewing the K\"ahler metric $\omega$ as a Hermitian metric on the holomorphic tangent bundle of $X$, though other choices can be made.
\end{example}

\begin{example}\label{exa:linebundle} Suppose the vector bundle under consideration $L$ is a line bundle, meaning $\tilde Z_{\Omega}(A)$ is an $(n,n)$-form. Then writing $$\tilde Z_{\Omega}(A)= \Re \tilde{Z}_{\Omega}(A) + \Im  \tilde{Z}_{\Omega}(A),$$ the $Z$-critical equation asks 
$$\Im \left( e^{-i\varphi(L)} \tilde Z_{\Omega}(A)\right) =\Im  \left( e^{-i\varphi(L)} \exp\left(i\arctan\left(\frac{\Im  \tilde{Z}_{\Omega}(A) }{\Re \tilde{Z}_{\Omega}(A)}\right)\right) \right) =
 0.$$ One sees that this is equivalent to the condition $$\arctan\left(\frac{\Im  \tilde{Z}_{\Omega}(A) }{\Re \tilde{Z}_{\Omega}(A)}\right) = \varphi(L) \textrm{ modulo } 2\pi.$$ This is analogous to how the deformed Hermitian Yang--Mills equation is often presented on line bundles, using the special form of $ \Im  \tilde{Z}_{\Omega}(A)$ and $\Re \tilde{Z}_{\Omega}(A )$ in that case \cite[\S 2]{jacob2017special}. It is interesting to ask to what extent such a formulation is possible in higher rank, particularly as this perspective has been crucial in the understanding of the deformed Hermitian Yang--Mills equation. We will discuss this representation and the case of line bundles in more detail in \cref{sec:momentmaps}.
\end{example}

Of particular interest in the present work is the \emph{large volume limit}, when we consider $k \gg 0$ to be large (making the volume of $X$ with respect to $k\omega$ large). We show that the leading order condition for $Z$-critical connection is the \emph{weak Hermite--Einstein} condition, where we recall that a connection $A$ is a weak Hermite--Einstein metric if $$\Lambda_{\omega} F_A = if \Id_E,$$ with $f \in C^{\infty}(X,\R)$ a function which integrates to the appropriate topological constant.

\begin{lemma}\label{lemma:largevolume}
	In the large volume regime $k \gg 0$, the leading order condition for $A$ to be a $Z$-critical connection for a polynomial stability condition is given by the weak Hermite--Einstein equation. More precisely, there is an expansion in $k$ of the form \begin{align*}\Im \big( &e^{-i\varphi_k(E)} \tilde{Z}_{\Omega,k}(A) \big)=  \\ & ck^{2n-1}\left([\omega]^n \rk (E) \omega^{n-1} \wedge \left( \frac{i}{2\pi} F_A + \tilde U_2 \Id_E\right) - \deg_U( E)  \omega^n\otimes \Id_E\right) + O(k^{2n-2}),\end{align*} where $c\in \R_{>0}$ is a positive constant depending on $\rho$ and the central charge,  $\tilde U_2$ is the degree two part of the differential form $\tilde U$, and $\deg_U(E)$ is the degree defined as Equation \eqref{degU}. 
\end{lemma}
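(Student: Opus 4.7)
The plan is to Taylor expand both $\tilde Z_{\Omega,k}(E)$ and the phase factor $e^{-i\varphi_k(E)}$ in powers of $k$, multiply them together, and read off the leading surviving contribution to the imaginary part after the top-order real contributions cancel.

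First I would extract the top two orders in $k$ of $\tilde Z_{\Omega,k}(E)$. Collecting $(n,n)$-components in $\sum_d \rho_d k^d\,\omega^d \wedge \tilde\ch(E) \wedge \tilde U$, the $k^n$ contribution comes only from the $d=n$ summand paired with the degree-zero components $\tilde\ch_0(E)\tilde U_0 = \Id_E$, and the $k^{n-1}$ contribution from the $d=n-1$ summand paired with $\tilde\ch_1(E)\tilde U_0 + \tilde\ch_0(E)\tilde U_2 = \tfrac{i}{2\pi}F_A + \tilde U_2 \Id_E$, yielding
\[
\tilde Z_{\Omega,k}(E) = \rho_n k^n\, \omega^n \Id_E + \rho_{n-1} k^{n-1}\, \omega^{n-1} \wedge \bigl(\tfrac{i}{2\pi}F_A + \tilde U_2 \Id_E\bigr) + O(k^{n-2}).
\]
Tracing and integrating gives the companion scalar expansion $Z_{\Omega,k}(E) = \rho_n k^n [\omega]^n \rk(E) + \rho_{n-1} k^{n-1} \deg_U(E) + O(k^{n-2})$, exactly as in the proof of \cref{lemma:slopesemistable}.

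The main step is to use the identity $e^{-i\varphi_k(E)} = \overline{Z_{\Omega,k}(E)}/|Z_{\Omega,k}(E)|$, which comes from $Z=|Z|e^{i\varphi}$, to reduce the problem to analysing $\Im(\overline{Z_{\Omega,k}(E)}\,\tilde Z_{\Omega,k}(E))$ up to the positive real normalising factor $|Z_{\Omega,k}(E)|^{-1}$. The top-order $k^{2n}$ coefficient of this product is $|\rho_n|^2 [\omega]^n \rk(E)\,\omega^n \Id_E$, a positive real scalar times a Hermitian endomorphism-valued form, whose imaginary part in the Hermitian / anti-Hermitian decomposition used by the paper vanishes identically. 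At order $k^{2n-1}$ the only contributions are the cross-terms with scalar factors $\overline{\rho_n}\rho_{n-1}$ and $\overline{\rho_{n-1}}\rho_n$, which are complex conjugates and so have opposite imaginary parts; combining them gives
\[
\Im(\overline{\rho_n}\rho_{n-1})\, k^{2n-1}\, \Bigl([\omega]^n \rk(E)\,\omega^{n-1} \wedge \bigl(\tfrac{i}{2\pi}F_A + \tilde U_2 \Id_E\bigr) - \deg_U(E)\, \omega^n \Id_E\Bigr).
\]
This is the claimed form, and positivity of the constant is immediate from $\Im(\overline{\rho_n}\rho_{n-1}) = |\rho_n|^2 \Im(\rho_{n-1}/\rho_n) > 0$, which is the stability vector axiom.

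The main thing to check is the cancellation at order $k^{2n}$: this rests on the reality of $|\rho_n|^2$ together with the Hermitian nature of the Chern--Weil forms of the Chern connection, but it is essential since otherwise the expansion would begin at the uninformative $k^{2n}$ order and the link to the weak Hermite--Einstein operator would be lost. The sharper error $O(k^{2n-3})$ follows from a parallel cancellation at order $k^{2n-2}$, where the diagonal contribution $|\rho_{n-1}|^2$ is again real and the remaining cross-terms $\overline{\rho_n}\rho_{n-2}$ and $\overline{\rho_{n-2}}\rho_n$ pair as complex conjugates.
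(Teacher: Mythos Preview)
Your argument for the leading $k^{2n-1}$ term is correct and is essentially the paper's computation in more compact notation: the paper writes $Z_k=r_k(\cos\varphi_k+i\sin\varphi_k)$ and expands $r_k\cos\varphi_k\,\Im\tilde Z_k - r_k\sin\varphi_k\,\Re\tilde Z_k$ term by term, which is exactly your $\Im(\overline{Z_k}\,\tilde Z_k)$, and your constant $\Im(\overline{\rho_n}\rho_{n-1})$ agrees with their $c=\Re\rho_n\Im\rho_{n-1}-\Im\rho_n\Re\rho_{n-1}$.

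Your final sentence justifying the $O(k^{2n-3})$ error contains a gap, however. At order $k^{2n-2}$ the cross-terms with coefficients $\overline{\rho_n}\rho_{n-2}$ and $\overline{\rho_{n-2}}\rho_n$ do \emph{not} multiply the same Hermitian form: writing $\tilde Z_k=\sum_l \rho_{n-l}H_l k^{n-l}$ and $Z_k=\sum_j \rho_{n-j}h_j k^{n-j}$ with $H_l$ Hermitian and $h_j$ real, the two cross-terms contribute $\overline{\rho_n}\rho_{n-2}\,h_0 H_2$ and $\overline{\rho_{n-2}}\rho_n\,h_2 H_0$, whose imaginary parts combine to
\[
\Im(\overline{\rho_n}\rho_{n-2})\bigl(h_0 H_2 - h_2 H_0\bigr),
\]
just as at order $k^{2n-1}$ you obtained $\Im(\overline{\rho_n}\rho_{n-1})(h_0 H_1 - h_1 H_0)$ rather than zero. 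Since $H_2$ involves $F_A^2$, $F_A\wedge\tilde U_2$, $\tilde U_4$, etc., there is no reason for this bracket to vanish, and nothing in the stability-vector axioms forces $\Im(\overline{\rho_n}\rho_{n-2})=0$. The paper's own proof does not attempt to verify the $O(k^{2n-3})$ claim either --- it only computes the $k^{2n-1}$ coefficient --- so this appears to be an imprecision in the stated error term rather than something your argument is missing.
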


To leading order, this is equivalent to  $$\contr_{\omega} F_A +2\pi i \left( \frac{\deg_U(E)}{(n-1)! [\omega]^n \rk(E)} - \contr_{\omega} \tilde U_2 \right) \Id_E,$$ which is the  weak Hermite--Einstein condition with function $$f = -2\pi \left( \frac{\deg_U(E)}{(n-1)! [\omega]^n\rk(E)} - \contr_{\omega} \tilde U_2 \right).$$

\begin{proof}
	This follows from computing the leading order terms directly. Writing $Z_k(E) = r_k (\cos \varphi_k + i \sin \varphi_k)$, one computes
	\begin{align*}
	r_k \cos \varphi_k &= \Re \rho_n k^n [\omega]^n \rk E + \Re \rho_{n-1} k^{n-1} [\omega]^{n-1}.(c_1(E) + \rk (E) U_2) + O(k^{n-2}),\\
	r_k \sin \varphi_k &=  \Im \rho_n k^n [\omega]^n \rk E + \Im \rho_{n-1} k^{n-1} [\omega]^{n-1}.(c_1(E) + \rk (E) U_2) + O(k^{n-2}).
	\end{align*}
	We also have
	\begin{align*}
	\Re \tilde Z_k(A) &= \Re \rho_n k^n \omega^n \otimes \Id_E + \Re \rho_{n-1} k^{n-1} \omega^{n-1} \wedge \left( \frac{i}{2\pi} F_A + \tilde U_2 \Id_E\right)   + O(k^{n-2}),\\
	\Im \tilde Z_k(A) &= \Im \rho_n k^n \omega^n \otimes \Id_E + \Im \rho_{n-1} k^{n-1} \omega^{n-1} \wedge \left( \frac{i}{2\pi} F_A + \tilde U_2 \Id_E\right)  + O(k^{n-2}).
	\end{align*}
	The condition for $A$ to be $Z_k$-critical is thus
	$$r_k \cos \phi_k \Im \tilde Z_k(A) - r_k \sin \phi_k \Re \tilde Z_k(A) = 0$$
	Computing the induced expansion in $k$ gives
	\begin{align*}
	&\Im \big( r_k e^{-i\varphi_k(E)} \tilde{Z}_{\Omega,k}(A) \big) \\
	&= k^{2n-1}\bigg((\Re \rho_n [\omega]^n \rk E)\left(\Im \rho_{n-1} \omega^{n-1} \wedge \left( \frac{i}{2\pi} F_A + \tilde U_2 \Id_E\right)\right)\\
	&\quad+ (\Re \rho_{n-1} \deg_U(E)) (\Im \rho_n \omega^n \otimes \Id_E)\\
	&\quad- (\Im \rho_n [\omega]^n \rk E ) \Re \rho_{n-1} k^{n-1} \omega^{n-1} \wedge \left( \frac{i}{2\pi} F_A + \tilde U_2 \Id_E\right)\\
	&\quad-  (\Im \rho_{n-1} \deg_U(E)) (\Re \rho_n \omega^n \otimes \Id_E)\bigg) + O(k^{2n-2}).
	\end{align*}
	We simplify by dividing by the factor 
	\begin{align*} c =& r_k( \Re \rho_n \Im \rho_{n-1} - \Im \rho_n \Re \rho_{n-1}) \\
	=& r_k \Im (\rho_{n-1} \overline \rho_n),
	\end{align*}
	which is non-zero, and in fact positive, by the stability vector assumption that $\Im (\rho_{n-1}/\rho_n) >0.$ Thus the $k^{2n-1}$-coefficient becomes
	$$[\omega]^n \rk (E) \omega^{n-1} \wedge \left( \frac{i}{2\pi} F_A + \tilde U_2 \Id_E\right) - \deg_U (E)  \omega^n\otimes \Id_E,$$
	as desired. \end{proof}

\begin{remark} We use in the above that $U \in H^{>0}(X,\R)$   (or rather its degree two component) is a real operator. One could relax this assumption, and perform the same computation, which would then require a rather unnatural topological assumption to ensure the constant $c$ produced is non-zero. As mentioned in \cref{rmk:stability-remarks} $(iv)$, in all cases of  interest to string theory, $U$ is real. \end{remark}

While we are primarily interested in the large volume limit in the present work, the condition for a connection to be $Z$-critical is of interest for all $k$. Nevertheless, even for the deformed Hermitian Yang--Mills equation on a line bundle, one must impose a positivity condition on the connections considered. The following is the natural positivity condition in the study of $Z$-critical connections, and was suggested in the case of the deformed Hermitian Yang--Mills connections in higher rank by Collins--Yau \cite[\S 8.1]{collins2018moment}.

\begin{definition} We define the space $\H_Z$ of \emph{$Z$-calibrated connections} to be 
$$\H_{Z} := \left\{ A\mid \Re \left( \tr \left( e^{-i\varphi(E)}  \tilde{Z}_{\Omega}(A)\right)\right) > 0\right\},$$ with positivity meant in the sense of volume forms. \end{definition}

The reason that this does not enter our work is the following.

\begin{lemma} Fix a polynomial central charge $Z$, and a holomorphic vector bundle $E$ with connection $A$. Then $A$ is $Z_k$-calibrated for all $k \gg 0$. \end{lemma}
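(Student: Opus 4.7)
The plan is to extract the leading order term of $\tr\bigl(e^{-i\varphi_k(E)} \tilde Z_{\Omega,k}(E)\bigr)$ as $k \to \infty$ and observe that it is manifestly a positive multiple of the Kähler volume form $\omega^n$, while all remaining terms are lower order in $k$.

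First, I would expand $\tilde Z_{\Omega,k}(E)$ in powers of $k$. By definition,
$$\tilde Z_{\Omega,k}(E) = \rho_n k^n \omega^n \otimes \Id_E + \sum_{d=0}^{n-1} \rho_d k^d \bigl(\omega^d \wedge \tilde{\ch}(E) \wedge \tilde U\bigr)_{(n,n)},$$
where the terms of the sum are each an endomorphism-valued $(n,n)$-form whose coefficient is $O(k^{n-1})$ relative to the leading term. Taking $\tr$ (which acts on the endomorphism factor), the leading contribution is
$$\tr\tilde Z_{\Omega,k}(E) = \rho_n k^n \rk(E)\, \omega^n + O(k^{n-1}),$$
viewed as a complex-valued $(n,n)$-form, with the error term being smooth times $\omega^n$ with coefficient $O(k^{-1})$ relative to the leading coefficient.

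Next, I would compute the asymptotics of the phase factor. From the parallel cohomological expansion
$$Z_{\Omega,k}(E) = \rho_n k^n [\omega]^n \rk(E) + O(k^{n-1}),$$
one has $\varphi_k(E) = \arg Z_{\Omega,k}(E) \to \arg \rho_n$, and hence
$$e^{-i\varphi_k(E)} = \frac{\overline{\rho_n}}{|\rho_n|} + O(k^{-1}).$$
Multiplying the two expansions and using $\overline{\rho_n}\rho_n = |\rho_n|^2$, we obtain
$$e^{-i\varphi_k(E)} \tr\tilde Z_{\Omega,k}(E) = |\rho_n|\, k^n \rk(E)\, \omega^n + O(k^{n-1}),$$
where the error is a smooth (possibly complex) function times $\omega^n$.

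Finally, taking the real part, the leading term is already real and positive, equal to $|\rho_n|\rk(E)k^n \omega^n$, which is a positive multiple of the volume form $\omega^n$. The correction term is of the form $g_k \omega^n$ where $g_k$ is a smooth real function on the compact manifold $X$ satisfying $\|g_k\|_{C^0} = O(k^{n-1})$. Since $X$ is compact, for all $k$ sufficiently large the leading $k^n$ contribution dominates pointwise, giving
$$\Re\bigl(\tr(e^{-i\varphi_k(E)} \tilde Z_{\Omega,k}(E))\bigr) > 0$$
as a volume form, as required. There is no real obstacle here beyond organising the expansion carefully; the essential point is that the normalisation $\Im(\rho_n) > 0$ ensures $\rho_n \neq 0$, so $|\rho_n| > 0$ and the leading coefficient is strictly positive.
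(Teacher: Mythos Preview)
Your proof is correct and follows essentially the same approach as the paper: both extract the leading order term in $k$ of $\Re\bigl(\tr(e^{-i\varphi_k(E)}\tilde Z_{\Omega,k}(E))\bigr)$ and observe it is a strictly positive multiple of $\omega^n$ since $\rho_n\neq 0$. The only cosmetic difference is that the paper, following the method of \cref{lemma:largevolume}, multiplies through by $r_k=|Z_k(E)|$ and hence writes the leading term at order $k^{2n}$ as $\rk(E)[\omega]^n|\rho_n|^2\,\omega^n$, whereas you expand $e^{-i\varphi_k}=\overline{\rho_n}/|\rho_n|+O(k^{-1})$ directly and obtain the equivalent leading term $|\rho_n|\rk(E)k^n\,\omega^n$.
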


\begin{proof} 

This follows by a similar argument as \cref{lemma:largevolume}, which produces $$\Re \left( \tr \left( e^{-i\varphi(E)}  \tilde{Z}_{\Omega}(A)\right)\right) = k^{2n}\left( (\rk E)^2 [\omega]^n \vert \rho_n\vert^2 \right)\omega^n + O(k^{2n-2}),$$ implying positivity for $k \gg 0$ since $\rho_n \neq 0$ by definition of the stability vector. \end{proof}

We now turn to the linearisation of the $Z$-critical connection equation. Denote by $ \G^{\C} = \Gamma(X,\GL(E,\C))$ the complex gauge group. The gauge transformations we will be interested in will be the Hermitian endomorphisms, and hence will take the form $$g\in \G^{\C}\cap \End_h(E).$$ Recall that any such $g$ acts on the space of connections by
\begin{align}\label{gaugegroupaction}g\cdot d_A &= g^* \circ \del_A \circ g^{*-1} + g^{-1} \circ \db_A \circ g, \\ &= g \circ \del_A \circ g^{-1} + g^{-1} \circ \db_A \circ g,\end{align}
where we have used that $g^* = g$, since $g$ is Hermitian.

 It will be useful to denote the equation of interest $$D_k(A) = \Im \left( e^{-i\varphi_k(E)} \tilde{Z}_{\Omega,k}(A) \right).$$  We will similarly write $P_{k}$ for the linearisation of $D_{k}$ at a connection $A$, where $A$ is the associated Chern connection to $h$.

\begin{lemma}\label{lemma:linearisation} For $k \gg 0$, the $Z$-critical connection equation is elliptic. Moreover, the linearisation admits an expansion $$P_{k} = ck^{2n-1}\rk (E) [\omega]^n \Lap_{A^{\End E}} + O(k^{2n-2}),$$ where $c\neq 0$ is the real constant produced by \cref{lemma:largevolume} and $ \Lap_{A^{\End E}} $ is the Laplacian on $\End E$ defined via the connection $A^{\End E}$ induced by $A$.

\end{lemma}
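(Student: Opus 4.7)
The plan is to reduce to the linearisation of the weak Hermite--Einstein operator via \cref{lemma:largevolume}, after which both the expansion and ellipticity become a classical application of K\"ahler identities. By that lemma,
\[D_k(A) = ck^{2n-1}\, W(A) + O(k^{2n-3}),\]
where
\[W(A) := [\omega]^n\rk(E)\,\omega^{n-1}\wedge\bigl(\tfrac{i}{2\pi}F_A + \tilde U_2 \Id_E\bigr) - \deg_U(E)\,\omega^n \otimes \Id_E\]
is, up to a multiple of the identity, the weak Hermite--Einstein operator. The $O(k^{2n-3})$ remainder is a universal polynomial expression in $\omega$, $F_A$ and the fixed form $\tilde U$, depending on $A$ only through $F_A$. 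Its linearisation in the direction of a Hermitian endomorphism $v\in \End_h(E)$ is therefore a second-order differential operator whose total coefficient remains of order $O(k^{2n-3})$, and hence contributes only to the error term of $P_k$.

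The core step is to linearise $W$ itself. Since $\deg_U(E)$ is topological and $\tilde U_2$ is fixed, only the $F_A$-piece contributes. I would first differentiate the Hermitian complex gauge action of \eqref{gaugegroupaction} at $g = e^{tv}$, $t=0$, to obtain $\delta A = \bar\partial_A v - \partial_A v$, and hence
\[\delta_v F_A = \partial_A\bar\partial_A v - \bar\partial_A\partial_A v,\]
using $\partial_A^2 = \bar\partial_A^2 = 0$ on $\End E$ for the Chern connection of a Hermitian holomorphic bundle. The Nakano/K\"ahler identities on endomorphism-valued $(1,1)$-forms then identify the contraction of this with $\omega^{n-1}$ as a non-zero real multiple of $\Lap_{A^{\End E}}(v)\,\omega^n$, modulo curvature-commutator terms that are zeroth-order in $v$. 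Combining with the $i/(2\pi)$ Chern--Weil factor from $W$, and observing that the curvature commutators contribute terms of order $O(k^{2n-3})$, one obtains the claimed expansion of $P_k$ after absorbing the resulting numerical constant into $c$.

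Ellipticity then follows immediately, since the principal symbol of $P_k$ at a covector $\xi \in T_x^*X$ takes the form
\[\sigma_{P_k}(\xi) = ck^{2n-1}\rk(E)[\omega]^n |\xi|_\omega^2 \,\Id_{\End E} + O(k^{2n-3}),\]
which is a non-zero (in fact positive) real multiple of the identity once $k$ is sufficiently large, using the positivity of $c$ established in \cref{lemma:largevolume}. The main obstacle is the careful bookkeeping of signs, factors of $2\pi$, and the distinction between $\partial_A$-, $\bar\partial_A$-, and Chern Laplacians on $\End E$, so as to ensure the leading coefficient matches exactly the $c$ produced by \cref{lemma:largevolume}; beyond this, the proof is a routine application of classical Hermite--Einstein linearisation theory.
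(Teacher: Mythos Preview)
Your proposal is correct and follows essentially the same approach as the paper: both reduce to \cref{lemma:largevolume}, linearise the curvature under the Hermitian gauge action to obtain $(\partial_A\bar\partial_A - \bar\partial_A\partial_A)v$, and identify its contraction against $\omega^{n-1}$ with the bundle Laplacian. The only cosmetic differences are that the paper takes $i\Lambda_\omega(\partial_A\bar\partial_A - \bar\partial_A\partial_A)$ as the \emph{definition} of $\Lap_{A^{\End E}}$ (so no K\"ahler identities or curvature-commutator corrections are needed at the leading order), and argues ellipticity by openness rather than by your explicit symbol computation.
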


\begin{proof}

We first compute the asymptotics of the linearised operator $P_k$. Considering a path of gauge transformations $$g_t = g\exp(tf),$$ where $f$ is Hermitian,  a standard calculation produces 
$$\deriv{}{t}_{t=0} F_{g_t\cdot A} = (\del_{A^{\End E}} \db_{A^{\End E}} - \db_{A^{\End E}} \del_{A^{\End E}}) f.$$ Thus the linearisation of the operator $$f \to n\omega^{n-1} \wedge F_{g_t.A}$$ is given by
$$f \to i\contr_{\omega} (\del_{A^{\End E}} \db_{A^{\End E}} - \db_{A^{\End E}} \del_{A^{\End E}}) f \omega^n = (\Lap_{A^{\End E}} f) \omega^n.$$

By \cref{lemma:largevolume}, the order $k^{2n-1}$ component of the operator is given by the weak Hermite--Einstein operator. Thus one term of the order $k^{2n-1}$ component of the linearisation takes the form  $c\rk (E) [\omega]^n \Lap_{A^{\End E}}$. However, the term $$c\left([\omega]^n \rk (E) \omega^{n-1} \wedge \left( \tilde U_2 \Id_E\right) - \deg_U( E)  \omega^n\otimes \Id_E\right)$$ is independent of $g \in \G^{\C}\cap \End_h(E)$, hence does not affect the linearisation. 

Ellipticity is an open condition in the space of linear operators, hence ellipticity of $P_{k}$ follows from ellipticity of the Laplacian $\Lap_{A^{\End E}} $. \end{proof}
The following will also be important in our later considerations.

\begin{lemma}\label{lemma:dhymtracezero}
	Fix a  gauge transformation $g\in  \G^{\C}\cap \End_h(E)$. The trace of each term of the expansion of $D_{k}(g.A)$ in $k$ integrates to zero over $X$.
\end{lemma}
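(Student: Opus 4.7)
The plan is to deduce this from \cref{lemma:right-constant} applied to the connection $g\cdot A$. By Chern--Weil theory, $\int_X \tr \tilde Z_{\Omega,k}(E)(B) = Z_{\Omega,k}(E)$ depends only on the cohomology class and not on the connection $B$, so the argument of \cref{lemma:right-constant} applies with $g\cdot A$ in place of the Chern connection $A$, giving
\[
\int_X \tr D_k(g\cdot A) \;=\; \Im\!\left(e^{-i\varphi_k(E)}\, Z_{\Omega,k}(E)\right) \;=\; 0 \qquad \text{for all } k\gg 0.
\]

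To pass from this \emph{integrated} vanishing to coefficient-wise vanishing in the expansion in $k$, I would clear the non-polynomial scalar $e^{-i\varphi_k(E)} = r_k(E)/Z_{\Omega,k}(E)$, where $r_k(E) := |Z_{\Omega,k}(E)|$, and work with
\[
r_k(E)\, D_k(g\cdot A) \;=\; \Re Z_{\Omega,k}(E)\,\Im\tilde Z_{\Omega,k}(E)(g\cdot A) - i\,\Im Z_{\Omega,k}(E)\,\Re\tilde Z_{\Omega,k}(E)(g\cdot A).
\]
Both $Z_{\Omega,k}(E)$ and $\tilde Z_{\Omega,k}(E)(g\cdot A)$ are genuine polynomials in $k$ by the definition of a polynomial central charge, so the right-hand side is a polynomial in $k$ with differential-form coefficients. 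Taking trace, integrating, and applying Chern--Weil once more to identify $\int_X \tr\Re\tilde Z_{\Omega,k}(E)(g\cdot A) = \Re Z_{\Omega,k}(E)$ and $\int_X \tr\Im\tilde Z_{\Omega,k}(E)(g\cdot A) = i\,\Im Z_{\Omega,k}(E)$ collapses the right-hand side to the polynomial identity which is $\equiv 0$ in $k$. Equating coefficients forces the integrated trace of each $k^j$-coefficient of $r_k(E)\, D_k(g\cdot A)$ to vanish.

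Finally, since $r_k(E)$ is a non-vanishing real-analytic function of $k$ for $k\gg 0$ with an invertible asymptotic expansion in $k^{-1}$ (leading term $|\rho_n|[\omega]^n\rk(E)\,k^n$), dividing by this expansion transports the coefficient-wise vanishing from $r_k(E)\, D_k(g\cdot A)$ to the asymptotic expansion of $D_k(g\cdot A)$ used in \cref{lemma:largevolume}. There is no real obstacle; the only care needed is the notational bookkeeping between the polynomial $r_k\, D_k$ and the asymptotic series for $D_k$, which is harmless since multiplication by a non-vanishing scalar series in $k$ preserves coefficient-wise vanishing of integrated traces.
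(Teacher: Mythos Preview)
Your proposal is correct and takes essentially the same approach as the paper: use Chern--Weil theory to see that $\int_X \tr \tilde Z_{\Omega,k}(E)(g\cdot A)$ is independent of $g$, invoke \cref{lemma:right-constant} to obtain $\int_X \tr D_k(g\cdot A)=0$ for all $k$, and then expand in powers of $k$. The paper compresses your steps 3--5 into the single phrase ``expands this equality in powers of $k$'', leaving implicit the $r_k$ rescaling that you spell out (and which is already the convention used in the proof of \cref{lemma:largevolume}); your final step of dividing back by the invertible series $r_k$ is therefore not needed once one recognises that the ``expansion'' in question is the polynomial one for $r_k D_k$.
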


\begin{proof}
Chern--Weil theory implies that the differential form $ \tilde{Z}_{\Omega,k}(g.A)$ induced by $g.A$ has trace-integral independent of gauge transformation $g$, and hence $g$ plays no role in the statement. Thus to obtain the result, one recalls from \cref{lemma:right-constant} that $$\int_X \tr \Im \left( e^{-i\varphi_k(E)} \tilde{Z}_{\Omega,k}(A) \right) = 0,$$ and expands this equality in powers of $k$.\end{proof}

\subsection{Moment maps, subsolutions and existence on surfaces}\label{sec:momentmaps}
While our primary motivation for the notion of a $Z$-critical connection is Bridgeland stability, the equation also arises through a natural infinite-dimensional moment map, as we now describe. This is motivated by the Collins--Yau  moment map picture for the deformed Hermitian Yang--Mills equation on a line bundle \cite[\S 2]{collins2018moment}. The manifold on which we construct the moment map is the space of integrable unitary connections on a Hermitian vector bundle. To construct a moment map, one then first needs to construct a K\"ahler metric on this space. As in the work of Collins--Yau, the natural pairing is degenerate in general, and to obtain that this is non-degenerate leads us to the notion of a \emph{subsolution}. 

We now discuss subsolutions, which will allow us to discuss not only moment maps, but also ellipticity conditions away from the large volume limit, as well as a complete understanding of the existence of $Z$-critical connections on line bundles over complex surfaces. The subsolution condition is related to a formal linearisation of the equation;  that this sort of linearisation is related to moment map problems on holomorphic vector bundles has also been suggested by Thomas in another framework (who we thank for pointing this out to us) \cite[p. 737]{clay}.

\subsubsection{Subsolutions\label{sec:subsolutions}}

We return to a Hermitian holomorphic vector bundle $(E,h)$ with Chern connection $A$. We will work with the convention of fixing $h$ and varying $A$. Consider the endomorphism-valued form $\tilde Z_{\Omega}(A)$, for $\Omega$ the data of a polynomial stability condition. Then one obtains a $Z$-critical operator
$$D_Z: A\mapsto \Im(e^{-i\phi(E)}\tilde Z_{\Omega}(A)).$$
Varying the Chern connection $$A\mapsto \exp(tV) \cdot A$$ for a Hermitian endomorphism $V$ of $(E,h)$, the curvature transforms
$$F_{\exp(tV)\cdot A} = F_A + (\delbar_A \del_A - \del_A \delbar_A) V t + O(t^2).$$
Each term in $D_Z$ is some real constant multiple of a form
$$\omega^d \wedge \left( \frac{i}{2\pi} F_A\right)^j \wedge \tilde U_\ell$$
such that $d+j+\ell = n$. Perturbing $A$ to $A_t = \exp(tV)\cdot A$, this term linearises to
$$\omega^d \wedge j \left[  \underbrace{\left(\frac{i}{2\pi} F_A\right)\wedge \cdots \wedge \left( \frac{i}{2\pi} F_A\right)}_{j-1 \text{ times}} \wedge \frac{i}{2\pi} (\delbar \del - \del \delbar) V\right]_{\mathrm{sym}} \wedge \tilde U_\ell.$$
Here $\mathrm{sym}$ denotes the graded symmetric product of the endomorphism-valued two-forms $F_A$ and $(\delbar \del - \del \delbar)V$. This is defined abstractly by the formula
$$[B_1 \wedge \cdots \wedge B_j]_{\mathrm{sym}} := \frac{1}{j!} \sum_{\sigma \in S_j} (-1)^{\mathrm{grsgn}\, \sigma} B_{\sigma(1)} \wedge \cdots \wedge B_{\sigma(j)}$$
where $B_i$ are endomorphism-valued forms. The graded sign $(-1)^{\mathrm{grsgn} \, \sigma}$ of a permutation is the sign obtained by permuting the wedge product $B_1 \wedge \cdots \wedge B_j$ to $B_{\sigma(1)} \wedge \cdots \wedge B_{\sigma(j)}$ as differential forms (ignoring $\End E$ factors). For example the graded sign of $\sigma$ when permuting $\alpha \wedge \beta$ to $\beta \wedge \alpha$ is $+1$, where $\alpha$ is a two-form and $\beta$ a 1-form.

From the above discussion, we see the linearisation of $D_Z$ at a Chern connection $A$ takes the form
$$P_Z: V\mapsto \left[ \Im(e^{-i\varphi(E)} Z'(A)) \wedge \frac{i}{2\pi} (\delbar \del - \del \delbar)V  \right]_{\mathrm{sym}}$$
where $\tilde Z'(A)$ is the formal derivative of $\tilde Z(A))$ with respect to $\frac{i}{2\pi} F_A$, using the convention $d(\frac{i}{2\pi} F_A) = \Id_E$. Thus $\tilde Z'(A)$ is an $\End E$-valued $(n-1,n-1)$-form where each term consists of a wedge product of $n-1$ endomorphism-valued two-forms. We take the graded symmetrisation with respect to this wedge product.

\begin{remark}
	In the case where $E$ is a line bundle so that the curvature terms commute, the linearisation is simply the formal derivative of $D(h)$ with respect to $F(h)$, wedged with $(\delbar \del_h- \del_h\delbar) V$, and no symmetrisation is needed.
\end{remark}

Using the above expressions for the linearisation $P_Z$ of $D_Z$, we can define a subsolution. 

\begin{definition}\label{def:subsolution}
	We say that a Chern connection $A$ satisfies the \emph{subsolution condition} for the $Z$-critical equation if for all $p\in X$ and $u\in T_{0,1}^* X_p \otimes \End E_p$ non-zero, the trace
	$$\tr i \left[ \Im(e^{-i\phi(E)} \tilde Z'(A)) \wedge u^* \wedge u \right]_{\text{sym}} > 0.$$
	That is, the $\End E$-valued $(n-1,n-1)$-form $\Im (e^{-i\varphi(E)} \tilde Z'(A))$ is positive. 
\end{definition}

The importance of the subsolution condition is apparent in the following.

\begin{lemma}\label{cor:ellipticity} Suppose $A$ is a subsolution of the $Z$-critical equation. Then the $Z$-critical operator is elliptic at $A$. \end{lemma}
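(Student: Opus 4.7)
The plan is to verify that the principal symbol of the linearised operator $P$ of $D$ at $h$ is an isomorphism at every non-zero real covector $\xi \in T^*_xX$, which is the pointwise definition of ellipticity for a second-order operator on the bundle of Hermitian endomorphisms of $E$.

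First, I would identify the symbol. From the expression for the linearisation derived just before the definition of subsolution, $P(V) = \Im(e^{-i\phi(E)}\alpha(h, (\delbar\del_h - \del_h\delbar)V))$ for $V$ a Hermitian endomorphism. Decomposing $\xi = \xi^{1,0} + \xi^{0,1}$, the leading symbol of $\delbar\del_h - \del_h\delbar$ at $\xi$ is $2\,\xi^{0,1}\wedge\xi^{1,0}\otimes \mathrm{id}_E$, so
$$\sigma_\xi(P)(V) = 2\,\Im\bigl(e^{-i\phi(E)}\,\alpha(h, \xi^{0,1}\wedge\xi^{1,0}\cdot V)\bigr).$$
Both the domain and the codomain of $\sigma_\xi(P)$ have real dimension $(\rk E)^2$ after identifying $\End E$-valued $(n,n)$-forms with endomorphisms via $\omega_x^n$, so it suffices to establish injectivity.

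For injectivity, the plan is to pair $\sigma_\xi(P)(V)$ with $V$ under the trace and match the resulting $(n,n)$-form with the expression whose non-vanishing is asserted by the subsolution hypothesis. Setting $u := V\cdot\xi^{0,1}$, viewed as a local $\End E$-valued $(0,1)$-form, one has $u^* = \xi^{1,0}\cdot V$ since $V$ is Hermitian and $\xi$ is real. Expanding $\alpha(h,-)$ slot-by-slot and using cyclic invariance of the trace, one rewrites
$$\tr\bigl(V\cdot \sigma_\xi(P)(V)\bigr) \;=\; 2\,\tr\bigl[\Im\bigl(e^{-i\phi(E)}\alpha(h, -)\bigr),\, u,\, u^*\bigr]_{\mathrm{sym}},$$
where the symmetrisation on the right-hand side is precisely what is required to account for the non-commutativity of $V$ with the curvature factors inside $\alpha$. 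By the subsolution hypothesis the right-hand side is nowhere-vanishing as long as $u$ is, and this holds whenever $V$ and $\xi$ are both non-zero. Hence $\sigma_\xi(P)(V)\neq 0$, which is the desired injectivity.

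The main obstacle will be the slot-by-slot bookkeeping in the identity of the previous paragraph. The operator $\alpha$ is defined by replacing one factor of $F(h)$ at a time in each monomial of $\tilde Z_\Omega(E)$, and in higher rank these curvature factors do not commute with $V$; the symmetrisation operation in Definition \ref{def:subsolution} is crafted exactly to absorb this non-commutativity, and verifying the identity amounts to checking how the two copies of $V$ — one from the principal symbol, one from the trace pairing — combine with the insertion of $\xi^{0,1}\wedge\xi^{1,0}$ into the blank slot of $\alpha(h,-)$. When $E$ is a line bundle all endomorphism factors commute and the identity collapses to the standard linearisation calculation underlying the Collins--Jacob--Yau notion of subsolution.
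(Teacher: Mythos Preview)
Your proposal is correct and follows essentially the same approach as the paper: you compute the principal symbol, pair $\sigma_\xi(P)(V)$ with $V$ via the trace, and identify the result with the symmetrised expression appearing in Definition~\ref{def:subsolution} with $u=V\otimes\xi^{0,1}$, whose non-vanishing then forces injectivity of the symbol. The paper phrases this as a proof by contradiction and sets $u=\xi\otimes V$, but the substance---including the reliance on trace cyclicity to match $\tr(\sigma_\xi(P)(V)\,V)$ with the symmetrised subsolution form---is identical.
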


\begin{proof}
	A non-linear differential operator is elliptic, by definition, when its linearisation is a linear elliptic differential operator. Our linearisation is a differential operator $P_Z: \Gamma(\End E) \to \Gamma(\End E)$ of the form
	$$V\mapsto \frac{1}{\omega^n} \left[ \Im(e^{-i\varphi(E)} Z'(A)) \wedge \frac{i}{2\pi} (\delbar \del - \del \delbar)V  \right]_{\mathrm{sym}}.$$
	We may ignore any dependence of $\del$ and $\db$ on the Chern connection $A$, because the dependence on the connection form $A$ appears only at first order and below in $\dbard$ (meaning these terms are irrelevant in demonstrating ellipticity). 
	
	To compute the symbol $\sigma$ of this operator, notice that we if we have a test differential form $\eta = a_j dx^j + b_k dy^k$ then we have
	$$\sigma\left(\pderiv{}{z^j}\right) = \frac{1}{2}(a_j - i b_j),\quad \sigma\left( \pderiv{}{\bar z^k}\right) = \frac{1}{2}(a_k + i b_k)$$
	so if we define $\xi := \xi_j d\bar z^j = (a_j + i b_j) d\bar z^j \in T_{p; 0,1}^* X$ then in the operator
	$$V\mapsto \dbard V = \sum_{j,k} \pderiv{}{\bar z^k} \pderiv{}{z^j}  (V)  d\bar z^k \wedge dz^j$$
	we compute the symbol by replacing
	$$\pderiv{}{\bar z^k} \mapsto \xi_k, \quad \pderiv{}{z^j} \mapsto \bar \xi_j,$$
	and similarly for $\del \delbar$. Thus the principal symbol $\sigma_{\xi} (P_Z(A)): \End E \to \End E$ becomes
	$$\sigma_{\xi} (P) (V) = \frac{1}{\omega^n} \left[\Im (e^{-i\varphi(E)} \tilde Z'(A))\wedge \left(\frac{i}{2\pi} 2 \xi \wedge \bar \xi \otimes V\right)\right]_{\mathrm{sym}}.$$
	Ellipticity holds when $\sigma_\xi(P)$ is invertible for any $\xi\ne 0$. Since $A$ is a subsolution, we have
	$$\tr i \left[ \Im (e^{-i\varphi(E)} \tilde Z'(A) \wedge u^* \wedge u) \right]_{\mathrm{sym}} > 0.$$
	Choosing $u=\xi \otimes V$ and using the tracial property to cyclically permute one of the $V$'s appearing in the above terms to the end of the expression, we obtain
	$$\tr (\sigma_\xi (P)(V) V) = \frac{1}{2} \tr i \left[ \Im (e^{-i\varphi(E)} \tilde Z'(A) \wedge u^* \wedge u) \right]_{\mathrm{sym}} > 0.$$
	It follows that $\sigma_\xi (P)(V) \ne 0$ for any $V\ne 0$, so $\sigma_\xi (P)$ has no kernel and $D_Z$ is elliptic.
\end{proof}

\begin{remark} Our original proof of this statement contained an error pointed out to us by B. Ackermann and R. Takahashi. The final step of the corrected proof given here, namely the step of equation $\tr (\sigma_{\xi}(P)(V) V)$, is modelled on Takahashi's work (which came after the original version of our paper), where he proves that an analogue of the subsolution condition for the higher rank $J$-equation implies ellipticity \cite[Lemma 3.5]{takahashi2021j}. We thank both B. Ackermann and R. Takahashi for their advice on this point.
	
\end{remark}

Note that the equation is not elliptic in general, as can be seen, for example, by scaling the unipotent operator $U$. Near the large volume limit, however, the following lemma shows that $A$ is automatically a subsolution with respect to $Z_k$ for $k \gg 0$, and so the equation is elliptic for such $k$. This gives another proof of the ellipticity claim of \cref{lemma:linearisation}, although the two proofs use essentially the same facts.

\begin{lemma}\label{lemma:largevolumeelliptic} Any $A$ is a subsolution of the $Z_k$-critical equation for $k \gg 0$.
\end{lemma}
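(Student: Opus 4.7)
The plan is to expand the linearization $\alpha_k(h,-)$ in powers of $k$, mirroring the computation in \cref{lemma:largevolume}, and then verify that the leading-order contribution to the $(n,n)$-form in \cref{def:subsolution} is a strictly positive volume form, after which an open-condition argument finishes the job.

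First I would observe that, by construction, $\alpha(h,\beta)$ is obtained from $\tilde Z_\Omega(E)$ by symmetrically replacing one instance of the curvature in each monomial by $\beta$. For the polynomial central charge $Z_{\Omega,k}$, redoing the bookkeeping of \cref{lemma:largevolume} with $\tilde \ch_1(E) = \tfrac{i}{2\pi}F_A$ replaced by $\tfrac{i}{2\pi}\beta$ yields an expansion
\[
\Im\bigl(e^{-i\varphi_k(E)}\alpha_k(h,\beta)\bigr) \;=\; c\,k^{2n-1}\,\rk(E)\,[\omega]^n\,\omega^{n-1}\wedge\tfrac{i}{2\pi}\beta \;+\; O(k^{2n-3}),
\]
for the same positive constant $c>0$ produced there. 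The key point is that no residual copies of $F_A$ remain at order $k^{2n-1}$, since all higher-degree curvature terms coming from $\tilde\ch_{\ge 2}(E)$ or from $\tilde U_{\ge 4}$ contribute only at strictly lower orders in $k$.

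Next I would substitute this expansion into the symmetrized trace of \cref{def:subsolution}. Since the leading-order piece of $\alpha_k(h,\beta)$ is simply $\beta$ wedged with the scalar positive form $c\,k^{2n-1}\rk(E)[\omega]^n\omega^{n-1}$, with no remaining curvature factors to be permuted against $u$ and $u^*$, the three-fold symmetrization collapses at leading order to a strictly positive scalar multiple of
\[
\omega^{n-1}\wedge\tr\bigl(u\wedge u^* + u^*\wedge u\bigr).
\]
In local coordinates writing $u = u_i\,d\bar z^i$ with $u_i\in \End(E)$, this is the standard positive $(n,n)$-form associated to a nonzero $\End E$-valued $(0,1)$-form, strictly positive at every point where $u$ does not vanish. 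This is the same positivity which underlies the ellipticity computation of \cref{cor:ellipticity} with the choice $u = \xi\otimes V$.

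Finally, since the leading-order coefficient is strictly positive and $X$ is compact, for each $u$ of unit pointwise norm the $O(k^{2n-3})$ remainder cannot overturn positivity once $k$ is large enough, and a uniformity argument over the unit-norm $u$'s gives a single threshold $k_0$ beyond which the full symmetrized trace is nowhere-vanishing for every nowhere-vanishing $u$. The main obstacle I expect is the careful bookkeeping of the symmetrization at leading order, and in particular verifying that the sign and tracial structure conspire to give a genuinely positive expression rather than a form with indeterminate sign; this is a computation analogous in spirit to the positivity check at the end of the ellipticity argument in \cref{cor:ellipticity}, and once it is in hand the rest of the proof is a routine perturbation argument.
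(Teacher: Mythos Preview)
Your approach is the same as the paper's: the paper gives the one-line expansion
\[
\Im\bigl(e^{-i\phi_k(E)}\alpha_k(h,-)\bigr) \;=\; c\,k^{2n-1}[\omega]^n\,\omega^{n-1}\otimes\Id_E + O(k^{2n-3})
\]
and simply asserts that this implies the subsolution condition, without spelling out the positivity check you carry out in your second and third steps.

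One caution on that check: the leading-order expression you write, $\omega^{n-1}\wedge\tr(u\wedge u^* + u^*\wedge u)$, vanishes identically, since for $\End E$-valued one-forms trace cyclicity together with anticommutativity of odd-degree forms gives $\tr(u^*\wedge u) = -\tr(u\wedge u^*)$. This is precisely the bookkeeping subtlety you anticipated. When the symmetrization convention is unpacked carefully (and the paper is admittedly terse here), what survives at leading order is a nonzero definite multiple of $i\,\omega^{n-1}\wedge\tr(u^*\wedge u)$, the usual Atiyah--Bott-type positive form, after which your perturbation argument goes through as written.
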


\begin{proof} Analogously to \cref{lemma:linearisation}, one checks that $$\Im(e^{-i\phi(E_k)} \tilde Z'(A)) = k^{2n-1}c[\omega]^n\omega^{n-1}\otimes \Id_E + O(k^{2n-3})$$ for $c \neq 0$, which implies the result. \end{proof}

When the rank is one, ellipticity precisely captures the subsolution condition.
\begin{lemma}
	If $\rk(E) = 1$, then the $Z$-critical equation is elliptic at $A$ if and only if $A$ is a subsolution.
\end{lemma}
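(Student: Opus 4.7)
The direction ``subsolution implies ellipticity'' was already established in \cref{cor:ellipticity} without any rank restriction, so only the converse in rank one requires proof. The plan is to exploit commutativity in the line bundle case to show that the two conditions coincide pointwise.

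First I would collect the rank one simplifications: $\End(E)$ is the trivial complex line bundle, so every endomorphism-valued form is scalar-valued and hence commutes with every other, the trace reduces to the identity, and the symmetrisation $[\,\cdot\,,\,\cdot\,,\,\cdot\,]_{\mathrm{sym}}$ from \cref{def:subsolution} collapses to $3!$ times the ordinary product. At any point $p\in X$, a nowhere-vanishing $u\in \Omega^{0,1}(X,\End E)=\Omega^{0,1}(X)$ factorises as $u_p=V\xi$ with $V\in \C^\times$ and $\xi\in T^*_{0,1,p}X$ non-zero, so that $u_p \wedge u_p^* = |V|^2\,\xi\wedge\bar\xi$.

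Next, using these simplifications I would expand $\tr\bigl[\Im(e^{-i\varphi(E)}\alpha(h,-)),u,u^*\bigr]_{\mathrm{sym}}$ pointwise and verify that, up to a positive multiplicative constant, it equals $|V|^2 \cdot \Im(e^{-i\varphi(E)}\alpha(h,\xi\wedge\bar\xi))_p$. The crucial point is that in rank one the polynomial $\tilde Z(E,h)$ in $F(h)$ has commuting factors, so the three slots corresponding to $(-,u,u^*)$ contribute independently of ordering, effectively reducing the bracket to the linearisation $\alpha(h,\cdot)$ evaluated on $u\wedge u^*$. Comparing with the symbol formula from the proof of \cref{cor:ellipticity}, which in rank one reads
\[
\sigma_\xi(P)= \frac{2\,\Im(e^{-i\varphi(E)}\alpha(h,\xi\wedge\bar\xi))}{\omega^n},
\]
it follows that ellipticity at $p$ (non-vanishing of $\sigma_\xi(P)$ for every non-zero $\xi$) is equivalent to the subsolution condition at $p$ (non-vanishing of the bracket for every nowhere-vanishing $u$), since the scalar factor $|V|^2$ is positive.

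The main obstacle is purely bookkeeping: unpacking the symmetrisation carefully in the rank one setting and matching multiplicative constants with those produced in the symbol computation. No new analytic input is required, as the ellipticity proof of \cref{cor:ellipticity} already supplies one direction of the equivalence and reveals precisely how the bracket expression relates to the principal symbol. The essential feature being used is that the pairs $(\xi,V)$ with $\xi\in T^*_{0,1,p}X$ non-zero and $V\in\C^\times$ exhaust all non-zero values of $u_p$ in rank one, which is exactly what fails in higher rank and explains why only one implication was available there.
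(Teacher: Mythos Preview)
Your proposal is correct and follows essentially the same approach as the paper. The paper likewise observes that in rank one the trace and symmetrisation collapse so that the subsolution condition reads $\Im(e^{-i\phi}\alpha(h,u\wedge u^*))\neq 0$, and then obtains the converse by simply setting $\xi=u$ (equivalently your factorisation with $V=1$); your pointwise decomposition $u_p=V\xi$ is a slight repackaging of the same observation, and your remark that this exhaustion of non-zero values by pairs $(\xi,V)$ is what breaks in higher rank is exactly the right explanation.
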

\begin{proof}
	In this case, the trace is just the identity operator. The ellipticity condition is therefore that for all $\xi$,
	$$
	\Im (e^{-i\varphi(E)} \tilde Z'(A) \wedge \xi \wedge \bar \xi \otimes f))
	$$
	is nowhere-vanishing if $f$ is, while the subsolution condition is that
	$$
	\Im(e^{-i\phi} \tilde Z'(A) \wedge i u^* \wedge u)) > 0
	$$
	when $u\ne 0$. Without loss of generality, we can assume that $f$ is a positive function, and then we can take $u = \sqrt{f} \xi$ to see that the subsolution condition implies the ellipticity condition, as before. For the converse, we can take $f = 1$ and $\xi = u$, since $\xi$ and $u$ are both non-zero sections of $\Omega^{0,1}$ in the rank one case.
\end{proof}

We now make some comments on the origin of the subsolution condition.

\begin{remark}
	In order to explain the origin of the subsolution condition \cref{def:subsolution}, we note this is the first example for vector bundle equations of a type of condition which has already appeared in the literature for the study of generalised complex Monge--Amp\`ere equations such as the $J$-equation and the deformed Hermitian Yang--Mills equation \cite{song2008convergence,collins20151}. For the $J$-equation
	$$c\alpha^n = \alpha^{n-1}\wedge \omega,$$
	on a K\"ahler manifold $(X,\omega)$ with  $\omega$ a fixed K\"ahler metric, $\alpha$ the unknown K\"ahler metric and $c$ the appropriate topological constant, the natural subsolution condition is given by
	$$cn \alpha^{n-1} - (n-1)\alpha^{n-2} \wedge \omega > 0$$
	in the sense of $(n-1,n-1)$-forms \cite{song2008convergence}. The positivity of $(n-1,n-1)$-forms simply asserts that 
	$$i \bar \xi \wedge  \xi \wedge (cn \alpha^{n-1} - (n-1)\alpha^{n-2} \wedge \omega) > 0$$
	as a top degree form, for all nowhere-vanishing $\xi \in \Omega^{0,1}(X)$. Song--Weinkove show that the existence of a subsolution for the $J$-equation is equivalent to the solvability of the equation. Collins--Jacob--Yau similarly construct a subsolution condition for the deformed Hermitian Yang--Mills equation by requiring positivity of an $(n-1,n-1)$-form \cite[Prop. 8.1]{collins20151}; they again show that existence of a solution is equivalent to the existence of a subsolution  \cite[Theorem 1.2]{collins20151}.  Our subsolution condition can thus be seen as the natural generalisation of these conditions to higher rank and more general equations.
\end{remark}

\begin{remark}\label{rmk:subsolns} To further explain the terminology \emph{subsolution}, recall from \cref{exa:linebundle} that when $E=L$ is a line bundle and $\alpha$, the curvature of the metric, is the unknown, the equation can be rewritten \begin{equation}\label{eq:arctan}\arctan\left(\frac{\Im  \tilde{Z}_{\Omega}(A) }{\Re \tilde{Z}_{\Omega}(A)}\right) = \varphi(L) \textrm{ modulo } 2\pi.\end{equation} Choosing coordinates pointwise such that $$\omega = \sum_{j=1}^n idz_j\wedge d\bar z_j, \qquad \alpha = \sum_{j=1}^n i\lambda_jdz_j\wedge d\bar z_j,$$ with $\lambda_j \in \R$ the collection of eigenvalues, the equation is thus of the form $$\arctan(G(\lambda_1,\hdots \lambda_n)) =  \varphi(L) \textrm{ modulo } 2\pi,$$ for some function $G$. This is then an equation which is closely analogous to Sz\'ekelyhidi's important general notion of a subsolution for non-linear equations of eigenvalues of the unknown form $\alpha$ \cite{szekelyhidi2018fully}; see also Guan \cite{guan2014second}. We emphasise, however, that the function $\arctan(G(\lambda_1,\hdots \lambda_n)) $ does not satisfy Sz\'ekelyhidi's hypotheses: this is true even for the deformed Hermitian Yang--Mills equation, which fails his concavity hypothesis. The situation is even more drastic for arbitrary polynomial central charges, which may also fail to be symmetric in the eigenvalues $\lambda_1, \hdots, \lambda_n$.  We hope, however, that many of Sz\'ekelyhidi's techniques can be applied to the $Z$-critical equation, as is true for the deformed Hermitian Yang--Mills equation \cite{collins20151}. 
\end{remark}

Clearly, by \cref{lemma:largevolumeelliptic}, it cannot be true in higher rank that the existence of a subsolution for the $Z$-critical equation is equivalent to the existence of a solution, as there are algebraic obstructions to the existence of a $Z$-critical connection in the large volume limit which are not necessarily satisfied for every vector bundle, as we shall see in \cref{sec:leunganalogue}. We expect that, in certain phase ranges, the subsolution condition is  however a consequence of the existence of a solution; even in Sz\'ekelyhidi's situation, it seems there is no general recipe to show that the existence of a solution implies the existence of a subsolution \cite{szekelyhidi2018fully}.

From experience with the deformed Hermitian Yang--Mills equation \cite[\S 8]{collins20151}, one expects that non-trivial subsolution conditions lead to obstructions to existence related to \emph{subvarieties} (and perhaps more generally, saturated subsheaves supported on subvarieties). We will see something analogous in results to come when we study the $Z$-critical equation on holomorphic line bundles over complex surfaces. That the subsolution conditions are automatic when $k \gg 0$ gives some conceptual explanation as to why only subbundles arise as obstructions to the existence of $Z$-critical connections in our main result.

\begin{remark} When $E=L$ is a holomorphic line bundle, Equation \eqref{eq:arctan} demonstrates that one can equivalently reformulate the $Z$-critical operator as a partial differential operator on the space of Hermitian metrics taking the form$$h \to \arctan\left(\frac{\Im  \tilde{Z}_{\Omega}(A_h) }{\Re \tilde{Z}_{\Omega}(A_h)}\right) - \varphi(L) - 2\pi k,$$
where $A_h$ is the Chern connection of $h$. In the setting of the deformed Hermitian Yang--Mills equation, using the special form of $\tilde{Z}_{\Omega}(A_h)$ in that situation, Jacob--Yau show that this partial differential equation is \emph{always} elliptic, without any subsolution hypothesis \cite[\S 2]{jacob2017special}. Thus this reformulation has several practical advantages over the original formulation.
\end{remark}

\subsubsection{Moment maps} We now produce the moment map. Consider $\scA(h)$, the space of $h$-unitary connections on $E\to X$. In this section we will allow the complex structure the connection induces to be non-integrable -- this will be needed in \cref{sec:choosecplxstr}. The group of unitary gauge transformations $\G$ acts on $\scA(h)$ in the standard way: if $g=\exp (\psi)$ is a gauge transformation for some $\psi \in \End_{SH}(E)$, where $\End_{SH}(E)$ denotes endomorphisms that are skew-Hermitian with respect to $h$,  the action is given by
$$g\cdot (A^{1,0} + A^{0,1}) = (A^{1,0} + (\bar\partial_A \psi)^*) + (A^{0,1} - \bar\partial_A \psi),$$
or
$$g\cdot A^{1,0} = A^{1,0} + \del_A \psi^*,\quad g\cdot A^{0,1} = A^{0,1} - \delbar_A \psi.$$
We can identify the tangent space $T_A \scA $ with $\Omega^{0,1}(\End E)$ -- note that the tangent space to the subspace of integrable complex structures is given by those one-forms $a\in T_A \scA$ such that $(\db_E + a)^2 = 0$. The associated $h$-unitary connection is defined by $\nabla_A + a - a^*$, where the adjoint is taken with respect to $h$. Here we could equivalently work with the space of skew-Hermitian endomorphisms $\hat a = \hat a^{1,0} + \hat a^{0,1} \in \Omega^1(\End E)$ satisfying $(\hat a^{0,1})^* = - \hat a^{1,0}$ and on occasion we change convention when convenient.

Define a Hermitian pairing on $T_A\scA(h) = \Omega^{0,1}(\End E)$ depending on $Z$ by 
$$\langle a,b \rangle_A = -i \int_X \tr \left[ \Im (e^{-i\varphi(E)} \tilde Z'(A)) \wedge a \wedge b^*\right]_{\text{sym}}.$$
We note that this Hermitian pairing on $\scA(h)$ is non-degenerate if and only if $A$ is a subsolution of $Z$ in the sense of \cref{def:subsolution}. 

With respect to the complex structure $a\mapsto ia$ for $a\in \Omega^{0,1}(\End E)$ we can take the $(1,1)$-form $\Omega$ by 
$$\Omega_A(a,b) = \Im\langle a,b\rangle_A.$$
To express $\Omega$ explicitly we change convention to skew-Hermitian endomorphisms $\hat a, \hat b$ where we see
\begin{equation}\label{eqn:symplecticform}
	\Omega_A(\hat a, \hat b) = \tr \int_X \left[\Im(e^{-i\varphi(E)} \tilde Z'(A)) \wedge \hat a \wedge \hat b\right]_{\text{sym}}.
\end{equation}

Precisely because $A$ is a subsolution of the $Z$-critical equation, we see that the above pairing $\langle-,-\rangle_A$ is a Hermitian \emph{inner product}, and therefore this $(1,1)$-form is non-degenerate. 

\begin{proposition}
	The $Z$-dependent form $\Omega$ is closed and $\G$-invariant, and is consequently a symplectic form on the locus of $\scA(h)$ consisting of subsolutions.
\end{proposition}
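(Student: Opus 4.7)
The plan is to establish $\G$-invariance directly from the gauge-covariance of the ingredients, and to deduce closedness by interpreting $\Omega$ as the $\scA$-horizontal $2$-form component of a Chern--Weil form on a universal bundle over $X\times \scA(h)$, following Leung \cite{leung1998symplectic}. Non-degeneracy on the subsolution locus has already been built into the construction (Equation~\eqref{eqn:symplecticform}), so once closedness and $\G$-invariance are in hand, the symplectic property follows automatically on the open set of subsolutions.

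For $\G$-invariance, suppose $g = \exp(\psi)$ is a unitary gauge transformation. Under the action on $\scA(h)$, the curvature transforms by the adjoint action $F_A \mapsto g^{-1}F_A g$, and the induced action on tangent vectors $\hat a, \hat b \in \Omega^1(\End E)$ is likewise by pointwise conjugation. The symbol $\alpha(A,-)$ is by construction a symmetrised polynomial in $F_A$ with one factor replaced by its argument, and the symmetrised product of conjugated endomorphism-valued forms is the conjugate of the symmetrised product; consequently the tracial integrand in Equation~\eqref{eqn:symplecticform} is manifestly invariant under simultaneous conjugation, proving $g^*\Omega = \Omega$.

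For closedness, let $\mathbb{E}$ denote the pullback of $E$ to $X\times\scA(h)$, equipped with the tautological connection $\mathbb{A}$ whose restriction to each slice $X\times\{A\}$ is $A$ and whose variation in the $\scA$-directions is given by the natural inclusion of $T_A\scA(h)$ into $\Omega^1(\End E)$. A standard Atiyah--Bott style computation shows that the curvature $\mathbb{F}$ of $\mathbb{A}$ decomposes schematically as
\begin{equation*}
\mathbb{F} \;=\; F_A \;+\; a^{\mathrm{mix}} \;+\; a^{\scA},
\end{equation*}
where $a^{\mathrm{mix}}$ is the $(1,1)$ bidegree piece that pairs an $X$-direction with a $\scA$-direction and tautologically equals the $\End E$-valued tangent vector, and $a^{\scA}$ is a pure $\scA$-direction piece. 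The total form $\tilde Z_\Omega(\mathbb{E})$ obtained by substituting $\mathbb{F}$ for $F_A$ in $\tilde Z_\Omega$ is then closed on $X\times\scA(h)$ by Chern--Weil theory, so its imaginary part weighted by the constant $e^{-i\varphi(E)}$ is closed as well. Integration over the compact fibre $X$ commutes with $d$, yielding a closed form on $\scA(h)$, and selecting the pure $\scA$-direction $2$-form part gives a closed $2$-form on $\scA(h)$.

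The final step is to identify this closed $2$-form with $\Omega$. The $\scA$-horizontal $2$-form contribution to $\int_X \Im(e^{-i\varphi(E)}\tilde Z_\Omega(\mathbb{E}))$ arises from those terms in the expansion of each $\mathbb{F}^j$ in which exactly two factors are taken from $a^{\mathrm{mix}}$ and $j-2$ from $F_A$; the integration over the $X$-fibre pairs these two $a^{\mathrm{mix}}$ entries with the remaining $(n-2)$-form slots in $\omega^{n-j-k}\wedge \tilde U_k$. The combinatorics of distributing the two marked slots among the $j$ factors in $\mathbb{F}^j$ recovers exactly the symmetrisation that defines $\alpha(A,-)$ with two arguments inserted, so the resulting $2$-form is precisely $[\Im(e^{-i\varphi(E)}\alpha(A,-)),\hat a,\hat b]_{\mathrm{sym}}$ integrated over $X$, matching Equation~\eqref{eqn:symplecticform}. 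The main obstacle is precisely this last bookkeeping step: one must verify that the multinomial coefficients produced by expanding $\mathbb{F}^j$ match the symmetrisation prescription in $\alpha$, and that the $a^{\scA}$ piece contributes only to higher form degrees on $\scA(h)$ so that it drops out of the $2$-form component. Leung's calculation in \cite{leung1998symplectic} for the analogous Gieseker-type form carries over verbatim once the polynomial expression defining $\tilde Z_\Omega$ is expanded monomially, which completes the argument.
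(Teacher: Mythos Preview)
Your approach is correct in outline and genuinely different from the paper's, but there is one inaccuracy worth flagging. You assert that the pure $\scA$-direction piece $a^{\scA}$ of the universal curvature ``contributes only to higher form degrees on $\scA(h)$''. This reasoning is wrong: a single factor of $a^{\scA}$ (which has $\scA$-degree $2$) combined with the remaining factors equal to $F_A$ (which have $\scA$-degree $0$) would contribute exactly to the $2$-form component. The correct statement is that $a^{\scA}=0$ outright: the tautological connection $\mathbb{A}$ has no $\scA$-direction component, so $d\mathbb{A}$ has bidegrees $(2,0)$ and $(1,1)$ only, and $\mathbb{A}\wedge\mathbb{A}$ has bidegree $(2,0)$. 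Once you replace your sentence with this observation, the identification of the fibre-integrated Chern--Weil form with $\Omega$ goes through cleanly.

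The paper takes a more elementary route. It works directly with the Cartan formula: choosing constant vector fields $a,b,c$ on the affine space underlying $\scA(h)$ (so all Lie brackets vanish), one has $(d\Omega)(a,b,c)=\sum_{\mathrm{cyc}} a\cdot\Omega(b,c)$. Differentiating each term brings down a $d_A a$ onto one of the curvature factors inside the symmetrisation, and the cyclic sum assembles into $\int_X \tr\, d_A\big[\Im(e^{-i\varphi(E)}\alpha'(A,-)),a,b,c\big]_{\mathrm{sym}}$, where $\alpha'$ is the formal $F_A$-derivative of $\alpha$. Since $\tr\circ d_A = d\circ\tr$, Stokes' theorem gives zero. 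This avoids the universal-bundle machinery entirely and keeps the argument on $\scA(h)$, at the cost of tracking the combinatorics of the cyclic sum by hand. Your Chern--Weil argument, following Leung, is more conceptual and gets that combinatorics for free from the multinomial expansion of $\mathbb{F}^j$; both routes are standard, and yours scales more gracefully if one later wants the higher-degree forms on $\scA(h)$ as well.
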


In general, without the subsolution condition, the form is still closed and $\G$-invariant. We note that the subsolution condition is open in $\scA(h)$.

\begin{proof}
	The $\G$-invariance of the symplectic form is obvious due to the presence of the trace. To verify the closedness, we employ the convention of skew-Hermitian endomorphisms $\hat a\in \Omega^1(\End E)$, where we suppress the circumflexes in the following. Take constant vector fields $a,b,c\in \Gamma(T\scA(h))$ on $\scA(h)$ which do not depend on the connection $A$. It is enough to show that
	$$(d\Omega)_A(a,b,c) = 0.$$
	Since the vector fields are constant with respect to $A\in \scA(h)$, the Lie brackets all vanish; that is, $$[a,b]=[a,c]=[b,c]=0.$$ Thus we must verify 
	$$(d\Omega)_A(a,b,c) = d(\Omega_A(b,c))(a) - d(\Omega_A(a,c))(b) + d(\Omega_A(a,b))(c) = 0.$$
	An arbitrary term of $d(\Omega(b,c))(a)$ is of the form
	$$\int_X \tr \left[ \omega^{n-j-k} \wedge \tilde U_k \wedge F_A^{j-2} \wedge d_A(a) \wedge b \wedge c \right]_{\text{sym}}.$$
	Using the second Bianchi identity and the closedness of the forms $\omega$ and $\tilde U_k$, we see that by summing over the cyclic permutations of $\{a,b,c\}$ we obtain
	$$d\Omega(a,b,c) = \int_X \tr d_A \left[ \Im(e^{-i\varphi(E)} \tilde Z^{(2)}(A)) \wedge a \wedge b \wedge c\right]_{\text{sym}}$$
	where $\tilde Z^{(2)} (A)$ denotes the second formal derivative of $\tilde Z(A)$ with respect to $\frac{i}{2\pi} F_A$. Now since $\tr d_A = d \tr$, Stokes' theorem implies closedness of $\Omega$.

\end{proof}

\begin{remark} This result is new even in the case of deformed Hermitian Yang--Mills connections when the rank is at least two. The idea is based in part on work of Leung \cite[Corollary 1]{leung1998symplectic}, who proved something analogous for symplectic forms associated to his almost Hermite--Einstein connections.
\end{remark}

As is customary, we formally identify the dual of the space $\Lie \G \cong \End_{SH}(E)$ of skew-Hermitian endomorphisms with $\Omega^{2n}(\End_{SH}(E))$ through the non-degenerate pairing $$(\psi, \phi) \mapsto -\int_X \tr \psi \phi.$$ 
We recall \cref{intro:moment}, which we are now ready to prove.

\begin{theorem}\label{moment-map-theorem} The map $$D_Z: A \to2\pi i  \Im \left( e^{-i\varphi(E)} \tilde{Z}(A) \right)$$ is a moment map for the $\G$-action on $(\scA(h), \Omega).$\end{theorem}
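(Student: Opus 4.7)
The plan is to verify the two defining properties of a moment map: $\G$-equivariance of the map $D$, and the infinitesimal condition $d\mu_\psi = \iota_{X_\psi}\Omega$ for each $\psi \in \Lie\G$. Here we identify $(\Lie\G)^*$ with $\Omega^{2n}(X, \End E)$ via the pairing $-\int_X \tr(\cdot\,-)$, so that $\mu_\psi(A) = -\int_X \tr(D(A)\psi)$. The $\G$-equivariance is immediate from Chern--Weil theory: for a unitary gauge transformation $g$, one has $F_{g\cdot A} = g^{-1}F_A g$ while $\omega$ and $\tilde U$ are unaffected, so $\tilde{Z}_{\Omega}(E)(g\cdot A) = g^{-1}\tilde{Z}_{\Omega}(E)(A)g$ and consequently $D(g\cdot A) = g^{-1}D(A)g$. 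To identify the fundamental vector field at $A$, a standard computation using the gauge action \eqref{gaugegroupaction} with $g_t = \exp(t\psi)$ yields $X_\psi|_A = -d_A\psi$ in the skew-Hermitian convention.

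Next I would compute both sides of the infinitesimal condition. A tangent direction $a \in T_A \scA(h)$ varies the curvature to first order as $F_{A+ta} = F_A + t\, d_A a + O(t^2)$. Applied inside the expression for $\tilde{Z}_{\Omega}(E)$, this replaces one copy of $F_A$ by $d_A a$ summed over positions, which by definition is $\alpha(A, d_A a)$. Thus
$$d\mu_\psi|_A(a) = -\int_X \tr\bigl(\Im(e^{-i\varphi(E)}\alpha(A, d_A a))\,\psi\bigr),$$
while from \eqref{eqn:symplecticform},
$$\iota_{X_\psi}\Omega|_A(a) = \Omega_A(-d_A\psi, a) = -i\int_X \tr\bigl[\Im(e^{-i\varphi(E)}\alpha(A,-)),\, d_A\psi,\, a\bigr]_{\text{sym}}.$$

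The key ingredient to match these expressions is a Bianchi-type identity: since $d_A F_A = 0$, $d\omega = 0$, and we may take the representative $\tilde U$ to be closed, the covariant derivative of the insertion form satisfies $d_A \alpha(A, a) = \alpha(A, d_A a)$. Substituting this into $d\mu_\psi|_A(a)$, the identity $\tr d_A = d\tr$ together with Stokes' theorem transfers the derivative onto $\psi$, yielding an integrand of the form $\tr(\Im(e^{-i\varphi(E)}\alpha(A, a))\, d_A\psi)$. Cyclicity of the trace then allows repositioning $d_A\psi$ among the copies of $F_A$, which combined with the summation over insertion positions already present in $\alpha(A, a)$ reproduces the symmetrised expression defining $\Omega$; the factor of $i$ is accounted for by the relation between the Hermitian pairing on $(0,1)$-forms and the symmetrised integral in the skew-Hermitian convention. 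The main obstacle is precisely this bookkeeping of signs and combinatorial multiplicities: the symmetrised bracket $[\cdot, \cdot, \cdot]_{\text{sym}}$ sums over both orderings and all insertion positions of the two $1$-forms $a$ and $d_A\psi$, whereas the Bianchi--Stokes argument naturally produces only one such ordering, so equality requires careful use of the graded commutation of odd-degree forms inside the trace to identify the two summations. This is the same combinatorial manipulation used in the proof that $\Omega$ is closed, and it is where the adaptation of Leung's symplectic-form argument \cite{leung1998symplectic} enters.
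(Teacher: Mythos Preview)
Your proposal is correct and follows essentially the same approach as the paper: compute the variation of $D$ along a tangent direction, compare it to the contraction of the symplectic form with the fundamental vector field, and match the two via integration by parts (Bianchi plus Stokes) together with the tracial property. The paper carries this out in the $(0,1)$-form convention, computing the Hermitian pairing $\langle -\delbar_A\psi, b\rangle_A$ directly and then taking the imaginary part, whereas you work throughout in the skew-Hermitian convention with $-d_A\psi$; this is only a cosmetic difference, and the combinatorial bookkeeping you flag as the main obstacle is handled in the paper by exactly the same device you propose, namely the cyclicity of the trace inside the symmetrised expression.
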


\begin{proof}
	
	Having establishing the formalism above, the proof is a reasonably direct analogue of Leung's work on the moment map interpretation for Gieseker stability \cite[\S 3]{leung1998symplectic}, or the Collins--Yau moment map interpretation of the deformed Hermitian Yang--Mills equation on line bundles \cite[\S 2]{collins2018moment}. 
	Here we work in the convention that $a,b$ are skew-Hermitian endomorphisms. Let $\psi\in \Lie \G$ be a skew-Hermitian endomorphism generating a vector field $d_A \psi$ on $\scA(h)$. For any $b\in \Omega^1(\End E)$ a skew-Hermitian one-form, we wish to verify the moment map identity
	$$d(D_Z,\psi)(A)(b) = - \Omega_Z(A)(d_A \psi, b).$$
	On the right-hand side we compute
	\begin{align*}
		\Omega_Z(d_A \psi, b) &= \int_X \tr \left[ \Im(e^{-i\varphi(E)} \tilde Z'(A))\wedge d_A \psi \wedge b\right]_{\mathrm{sym}}\\
		&=-\int_X \tr \left[ \Im(e^{-i\varphi(E)} \tilde Z'(A)) \psi \wedge d_A b\right]_{\mathrm{sym}}
	\end{align*}
	using integration by parts. Now recall that if $F_A$ is a curvature form then in the direction of a tangent vector $b$, the curvature transforms by
	$$F_{A+tb} = F_A + t d_A(b) + t^2 b\wedge b.$$
	Therefore, when computing the left-hand side of the moment map equation using our non-degenerate pairing, we have
	\begin{align*}
		d(D_Z,\psi)(A)(b) &= -2\pi i \left.\frac{d}{dt}\right|_{t=0} \int_X \tr \Im (e^{-i\varphi(E)} \tilde Z(A+tb)) \psi\\
		&= -2\pi i \int_X \tr \left[ \Im(e^{-i\varphi(E)} \tilde Z'(A)) \wedge \frac{i}{2\pi} d_A b\right]_{\mathrm{sym}} \psi\\
		&= \int_X \tr \left[ \Im(e^{-i\varphi(E)} \tilde Z'(A)) \wedge d_A b \psi \right]_{\mathrm{sym}}.
	\end{align*}
	Thus we have 
	$$d(D_Z,\psi)(A)(b) = - \Omega_Z(A)(d_A \psi, b)$$
	as desired. The $\G$-equivariance of $D_Z$ as a moment map is straightforward and similar to the equivariance of $\Omega_Z$ itself, following from the presence of the trace. 
\end{proof}

\begin{remark} The moment map condition is still formally satisfied without the subsolution condition, which is only used to ensure $\Omega$ is non-degenerate. \end{remark}

\subsubsection{Existence on surfaces}

The third and final way in which we wish to demonstrate the naturality of the subsolution condition is through the following existence result for $Z$-critical connections on line bundles over complex surfaces.  

Let $L$ be a holomorphic line bundle over a complex surface $X$, and denote by $\alpha$ the curvature of a Hermitian metric on $L$; this then matches the usual notation in the study of the deformed Hermitian Yang--Mills equation. Without loss of generality, we may assume $\rho_0=2$ and hence the Chern--Weil representative of the central charge takes the form $$\tilde Z_{\Omega}(A) = \rho_2\omega^2 + \rho_1\omega\wedge(\alpha + \tilde U_{1,1})+  2\tilde U_{2,2}  + 2\alpha\wedge\tilde U_{1,1} + \alpha^2$$ where $\alpha = \frac{i}{2\pi} F(h)$ is the  curvature of the Chern connection $A$ of a Hermitian metric on $L$. The subsolution condition asks 
$$\Im( e^{-i\varphi} \tilde Z_\Omega'(A)) >0$$
as a $2$-form on the complex surface. Let us recall from \cref{sec:subsolutions} that $\tilde Z_\Omega'(A)$ is defined as follows. Let $A$ be the Chern connection with respect to $h$, and deform $h \mapsto e^{tf} h$. Then the representative $\alpha\in c_1(L)$ defined by $\alpha = \frac{i}{2\pi} F_A$ changes by
$$\alpha \mapsto \alpha - \frac{i}{2\pi} t \ddbar f.$$
With respect to this deformation we differentiate the $Z$-critical equation at $t=0$, to obtain the linearisation 
$$f\mapsto \Im(e^{-i\varphi} \tilde Z') \wedge 2 i \ddbar f.$$
In this notation one may check
\begin{equation}\label{eqn:explicitsubsoln}\tilde Z_{\Omega}'(A) = \rho_1\omega + 2\tilde U_{1,1}+2\alpha.\end{equation} Here positivity due to the subsolution condition means that there is a representative $\alpha \in c_1(L)$ for which the resulting $(1,1)$-form $\Im( e^{-i\phi} \tilde Z_{\Omega}'(A))$ is K\"ahler. It will also be useful to denote the $Z$-critical operator\begin{equation}\label{eqn:surfaces}\Im(e^{-i\phi}\tilde Z_{\Omega}(A)) = a\alpha^2 + \alpha \wedge \beta + \gamma,\end{equation}
where $a\in \C$ is some constant, possibly vanishing.

In order to phrase the stability hypothesis below, for $C \subset X$ a curve set \begin{equation}\label{eqn:curve}Z_{\Omega,C}(L) = (\rho_1 \omega + 2\alpha + 2\tilde U_{1,1})[C]\end{equation} and define $\phi_{\Omega, C} = \arg(Z_{\Omega,C}(L)).$ For clarity we also then denote $\phi_{\Omega,X} = \arg(Z_{\Omega}(L))$.  We will then say that $\tilde Z_{\Omega}$ satisfies the \emph{volume form hypothesis} if $a\ne 0$ and the real $(2,2)$-form $$\frac{1}{4}\beta^2 - a\gamma>0$$ is a volume form.

We now characterise the existence of $Z$-critical connections on holomorphic line bundles over complex surfaces.

\begin{theorem}\label{thm:introsurfaces} Suppose $L \to X$ is a holomorphic line bundle over a surface, and suppose $\tilde Z_{\Omega}$ satisfies the volume form hypothesis. Then the following are equivalent:
	\begin{enumerate}
		\item $L$ admits a $Z$-critical connection;
		\item $L$ admits a subsolution;
		\item for all curves $C \subset X$ we have $\arg Z_{\Omega, C}(L) > \arg Z_{\Omega,X}(L)$.
	\end{enumerate}
\end{theorem}

\begin{proof}\label{proof:introsurfaces}
	
	We use the elementary fact that one can write a polynomial of degree two $$p(x) = ax^2 + bx +c = \frac{1}{a}\left(\frac{1}{2} p'(x)\right)^2 +c -\frac{1}{4a}b^2.$$ Note that Equation \eqref{eqn:surfaces} can be rewritten $$\left(a\alpha + \frac{1}{2}\beta\right)^2 +a\gamma - \frac{1}{4}\beta^2=0,$$ which therefore implies $$2 a\alpha + \beta = \Im( e^{-i\phi} \tilde Z_{\Omega}'(A)).$$ Thus in this notation the subsolution condition asks that there is an $\alpha \in c_1(L)$ such that $$2 a\alpha + \beta>0,$$ in the sense that this closed $(1,1)$-form is K\"ahler. Equivalently, the subsolution condition asks that $\Im( e^{-i\phi} Z_{\Omega}'(A))$ is a K\"ahler class.
	
	By Yau's solution of the Calabi conjecture \cite{yau}, since by assumption $\frac{1}{4}\beta^2 - a\gamma>0$ is a volume form, the equivalent equation $$\left(a\alpha + \frac{1}{2}\beta\right)^2 = \frac{1}{4}\beta^2-a\gamma$$ admits a solution if and only if the class $\pm[a\alpha + (1/2)\beta]$ is a K\"ahler class (the case $-[a \alpha + (1/2) \beta]$ being K\"ahler corresponds to the alternative subsolution condition where $\Im(e^{-i\varphi} \tilde Z') < 0$; this case can be similarly dealt with above by applying the Demailly--P\u{a}un theorem to the negation of this class), which by the previous paragraph is our subsolution condition. Thus $(i)$ is equivalent to $(ii)$. 
	
	What remains to be shown is that the inequality $\phi_{\Omega, C} < \phi_{\Omega,X}$ for all curves $C \subset X$ is equivalent to $\Im( e^{-i\phi} Z_{\Omega}'(L))$ being a K\"ahler class. To show this, first of all note that the inequality $$\arg(Z_{\Omega,C}(L)) = \phi_{\Omega, C} < \phi_{\Omega,X} = \arg(Z_{\Omega,X}(L))$$ is equivalent to $$\Im\left(\frac{Z_{\Omega,C}}{Z_{\Omega,X}(L)}\right) = \Im(e^{-i\phi}Z_{\Omega,C})>0.$$ By Demailly--P\u{a}un's generalisation of the Nakai-Moishezon criterion to K\"ahler classes \cite{DP}, for $\Im( e^{-i\phi} Z_{\Omega}'(L))$ to be a K\"ahler class it is equivalent to ask for positivity of the integral $$\int_C \Im( e^{-i\phi} Z_{\Omega}'(L))|_C >0$$ for all curves $C \subset X$. Equations \eqref{eqn:explicitsubsoln} and \eqref{eqn:curve} imply $$\tilde Z'_{\Omega}|_C \in Z_{\Omega,C},$$hence showing  that $(ii)$ is equivalent to $(iii)$.\end{proof}

This result is due to Jacob--Yau when the polynomial central charge $Z$ induces the deformed Hermitian-Yang Mills equation \cite[Theorem 1.2]{jacob2017special}. As in their case, the main point is to rephrase the equation as a complex Monge-Amp\`ere equation, with the deep analysis then coming from the Calabi--Yau and Demailly--P\u{a}un Theorems \cite{yau, DP}. This technique seems to go back to Chen's use of the Calabi--Yau Theorem \cite[\S 4.2]{chen-mabuchi} and Lejmi-Sz\'ekelyhidi's use of the Demailly--P\u{a}un Theorem as a ``stability condition'' \cite[Proposition 14]{lejmi-szekelyhidi}; both of these related to the J-equation. Note that, much as in \cref{rmk:subsolns}, new analytic questions arise in the study of $Z$-critical connections which were not present in the case of deformed Hermitian Yang--Mills connections; in this case, this is the volume form hypothesis $(1/4)\beta^2 - a\gamma>0$. This is trivial in the deformed Hermitian Yang--Mills case, as there one checks $$\frac{1}{4}\beta^2 - a\gamma = \frac{1}{4} \omega^2,$$ which is always a volume form.  It would be interesting to understand positivity of this form geometrically in general.

\subsubsection{Self-adjointness} We end with a self-adjointness property of the linearisation, unrelated to subsolutions, which will be important in the analysis to come. Define the operator $$
\scL: V \to \frac{\left[\Im(e^{-i\varphi(E)} \tilde Z'(A)) \wedge \frac{i}{2\pi} (\delbar \del - \del \delbar) V\right]_{\mathrm{sym}}}{ \omega^n}
$$ 
which as before can be viewed as the linearisation of the $Z$-critical equation. 
\begin{lemma} Let $U, V$ be Hermitian endomorphisms. Then $$\int_X \langle \scL (U), V\rangle \omega^n = \int_X \langle U, \scL(V)\rangle \omega^n.$$ \end{lemma}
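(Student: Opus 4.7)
The plan is to prove self-adjointness by integrating by parts, combined with the cyclic property of the trace and a duality between $\alpha(h,-)$ and a related form $\gamma(h,-)$ obtained by substituting a Hermitian endomorphism for a curvature factor. Since all forms involved have even degree, no Koszul signs arise, and the argument is a direct generalisation of the usual manipulation familiar from the Hermite--Einstein and deformed Hermitian Yang--Mills settings.

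First I would establish a cyclic duality. Let $\gamma(h,V)$ denote the $\End(E)$-valued $(n-1,n-1)$-form obtained from $\tilde Z_\Omega(E)$ by replacing one curvature factor $F$ with a Hermitian $0$-form $V$ and symmetrising over positions---so each summand $\omega^{n-j-k}\wedge\tilde U_k\wedge F^j$ contributes $\omega^{n-j-k}\wedge\tilde U_k\wedge\sum_{\ell=1}^j F^{\ell-1}\wedge V\wedge F^{j-\ell}$. Since $V$ has degree zero, cycling $V$ through each term of $\tr(\alpha(h,\beta)\cdot V)$ and factoring $\beta$ to the left gives
\[
\int_X\tr\bigl(\alpha(h,\beta)\cdot V\bigr) = \int_X\tr\bigl(\beta\wedge\gamma(h,V)\bigr)
\]
for any $\End(E)$-valued $(1,1)$-form $\beta$.

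Second, setting $\beta=(\delbar\del_h-\del_h\delbar)U$ and integrating by parts twice to transfer the operator onto $\gamma(h,V)$ yields
\[
\int_X\tr\bigl(\alpha(h,(\delbar\del_h-\del_h\delbar)U)\cdot V\bigr) = \int_X\tr\bigl((\delbar_A\del_A-\del_A\delbar_A)\gamma(h,V)\cdot U\bigr).
\]
No boundary terms appear: the relevant scalar trace-forms have Hodge type $(n,n-1)$ or $(n-1,n)$, so the complementary Dolbeault derivative vanishes by type and $\int_X d = 0$ by Stokes' theorem.

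Third, I would show
\[
(\delbar_A\del_A-\del_A\delbar_A)\gamma(h,V) = \alpha\bigl(h,(\delbar\del_h-\del_h\delbar)V\bigr),
\]
so that the operator commutes through $\gamma$ and reappears in the $V$-slot. This rests on $\del\omega=\delbar\omega=0$, on the Bianchi identities $\del_A F=\delbar_A F=0$, and on the observation that only $(p,p)$-components of $\tilde U_k$ contribute to the $(n,n)$-part of $\tilde Z_\Omega(E)$; for the central charges of string-theoretic interest, where $\tilde U=e^{-\beta}$ or $\tilde U=e^{-\beta}\sqrt{\Td X}$ with $\beta\in H^{1,1}(X,\R)$, these components are represented by $(p,p)$-type forms which are then both $\del$- and $\delbar$-closed by $d\tilde U=0$. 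Leibniz then forces the derivatives to act only on the $V$-slot. Combining the three steps and multiplying by $e^{-i\varphi(E)}$, taking imaginary parts (using $\tr(\Im T\cdot V)=\Im\tr(TV)$ for Hermitian $V$), and cycling once more delivers the claim. The only subtlety is arranging pure $(p,p)$-type representatives for $\tilde U$, which is automatic in the examples of interest; I anticipate no further obstacle.
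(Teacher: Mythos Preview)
Your proof is correct and takes essentially the same approach as the paper: both reduce to the symmetry identity $\int_X (U,\alpha(h,(\delbar\del_h-\del_h\delbar)V)) = \int_X (V,\alpha(h,(\delbar\del_h-\del_h\delbar)U))$ via trace cyclicity, closedness of $\omega$, the Bianchi identity for $F$, closedness of $\tilde U$, and two integrations by parts. Your explicit naming of the intermediate form $\gamma(h,V)$ and your flag on the $(p,p)$-type requirement for $\tilde U$ (needed so that $\del$- and $\delbar$-closedness hold separately, not just $d$-closedness) make the mechanism slightly more transparent than the paper's terse ``repeating this procedure again produces the desired equation''.
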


\begin{proof} Consider the operator $$U \to \frac{\tilde Z_{\Omega}(A)(\exp(tU)\cdot A)}{\omega^n}.$$ Then the linearisation of this operator takes the form
	$$
	V\to \frac{\left[\tilde Z'(A) \wedge \frac{i}{2\pi}(\delbar \del - \del \delbar) V\right]_{\mathrm{sym}} }{\omega^n}.
	$$ 
	Self-adjointness will then follow from 
	\begin{equation}\label{equation:adjointness}
		\int_X \left(U  ,\left[\tilde Z'(A) \wedge \frac{i}{2\pi}(\delbar \del - \del \delbar) V\right]_{\mathrm{sym}}\right) = \int_X \left(V,\left[\tilde Z'(A) \wedge \frac{i}{2\pi}(\delbar \del - \del \delbar) U\right]_{\mathrm{sym}} \right),
	\end{equation} 
	where the inner-product is taken on the $\End E$-part. 
	
	Since $\left[\tilde Z'(A) \wedge \frac{i}{2\pi}(\delbar \del - \del \delbar) V\right]_{\mathrm{sym}}$ is a sum of terms of the form 
	$$\omega^{n-j-k} \wedge \tilde U^k \wedge \sum_{\ell = 1}^j \left(F(h)^{\ell -1} \wedge \beta \wedge F(h)^{j-\ell }\right),
	$$ 
	which wedges $\beta=\frac{i}{2\pi} (\delbar \del - \del \delbar)V$ with forms that all are closed and $A$ is the Chern connection, we have that

	\begin{align*} 0 =&\int_X \bar \partial \left(U  , \left[\tilde Z'(A) \wedge \frac{i}{2\pi}\del_A  V\right]_{\mathrm{sym}}\right) \\
		=& \int_X  \left(\delbar_A U  , \left[\tilde Z'(A) \wedge \frac{i}{2\pi}\del_A  V\right]_{\mathrm{sym}}\right) + \int_X  \left( U  , \left[\tilde Z'(A) \wedge \frac{i}{2\pi}\delbar_A \del_A  V\right]_{\mathrm{sym}}\right) .
	\end{align*} 
	Similarly  
	$$\int_X  \left(\partial_A U  ,  \left[\tilde Z'(A) \wedge \frac{i}{2\pi}\delbar_A  V\right]_{\mathrm{sym}}\right) = - \int_X  \left( U  ,  \left[\tilde Z'(A) \wedge \frac{i}{2\pi}\del_A \delbar_A  V\right]_{\mathrm{sym}}\right).$$ 
	Repeating this procedure again and using that
	$$
	\left( U  ,  \left[\tilde Z'(A) \wedge \frac{i}{2\pi}(\delbar_A \del_A -\del_A \delbar_A )V\right]_{\mathrm{sym}}\right) = \left(  (\delbar_A \del_A -\del_A \delbar_A )V ,  \left[\tilde Z'(A) \wedge \frac{i}{2\pi}U \right]_{\mathrm{sym}}\right)
	$$
	since $(A,B) = \tr(AB)$ on Hermitian endomorphisms, produces the desired Equation \eqref{equation:adjointness}. \end{proof}

\section{Asymptotic $Z$-stability of $Z$-critical vector bundles}\label{sec:leunganalogue}

This section proves one direction of our main theorem: the existence of $Z$-critical connections on a sufficiently smooth holomorphic vector bundle $E$ over a compact K\"ahler manifold $(X,k\omega)$ for all $k \gg 0$ implies asymptotic $Z$-stability. We follow a general strategy for gauge-theoretic equations, and use the basic principle that curvature decreases in subbundles and increases in quotients. 

It will be convenient to rephrase the $Z$-critical equation in terms of the parameter $\oldepsilon = 1/k$ in the following.  We then recall that our definition of asymptotic $Z$-stability asks that for all holomorphic subbundles $S \subset E$, for $0 < \oldepsilon \ll 1$ we have strict inequality $$\phi_{\oldepsilon}(S) < \phi_{\oldepsilon}(E).$$ Our restriction to \emph{subbundles} rather than \emph{subsheaves} is natural given our hypothesis that $E$ is sufficiently smooth, and will arise naturally in the analysis to follow.

In our new notation, using $\oldepsilon$ rather than $k$, our central charge takes a slightly different form. Namely, define 
$$\ch^{\oldepsilon}(E) = \sum_{j=0}^n \oldepsilon^j \ch_j(E),\quad U^{\oldepsilon} = \sum_{j=0}^n \oldepsilon^j U_j$$
where we recall $U_j$ refers to the degree $2j$ component of $U$. The central charge associated to the input $(\omega, \rho, U)$ may then be written in terms of $\oldepsilon$ as
$$Z_{\oldepsilon}(E) = \int_X \sum_{d=0}^n \rho_{n-d} \omega^{n-d} \cdot \ch^{\oldepsilon}(E) \cdot U^{\oldepsilon},$$
and noting that we have $$\frac{1}{\oldepsilon^n} Z_{\oldepsilon}(E) = Z_k(E).$$ The $Z$-critical equation is simply produced as before, and we see a solution to the $Z_k$-critical equation is equivalent to a solution to the $Z_{\oldepsilon}$-critical equation at $\oldepsilon=1/k$.

\begin{remark}
	In \cref{sec:perturbation} we will make the different substitution $\varepsilon^2 = 1/k$, which will avoid the introduction of fractional powers in that argument, but is not necessary here. To emphasise the differences in parameters, we use the notation $\varepsilon^2 = \oldepsilon$.
\end{remark}

It will be more convenient to use Hermitian metrics rather than connections; the two formalisms are equivalent, so this makes no substantial difference.  In addition to assuming that the vector bundle $E$ admits $Z_{\oldepsilon}$-critical metrics $h_{\oldepsilon}$ for all $0 < \oldepsilon \ll 1$, we will assume that these metrics $h_{\oldepsilon}$ are uniformly bounded as tensors in the $C^2$-norm. This is again justified by our assumption that $E$ is sufficiently smooth: in \cref{sec:perturbation}, where we prove the reverse direction that asymptotic $Z$-stability implies the existence of $Z_{\oldepsilon}$-critical connections for all $0 < \oldepsilon \ll 1$, the metric $h_{\oldepsilon}$ will be constructed as perturbations of the Hermite--Einstein metric on the bundle $\Gr(E)$, and hence in this situation boundedness in $C^{\infty}$ even holds. The reason we employ the $C^2$-norm is the following.

\begin{lemma}\label{lemma:bounds} Suppose $h_{\oldepsilon}$ is a family of Hermitian metrics on a holomorphic vector bundle $E$ with uniformly bounded $C^2$-norm, and suppose $S \subset E$ is a holomorphic subbundle. Then the second fundamental forms $\gamma_{\oldepsilon}$ and the curvature forms $F_{\oldepsilon}$ are uniformly bounded in $C^1$ and $C^0$ respectively.
\end{lemma}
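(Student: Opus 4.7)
The plan is to express both the second fundamental form $\gamma_{\oldepsilon}$ and the curvature form $F_{h_{\oldepsilon}}$ as explicit differential expressions in $h_{\oldepsilon}$, and then simply count derivatives. The key observation is that the $h_{\oldepsilon}$-orthogonal projection $\pi_{\oldepsilon}: E \to S$ depends \emph{algebraically} on $h_{\oldepsilon}$: fixing any smooth reference metric $h_0$ and writing $h_{\oldepsilon} = h_0 H_{\oldepsilon}$ for positive-definite Hermitian endomorphisms $H_{\oldepsilon}$, the projection $\pi_{\oldepsilon}$ is produced from $H_{\oldepsilon}$ and the inclusion $S \hookrightarrow E$ by purely algebraic operations (restriction, inverse, composition) involving no derivatives of $h_{\oldepsilon}$. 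Consequently $\|\pi_{\oldepsilon}\|_{C^k} \leq C \|h_{\oldepsilon}\|_{C^k}$, uniformly in $\oldepsilon$, so long as the $h_{\oldepsilon}$ remain uniformly positive-definite.

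For the second fundamental form I would use the standard identification $\gamma_{\oldepsilon} = \db \pi_{\oldepsilon}$, viewed as a section of $\Omega^{0,1}(\Hom(S, E/S))$. Since this involves a single derivative of $\pi_{\oldepsilon}$, the $C^1$-norm of $\gamma_{\oldepsilon}$ is controlled by the $C^2$-norm of $\pi_{\oldepsilon}$, hence by $\|h_{\oldepsilon}\|_{C^2}$, which is uniformly bounded by hypothesis. The desired uniform $C^1$-bound on $\gamma_{\oldepsilon}$ follows immediately.

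For the curvature, in any local holomorphic frame of $E$ the Chern curvature takes the form $F_{h_{\oldepsilon}} = \db(h_{\oldepsilon}^{-1} \del h_{\oldepsilon})$, a polynomial expression in $h_{\oldepsilon}$, $h_{\oldepsilon}^{-1}$ and the first and second derivatives of $h_{\oldepsilon}$. Thus $\|F_{h_{\oldepsilon}}\|_{C^0}$ is controlled uniformly by $\|h_{\oldepsilon}\|_{C^2}$.

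The only mild point requiring attention, in either step, is securing a uniform lower bound on $h_{\oldepsilon}$ in order to control the inverses $h_{\oldepsilon}^{-1}$ (the $C^2$-bound alone does not exclude degeneration); I expect this to be the main, but rather minor, obstacle. In the setting of \cref{sec:perturbation} this is automatic, since the $h_{\oldepsilon}$ are constructed there as small perturbations of a fixed Hermite--Einstein metric on $\Gr(E)$, and hence stay uniformly positive-definite on the compact manifold $X$.
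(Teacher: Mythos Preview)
Your proposal is correct and follows essentially the same derivative-counting argument as the paper. The paper bounds the Chern connection $A_{\oldepsilon} = \partial h_{\oldepsilon}\cdot h_{\oldepsilon}^{-1}$ in $C^1$ and then reads off $\gamma_{\oldepsilon}$ as the off-diagonal block of $A_{\oldepsilon}$ in the $h_{\oldepsilon}$-orthogonal splitting; you instead factor through the projection via $\gamma_{\oldepsilon} = \db\pi_{\oldepsilon}$ with $\pi_{\oldepsilon}$ algebraic in $h_{\oldepsilon}$, which is an equivalent route (the paper's block decomposition implicitly uses $\pi_{\oldepsilon}$ anyway). Your explicit remark on needing a uniform lower bound for $h_{\oldepsilon}$ to control $h_{\oldepsilon}^{-1}$ is a point the paper leaves tacit, and your justification for it in the setting of \cref{sec:perturbation} is exactly right.
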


\begin{proof} This is an immediate consequence of the definitions. Recall that locally, the Chern connection of $h_{\oldepsilon}$ is given as $$A_{\oldepsilon}  = \partial h_{\oldepsilon}\cdot h_{\oldepsilon}^{-1},$$ while the curvature is given as $$F_{\oldepsilon} = d A_{\oldepsilon} + A_{\oldepsilon}\wedge A_{\oldepsilon}.$$ Thus the Chern connections $A_{\oldepsilon}$ are uniformly bounded in $C^1$ due to the uniform $C^2$-bound on $h_{\epsilon}$, and  the curvature forms  $F_{\oldepsilon}$ are uniformly bounded in $C^0$. 

The holomorphic subbundle $S\subset E$ induces a short exact sequence

$$0 \to S \to E \to Q \to 0.$$The Hermitian metrics $h_{\oldepsilon}$ define an $\oldepsilon$-dependent orthogonal splitting of $E$ into a direct sum of complex vector bundles $$E \cong S \oplus Q,$$ where $Q = E/S$. Via this orthogonal splitting one writes the connections $A_{\oldepsilon}$ as $$A_{\oldepsilon}=\begin{pmatrix}
	A_{S, \oldepsilon} & \gamma_{\oldepsilon}\\
	-\gamma_{\oldepsilon}^* & A_{Q,{\oldepsilon}}
	\end{pmatrix}.$$ Here $$\gamma_{\oldepsilon}\in \Omega^{0,1}(X,\Hom(Q,S))$$ is, by definition, the second fundamental form of the subbundle $F$ of $E$ \cite[page 78]{griffiths-harris}. Thus the uniform $C^2$-bound on $h_{\oldepsilon}$ induces a uniform $C^1$-bound on the second fundamental forms $\gamma_{\oldepsilon}$. \end{proof}

We are now ready to prove the main result of the Section, which uses some techniques of Leung in the study of almost Hermite--Einstein metrics   and Gieseker stability \cite[Proposition 3.1]{leung1997einstein}.

\begin{theorem}\label{thm:existencestabilitysubbundles}
	Suppose $E$ is irreducible and sufficiently smooth, and for every $0< \oldepsilon\ll 1$, $E$ admits a solution $h_{\oldepsilon}$ to the $Z$-critical equation such that the metrics $h_{\oldepsilon}$ are uniformly bounded in $C^2$. Then $E$ is asymptotically $Z$-stable with respect to holomorphic subbundles.
\end{theorem}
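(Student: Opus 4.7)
The plan is to pair the $Z$-critical equation pointwise against the $h_{\oldepsilon}$-orthogonal projection onto a proper holomorphic subbundle $S\subsetneq E$, integrate, and exploit Chern--Weil identities in short exact sequences together with positivity of the second fundamental form. This follows the classical Kobayashi strategy for deducing slope semistability from the Hermite--Einstein condition, in the spirit of Leung's treatment of Gieseker stability \cite[Proposition 3.1]{leung1997einstein}.

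Fix a proper holomorphic subbundle $S\subsetneq E$. For $\oldepsilon$ small the stability vector hypothesis ensures $Z_{\oldepsilon}(S),Z_{\oldepsilon}(E)\in\HH$, so the target inequality $\phi_{\oldepsilon}(S)<\phi_{\oldepsilon}(E)$ is equivalent to $\Im(e^{-i\phi_{\oldepsilon}(E)}Z_{\oldepsilon}(S))<0$. Using $h_{\oldepsilon}$, write $E=S\oplus Q$ as an orthogonal splitting of $C^{\infty}$ vector bundles, with second fundamental form $\gamma_{\oldepsilon}\in\Omega^{0,1}(X,\Hom(Q,S))$ uniformly $C^1$-bounded by \cref{lemma:bounds}. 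Pairing the $Z$-critical equation against the $h_{\oldepsilon}$-orthogonal projection $\pi_{S,\oldepsilon}$ onto $S$ and integrating gives
$$0=\Im\!\left(e^{-i\phi_{\oldepsilon}(E)}\int_X\tr(\pi_{S,\oldepsilon}\,\tilde Z_{\oldepsilon}(E))\right).$$
Applying the Chern--Weil identity $F_E|_S=F_S-\gamma_{\oldepsilon}\wedge\gamma_{\oldepsilon}^*$ inside the curvature polynomial defining $\tilde Z_{\oldepsilon}(E)$, and using that Chern--Weil integrals are cohomological, decomposes this as
$$\int_X\tr(\pi_{S,\oldepsilon}\,\tilde Z_{\oldepsilon}(E))=Z_{\oldepsilon}(S)+R_{\oldepsilon}(\gamma_{\oldepsilon}),$$
where each term of the remainder $R_{\oldepsilon}$ contains at least one factor of $\gamma_{\oldepsilon}\wedge\gamma_{\oldepsilon}^*$. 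The target inequality thus reduces to strict positivity of $\Im(e^{-i\phi_{\oldepsilon}(E)}R_{\oldepsilon}(\gamma_{\oldepsilon}))$.

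The main obstacle is this sign analysis, which I would handle via $\oldepsilon$-expansion. The leading contribution to $R_{\oldepsilon}$ comes from the $\tilde\ch_1$-component of $\tilde Z_{\oldepsilon}(E)$ and equals, up to scaling, $-\rho_{n-1}\oldepsilon\cdot\tfrac{i}{2\pi}\int_X\tr(\gamma_{\oldepsilon}\wedge\gamma_{\oldepsilon}^*)\wedge\omega^{n-1}$; the crucial input is that $-i\tr(\gamma_{\oldepsilon}\wedge\gamma_{\oldepsilon}^*)$ is a non-negative real $(1,1)$-form, strictly positive wherever $\gamma_{\oldepsilon}\neq 0$. Combined with $\Im(\rho_{n-1}/\rho_n)>0$ from the stability vector, taking $\Im(e^{-i\phi_{\oldepsilon}(E)}\cdot)$ converts this contribution into a strictly positive multiple of $\int_X (-i)\tr(\gamma_{\oldepsilon}\wedge\gamma_{\oldepsilon}^*)\wedge\omega^{n-1}\ge 0$. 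Matching against the leading expansion $\Im(e^{-i\phi_{\oldepsilon}(E)}Z_{\oldepsilon}(S))\sim c\,\oldepsilon(\mu(S)-\mu(E))$ with $c>0$ reproduces the classical Kobayashi identity
$$\omega^{n-1}\rk(S)(\mu(S)-\mu(E))=-\frac{1}{2\pi}\int_X(-i)\tr(\gamma_{\oldepsilon}\wedge\gamma_{\oldepsilon}^*)\wedge\omega^{n-1}+O(\oldepsilon),$$
so in particular $\mu(S)\le\mu(E)$. If $\mu(S)<\mu(E)$ strictly, the $O(\oldepsilon)$-term of $\Im(e^{-i\phi_{\oldepsilon}(E)}Z_{\oldepsilon}(S))$ is strictly negative and dominates for small $\oldepsilon$, delivering $\phi_{\oldepsilon}(S)<\phi_{\oldepsilon}(E)$. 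If instead $\mu(S)=\mu(E)$, the identity forces $\int_X(-i)\tr(\gamma_{\oldepsilon}\wedge\gamma_{\oldepsilon}^*)\wedge\omega^{n-1}\to 0$ as $\oldepsilon\to 0$; together with the uniform $C^1$-bound on $\gamma_{\oldepsilon}$, Arzel\`a--Ascoli applied to $h_{\oldepsilon}$ yields a $C^1$-convergent subsequence with limit $h_{\infty}$ whose second fundamental form vanishes, exhibiting $S^{\perp_{h_{\infty}}}$ as a holomorphic complement to $S$ in $E$ and contradicting irreducibility. Hence the strict case always applies, completing the proof.
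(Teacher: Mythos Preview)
Your setup---pairing the equation against $\pi_{S,\oldepsilon}$, integrating, and decomposing via the block form of the curvature---matches the paper, and your leading-order analysis and the case $\mu(S)<\mu(E)$ are correct. The gap is in the equal-slope case $\mu(S)=\mu(E)$, which is precisely where the theorem has content: strictly slope-semistable bundles (e.g.\ the Maruyama example in \S2.1.1, or any $E$ whose graded object has more than one factor) \emph{do} have subbundles of equal slope, and the converse direction in \S4 constructs $Z$-critical connections on them. Your Arzel\`a--Ascoli argument, if valid, would force $E$ to be slope stable under the hypotheses, contradicting this. The concrete failure is that you need the limit $h_\infty$ to be a \emph{nondegenerate} metric on the fixed holomorphic bundle $(E,\db_E)$, hence uniform lower bounds on $h_{\oldepsilon}$; but the solutions in \S4 arise by gauge-transforming the product Hermite--Einstein metric on $\Gr(E)$ via automorphisms $\kappa_{\oldepsilon}$ of size $\sim\oldepsilon^{q/2}$, so viewed as metrics on $(E,\db_E)$ they degenerate, and their honest limit is a metric on $\Gr(E)$---which \emph{does} split, giving no contradiction. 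The only bounds the paper's proof actually uses (and which do transfer from \S4) are on curvature and second fundamental form, not on the metric tensor itself.

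The paper avoids any limit. It shows directly that the full remainder equals $C_{\oldepsilon}\|\gamma_{\oldepsilon}\|^2$ with $C_{\oldepsilon}>0$ for all small $\oldepsilon$, via a term-by-term analysis of every contribution $\omega^{n-j}\wedge\mathcal{T}_p\wedge\tilde U_{j-p}$ at each order $\oldepsilon^j$: terms containing a factor $\gamma_{\oldepsilon}^*\wedge\gamma_{\oldepsilon}$ are dominated by $c_{\oldepsilon}\,\oldepsilon\|\gamma_{\oldepsilon}\|^2$ with $c_{\oldepsilon}\to 0$, using only $\oldepsilon\|F_{\oldepsilon}\|\to 0$ and $\oldepsilon\|\gamma_{\oldepsilon}\|\to 0$; terms containing $d_A\gamma_{\oldepsilon}$ (which genuinely appear, since products of curvature blocks pick up off-diagonal pieces---your claim that every remainder term contains a factor $\gamma\wedge\gamma^*$ is not quite right) are integrated by parts and reduced to the first case via Bianchi; terms containing only $F_{S,\oldepsilon},F_{Q,\oldepsilon}$ are rewritten via $F_{S}=F_E|_S+\gamma\wedge\gamma^*$ and absorbed. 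Since $\|\gamma_{\oldepsilon}\|>0$ for each fixed $\oldepsilon$ by irreducibility, this yields $\Im(e^{-i\phi_{\oldepsilon}(E)}Z_{\oldepsilon}(S))=-C_{\oldepsilon}\|\gamma_{\oldepsilon}\|^2<0$ directly, regardless of whether $\mu(S)=\mu(E)$ and regardless of whether $\|\gamma_{\oldepsilon}\|$ is bounded away from zero.
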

\begin{proof} We follow the notation introduced  in  \cref{lemma:bounds}. The  $\oldepsilon$-dependent orthogonal splittings $E \cong S \oplus Q$ defined through the Hermitian metrics $h_{\oldepsilon}$ induce block matrix decompositions
	$$A=\begin{pmatrix}
	A_{S,\oldepsilon} & \gamma_{\oldepsilon}\\
	-\gamma_{\oldepsilon}^* & A_{Q,\oldepsilon}
	\end{pmatrix},\qquad F_{\oldepsilon} = \begin{pmatrix}
	F_{S, \oldepsilon} - \gamma_{\oldepsilon} \wedge \gamma_{\oldepsilon}^* & d_{A_{\oldepsilon}} \gamma_{\oldepsilon}\\
	-d_{A_{\oldepsilon}} \gamma_{\oldepsilon}^* & F_{Q,\oldepsilon} - \gamma_{\oldepsilon}^* \wedge \gamma_{\oldepsilon}
	\end{pmatrix},$$ where as above $\gamma_{\oldepsilon}$ is the second fundamental form of $F \subset E$ and $F_{\oldepsilon}$ is the curvature of $h_{\oldepsilon}$. We fix a reference Hermitian metric with which to measure the norms of the various tensors of interest; by our assumption of uniform boundedness, any of the $h_{\oldepsilon}$ will suffice.  Since we have assumed that  $E$ is irreducible, the second fundamental form is non-trivial, thus $$\|\gamma_{\oldepsilon}\| >0.$$ By the uniform boundedness obtained in \cref{lemma:bounds} we have \begin{equation}\label{eqn:uniformoldepsilonbounds}\lim_{\oldepsilon\to 0} \oldepsilon \|F_{\oldepsilon}\|_{C^0} = 0, \quad \lim_{\oldepsilon \to 0} \oldepsilon \|\gamma_{\oldepsilon}\|_{C^1} = 0.\end{equation}

	In order to show $E$ is asymptotically $Z$-stable with respect to $S$, we must show that
	$$\varphi_{\oldepsilon}(S) < \varphi_{\oldepsilon}(E)$$
	for all $0<\oldepsilon \ll 1$, where $\varphi_{\oldepsilon}(E) = \arg Z_{\oldepsilon}(E).$ This is equivalent to the inequality
	$$\Im\left( \frac{Z_{\oldepsilon} (S)}{Z_{\oldepsilon}(E)} \right) < 0,$$
	which in turn is equivalent to the inequality
	$$\Im\left( e^{-i\varphi_\oldepsilon(E)} Z_{\oldepsilon}(S) \right) < 0.$$
	Now since $h_{\oldepsilon}$ solves the $Z_{\oldepsilon}$-critical equation, we have
	$$\Im(e^{-i\varphi_{\oldepsilon}(E)} \tilde{Z}_{\oldepsilon}(h_{\oldepsilon})) = 0$$
	and so
	\begin{equation}\label{eqn:zeqnsubbundle}
	\Im(e^{-i\varphi_{\oldepsilon}(E)} \tr_S( \tilde Z_{\oldepsilon} (h_{\oldepsilon}))) = 0.
	\end{equation} Here $\tilde Z_{\oldepsilon} (h_{\oldepsilon})$ denotes $\tilde Z_{\oldepsilon}$ of the Chern connection of $h_{\oldepsilon}$. This is an $\End(E)$-valued $(n,n)$-form, which restricts to an $\End(S)$-valued $(n,n)$-form via the splitting $E \cong S \oplus Q$, and $ \tr_S( \tilde Z_{\oldepsilon} (h_{\oldepsilon}))$ is the induced $(n,n)$-form on $X$ obtained by taking trace.	

	We will argue that for sufficiently small $\oldepsilon$, there is a positive $\oldepsilon$-dependent constant $C_{\oldepsilon}$ such that
	\begin{equation}\label{eqn:wanted}\int_X \Im(e^{-i\varphi_{\oldepsilon}(E)} \tr_S( \tilde Z_{\oldepsilon} (h_{\oldepsilon}))) = \Im(e^{-i\varphi_{\oldepsilon}(E)} Z_{\oldepsilon}(S)) + C_{\oldepsilon} \|\gamma_{\oldepsilon}\|^2,\end{equation}
	which will imply the result since the left hand side vanishes by \eqref{eqn:zeqnsubbundle}. Note that the leading order term in the $Z_{\oldepsilon}$-critical equation occurs at order $\oldepsilon$, so the constant $C_{\oldepsilon}$ will have lowest order $\oldepsilon$. 
	We begin by considering the order $\oldepsilon = \oldepsilon^1$ term. By \cref{lemma:largevolume}, to leading order the $Z_{\oldepsilon}$-critical equation is given by the weak Hermite--Einstein equation. That is, there is a positive constant $c>0$ such that the leading order term of the $\oldepsilon$-expansion of the $Z_{\oldepsilon}$-critical equation is given by	
	$$c\left([\omega]^n \rk (E) \omega^{n-1} \wedge \left( \frac{i}{2\pi} F_{A_{\oldepsilon}} + \tilde U_2 \Id_E\right) - \deg_U( E)  \omega^n\otimes \Id_E\right).$$
	Thus we see the leading order $\oldepsilon^1$-term of $\Im(e^{-i\varphi_{\oldepsilon}(E)} \tr_S( \tilde Z_{\oldepsilon} (h_{\oldepsilon})))$  is given by
	\begin{align*}
	&c\left([\omega]^n \rk (E) \cdot \left( [\omega]^{n-1}. (\ch_1(S) + \rk(S) U_2)  - [\omega]^{n-1} .\frac{i}{2\pi} \tr(\gamma_{\oldepsilon} \wedge \gamma_{\oldepsilon}^*)\right) \right. \\&\quad \left. \phantom{\frac{i}{2\pi}} - \deg_U(E) \rk(S)  [\omega]^n\right) \\
	&= \Im(e^{-i\varphi_{\oldepsilon}(E)} Z_{\oldepsilon}(S))^1 -  c\frac{i}{2\pi}  (\rk E) [\omega]^n \cdot [\omega]^{n-1} . [\tr(\gamma_{\oldepsilon} \wedge \gamma_{\oldepsilon}^*)]\\
	&= \Im(e^{-i\varphi_{\oldepsilon}(E)} Z_{\oldepsilon}(S))^1 + C_1 \|\gamma_{\oldepsilon}\|^2.
	\end{align*} Here we have used the fact that, for some positive constant $C_1$
	$$c\omega^{n-1} \wedge \tr(\gamma_{\oldepsilon} \wedge \gamma_{\oldepsilon}^*) = -2\pi iC_1 |\gamma_{\oldepsilon}|^2 \omega^n,$$ and have written $\Im(e^{-i\varphi_{\oldepsilon}(E)} Z_{\oldepsilon}(S))^1$ to denote the $\oldepsilon^1$ term in the expansion. This is precisely the desired Equation \eqref{eqn:wanted} to leading order $\oldepsilon$, where we observe that, $C_{\oldepsilon} = \oldepsilon C_1 + O(\oldepsilon^2)$.  What we have crucially used here is that  $c>0$, which from \cref{lemma:largevolume} follows from our assumption that $\Im(\rho_{n-1}/\rho_n) > 0,$ the crucial \emph{stability vector assumption} on our central charge $Z$. Curiously, at higher order in $\oldepsilon$ the lower order inequalities $\Im (\rho_{i-1}/\rho_i) > 0$ do not come into the argument, again reinforcing the observation that we only require this leading inequality in our work.
	
	We will now argue that at each higher order $\oldepsilon^{j}$, we obtain a similar expansion. Each of the terms appearing in the $Z$-critical equation at order $\oldepsilon^j$ involve differential forms of the form
	\begin{equation}\label{eqn:termform} C \oldepsilon^j\omega^{n-j} \wedge F_{A_{\oldepsilon}}^p \wedge \tilde U_{j-p}\end{equation}
	for $p$ possibly between $0$ and $j$ and $C$ some $\oldepsilon$-independent constant. 
	
	First let us note that if $p=0$, then this term is independent of the subbundle $S$ and is unaffected by our taking the $\tr_S$ in \eqref{eqn:zeqnsubbundle}, and so after integrating is absorbed by $\Im(e^{-i\varphi_{\oldepsilon}(E)} Z_{\oldepsilon}(S))^j$ on the right-hand side of \eqref{eqn:wanted}, such terms being the same whether we take $Z_{\oldepsilon}(S)$ or $Z_{\oldepsilon}(E)$ in this expression.

	Now if $p>0$, we need to understand the block matrix decomposition of a product of curvature terms
	\begin{align*}
	&\underbrace{F_{A_{\oldepsilon}} \wedge \cdots \wedge F_{A_{\oldepsilon}}}_{p \text{ times}}\\
	&= \begin{pmatrix}
	F_{S,{\oldepsilon}} - \gamma_{\oldepsilon} \wedge \gamma_{\oldepsilon}^* & d_A \gamma_{\oldepsilon}\\
	-d_A \gamma_{\oldepsilon}^* & F_{Q,{\oldepsilon}} - \gamma_{\oldepsilon}^* \wedge \gamma_{\oldepsilon}
	\end{pmatrix} \underbrace{\wedge \cdots \wedge}_{p \text{ times}} \begin{pmatrix}
	F_{S,{\oldepsilon}} - \gamma_{\oldepsilon} \wedge \gamma_{\oldepsilon}^* & d_A \gamma_{\oldepsilon}\\
	-d_A \gamma_{\oldepsilon}^* & F_{Q,{\oldepsilon}} - \gamma_{\oldepsilon}^* \wedge \gamma_{\oldepsilon}
	\end{pmatrix}.
	\end{align*}
	We will be interested in the $\tr_S$ of the terms that appear in the top left block of this matrix decomposition. This will in general involve a term of the form 
	$$F_{S_{\oldepsilon}} \underbrace{\wedge \cdots \wedge}_{p \text{ times}} F_{S,{\oldepsilon}},$$
	which after taking $\tr_S$ and wedging with the differential forms as in \eqref{eqn:termform} and integrating, gives the required factor for $\Im (e^{-i\varphi_{\oldepsilon}} Z_{\oldepsilon}(S))^j$ in \eqref{eqn:wanted}, which is
	$$ \int_X C \oldepsilon^j\omega^{n-j} \wedge \tr_S F_{S_{\oldepsilon}}^p \wedge \tilde U_{j-p}.$$
	In addition to this desired term, we will also obtain other terms all involving at least one of 
	\begin{equation} \label{eqn:list} \gamma_{\oldepsilon}^* \wedge \gamma_{\oldepsilon},\quad  \gamma_{\oldepsilon} \wedge \gamma_{\oldepsilon}^*,\quad  d_{A_{\oldepsilon}} \gamma_{\oldepsilon},\quad  d_{A_{\oldepsilon}} \gamma_{\oldepsilon}^*,\quad F_{Q,{\oldepsilon}}\end{equation} 
	and also some possible factors of $F_{S,{\oldepsilon}}$. We wish to show that no matter what product we obtain containing these terms, the corresponding form can be absorbed by the factor $C_{\oldepsilon} \|\gamma_{\oldepsilon} \|^2 \omega^n$ in \eqref{eqn:wanted} after taking a trace and integrating.
	
	At order $\oldepsilon^j$ with a differential form of the form \eqref{eqn:termform}, our curvature component consists of a product of $p$ terms in the list \eqref{eqn:list} given above, with $p$ at most $j$. Following Leung's notation, let us call such a product $\caT_p$, so our form of interest is
	$$\omega^{n-d} \wedge \caT_p \wedge \tilde U_{j-p}.$$
	We will show that provided $\oldepsilon$ is sufficiently small, any such form is much smaller in norm than $C_1 \oldepsilon \|\gamma_{\oldepsilon} \|^2$ appearing in \eqref{eqn:wanted}, and therefore can be absorbed into this term whilst preserving the positivity of $C_{\oldepsilon} \|\gamma\|^2$. There are three cases to consider.
	
	\begin{enumerate}
		\item \emph{$\caT_p$ containing $\gamma_{\oldepsilon}^* \wedge \gamma_{\oldepsilon}$ or $\gamma_{\oldepsilon} \wedge \gamma_{\oldepsilon}^*$:}
		
		First notice that by the uniform estimates \eqref{eqn:uniformoldepsilonbounds}, $\oldepsilon$ times any term in the list \eqref{eqn:list}, or a term $F_{S,\oldepsilon}, F_{Q,\oldepsilon}$ tends to zero as $\oldepsilon\to 0$. Thus if we have a term of the form $\gamma_{\oldepsilon}^* \wedge \gamma_{\oldepsilon}$ or $\gamma_{\oldepsilon} \wedge \gamma_{\oldepsilon}^*$ in our product $\caT_p$, we have an expression
		$$\pm \oldepsilon^j \omega^{n-j} \wedge \caT'_{p-1} \wedge \tilde U_{j-p} \wedge \gamma_{\oldepsilon}^* \wedge \gamma_{\oldepsilon}$$
		where $\caT'_{p-1}$ consists of the remaining $p-1$ factors in $\caT_p$, and the sign depends on the order of $\gamma_{\oldepsilon}$ or $\gamma_{\oldepsilon}^*$ in our wedge term. But we can rewrite this as
		$$\pm  \omega^{n-j} \wedge \oldepsilon^{p-1} \caT'_{p-1} \wedge  \oldepsilon^{j-p} \tilde U_{j-p} \wedge \oldepsilon  \gamma_{\oldepsilon}^* \wedge \gamma_{\oldepsilon}.$$
		Then by our initial observation, $\oldepsilon^{p-1} \caT'_{p-1}$ tends to zero as $\oldepsilon\to 0$, so for $\oldepsilon$ taken sufficiently small we can estimate (after taking trace and integrating over $X$),
		$$\|\omega^{n-j} \wedge \caT_p \wedge \tilde U_{j-p}\| \le  c_{\oldepsilon} \oldepsilon\|\gamma_{\oldepsilon}\|^2$$
		where the constant $c_{\oldepsilon}$ depending on our factor $\oldepsilon^{p-1} \caT'_{p-1}$ is as small as we like provided we take $\oldepsilon$ sufficiently small. Such a term is therefore small in norm compared to $C_1 \oldepsilon \|\gamma_{\oldepsilon}\|^2$ for $\oldepsilon$ sufficiently small, as desired.
		\item \emph{$\caT_p$ containing $d_{A_{\oldepsilon}} \gamma_{\oldepsilon}$ or $d_{A_{\oldepsilon}} \gamma_{\oldepsilon}^*$:}
		
		If there is no $\gamma_{\oldepsilon}^* \wedge \gamma_{\oldepsilon}$ or $\gamma_{\oldepsilon} \wedge \gamma_{\oldepsilon}^*$ term in the product $\caT_p$, but there is a term of the form $d_{A_{\oldepsilon}} \gamma_{\oldepsilon}$ or $d_{A_{\oldepsilon}} \gamma_{\oldepsilon}^*$, then we may integrate by parts when computing \eqref{eqn:wanted}.  This shifts $d_{A_{\oldepsilon}}$ on to the other terms appearing in the differential form
		$$\omega^{n-j} \wedge \caT_p \wedge \tilde U_{j-p}.$$
		Using the Leibniz rule for the exterior covariant derivative $d_{A_{\oldepsilon}}$, we can deal with each possibility in cases. If $d_{A_{\oldepsilon}}$ is applied to a term of the form $F_S$ or $F_Q$ after integrating parts, or a form $\tilde U_{j-p}$ or $\omega^{n-j}$, then this will vanish by the Bianchi identity $d_{A_{\oldepsilon}} F_{\oldepsilon} = 0$ or closedness of $\omega$ and $\tilde U$. 
		
		Thus the only non-vanishing possibilities occur if, after integrating by parts, the $d_{A_{\oldepsilon}}$ is applied to a term of the form  $d_{A_{\oldepsilon}} \gamma_{\oldepsilon}$ or $d_{A_{\oldepsilon}} \gamma_{\oldepsilon}^*$. Now we recall that in fact $d_{A_{\oldepsilon}} \gamma = \del_{A_{\oldepsilon}} \gamma$ and similarly $d_{A_{\oldepsilon}} \gamma^* = \db_{A_{\oldepsilon}} \gamma^*$. Thus, for example, if we started with $\del_{A_{\oldepsilon}} \gamma$ and our product $\caT_p$ contains another term $\del_{A_{\oldepsilon}} \gamma$, after integrating by parts we would obtain $\gamma \wedge \del_{A_{\oldepsilon}}^2 \gamma = 0$, and similarly for when we have $\db_{A_{\oldepsilon}} \gamma^*$. Thus we reduce just to the case where we have a factor $\gamma \wedge \del_{A_{\oldepsilon}} \db_{A_{\oldepsilon}}\gamma^*$ or $\gamma^* \wedge \db_{A_{\oldepsilon}} \del_{A_{\oldepsilon}}\gamma$ in our product after integrating by parts.
		
		Using the fact that $\db_{A_{\oldepsilon}} \gamma_{\oldepsilon} = \del_{A_{\oldepsilon}} \gamma_{\oldepsilon}^* = 0$, and that $F_{A_{\oldepsilon}} \wedge \gamma_{\oldepsilon}= d_A^2 \gamma_{\oldepsilon} = (\del_{A_{\oldepsilon}} \db_{A_{\oldepsilon}} + \db_{A_{\oldepsilon}} \del_{A_{\oldepsilon}})  \gamma_{\oldepsilon}$, we will therefore obtain more curvature terms times terms involving $\gamma_{\oldepsilon}^* \wedge \gamma_{\oldepsilon}$ or $\gamma_{\oldepsilon} \wedge \gamma_{\oldepsilon}^*$. This lands us in the previous situation above, which we have already dealt with, so we are done in this case.
		
		\item \emph{$\caT_p$ not containing a term with a $\gamma_{\oldepsilon}$:}
		
		When there is no $\gamma_{\oldepsilon}$ term appearing, from the block matrix decomposition of the curvature form we see the only non-zero terms which can appear are $p$-fold products of $F_{S,\oldepsilon}$ or $F_{Q,\oldepsilon}$. The latter have vanishing $\tr_S$ and the former terms were already accounted for, being the term which contributes to $\Im(e^{-i\varphi_\oldepsilon} Z_\oldepsilon (S))^j$. 
	\end{enumerate}
	
	This shows that every form with curvature part given by a product $\caT_p$ is very small in norm relative to $C_1 \oldepsilon \|\gamma\|^2$ provided we choose $\oldepsilon>0$ small enough. Thus \eqref{eqn:wanted} holds and since the left hand side is zero, for such small $\oldepsilon$ we have
	$$\Im(e^{-i\varphi_{\oldepsilon}(E)} Z_{\oldepsilon}(S)) < 0$$
	so $E$ is asymptotically $Z$-stable with respect to the subbundle $S$. 
\end{proof}

\begin{remark}\label{leung-errors} Our proof is a variant of a result of Leung, where he shows that the existence of ``almost Hermite--Einstein metrics'' implies Gieseker stability with respect to subbundles \cite[Proposition 3.1]{leung1997einstein}. Much like asymptotic $Z$-stability, Gieseker stability requires an inequality of topological quantities associated to subbundles and subsheaves. Leung further states that the existence of almost Hermite--Einstein metrics actually implies Gieseker stability in general, with respect to not only subbundles but also subsheaves. Leung's proof of this statement relies on the claim that if $$f: S \to Q_0$$ is a morphism between coherent sheaves such that $S$ is Gieseker stable, $Q_0$ is slope stable and $S, Q_0$ have the same slope, then $f$ must be zero or an isomorphism \cite[page 530]{leung1997einstein}. It follows from general theory that such an $f$ must be zero or surjective, but it seems likely to the authors that it could happen that $f$ is surjective but not injective. In any case, there is no analogous property for asymptotic $Z$-stability, leading us to restrict ourselves to stability with respect to subbundles, which is the situation relevant to sufficiently smooth vector bundles.
\end{remark}

\section{Existence of $Z$-critical connections on $Z$-stable bundles}\label{sec:perturbation}

We consider a holomorphic vector bundle $E$ over a compact K\"ahler manifold $(X,\omega)$, together with a polynomial central charge $Z$. We are now in a position to prove our main result.

\begin{restatable}{theorem}{intromainthm}\label{thm:intromainthm} A simple, sufficiently smooth holomorphic vector bundle $E$ admits uniformly bounded $Z_k$-critical connections for all $k \gg 0$ if and only if it is asymptotically $Z$-stable. \end{restatable}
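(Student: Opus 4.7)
One direction, that existence of uniformly bounded $Z_k$-critical connections implies asymptotic $Z$-stability, is the content of \cref{thm:existencestabilitysubbundles}. It remains to prove the converse: asymptotic $Z$-stability of a simple, sufficiently smooth bundle $E$ produces $Z_k$-critical connections for all $k \gg 0$. The plan is to substitute $\varepsilon^2 = 1/k$, so that the $Z_k$-critical equation becomes a family of equations parameterised by small $\varepsilon > 0$ whose leading-order term is the weak Hermite--Einstein equation by \cref{lemma:largevolume}, and then to perturb from a central fibre built out of the polystable degeneration of $E$.

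To prepare the central fibre: by \cref{lemma:slopesemistable} and \cref{cor:polystabledegen}, $E$ is slope semistable and admits a Jordan--H\"older filtration with associated graded $\Gr(E) = \bigoplus_{j=1}^{l} G_j$, and by sufficient smoothness this is a holomorphic vector bundle, so Donaldson--Uhlenbeck--Yau produces a Hermite--Einstein metric $h_0$ on $\Gr(E)$. Using the second fundamental forms associated with the filtration, I would next set up a family of integrable Dolbeault operators $\bar\partial_\varepsilon$ on the underlying smooth bundle such that $\bar\partial_0$ recovers $\Gr(E)$ and each $\bar\partial_\varepsilon$ with $\varepsilon > 0$ is holomorphically isomorphic to $E$. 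Crucially, I would introduce independent rescaling parameters for the extension classes between different pairs of graded pieces, yielding a multi-parameter family whose parameters will be tuned to kill obstructions in the inductive construction.

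The main step is to construct approximate $Z_\varepsilon$-critical metrics of the form $h_\varepsilon = \exp(\psi_\varepsilon) \cdot h_0$ by formally expanding $\psi_\varepsilon = \sum_{m \geq 1} \varepsilon^m \psi_m$ and solving order by order. By \cref{lemma:linearisation}, the leading linearised operator is the bundle Laplacian $\Delta_0$ on Hermitian sections of $\End(\Gr(E))$, whose kernel is the space of parallel Hermitian endomorphisms; this can be strictly larger than the $\R^l$ spanned by projections onto summands when some $G_j$ are isomorphic. At each order $m$ one must solve $\Delta_0 \psi_m = R_m$, where $R_m$ depends on $\psi_1, \ldots, \psi_{m-1}$ and the rescaling parameters. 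The $L^2$-orthogonal part of $R_m$ is solved by elliptic regularity; the essential obstruction is its $\ker \Delta_0$ component. I would then carry out a trace-and-pair calculation in the spirit of \cref{thm:existencestabilitysubbundles} showing that the pairing of this obstruction with the projector onto a sub-sum $\bigoplus_{j \in I} G_j$ equals, up to a positive factor, the coefficient at the relevant order of $\Im(e^{-i\varphi_\varepsilon(E)} Z_\varepsilon(F_I))$ for the corresponding subbundle $F_I \subset E$. The strict asymptotic $Z$-stability inequalities then let one tune the rescaling parameters, in a hierarchy matching the varying ``discrepancy orders'' of different subbundles, to kill the obstruction at each step; simplicity of $E$ absorbs the residual scalar obstruction into the topological constant $\varphi_\varepsilon(E)$.

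With an approximate solution of error $O(\varepsilon^N)$ for arbitrarily large $N$ in hand, the final step is a quantitative inverse function theorem. One needs uniform control on a right-inverse to the linearised operator $P_\varepsilon$, whose smallest eigenvalues degenerate as $\varepsilon \to 0$ at a controlled polynomial rate; combining this with bounds on the quadratic remainder of the nonlinearity and taking $N$ large enough, the contraction mapping principle yields a genuine solution. This last step is the main technical obstacle: the balance between approximation order, blow-up rate of the right-inverse, and nonlinear estimate is substantially more delicate than in the two-component case, both because $\ker \Delta_0$ may be genuinely larger than the number of graded pieces and because different subbundles become stable at different discrepancy orders, forcing the induction and the contraction estimate to be stratified accordingly. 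Uniform $C^2$-boundedness of $h_\varepsilon$ is automatic, since $h_\varepsilon$ is close in every $C^k$-norm to $h_0$.
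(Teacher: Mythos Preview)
Your outline captures the correct overall architecture—perturb from the polystable degeneration, kill obstructions living in the kernel of the graded Laplacian using the rescaling parameters, then close with a quantitative inverse function theorem—and you have correctly flagged the two genuine difficulties (isomorphic graded pieces enlarging $\ker\Delta_0$, and subbundles destabilising at different discrepancy orders). However, the paper does \emph{not} work directly from $\Gr(E)=\bigoplus G_j$ with its Hermite--Einstein metric and attempt to solve order by order as you propose. Instead it introduces an induction on the \emph{maximal discrepancy order} $q=\discrep(E)$: one first constructs (\cref{inductive-construction}) a coarser filtration $0\subset S_1\subset\cdots\subset S_j\subset E$ whose successive quotients $Q_i$ are simple, asymptotically $Z$-stable with respect to $\mathcal I(Q_i)$, and have strictly smaller maximal discrepancy order than $E$. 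By the inductive hypothesis each $Q_i$ already carries a $Z_\varepsilon$-critical connection, and one deforms $E$ from $\bigoplus_i Q_i$ equipped with the \emph{product} of these connections, not from $\Gr(E)$ with its Hermite--Einstein metric.

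This reorganisation is what makes your ``hierarchy matching the varying discrepancy orders'' concrete and tractable. With it, all obstructions concentrate at the single order $\varepsilon^{2q}$: one first produces \emph{generalised solutions} satisfying $D_\varepsilon=\sum_i c_i\,\Id_{Q_i}\varepsilon^{2q}$ exactly (\cref{prop:gensolns}), and only afterwards solves a finite system in the deformation parameters $\kappa$ to force $c_i=0$; the positivity of the solution to that system is precisely where asymptotic $Z$-stability and \cref{prop:abs-critical} enter. The required operator bound $\|P_\varepsilon s\|\ge C\varepsilon^{2q}\|s\|$ (\cref{prop:bound-on-linearised}) is likewise proved by induction on the number of $Q_i$-components, reducing via three structural cases to smaller irreducible pieces. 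Your direct approach from $\Gr(E)$ would have to confront obstructions and degenerating eigenvalues simultaneously at \emph{several} distinct orders in $\varepsilon$, with the rescaling parameters entangled across orders; the paper's inductive packaging is exactly the device that decouples these.
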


It is equivalent to solve the $Z$-critical equation viewed as a partial differential equation on the space of Hermitian metrics, or as a partial differential equation on the space of connections inducing the fixed holomorphic structure. With this in mind, as in \cref{lemma:bounds}, uniform boundedness means that the Hermitian metrics  $h_{\oldepsilon}$ providing solutions of the $Z_k$-critical equation, with $\oldepsilon = 1/k$, are uniformly bounded in $C^2$. In \cref{sec:leunganalogue} we showed that existence of such solutions implies asymptotic $Z$-stability, so what remains is to prove the following:

\begin{theorem}
\label{thm:stabilityimpliesexistence}
	Suppose $E$ is a is asymptotically $Z$-stable and sufficiently smooth. Then for all $k\gg 0$, $E$ admits a $Z_k$-critical connection. 
\end{theorem}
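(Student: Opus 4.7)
The plan is to follow the overall scheme outlined in the introduction, which is itself a refinement of the strategy of Sektnan--Tipler and, ultimately, of Leung. Since $E$ is sufficiently smooth and asymptotically $Z$-stable, by \cref{lemma:slopesemistable} and \cref{cor:polystabledegen} the associated graded $\Gr(E) = \bigoplus_{j=1}^{\ell} G_j$ is a slope polystable holomorphic vector bundle with the same slope as $E$, and hence by the Donaldson--Uhlenbeck--Yau theorem it admits a Hermite--Einstein metric $h_0$. The first step is to regard $E$ as a smoothing of $\Gr(E)$: fix a smooth isomorphism of underlying complex vector bundles $E \cong \Gr(E)$, under which the Dolbeault operator of $E$ takes the form $\bar\partial_0 + \gamma$ for a strictly upper-triangular endomorphism-valued $(0,1)$-form $\gamma$ encoding the extension data of the Jordan--H\"older filtration. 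Setting $\varepsilon^2 = 1/k$, our ambient family of complex structures will be $\bar\partial_\varepsilon := \bar\partial_0 + f(\varepsilon)\gamma$ for a real-analytic scalar $f(\varepsilon)$ to be chosen, together with further gauge-theoretic deformations by kernel elements of the leading-order linearisation; the role of $f$ and these extra parameters is to compensate for the non-invertibility of the leading-order operator, which is the key difficulty.

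The second step is to construct approximate solutions. Writing the $Z_\varepsilon$-critical operator as $D_\varepsilon$ and using \cref{lemma:largevolume}, we have $D_\varepsilon = c\varepsilon \,\mathrm{HE} + O(\varepsilon^3)$ where $\mathrm{HE}$ denotes the Hermite--Einstein operator, and \cref{lemma:linearisation} shows that the linearisation $P_\varepsilon$ at $h_0$ has the form $P_\varepsilon = c\varepsilon\,\Lap_{h_0} + O(\varepsilon^3)$ on the nose. The kernel $\mathcal{K} \subset \Gamma(\End \Gr(E))$ of $\Lap_{h_0}$ consists of parallel endomorphisms; because the $G_j$ may be isomorphic, $\mathcal{K}$ is generally larger than the $\ell$-dimensional algebra of block-diagonal scalings. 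I would solve iteratively: suppose $h_{\varepsilon,N}$ has been constructed so that $D_\varepsilon(h_{\varepsilon,N}) = O(\varepsilon^{N+1})$. The obstruction to the next step lies in $\mathcal{K}$ after projecting away from the image of $\Lap_{h_0}$; using the trace identity of \cref{lemma:dhymtracezero} and the self-adjointness of the linearisation, this obstruction pairs against elements of $\mathcal{K}$. Asymptotic $Z$-stability enters precisely here: the pairing of the $\mathcal{K}$-projected obstruction against a destabilising direction can be identified, modulo positive constants coming from the stability vector hypothesis on $\rho$, with the imaginary part of $e^{-i\varphi_\varepsilon(E)}Z_\varepsilon(S)$ for appropriate subbundles $S$, which is negative by asymptotic $Z$-stability. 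One then absorbs the obstruction by tuning both $f(\varepsilon)$ (the rate of degeneration) and an additional $\mathcal{K}$-valued modification of the complex structure, exactly matching the number of free parameters to the dimension of $\mathcal{K}$. This is the subtle point where the \emph{discrepancy order} for different subbundles must be tracked: each irreducible subrepresentation of $\mathcal{K}$ corresponds to a distinguished subbundle (or subquotient) of $E$, and its strict $Z$-stability inequality provides positivity at some order $\varepsilon^{d_S}$; the induction must proceed component-by-component and order-by-order in a way that is compatible with these varying discrepancy orders.

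The third step is to pass from an approximate solution $h_{\varepsilon,N}$ to an honest solution. Here I would apply a quantitative inverse function theorem of the standard type: if $\|D_\varepsilon(h_{\varepsilon,N})\|$ is small enough relative to the operator norm of the right inverse of $P_\varepsilon$ restricted to $\mathcal{K}^\perp$, then Banach's fixed point theorem produces a genuine zero nearby. Ellipticity of $D_\varepsilon$ for $\varepsilon$ small, from \cref{lemma:linearisation} or \cref{lemma:largevolumeelliptic}, gives standard Schauder estimates; the delicate input is a uniform (in $\varepsilon$) bound on the right inverse of $P_\varepsilon$ modulo $\mathcal{K}$, which follows from the fact that the next non-zero eigenvalue of $\Lap_{h_0}$ is bounded below and a careful perturbation argument in $\varepsilon$. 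Once these pieces are assembled, $N$ can be taken large enough that the nonlinear error term $D_\varepsilon(h_{\varepsilon,N} + u) - D_\varepsilon(h_{\varepsilon,N}) - P_\varepsilon u$ is controlled by $\|u\|^2$ and the iteration closes, giving a solution $h_\varepsilon$ on $E$ itself for all $\varepsilon$ sufficiently small. Combined with \cref{thm:existencestabilitysubbundles}, this proves \cref{thm:intromainthm}.

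I expect the principal obstacle to be not any single analytic step but the combinatorics of the induction when $\Gr(E)$ has many, possibly isomorphic, components with different discrepancy orders: ensuring that at each order in $\varepsilon$ the obstruction is cancellable by a parameter we still have available, and that the chosen cancellation does not re-introduce an error at a previously controlled order, requires the careful bookkeeping alluded to in the introduction and is where the argument genuinely goes beyond the two-component case handled in Leung's original work.
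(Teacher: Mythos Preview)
Your outline captures the broad architecture correctly, but there is a structural gap in how you set up the perturbation that the paper resolves in a substantially different way.

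You propose to start from the Hermite--Einstein metric on $\Gr(E)=\bigoplus_j G_j$ and perturb, using a scalar rate $f(\varepsilon)$ together with ``$\mathcal K$-valued modifications'' to kill the obstructions landing in $\ker\Delta_{h_0}$. This is essentially the approach the introduction attributes to Leung and flags as incomplete. Two concrete problems arise. First, your claimed uniform bound on the right inverse of $P_\varepsilon$ modulo $\mathcal K$ is not what is needed: on $\mathcal K/\langle\Id_E\rangle$ the operator genuinely degenerates, and one must prove a quantitative lower bound of the form $\|P_\varepsilon(s)\|\ge C\varepsilon^{2q}\|s\|$ on \emph{all} of $\langle\Id_E\rangle^\perp$, where $q=\discrep(E)$; see \cref{prop:bound-on-linearised}. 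Establishing this rate requires knowing how $P_\varepsilon$ acts on each direction of $\mathcal K$ to the \emph{next} nonvanishing order, and when components of $\Gr(E)$ are isomorphic or have differing discrepancy orders the combinatorics you allude to at the end cannot be handled by a single scalar $f$ plus an unspecified $\mathcal K$-deformation. Second, your parameter-matching step (``exactly matching the number of free parameters to the dimension of $\mathcal K$'') is not made precise, and in fact cannot be in this setup: the number of independent scaling parameters in $\Aut(\Gr(E))$ equals the number of components, which is generally strictly smaller than $\dim\mathcal K$ when components repeat.

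The paper sidesteps both issues by an induction on the maximal discrepancy order rather than by working directly with $\Gr(E)$. The key construction is \cref{inductive-construction}: a filtration $0\subset S_1\subset\cdots\subset S_j\subset E$ whose successive quotients $Q_i$ are simple, asymptotically $Z$-stable with respect to $\I(Q_i)$, and have \emph{strictly smaller} maximal discrepancy order than $E$. By induction the $Q_i$ already carry $Z_\varepsilon$-critical connections, and the starting point for the perturbation is the \emph{product of these connections} on $\bigoplus_i Q_i$, not the Hermite--Einstein metric on $\Gr(E)$. This reduces the obstruction space from $\aut(\Gr(E))$ to $\bigoplus_i\langle\Id_{Q_i}\rangle$ (the off-diagonal isomorphisms being handled separately in \cref{lemma:off-diag-bound} and \cref{lemma:nomixedterms}), which \emph{does} match the number of deformation parameters $\kappa_i$. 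The degenerate bound on $P_\varepsilon$ is then proved in \cref{prop:bound-on-linearised} by a second induction, this time on the number of components $Q_i$, reducing to irreducible subbundles built from fewer $Q_i$. Only after producing ``generalised solutions'' via the inverse function theorem (\cref{prop:gensolns}) does one solve for the $\kappa_i$ to kill the remaining $\varepsilon^{2q}$-term, and it is precisely asymptotic $Z$-stability of the $S_i$ that makes that final algebraic system admit a positive solution.
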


The uniform boundedness will be an obvious consequence of the proof. The strategy is as follows. By \cref{cor:polystabledegen}, our asymptotically $Z$-stable bundle $E$ admits a slope polystable degeneration to $\Gr(E)$, its associated graded object, which is a vector bundle by our assumption of sufficient smoothness. Since $\Gr(E)$ is a direct sum of stable bundles of the same slope, it admits a Hermitian Yang--Mills connection \cite{donaldson1985anti, uhlenbeck1986existence}. Our goal is to perturb this connections to $Z_k$-critical connections on $E$ for $k \gg 0$, and the main challenge is that this is only possible under the assumption of asymptotically $Z$-stability. The approach we take is to first employ a family of gradient flows in an associated finite-dimensional setting, which is the step at which we use our assumption of asymptotic $Z$-stability. We then construct approximate solutions to the $Z_k$-critical equation and use a version of the inverse function theorem to construct genuine $Z_k$-critical connections, for which the main step will be to understand the linearisation of the $Z_k$-critical operator.

\subsection{The case when $E$ is slope stable}\label{sec:stable}

We begin by proving the result in the considerably simpler situation that $E$ is actually slope stable. 

\begin{theorem}
\label{thm:stabilityimpliesexistencestable}
	Suppose $E\to (X,[\omega])$ is slope stable. Then for all $k\gg 0$, $E$ admits $Z_k$-critical connections with respect to the metric $k\omega$. 
\end{theorem}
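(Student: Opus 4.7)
The plan is to construct approximate solutions to the $Z_k$-critical equation by perturbing a Hermite--Einstein metric, then correct to an exact solution via a quantitative inverse function theorem. Since $E$ is slope stable, the Donaldson--Uhlenbeck--Yau theorem provides a Hermite--Einstein metric $h_0$ on $E$. By \cref{lemma:largevolume}, after rescaling by $k\omega$ and setting $\varepsilon^2 = 1/k$, the leading term of the $Z_k$-critical operator is precisely the weak Hermite--Einstein operator, so $h_0$ already solves the equation to leading order in $\varepsilon$.

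To improve this approximation, I would inductively produce Hermitian endomorphisms $f_1, f_2, \ldots, f_J$ and set $h_J := h_0 \exp(\varepsilon^{2} f_1) \cdots \exp(\varepsilon^{2J} f_J)$, chosen so that $D_k(h_J)$ vanishes to progressively higher order in $\varepsilon$. By \cref{lemma:linearisation}, solving at each stage reduces to an equation of the form $\Delta_{A_0^{\End E}} f_j = g_j$ modulo already-controlled lower-order data, where $A_0$ is the Chern connection of $h_0$ and $g_j$ is a known expression built from $f_1,\ldots,f_{j-1}$. This is solvable because, first, by \cref{lemma:dhymtracezero} the trace-integral of $g_j$ vanishes, placing $g_j$ in the image of the Laplacian, and second, the simplicity of $E$ (implied by slope stability) gives $\ker \Delta_{A_0^{\End E}} = \R \cdot \Id_E$, so the Laplacian is invertible on the $L^2$-orthogonal complement of the scalars. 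One then chooses $f_j$ uniquely in this complement and the induction continues to any order $J$.

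Passing from the approximate solution $h_J$ to a genuine $Z_k$-critical metric is achieved by a quantitative inverse function theorem applied to $D_k$ on an appropriate Sobolev or H\"older completion of the space of Hermitian endomorphisms. The required inputs are a uniform lower bound on the smallest nonzero eigenvalue of the linearisation at $h_J$, which controls the norm of its inverse on the trace-free complement, together with a Lipschitz estimate on the nonlinear remainder of $D_k$ near $h_J$. Choosing $J$ sufficiently large relative to how these constants depend on $k$ makes the residual error $D_k(h_J)$ small enough to initiate Newton iteration, whose convergence yields a true $Z_k$-critical metric for all $k \gg 0$.

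The main obstacle is controlling the $k$-dependence of these estimates simultaneously. The linearisation scales like $k^{2n-1}$, so its inverse has operator norm that decays in $k$; one must balance this against the growth rates of the nonlinear terms in the Taylor expansion of $D_k$ and the order of vanishing of $D_k(h_J)$, in order to certify convergence of Newton iteration. In the slope stable case this bookkeeping is uniform because $\ker \Delta_{A_0^{\End E}}$ is one-dimensional and consists only of scalar endomorphisms, orthogonal to all directions in which we have full analytic control. The substantive difficulties in the general asymptotically $Z$-stable case, namely isomorphic graded components producing higher-dimensional kernels and discrepancy orders varying across Jordan--H\"older factors, do not appear here, which is precisely why this stable case serves as the base of the broader inductive argument.
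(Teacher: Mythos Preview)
Your approach is correct but more elaborate than necessary, and the paper's proof is considerably shorter. The key simplification you are missing is that, after rescaling the $Z$-critical operator by $\varepsilon^{2n-1}$ (as the paper does just before its proof), the linearisation satisfies $P_\varepsilon = c\,\Delta_{A_0^{\End E}} + O(\varepsilon^2)$ with $c>0$ a fixed constant. Since $E$ is simple, $\Delta_{A_0^{\End E}}$ is invertible on the trace-zero space, and invertibility is open, so $P_\varepsilon$ is invertible with inverse bounded \emph{uniformly in $\varepsilon$}. The inverse function theorem then gives a neighbourhood of $D_\varepsilon(h_0)$, of radius independent of $\varepsilon$, lying in the image of $D_\varepsilon$. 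Since $D_\varepsilon(h_0) = O(\varepsilon^2) \to 0$, zero lies in this neighbourhood for all small $\varepsilon$, and elliptic regularity finishes. No iterative construction of approximate solutions $h_J$ is needed.

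Your concern about balancing $k$-scaling of the inverse against nonlinear growth is exactly what the rescaling eliminates: after dividing through by $\varepsilon^{2n-1}$, both the linearisation and its inverse have $\varepsilon$-independent bounds, and the nonlinear remainder is $O(\varepsilon^2)$ or smaller. What your more elaborate scheme buys is a template for the genuinely hard case, where $\ker\Delta_{\Gr(E)}$ is larger than $\R\cdot\Id_E$ and the inverse of $P_\varepsilon$ blows up like $\varepsilon^{-2q}$; there one really does need approximate solutions to high order before the inverse function theorem can bite. But in the slope stable base case the paper's direct argument is both sufficient and transparent.
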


First we observe that by \cref{lemma:largevolume}, the leading order term in the $Z$-critical equation is the \emph{weak} Hermite--Einstein condition. It is a standard result that the existence of a weak Hermite--Einstein metric is equivalent to the existence of a Hermite--Einstein metric \cite[\S 4.2]{kobayashi2014differential}, requiring only a conformal change of the Hermitian metric. Thus when a bundle is stable, the Hitchin--Kobayashi correspondence affords us a solution to the weak Hermite--Einstein equation for any function $f$ having the appropriate integral  \cite{donaldson1985anti, uhlenbeck1986existence}. In the case of $Z$-critical connections, we will apply the Hitchin--Kobayashi correspondence to obtain a solution to the weak Hermite--Einstein equation $$\Lambda_{\omega} F_{h_0} = if \Id_E,$$ with the function $f\in C^{\infty}(X,\R)$ defined by
$$f = -2\pi \left( \frac{\deg_U(E)}{(n-1)! [\omega]^n\rk(E)} - \contr_{\omega} \tilde U_2 \right),$$ arising through \cref{lemma:largevolume}.
The function $f$ will be fixed throughout, so we will refer to a connection solving this equation as simply a weak Hermite--Einstein connection.

It will be convenient to consider powers $\oldepsilon = 1/k$, so that we are interested in the range $0 < \oldepsilon \ll 1$. We recall then that our notation for the $Z$-critical operator is $D_{\oldepsilon}$, so that the equation  of interest is
$$D_{\oldepsilon}(h) = 0$$
with argument $h$ a Hermitian metric, or equivalently the associated Chern connection $A$ when this is more convenient. The linearisation of the operator $D_{\oldepsilon}$  is denoted $P_{\oldepsilon}$.

\begin{remark} Throughout this section, for notational convenience we rescale the $Z$-critical operator $D_{\oldepsilon}$ by a factor of $\frac{\oldepsilon^{2n-1}}{c \rk (E) [\omega]^n}$, so that by \cref{lemma:largevolume} it satisfies $$D_{\oldepsilon} = D_0 + O(\oldepsilon),$$ where $D_0$ is the weak Hermite--Einstein operator. \end{remark}

\begin{proof}[Proof of \cref{thm:stabilityimpliesexistencestable}]
	Since $E$ is slope stable, it admits a weak Hermite--Einstein metric $h_0$ through the Hitchin--Kobayashi correspondence. By \cref{lemma:largevolume} the leading order term of the $Z$-critical equation is the  weak Hermite--Einstein equation, so we immediately have
	$$D_0 (h_0) = 0,\qquad \|D_{\oldepsilon} (h_0) \| \le C\oldepsilon$$
	for some constant $C$, where $D_0$ denotes the weak Hermite--Einstein operator. As used in \cref{lemma:linearisation}, when taking the point of view of changing the $\db$-operator on $E$ through the action of the gauge group, the linearisation of the weak Hermite--Einstein operator $D_0$ at $h_0$ is given by $$P_0 = \Lap_{\db_{h_0}},$$ the bundle Laplacian on $\End E$ with respect to $h_0$. Since $E$ is stable, it is simple, and so the kernel of $P_0$ consists only of the constant endomorphisms, and $P_0$ is invertible orthogonal to this kernel. 
	
	We  pass to Banach spaces, and view our linearisation is an invertible operator
	$$P_{\oldepsilon} : L_{d+2,0}^2 (\End E) \to L_{d}^2(\End E)$$
	where $d\in \Z_{\ge0}$ is some non-negative integer and $L_{d,0}^2$ denotes the Sobolev of space trace-integral zero endomorphisms of $E$. Our previous discussion produces the estimate
	$$\|P_{\oldepsilon} - P_0\| \le C\oldepsilon$$
	for some $C$ independent of $\oldepsilon$, and since $P_0=\Lap_{\db_{h_0}}$ is invertible modulo constant endomorphisms and invertibility is an open condition, for $\oldepsilon$ sufficiently small, $P_{\oldepsilon}$ is also invertible on this space. If $G$ denotes the inverse of $P_0$ and $Q_{\oldepsilon}$ denotes the inverse of $P_{\oldepsilon}$, then we also obtain a bound
	$$\frac{1}{C'} \|G \| \le \|Q_{\oldepsilon}\| \le C'\|G\|$$
	for some constant $C'$. By the inverse function theorem for Banach spaces applied to the point $h_0$, there exists a neighbourhood of $D_{\oldepsilon}(h_0)$ with size independent of $\oldepsilon$ which is in the image of the operator $D_{\oldepsilon}$. In particular, since $D_{\oldepsilon}(h_0) \to 0$ as $\oldepsilon\to 0$, there exists some $\oldepsilon_0>0$ such that for all $0<\oldepsilon<\oldepsilon_0$, there exists a solution
	$$D_{\oldepsilon}(h_{\oldepsilon}) = 0$$
	for $h_{\oldepsilon} = \exp (V_{\oldepsilon}) h_0$ for some $V_{\oldepsilon}$ in the Sobolev space $L_{d+2,0}^2(\End E).$ But as was proven in \cref{lemma:linearisation}, the $Z$-critical equation is elliptic for all $\oldepsilon$ sufficiently small, so by elliptic regularity this $h_{\oldepsilon}$ is actually smooth and hence is a genuine solution of the $Z_{\oldepsilon}$-equation. 
\end{proof}

\subsection{The general case: finite-dimensional results}

\subsubsection{Kuranishi theory and moment map flows}\label{sec:choosecplxstr}

We next consider a simple, sufficiently smooth slope semistable vector bundle $E$, such that its graded object $\Gr(E)$ is a slope polystable vector bundle. We assume that $E$ is asymptotically $Z$-stable and aim to construct $Z_{\oldepsilon}$-critical connections on $E$ for all $\oldepsilon \ll 1$. In the previous section, which considered the case when $E$ is slope stable, the inverse function theorem was used to perturb the Hermitian Yang--Mills connection on $E$ (whose existence follows from slope stability) to $Z_{\oldepsilon}$-critical connections. The key step in this argument was the invertibility of the Laplacian associated to the  Hermitian Yang--Mills connection, orthogonal to constant endomorphisms. 

In our setting the relevant linear operator is $ \Delta_{\Gr(E)}$, where analytically the key new difficulty is the nontriviality of the kernel $\ker \Delta_{\Gr(E)}$ of the Laplacian on $\Gr(E)$. Geometrically, this kernel is the Lie algebra of the Lie group $K$ of isometries of $(\Gr(E),h)$. The complexification $G = K^{\C}$ acts on the infinite-dimensional space $\scA$ of unitary connections on the smooth Hermitian vector bundle underlying $\Gr(E,h)$, fixing the Hermitian Yang--Mills connection $A_0$ inducing $\Gr(E)$, and we next consider a finite-dimensional space where we can further examine this action.

The space is the \emph{Kuranishi space} of $\Gr(E)$, which is an open neighbourhood $U$ of the origin in the vector space $H^1(X, \End \Gr(E))$, which we take to be of dimension $m$, and which comes with a holomorphic embedding $\Phi: U \to \scA$ with $\Phi(0) = A_0$; we refer to \cite[Section 2]{BS} for a discussion of the Kuranishi space and details of all properties we require. The image of the  Kuranishi space under the embedding $\Phi$ parametrises both integrable and non-integrable connections; the integrable locus is a complex subspace of $U$. By versality of the Kuranishi space, the holomorphic vector bundle $E$ is associated to an integrable unitary connection  $\Phi(z_0)\in\scA$ for $z_0 \in U$. This point $z_0$ is not unique, and our next step is to choose an $\oldepsilon$-dependent sequence of such points $z_{\oldepsilon}$ whose image is ``closest'' to being $Z_{\oldepsilon}$-critical.

For this we move the initial point $z_0$ inside its $G$-orbit. The group $K$ acts naturally on $U$ induced from the linear action on $H^1(X, \End \Gr(E))$, in such a way that $\Phi$ is $K$-equivariant; the only obstruction to extending this to a $G$-equivariant embedding is the fact that $U$ is only an open neighbourhood of the origin in $H^1(X, \End \Gr(E))$. To choose the points $z_{\oldepsilon}$, we use moment maps and begin by endowing $U$ with a sequence of $K$-invariant K\"ahler metrics. As the map $\Phi$ is a holomorphic embedding, we may pull back the forms $\Omega_{Z_{\oldepsilon}}$ on $\scA$ to $K$-invariant closed $(1,1)$-forms on $\scA$ which we denote $$\omega_{\oldepsilon} = \Phi^*\Omega_{Z_{\oldepsilon}}.$$ Since $\Omega_{Z_{0}}$ is the usual Atiyah--Bott K\"ahler metric  on $\scA$, the closed $(1,1)$-forms  $\omega_{\oldepsilon}$ are K\"ahler on $U$ for all $\oldepsilon$ sufficiently small. 

The Lie algebra of the subgroup $K \subset \G$ (where $\G=\Gamma(X,\Uni(E,\C))$ is the gauge group) corresponds to a finite-dimensional Lie subalgebra $\mfk \subset \scA^0(\End E) = \Lie \G$, which is given as the kernel $\ker \Delta_{\Gr(E)}$. The $Z_{\oldepsilon}$-critical operator $$\Ima(e^{-i\phi_{\oldepsilon}}\tilde Z_{\oldepsilon}(-)): \scA \to \scA^0(\End E)$$ is a moment map for the $\G$-action on $\scA$ with respect to the $\Omega_{Z_{\oldepsilon}}$ (where we have identified the Lie algebra with its dual via the $L^2$-inner product), so the induced moment map for the action of $K\subset \G$ is the composition of the $Z$-critical operator with the $L^2$-projection $$p: \scA^0(\End E) \to \mfk.$$ Here we note that while for the majority of our work we only consider integrable unitary connections, the moment map property was established for arbitrary unitary connections. Thus the moment map for the $K$-action on $(U,\omega_{\oldepsilon})$ is $$z \to  p(\Ima(e^{-i\phi_{\oldepsilon}}\tilde Z_{\oldepsilon}(\Phi(z)));$$ we denote this operator by $\mu_{\oldepsilon}: U \to \mfk$. A zero of the moment map $z_{\oldepsilon}$ thus satisfies $$p(\Ima(e^{-i\phi_{\oldepsilon}}\tilde Z_{\oldepsilon}(\Phi(z_{\oldepsilon}))) = 0.$$

We next use our hypothesis that $E$ is asymptotically $Z$-stable to construct zeroes of the moment maps $\mu_{\oldepsilon}$ inside the $G$-orbit of $z_0$. These will be constructed for $\oldepsilon \ll 1$, but this condition will enter only into the final step of the proof. Thus for the moment we fix $\oldepsilon>0$. Our problem fits into a class of problems relating stability to moment maps, originating with the Kempf--Ness theorem. The new difficulty in our situation is that the problem is a local one as $U$ is not compact, which leads us to employ an associated geometric flow. The flow is the gradient flow of the norm-squared of the moment map, which explicitly takes for form 
\begin{align*}\dot z(t) =& Jv_{\mu_{\oldepsilon}(z(t))}, \\
z(0) =& z_0,
\end{align*}
where $\mu_{\oldepsilon}(z(t)) \in \mfk$, $v_{\mu_{\oldepsilon}(z(t))} \in T_{z(t)} U$ is the associated tangent vector and $J$ is the almost complex structure on $U$. We call this flow the \emph{moment map flow}. Our main reference for the technical results we use is Georgoulas--Robbin--Salamon \cite{moment-weight} (some of which, used here, is based on Chen--Sun \cite[Section 4.1]{CS}).

\begin{proposition}\label{flow-bounded-region}  There is a constant $C>0$ independent of $\oldepsilon$ such that the moment map flow remains in the ball of radius $C\sqrt{\oldepsilon}$ around the origin in $U \cap G.z_0$ for all $t>0$ and all sufficiently small $\oldepsilon >0$.

\end{proposition}

\begin{proof}
It suffices to show that there is a $C>0$ such that $Jv_{\mu_{\oldepsilon}(z(t))}$ lies in the ``right halfplane'' centred at $z(t)$ whenever $z(t)$ is close to the boundary of the ball of radius $C\sqrt{\oldepsilon}$, namely that $Jv_{\mu_{\oldepsilon}(z(t))}\cdot z(t) > 0$ whenever $z(t)$ lies in such a region. Indeed, differentiating the norm squared of $z(t)$ and using that $z(t)$ is a solution of the moment map flow, this implies that the derivative of the norm of $z(t)$ is \emph{negative} wherever this holds. We will show that condition  $Jv_{\mu_{\oldepsilon}(z)}\cdot z > 0$ holds for $z \in\partial B_{C\sqrt{\oldepsilon}} (0) \cap \overline{G.z_0}$ for a suitably chosen constant $C>0$, from which it follows that the flow remains in $B_{C\sqrt{\oldepsilon}} (0)$ for all $t$. 

We prove this first for the flat K\"ahler metric $\omega_{\Euc}$ arising from the Hermitian inner product defined by $\omega_0$ on $T_0U$, and choose coordinates so that 
$$ \omega_{\Euc} = i\sum_{j=1}^{m} dz_j\wedge d\bar z_j,$$ where we recall $m = \dim U$.
Associated to any $z \in U$ is a natural element of $\mfk$ defined by 
$$p(izz^*) + \mu_{\oldepsilon}(0);$$ here $izz^*$ is a skew-Hermitian matrix and we view $p$ as the projection, defined through the Hermitian inner product on $H^1(X, \End \Gr(E))$, from skew-Hermitian matrices onto $\mfk \subset \mfu(m)$ (where this inclusion is the induced by the inclusion $K \subset U(m)$). The element $p(izz^*) + \mu_{\oldepsilon}(0) \in \mfk$ then induces a tangent vector $\xi_{\oldepsilon}(z) \in T_z U$  defined as $$\xi_{\oldepsilon}(z) = p(izz^*)z + \mu_{\oldepsilon}(0) z.$$ Note that $\xi_{\oldepsilon}(z)$ is tangent to $G\cdot z$, and in fact, $\dot z(t) = -J\color{black} \xi_{0}(z(t))$ is the moment map flow with respect to the Euclidean metric, where $\xi_0 = p(izz^*)z$. We next claim that we can find  a $C>0$ so that $\mu_{\oldepsilon}(z)$ always lies in the desired halfplane at $z$. We begin by showing this for a fixed direction of $z$, and then we show we can find a $C$ that works for all $z$.

First note that the $G$-action generates a linear subspace at every tangent space $T_z V$ of $V$. Let $q = q_z : T_z V \to T_z V$ be the orthogonal projection to this subspace, using the Euclidean metric, so that $$
\xi_0(z) \cdot \bar z = |q(z)|^4.
$$
Note that $q(z) = 0$ if and only if $z$ is a fixed point of the $G$-action. One direction is clear, since if $z$ is a fixed point, then the image of $q$ is $\{ 0 \}$. In the reverse direction, suppose $q(z) =0$. Note that $\xi.z = \xi. q(z)$, where we have identified $\xi \in \mfg$ with a matrix, the action being a linear one on a vector space. Thus if $q(z) = 0$, then $\xi.z=0$ for all $\xi$. It follows that 
\begin{align*}
 \exp(t.\xi) z - z = \sum_{i=1}^{\infty} \frac{t^i \xi^{i}.z}{i!} = 0.
\end{align*}
Since $\xi \in \mfg$ was arbitrary, this implies that the $G$-action on $z$ is trivial, which precisely means that $z$ is a fixed point. 

Next, 
\begin{align*}
\langle \xi_{\oldepsilon}(z) ,  z \rangle =& \xi_0(z) \cdot \bar z + \mu_{\oldepsilon}(0) z \cdot \bar z
\end{align*}
and since $\mu_{0}(0) = 0$, we have $$
|\mu_{\oldepsilon}(0) z \cdot \bar z| \leq c \oldepsilon  |z|^2,
$$
for some $c>0$, independent of $\oldepsilon$. If $q(z) \neq 0$, then, by choosing $C>0$ sufficiently large, we can ensure that 
\begin{align}
\label{eq:Cbound}
c |z|^4 < \frac{1}{2} C |q(z)|^4,
\end{align} 
which we note is scale invariant in $z$. For such a choice of $C$, we have that if $z \in B_{C\sqrt{\oldepsilon}} (0)$ so that $|z|^2 = C \epsilon$, then
\begin{align*}
|\mu_{\oldepsilon}(0) z \cdot \bar z| &\leq c \epsilon |z|^2 \\
&= \frac{c |z|^4}{C} \\
&< \frac{1}{2} |q(z)|^4 \\
&= \frac{1}{2} \xi_0(z) \cdot \bar z .
\end{align*}
This in turn ensures that $\xi_{\oldepsilon}(z) \cdot \bar z =  \xi_0(z) \cdot \bar z + \mu_{\oldepsilon}(0) z \cdot \bar z >0$ for such $z$ for all $\oldepsilon$, since $\xi_0(z) \cdot \bar z >0$. 

While the particular value of the constant $C$ depends on the direction of $z$, we will next show that $C$ can be chosen uniformly in any compact subset outside the locus where $q(z) = 0$. This will in particular imply that we can choose it uniformly for $z$ in the closure of the $G$-orbit of $z_0$, since the only fixed point in the closure of the $G$-orbit of $z_0$ is the origin (as there is a unique closed orbit in the closure of $G.z_0$), hence for any non-zero point in the closure of the $G$-orbit of $z_0$, the projection $q(z) \neq 0$ is nonvanishing. By the choice of $C$, this is equivalent to showing that $\frac{|z|}{|q(z)|}$ can be uniformly bounded from below on any such set. Note that this will be implied by the lower semicontinuity of $|q(z)|$. In turn, this will be a consequence of the fact that the dimension of the image of $q$ can only increase near $z$. 

Indeed, fix a $z$ with $q(z) \neq 0$. Then, in particular, the image of $q$ at $z$ is non-trivial, so there exists $\xi_1, \ldots, \xi_d \in \mfg$ generating a basis for the image of $q$ in $T_z U$. We claim that there is an open neighbourhood $W$ of $z$ such that for all $w \in W$, the tangent vectors at $T_w U$ induced by the $\xi_j$ are linearly independent. This is a consequence of upper semicontinuity of the dimension of the Lie algebra of the stabiliser $\mfk_z$ of $z$. In more detail, if  $g \in G$ fixes an element in $B_{\delta} (z)$ for any sufficiently small $\delta >0$, then it also fixes $z$, since the elements of $U$ fixed by a given element of $G$ is closed. Now, since the image $A_w$ of the $\xi_j$ in $T_w U \cong \C^n$ varies continuously with $w$ near $z$, the norm of the projection $q'_w : T_w U \to A_w$ to the subspace $A_w$ \emph{is} continuous. Since $|q(w)| \geq |q'(w)|$ near $z$, we have that $|q(w)|$ is bounded below by a continuous function near $z$ and so this shows that $|q(w)|$ is lower semicontinuous at $z$. 

At this stage, we have shown that there is a $C>0$ such that $\xi_{\oldepsilon}(z) \cdot \bar z < 0$ whenever $z \in \partial B_{C \sqrt{\oldepsilon}} \cap \overline{G.z_0}$. We next consider the general case, where we replace $\xi_{\oldepsilon}(z)$ by the value of the moment map with respect to our actual K\"ahler metric. Here the tangent vector at $z(t)$ is associated to ($-J$ times) the value of the moment map ${\mu_{\oldepsilon}(z(t))}$. Since the K\"ahler metric $\omega_0$ is Euclidean to leading order, we have $$\omega_0 = \omega_{\Euc} + O(|z|).$$ Similarly comparing the closed $(1,1)$-forms $\omega_{\oldepsilon}$ to $\omega_0$, we have $\omega_{\oldepsilon} = \omega_0 + O(\oldepsilon)$. It follows that the moment maps $\mu_{\oldepsilon}$ satisfy 
$$\mu_{\oldepsilon}(z)= p(zz^*) + \mu_{\oldepsilon}(0) + O(|z|^3) + O(|z|\oldepsilon).$$ 
Thus 
\begin{align*}
\left(\mu_{\oldepsilon}(z)z - \xi_{\oldepsilon}(z)\right)\cdot \bar z = O(\oldepsilon^2 |z|^2)
\end{align*}
on $ \partial B_{C \sqrt{\oldepsilon}}$, and so
$$
\left| \left( \mu_{\oldepsilon}(z)z - \xi_{\oldepsilon}(z) \right)\cdot \bar z \right| < \frac{1}{2} \xi_{\oldepsilon}(z) \cdot \bar z 
$$
for all sufficiently small $\oldepsilon>0$, since the right hand side is an $O(\oldepsilon |z|^2)$-term and again, we are working on the region $\partial B_{C \sqrt{\oldepsilon}} \cap \overline{G.z_0}$, where $|z|$ is uniformly bounded by a multiple of $|q(z)|$. Since $\xi_{\oldepsilon}(z) \cdot \bar z >0$ here, it follows that $ \mu_{\oldepsilon}(z)z \cdot \bar z > 0$, too, which is what we wanted to show.
\end{proof}

We summarise the main consequence of this result.

\begin{corollary}
\label{cor:flowconvergence} The moment map flow  beginning at $z_0$ exists for all time and converges to a unique point $z_{\oldepsilon,\infty}\in \overline {G.z_0}$ satisfying $$\mu_{\oldepsilon}(z_{\oldepsilon,\infty}) \in \mfk_{z_{\oldepsilon,\infty}} \in U,$$ where $\mfk_{z_{\oldepsilon,\infty}}$ denotes the Lie algebra of the stabiliser of $z_{\oldepsilon,\infty}$ under the $K$-action. Furthermore, the points  $z_{\oldepsilon, \infty}$ lie in a ball of radius $C \sqrt{\oldepsilon}$, for some $C>0$ independent of $\oldepsilon$. \end{corollary}

\begin{proof}
The obstruction to the flow existing for all time is that in principle the flow could leave the region $U$ in which it is actually defined, which is ruled out by Proposition \ref{flow-bounded-region}. It is then a standard result that the flow exists for all time, while a $\L{}$ojasiewicz inequality argument implies that the flow converges to a unique point $z_{\oldepsilon, \infty}$ fixed by the flow (see \cite[Theorem 3.2]{moment-weight} for proofs of both of these claims). Since $z(t) \in G.z_0$ for all $t>0$ as $-Jv_{\mu_{\oldepsilon}(z(t))}$ is always tangent to $G.z(t)$, it then follows that  $z_{\oldepsilon,\infty}\in \overline {G.z_0}$. As $z_{\oldepsilon, \infty}$ is a fixed point of the flow, it follows that $\mu_{\oldepsilon}(z_{\oldepsilon, \infty}) \in \mfk_{z_{\oldepsilon}}$ by definition of the flow, as desired.\end{proof}

\subsubsection{Asymptotics of the moment map flow}\label{sec:asymptotics} We next consider the asymptotics of the moment map flow. The results of this section are independent of $\oldepsilon$, so we fix $\oldepsilon$ and omit it from our notation. Thus we have an open ball $U$ in a vector space $V = H^1(X,\End E)$ of dimension $m$, along with a K\"ahler metric $\omega$ on $U$. The vector space $V$ admits a linear action of a reductive group $G$, which is the complexification of $K$. The Lie algebras of $G$ and $K$ are denoted $\mfg$ and $\mfk$ respectively. The K\"ahler metric $\omega$ and the open ball $B$ are $K$-invariant and we fix a moment map $$\mu: U \to \mfk$$ for the $K$-action on $(U,\omega)$, where we have identified $\mfk$ with its dual via an invariant inner product. We fix $z_0 \in U$ with finite stabiliser under $G$ such that the moment map flow starting at $z$ converges to a point $z_{\infty} \in U$, as justified in our application by \cref{flow-bounded-region}. The main goal of the present section is to show that one of the following must hold, perhaps after moving $z_0$ to another point in its $G$-orbit inside $U$:
\begin{enumerate}
\item the point $z_{\infty}$ satisfies $\mu(z_{\infty})=0$, and there is an element $\xi \in \mfk$ such that $\exp(-it\xi)(z_0) \in U$ for all $t\geq 0$ and $$\lim_{t\to\infty} \exp(-it\xi)(z_0) = 
 z_{\infty} \in U;$$ 
\item there is a $\tilde z \in U$ and an element $\xi \in \mfk$ such that  such that $\exp(-it\xi)(z_0) \in U$ for all $t\geq 0$ and
  \begin{align*}\lim_{t\to\infty} \exp(-it\xi)(z_0) = \tilde z,\end{align*}with $$\langle \mu(\tilde z),\xi \rangle > 0.$$ 
\end{enumerate}

In the statement we allow ourselves to replace the initial point $z_0$ with another in its $G$-orbit such that the statement holds. We will then show in \cref{global-moment-map-geometry} that asymptotic $Z$-stability rules out the second possibility provided $0<\oldepsilon \ll 1$, while again by asymptotic $Z$-stability the first possibility forces $z_{\infty} \in G.z_0$, meaning the moment map flow converges to a zero of the moment map in $G.z_0 \cap U$. Note that once we show $z_{\infty} \in G.z_0$, it is automatic that $\mu(z_{\infty})=0$ as $z_0$ (hence $z_{\infty}$) has finite stabiliser and limit of the moment map flow satisfies $\mu(z_{\infty}) \in \mfk_{z_{\infty}}$.

The results we require around the asymptotics of the moment map flow have been established only in the projective setting, so we wish to view $(U,\omega)$ as a submanifold of a projective manifold in order to appeal to these results. Consider the natural embedding $$f: V \hookrightarrow \pr^m$$ as the complement of the hyperplane $x_0 =0$, where we use homogeneous coordinates $[x_0:x_1:\ldots:x_m]$ on $\pr^m$ associated to linear coordinates $x_1,\ldots,x_m$ on $V$. The $G$-action on $V$ extends naturally to an action on $\pr^m$, acting trivially on the $x_0$ coordinate, which is well defined since the $G$ action on $V$ is linear; this action makes $f$ a $G$-equivariant embedding, and hence makes $f$ a $K$-equivariant embedding when restricted to $U$. The following result states we may $K$-invariantly extend the K\"ahler metrics $\omega$ and the moment map $\mu$ to $\pr^m$.

\begin{lemma}
There is an open ball $U' \subset U$, a $K$-invariant K\"ahler metric $\omega_{\pr^m}$ on $\pr^m$ and a moment map $$\mu_{\pr^m}: \pr^m \to \mfk$$ such that viewing $U' \subset \pr^m$ as a submanifold via $f$:

\begin{enumerate}
\item $\omega_{\pr^m}|_{U'} = \omega$;
\item $\mu_{\pr^m}|_{U'} = \mu$.
\end{enumerate}
\end{lemma}

\begin{proof}We begin by constructing the K\"ahler metric $\omega_{\pr^m}$. In local holomorphic coordinates around the origin in $U$, we may write \begin{align*}\omega_{\FS} &= \ddb(|z|^2 + \psi_1(z)), \\ \omega &= \ddb(|z|^2 + \psi_2(z)),\end{align*} where $\psi_1(z)$ and $\psi_2(z)$ are $O(|z|^3)$ and $\omega_{FS} = \ddb(\log(1+|z|^2))$ is the Fubini--Study metric (which is $K$-invariant). We construct $\omega_{\pr^m}$ by gluing $\psi_1(z)$ to $ \psi_2(z)$ using $K$-invariant bump functions. 

Let $\chi : \R \to \R$ be a bump function such that $\chi(t) = 0$ if $t \geq 2$ and $\chi(t) = 1$ if $t \leq 1$. Let $r \in \R_{>0}$ be a sufficiently small positive parameter, and define 
\begin{align*}
\omega_r =&\begin{cases}
			\omega & \text{if } z \in B_r(0) \subset U \\
			\ddb( |z|^2 + \chi(|z|/r) \psi_1(z) + (1 - \chi(|z|/r)) \psi_2(z))  & \text{if } z \in \bar B_{2r}(0) \setminus B_{r}(0) \\
            \omega_{\FS} & \text{if } z \in \pr^m \setminus \bar B_{2r}(0)
		 \end{cases}
\end{align*}
which is a smooth closed two-form on $\mathbb{P}^m$ extending $\omega$ from $B_r(0)$ to $\pr^m$. We check that for a suitable choice of $r$, $\omega_r$ is K\"ahler. This amounts to checking that $$\eta_r = \ddb( |z|^2 + \chi(|z|/r) \psi_1(z) + (1 - \chi(|z|/r))\psi_2(z))$$ is positive on $ B_{2r}(0) \setminus B_{r}(0)$. Now,  \begin{align*}
\eta_r  =& \omega_{\Euc} \\ 
&+ i \partial \chi (|z|/r) \wedge \bar \partial ( \psi_1 (z) - \psi_2(z)) \\
&+ i \bar \partial \chi (|z|/r) \wedge \partial ( \psi_1 (z) - \psi_2(z)) \\
&+ ( \psi_1 (z) - \psi_2(z)) \ddb \chi (|z|/r) .
\end{align*}
By the Leibniz rule, $ \partial \chi (|z|/r)$ and  $\bar \partial \chi (|z|/r)$ are $O(r^{-1})$, whereas  $\partial (\psi_1 (z) -\psi_2(z))$ is $O(r^2)$ on $  B_{2r}(0) \setminus B_{r}(0)$, since the $\psi_i$ are $O(|z|^3)$. Thus the second and third summands are $O(r)$. Similarly, the final term is $O(r)$, since $\ddb \chi (|z|/r)$ is $O(r^{-2})$ and the $\psi_i$ are $O(r^3)$. In particular, since $\omega_{\Euc}$ is a fixed metric independent of $r$, we can choose $r$ sufficiently small such that $\eta_r \geq \frac{1}{2} \omega_{\Euc}$, which therefore is positive on $\bar B_{2r}(0) \setminus B_{r}(0)$. Choosing $r$ such that this holds, we obtain the first statement with $U' = B_r(0)$. 

We next extend the moment map. There is a canonical choice of moment map for the $U(m+1)$-action on $\pr^m$ with respect to the Fubini--Study metric, defined by $$\mu_{\FS}(x) = i\frac{xx^*}{|x|^2} \in \mfu(m+1),$$ which induces a moment map $\mu_{\FS}: \pr^m \to \mfk$ through the projection $\mfu(m+1) \to \mfk$ defined as the projection induced by the trace inner product on $\mfu(m+1)$. 
Our K\"ahler metric $\omega_{\pr^m}$ satisfies $$\omega_{\pr^m} = \omega_{\FS} + \ddb \psi$$ for a $K$-invariant function $\psi$ on $\pr^m$. We claim that there is a canonical moment map $\mu_{\pr^m}$ with respect to  $\omega_{\pr^m}$  defined by setting for $v \in \mfk$ (and denoting $v$ also the induced vector field on $\pr^m$) $$\langle \mu_{\pr^m}, v\rangle = \langle \mu_{\FS}, v\rangle - 2(Jv)(\psi),$$ where $J$ here is the complex structure on projective space, so that $$\ddb \psi = 2dJd\psi.$$ To prove this, first note that $$\iota_v \ddb \psi =  2 \iota_v dJd\psi = 2 d\iota_v Jd\psi = 2 d\iota_{Jv} d\psi = 2d((Jv)(\psi)),$$ where we used Cartan's magic formula, meaning $$\iota_v \omega_{\pr^m} = -d\langle \mu_{\pr^m}, v\rangle.$$ 

Thus we must only show that $\mu_{\pr^m}$ is an equivariant map from $\pr^m$ to $\mfk$, which means proving that for all $v\in \mfk$ and $k \in K$ $$\langle \mu_{\pr^m}, k\cdot v\rangle = k^*\langle \mu_{\pr^m}, k\cdot v\rangle,$$ where $k\cdot v$ denotes the adjoint action. Since $\mu_{\FS}$ is $K$-equivariant, this condition is equivalent to asking $$k^* ((Jv)(\psi)) = (Jk\cdot v)(\psi)$$ which (using that when we identify $v$ with a vector field on $\pr^m$, $iv$ is identified with $Jv$), follows from the equality \begin{align*}k^*\left( \frac{d}{dt}\bigg|_{t=0} \exp(itv)^*\psi\right) &=   \frac{d}{dt}\bigg|_{t=0}  k^*\exp(itv)^*k^{-1*}\psi, \\ &=\frac{d}{dt}\bigg|_{t=0} \exp(itk\cdot v)^*\psi, \end{align*} where we have used $K$-invariance of $\psi$. 

Finally the moment map $\mu_{\pr^m}$ is determined uniquely up to the addition of an element of $\mfk^K$ (namely an element invariant under the adjoint action), as is the moment map $\mu: U \to \mfk$; so by adding an element of $\mfk^K$ we may ensure that $\mu_{\pr^m}|_{U'} = \mu$, as required.  \end{proof}

We thus replace $U$ with $U'$ and assume we have such an embedding for $U$ itself. That the moment map and K\"ahler metric extend $K$-invariantly means that the two moment map flows---beginning at $z_0$ considered as either a point of $U$ or $\pr^m$---are identical, and so the limiting point $z_{\infty} \in U$ of the moment map flow is also the limit point of the flow considered on $\pr^m$. We now divide the argument into two cases, depending on whether or not  $z_{\infty}$ is a zero of the moment map. We ultimately aim to show that $z_{\infty}\in G.z_0$, meaning that the moment map flow produces a zero of the moment map in the orbit of interest, so we assume this is not the case.

\begin{proposition}\label{prop:zero}
If $\mu(z_{\infty})=0$ and $z_{\infty} \notin G.z_0$,  there is a $\xi\in \mfk$ such that $\exp(-it\xi)(z_0) \in U$ for all $t\geq 0$ and $$\lim_{t\to\infty} \exp(-it\xi)(z_0) = z_{\infty}.$$\end{proposition}

\begin{proof}

This is essentially a standard part of geometric invariant theory, related to the Hilbert--Mumford criterion, as in this case $z_{\infty}$ is a zero of the moment map $\mu$ and hence is polystable in the sense of geometric invariant theory. Without employing the full force of geometric invariant theory, one can argue as follows. Since $\mu_{\infty}(z_{\infty})=0$, the Lie algebra  $\mfg_{z_{\infty}}$ is reductive \cite[Lemma 2.3 (i)]{moment-weight}, which by taking a slice for the action allows us to construct such a $\xi$ \cite[Proposition 1 (ii)]{donaldson-b-stability}.
\end{proof}

One can in fact take the $\xi_{\infty}$ to be rational in this case  \cite[Proposition 1 (ii)]{donaldson-b-stability}, in the sense that it generates a $\C^*$-action on $\pr^m$ and hence $V$, though we will not need this. The remaining case is more involved, and will use further ideas around the relationship between moment maps and geometric invariant theory, for which our general reference is again Georgoulas--Robbin--Salamon \cite{moment-weight}.

Fix a point $z_0 \in \pr^m$. Firstly for any $\xi \in i\mfk$ we can take the limit $$z' = \lim_{t \to \infty} \exp(-it\xi)(z_0);$$ this limit exists by \cite[Lemma 5.4]{moment-weight}. We then define  the \emph{weight} by $$w_{\mu}(z_0,\xi) = -\langle \mu(z'), \xi\rangle.$$ Note the expression  has the opposite sign of that given \cite{moment-weight}, since we have the opposite sign convention for the definition of a moment map. Thus our notion of weight agrees with that of \cite{moment-weight}.

The starting point is the following result.


\begin{theorem}\cite[Theorem 10.4]{moment-weight}\label{lowerbound}
Suppose $\mu(z_{\infty}) \neq 0$, and set $m = |\mu(z_{\infty})|.$ Then there is a $\xi \in \mfk$ with $$w_{\mu}(z_0,\xi) = -m^2, \quad |\xi| = m.$$ Furthermore, $$-m = \frac{w_{\mu}(z_0,\xi)}{|\xi|} = \inf_{0 \neq \zeta \in \mfk} \frac{w_{\mu}(z_0, \zeta)}{|\zeta|}.$$ 
\end{theorem}

The idea of the proof originates with Chen--Sun \cite[Section 3]{CS}, and proves that the moment map flow produces a ``most destabilising element of $\mfk$''; the element $\xi$ is constructed to be asymptotically parallel to the moment map flow, in a suitable sense, and in particular $\xi$ is actually conjugate to $\mu(z_{\infty})$ in $\mfk$. So, either the flow converges to a zero of the moment map \emph{inside $U$}, or there is a ``destabilising'' $\xi \in \mfk$. In general, the difficulty is that $$z' = \lim_{t \to \infty} \exp(-it\xi)(z_0) $$ may not equal $z_{\infty}$, and in particular we must show that  $z' $ actually lies in $U$, as this is where we will have control through stability in applications. We will ultimately show that $z_{\infty} \in \overline{G.z'}$, by appealing to the second Ness uniqueness theorem \cite[Theorem 6.5]{moment-weight}, and this will be sufficient in our general argument to a reduce to a problem involving only $U$.

For this, we must understand the geometry of $z'$ further; our running assumption is thus that $\mu(z_{\infty})$ (and hence $\xi$) is non-zero. Denote by $\mfk_{\xi}$ the commutator of $\xi$ in $\mfk$, so the elements of $\mfk$ commuting with $\xi$.  We begin with the following.

\begin{lemma}\label{adding} Let $\zeta \in \mfk_{\xi}$ be such that 
$$y = \lim_{t \to \infty} \exp(-it\zeta)(z').$$ Then for all $k\gg 0$ $$y = \lim_{t \to \infty} \exp(-it(\zeta+k\xi))(z_{0}).$$
\end{lemma}

\begin{proof} We first determine the limits $z'$ and $y$ in suitable homogeneous coordinates. As the two actions commute, we can diagonalise both actions simultaneously, and so we can choose homogeneous coordinates  $[x_0, \ldots, x_n]$ so that for a general $z\in\pr^m$
\begin{align*}
\exp(-t i \xi)(x) =& [e^{-t \xi_0} x_0, \ldots, e^{-t\xi_n} x_n] \\
\exp(-t i \zeta)(x) =& [e^{-t \zeta_0} x_0, \ldots, e^{-t\zeta_n} x_n],
\end{align*}
for some real numbers $\xi_0, \ldots, \xi_n$ and $\zeta_0, \ldots, \zeta_n$. Moreover, we can assume that the coordinates of $z_0$ satisfy that their $j$\textsuperscript{th}-component $x_j = 0$ vanishes if and only if $j < d$. The coordinates are homogeneous and so replacing $\xi_j$ by $\xi_j + c$ for a constant $c$ independent of $j$ leaves the action unchanged. We can therefore assume that  $\xi_j \geq 0$ for all $j \geq d$, that the $\xi_j$ are non-decreasing for $j \geq d$, and that $\xi_d =0$. Thus there is an $r \geq 0$ such that $\xi_{d+r} = 0$ and $\xi_{d+r+1} > 0$. In these coordinates, the limit $z'$ is then given by 
$$
z' = [0, \ldots, 0, x_d , \ldots, x_{d+r}, 0, \ldots, 0].
$$

As with $\xi$, we can rescale the $\zeta_j$ and reorder the coordinates so that $\zeta_d = 0$ and   $\zeta_j \leq \zeta_{j+1}$ for all $j=d, \ldots, d+r-1$. Thus there is $q \leq r$ such that $\zeta_q = 0$ and, if $q<r$, $\zeta_{q+1}>0$. The limit $y$ of $z'$ under the action induced by $\zeta$ is then 
$$
y = [0, \ldots, 0, x_d , \ldots, x_{d+q}, 0, \ldots, 0].
$$

For all $k\gg 0$, we then have that $k\xi_{d+q+j}+ \zeta_{d+q+j}>0$ for all $j>0$, since $\zeta_{d+q+j}>0$ and $\xi_{d+q+j} =0$ if $q+j\leq r$, and $\xi_{d+q+j}> 0$ if $q+j>r$. Moreover, $k\xi_{d+j} + \zeta_{d+j} = 0$ for each $j =0, \ldots, q$. Thus for all sufficiently large $k$, the limit of $z_0$ under the action of $\exp(-i(k\xi+\zeta))$ is $y$, as claimed. \end{proof}

The point of this result is that it allows us to prove the following. An analogue of part of  this result for K-stability of polarised varieties is proven as \cite[Theorem 1.1]{optimal}.

\begin{proposition}

We have $$\inf_{0 \neq \zeta \in \mfk_{\xi}} \frac{w_{\mu}(z', \zeta)}{|\zeta|} = \inf_{0 \neq \zeta \in \mfk} \frac{w_{\mu}(z_0, \zeta)}{|\zeta|}.$$

\end{proposition}

\begin{proof}

It is obvious that 
$$\inf_{0 \neq \zeta \in \mfk_{\xi}} \frac{w_{\mu}(z', \zeta)}{|\zeta|} \leq \inf_{0 \neq \zeta \in \mfk} \frac{w_{\mu}(z_0, \zeta)}{|\zeta|};$$ 
the infimum of the right hand side is computed by $\xi$ itself, and the limit of $z_0$ under $\exp(-it\xi)$ is by definition $z'$. Thus 
$$\inf_{0 \neq \zeta \in \mfk} \frac{w_{\mu}(z_0, \zeta)}{|\zeta|} = - \frac{\langle \mu(z'), \xi\rangle}{|\xi|} =  \frac{w_{\mu}(z', \xi)}{|\xi|}.$$ Note that this implies \begin{equation}\label{negativeeq}\langle \mu(z'), \xi\rangle>0.\end{equation}

For the reverse direction, we begin by proving that for all $ \zeta \in \mfk_{\xi}$ we have \begin{equation}\label{relsemistable}w_{\mu}(z', \zeta) \geq - \langle \mu(z'), \xi\rangle\frac{\langle\xi, \zeta\rangle}{|\xi|^2};\end{equation} although we will not need the general theory, this means that $z'$ is ``GIT semistable relative to $\xi$'' \cite[Chapter 13]{moment-weight}. Fix such a $\zeta$ and let $$y = \lim_{t\to\infty} \exp(-it\zeta)(z'),$$ so that by  \cref{adding}  for $k \gg 0$ $$y = \lim_{t\to\infty} \exp(-it(\zeta + k\xi))(z_0).$$ The inequality of Equation \eqref{relsemistable} is invariant under adding a multiple of $\xi$ to $\zeta$ by a direct calculation, so we may assume that $$\langle \zeta, \xi\rangle = 0;$$ that is, we may assume $\zeta$ is orthogonal to $\xi$. 

Thus we must show that $w_{\mu}(z', \zeta) \geq 0$, when  $\zeta$ is orthogonal to $\xi$. We follow the proof of  \cite[Theorem 1.1]{optimal}. Let $y$ be the specialisation of $z'$ under $\zeta$.  Suppose for contradiction that $w_{\mu}(z', \zeta)<-\epsilon$ for some $\epsilon>0$. It follows from Lemma \ref{adding} that for any $k \gg 0$ 
$$\frac{w_{\mu}(z_0, \zeta+k\xi)}{|\zeta+k\xi|} = -\frac{\langle \mu(y), \zeta\rangle + k\langle \mu(y), \xi\rangle  }{|\zeta+k\xi|} < \frac{-\epsilon - k\langle \mu(y), \xi\rangle  }{|\zeta+k\xi|}.$$ 

Note that $\langle \mu(z'), \xi\rangle = \langle \mu(y), \xi\rangle.$ Indeed,  conjugation invariance (e.g. \cite[Proposition 6]{XW}) gives 
 $$\langle \mu(\exp(-it\zeta)(y)), \exp(-it\zeta) \cdot \xi\rangle = \langle \mu(y), \xi\rangle$$ 
 for all $t \geq 0$, where $\exp(-it\zeta) \cdot \xi$ denotes the adjoint action, and $\exp(-it\zeta) \cdot \xi = \xi$ since $\zeta \in \mfk_{\xi}$. Thus $\langle \mu(\exp(-it\zeta)(y)), \xi\rangle = \langle \mu(y), \xi\rangle$ for all $t$, and by continuity, taking the limit as $t\to \infty$ gives $\langle \mu(z'), \xi\rangle = \langle \mu(y), \xi\rangle ,$ as claimed. In particular, $\langle \mu(y), \xi\rangle >0$.

Since $\langle \mu(y), \xi\rangle >0$,  we can scale $\xi$ so that $|\xi|^2 = \langle \mu(y), \xi\rangle$. Then 
$$-|\xi| = -\frac{\langle \mu(y), \xi\rangle}{|\xi|} \leq \frac{w_{\mu}(z_0, \zeta+k\xi)}{|\zeta+k\xi|}< \frac{-\epsilon - k\langle \mu(y), \xi\rangle  }{|\zeta+k\xi|} =  \frac{-\epsilon - k|\xi|^2  }{|\zeta+k\xi|},$$ 
where we use  \cref{lowerbound} and the fact that $\langle \mu(y), \xi\rangle = -w_{\mu}(z_0,\xi)$ to obtain the first inequality; it follows that $$-|\xi| |\zeta+k\xi| < -\epsilon - k|\xi|^2.$$ Squaring both sides implies $$|\zeta+k\xi|^2 > k^2 |\xi|^2+2k\epsilon + \frac{\epsilon^2}{|\xi|^2}.$$ Since $\langle \zeta, \xi\rangle = 0$, it follows that $|\zeta+k\xi|^2 = k^2|\xi|^2 + |\zeta|^2,$ so $$|\zeta|^2 \geq 2k\epsilon |\xi|^2 + \frac{\epsilon^2}{|\xi|^2},$$ which is a contradiction for $k \gg 0$. Thus Equation \eqref{relsemistable} holds for all $\zeta \in \mfk_{\xi}$.

Now take a general $0 \neq \zeta \in \mfk_{\xi}$; we claim 
\begin{equation}\label{more-negative-eq} \frac{w_{\mu}(z', \zeta)  }{|\zeta|} \geq -\frac{ \langle\mu(z'), \xi\rangle}{|\xi|}.\end{equation} 
This will prove the result, since the right hand side satisfies  
$$-\frac{ \langle\mu(z'), \xi\rangle}{|\xi|} = \frac{w_{\mu}(z_0, \xi)}{|\xi|} = \inf_{0 \neq \zeta \in \mfk} \frac{w_{\mu}(z_0, \zeta)}{|\zeta|} .$$ 
To establish Equation \eqref{more-negative-eq}, we may assume $w_{\mu}(z', \zeta)<0$, since otherwise there is nothing to prove as  $ \langle \mu(z'), \xi\rangle>0$ by Equation \eqref{negativeeq}. Cauchy--Schwarz implies $$\langle\zeta,\xi\rangle  \leq |\xi|.|\zeta|,$$ while comparing signs of both sides of the inequality of Equation \eqref{relsemistable} further implies that $\langle\zeta,\xi\rangle \geq 0$. Thus  
$$w_{\mu}(z', \zeta)  \geq  -\langle \mu(z'), \xi\rangle\frac{\langle \xi, \zeta\rangle}{|\xi|^2} \geq -\langle\mu(z'), \xi\rangle\frac{ |\xi|.|\zeta|}{|\xi|^2} = -\langle \mu(z'), \xi\rangle \frac{ |\zeta|}{|\xi|}, $$implying in turn$$ \frac{w_{\mu}(z', \zeta)  }{|\zeta|} \geq -\frac{ \langle\mu(z'), \xi\rangle}{|\xi|}$$ and completing the proof. \end{proof}

This is the key result that allows us to understand the geometry of $z'$.

\begin{proposition}

Suppose $\mu(z_{\infty}) \neq 0$. Then there is an element $\xi' \in \mfk$ and an element $g' \in G$ such that $\exp(-it\xi'.g'(z_0))$ lies in $U$ for all $t\geq 0$ and the limit $$\lim_{t\to\infty} \exp(-it\xi')(g'(z_0)) = \tilde z \in U$$ satisfies $$w_{\mu}(g'(z_0), \xi')  = -\langle \mu(  z'),  \xi\rangle < 0.$$ 

\end{proposition}

\begin{proof}

We firstly claim that $z_{\infty} \in \overline{G.z'}$; as $z_{\infty} \in U$ and $U$ is open, this will allow us to replace $z'$ with another point in its $G$-orbit inside $U$. It follows from \cite[Theorem 10.4]{moment-weight} that  $$\inf_{0 \neq \zeta \in \mfk} \frac{w_{\mu}(z_0, \zeta)}{|\zeta|} = |\mu(z_{\infty})|,$$ and by the same result $$\inf_{0 \neq \zeta \in \mfk} \frac{w_{\mu}(z', \zeta)}{|\zeta|} = |\mu(z_{\infty}')|,$$ where $z_{\infty}'$ is the limit  as $t\to \infty$ of the moment map flow starting from $z'$, namely the flow
\begin{align*}\dot z(t), =& Jv_{\mu(z(t))} \\
z(0) =& z'.
\end{align*}  
This infinimum can actually computed by an element commuting with any maximal torus of automorphisms of $z'$, so $$\inf_{0 \neq \zeta \in \mfk_{\xi}} \frac{w_{\mu}(z', \zeta)}{|\zeta|} = \inf_{0 \neq \zeta \in \mfk} \frac{w_{\mu}(z', \zeta)}{|\zeta|},$$ by \cite[Theorem 13.3]{moment-weight}: the infimum is computed by an element of $\mfk$ asymptotic to the moment map flow, and the essential point is that the flow preserves symmetry \cite[Lemma 13.14]{moment-weight}. Thus it follows that $$ |\mu(z_{\infty})| =  |\mu(z_{\infty}')| = \inf_{g\in G} |\mu(g(z_0))|,$$ where the final equality follows again from \cite[Theorem 10.4]{moment-weight}. 

We next claim that $z_{\infty}' \in \overline{G.z_0}$. Indeed, certainly $z_{\infty}' \in \overline{G.z'}$ since $z_{\infty}' $ is the limit along the moment map flow, and the flow for all finite time remains in $G.z'$. Thus there is a sequence $h_i \in G$ such that $$\lim_{i \to \infty} h_i(z') = z_{\infty}'.$$ On the other hand, take a sequence $g_t\in G$ with $$\lim_{t \to \infty} g_t(z_0) = z'.$$ Then $$h_i(z') =h_i\left( \lim_{t \to \infty} g_t(z_0)\right) = \lim_{t \to \infty} h_i(g_t(z_0)),$$ since $h_i$ is continuous. Thus $h_i(z') \in \overline{G.z_0}$ and since $\overline{G.z_0}$  is closed it follows that also $$ z_{\infty}' = \lim_{i\to \infty} h_i(z') \in \overline{G.z_0}.$$

Thus we have two points $z_{\infty}' $ and $z_{\infty}$ which satisfy $z_{\infty}, z_{\infty}' \in \overline{G.z_0}$ and $$ |\mu(z_{\infty})| =  |\mu(z_{\infty}')| = \inf_{g\in G} |\mu(g(z_0))|.$$ It then follows from the second Ness uniqueness theorem  \cite[Theorem 6.5]{moment-weight} that $$ z_{\infty}' \in K.z_{\infty}.$$ Since  $U$ is $K$-invariant, it follows that $ z_{\infty}' \in U$.

Since $ z_{\infty}' \in U$ and $z_{\infty}' \in \overline{G.z'}$, we can choose  a  $g\in G$ with $g(z') \in U$ since $U$ is open. Then $$\lim_{t \to \infty} \exp(-it g\cdot \xi)(g(x_0)) = g(z') \in U.$$ It may not be the case that $ \exp(-it g\cdot \xi)(g(x_0))$ lies in $U$ for all $t\geq 0$, but since their limit as $t \to \infty$, namely $g(z')$, does lie $U$, we may replace $g(z_0)$ with some $g'(z_0)$ with $g' = \exp(-it g\cdot \xi) \circ g $ for $t\gg 0$ so that $\exp(-itg\cdot \xi)(g'(z_0))$ does lie in $U$ for all $t\geq 0$. With this choice we also still have $$w_{\mu}(g'(z_0), g\cdot\xi)> 0,$$ through the conjugation invariance property (e.g. \cite[Proposition 6]{XW}) $$w_{\mu}(g(z_0), g\cdot\xi) = w_{\mu}(z_0, \xi)$$ and the equality $w_{\mu}(g(z_0), g\cdot\xi) = w_{\mu}(g'(z_0), g\cdot\xi)$ (the latter holding since they both equal $\langle \mu(g(z')), g\cdot \xi\rangle = \langle \mu(z'), \xi\rangle$). This completes the proof by setting $\xi' = g\cdot \xi$. \end{proof}

\begin{remark}
Although we will not need the following strengthening, under the same hypotheses there is a $ \xi''$ which satisfies, in the notation of the proof $$\lim_{t\to\infty} \exp(-it\xi'')(z_0) = g(z')$$ with $\xi''$ \emph{rational} (in the sense that it generates a $\C^*$-action) and still satisfies $w_{\mu}(g(z'), \xi'')<0$ (analogously to \cite[Proof of Proposition 3.5]{CSW}; the proof follows by perturbing $\xi'$). We also note that Chen--Sun--Wang give an approach to the asymptotics to gradient flows satisfying general hypotheses they impose, related to some of the results we have proven here \cite{CSW}. In particular, under their hypotheses, one also obtains $z_{\infty} \in \overline{G.z_0}$, and another approach to the results presented here would be to show that their hypotheses apply to our (more classical) situation.
\end{remark}

As we expect these results to be useful for other problems, we summarise what has been proven here along with  \cref{sec:choosecplxstr}, which is essentially a local version of the Kempf--Ness theorem for families of moment maps. In what follows, $ B_{C\sqrt{\epsilon}}$ denotes the ball of radius $C\sqrt{\epsilon}$ around the origin.

\begin{corollary}\label{end-result}Suppose $U$ is an open ball around the origin in a vector space $V$ given a linear $K$-action, and let $\omega_{\epsilon}$ be a family  of $K$-invariant K\"ahler metrics on $U$ such that $|\omega_{\epsilon} - \omega_0| = O(\epsilon)$. Fix $z_0 \in U$ with finite stabiliser such that $0 \in \overline{G.z_0}$, where $G$ is the complexification of $K$. Let $\mu_{\epsilon}$ be a family of moment maps for the $K$-action on $(U,\omega_{\epsilon})$ with $\mu_0(0)=0$. Then there is a $C>0$ such that for all $0 < \epsilon \ll 1$ either there is a point $z_{\epsilon,\infty} \in G.z_0 \cap B_{C\sqrt{\epsilon}}$ with $\mu_{\epsilon}(z_{\epsilon,\infty}) = 0$ or there is a $g\in G$ and $0 \neq \xi_{\epsilon} \in \mfk$ such that $$w_{\mu_{\epsilon}}(g(z_0), \xi_{\epsilon})\leq 0.$$ Furthermore, for all $t \geq 0$ each of $\exp(-it\xi_{\epsilon})(g(z_0))$ and $$\lim_{t\to \infty} \exp(-it\xi_{\epsilon})(g(z_0))$$  lie in $U$. Finally, a $\xi_{\epsilon}$ with this property can be taken to be rational. 
\end{corollary}

\subsubsection{Returning to the space of unitary connections}\label{global-moment-map-geometry} We return to the setting of \cref{sec:choosecplxstr} and hence to a \emph{sequence} of moment maps $\mu_{\oldepsilon}$ on $U$. We will show  that asymptotic $Z$-stability ensures that $z_{\oldepsilon,\infty} \in G.z_0$, meaning we have constructed zeroes of the moment map $\mu_{\oldepsilon}$  in the orbit $G.z_0$ itself. \cref{end-result} implies that for $0<\epsilon \ll 1$ either there is a zero of the moment map $\mu_{\epsilon}$ in $G.z_0$ itself, or there is a $0\neq \xi_{\epsilon} \in \mfk$ with limit which we denote (replacing $z_0$ with another point in its $G$-orbit inside $U$ as in \cref{sec:choosecplxstr}) $$y_{\epsilon} = \lim_{t\to\infty}\exp(-it\xi_{\epsilon})(z_0)$$ with $\langle \mu_{\epsilon} (y_{\epsilon}), \xi_{\epsilon}\rangle \leq 0$. Note that $\xi_{\epsilon} \in \mfk_{y_{\epsilon}}$ and also that by definition of the moment map $\mu_{\epsilon}$ (and crucially using that $y_{\epsilon} \in U$) $$\langle \mu_{\oldepsilon}( y_{\oldepsilon}), \xi_{\oldepsilon}\rangle  = \int_X \left\langle \Ima(e^{-i\phi_{\oldepsilon}(E)} \tilde Z_{\oldepsilon}(E,\Phi( y_{\oldepsilon}))), \xi_{\oldepsilon}\right\rangle,$$ where $\Ima(e^{-i\phi_{\oldepsilon}(E)} \tilde Z_{\oldepsilon}(E,\Phi(y_{\oldepsilon})))$ is an $\End(E)$-valued $(n,n)$-form, so that the pairing means the trace product on the $\End(E)$ component, ensuring the integrand is an $(n,n)$-form. Here we have included $E$ in the notation of the $Z$-critical operator $\tilde Z_{\oldepsilon}$ as we will now incorporate more than one bundle in our discussion.

We begin by showing that $\xi_{\oldepsilon}$ induces a filtration of $E$, and subsequently show that the numerical invariants associated to the filtrations of the sort thus produced may be related to asymptotic $Z$-stability. Let $E'$ denote the underlying smooth bundle of $E$ given the holomorphic structure $\db_{E'}$ induced from the $(0,1)$-part of connection $\Phi ( y_{\oldepsilon})$. This connection is automatically integrable, as integrability is a closed condition. Note that $\xi_{\oldepsilon}$ acts on $E'$ holomorphically, so that $\xi_{\oldepsilon}$ is a holomorphic section of $\End E'$. 
\begin{lemma}
For each $\lambda \in \R$, 
$$
V_{\lambda} = \ker(i\xi_{\oldepsilon}- \lambda \Id_E )
$$
is a holomorphic subbundle of $E'$.
\end{lemma}
\begin{proof}
The smooth section $i\xi_{\oldepsilon} - \lambda \Id_E$ of the underlying smooth vector bundle of $\End E'$ defines a holomorphic section of $\End E'$ since, as remarked above, $\xi_{\oldepsilon}$ is a holomorphic section of $\End E'$ and the section $\Id_E = \Id_{E'}$ is always holomorphic. Thus its kernel is a holomorphic subsheaf of $E'$. Since $\xi_{\oldepsilon}$ also defines a holomorphic section of the graded object of $E$, which is assumed to be locally free, the kernel is in fact a subbundle.
\end{proof}

This produces a natural splitting of $E'$.

\begin{lemma}
$E'$ splits holomorphically as a direct sum of the non-trivial $V_{\lambda}$. There are also at least two such components provided $E'$ is not isomorphic to $E$.
\end{lemma}
\begin{proof}
When viewed as an action of $G$ on connections on $E$, $K$ acts via unitary endomorphism of $E$. Thus $i\xi_{\oldepsilon}$, viewed as an endomorphism of $E$, is Hermitian, in the sense that it satisfies $(i\xi_{\oldepsilon})^* = i\xi_{\oldepsilon}$. In particular, it is diagonalisable on each fibre. Hence each fibre is a union of the fibres of the $V_{\lambda}$, proving the first part of the statement. The remaining part follows because the only endomorphism of $E$ with kernel the entirety of $E$ is the zero section---thus if there were only one component $V_{\lambda}$, $\xi_{\oldepsilon}$ would be a constant multiple of the identity section $\Id_E$, but $\Id_E$ fixes the holomorphic structure of $E$, and $E'$ is not isomorphic to $E$ by assumption. Thus there must be at least two components in the splitting by eigenbundles of $\xi_{\oldepsilon}$.
\end{proof}

The above demonstrates that we obtain a splitting of $E'$ induced by the vector field $\xi_{\oldepsilon}$. We now further show that we  obtain a filtration of $E$ from this data. Write $\db_{E} = \db_{E'} + \gamma$ for some $\gamma \in \Omega^{0,1}(\End E)$. Let $\lambda_1 > \ldots > \lambda_l$ be the values of $\lambda$ such that $V_{\lambda} \neq 0$.  We decompose $\gamma$ as $\gamma = \sum_{i,j} \gamma_{ij}$ with
$$
\gamma_{ij} \in \Omega^{0,1} \left( V_{\lambda_j}^* \otimes V_{\lambda_i}\right).
$$
\begin{lemma}
The components $\gamma_{ij}$ of $\gamma$ satisfy  $\gamma_{ij} = 0$ for each $i \geq j$.
\end{lemma}
\begin{proof}
Write 
$$
\exp(ti\xi_{\oldepsilon}) \cdot \db_E = \db_{E'} + \gamma_t.
$$
As $v=i\xi_{\oldepsilon}$ acts on $V_{\lambda}$ by multiplication by $\lambda$ and $\gamma_{ij} \in \Omega^{0,1} \left( V_{\lambda_j}^* \otimes V_{\lambda_i}\right),$ it follows that
\begin{align*}
(\gamma_t)_{ij} =& \exp(-tv) \circ \gamma_{ij} \circ \exp(tv) \\
=& e^{(\lambda_j - \lambda_i )t} \gamma_{ij}.
\end{align*}
Moreover, $\db_{E'}$ is the limit of $\exp(t\xi_{\oldepsilon}) \cdot \db_E$ as $t \to \infty$. Thus $\gamma_t \to 0$ as $t \to \infty$. From the above formula, we see that $\gamma_t$ converges to zero if and only if for each $i,j$, either $\lambda_j < \lambda_i$, or $\gamma_{ij} = 0$. Since the $\lambda_i$ are ordered in such a way as to be strictly decreasing, this implies that $\gamma_{ij} =0$ for each $i \geq j$, as desired.
\end{proof}

The above implies that we obtain a filtration of $E$ from the data of the degeneration of $E$ to $E'$.
\begin{corollary}
\label{cor:degfiltration}
Let $S_i = \bigoplus_{j \leq i} V_{\lambda_j}$. Then each $S_i$ is a holomorphic subbundle of $E$ and in particular
$$
0 \subset S_1 \subset \ldots \subset S_l = E
$$
is a filtration of $E$ by holomorphic subbundles.
\end{corollary}

We next explain how to relate this filtration to asymptotic $Z$-stability.

\begin{lemma}
\label{lem:moment-map-filtration} The pairing of the moment map $\mu_{\epsilon}$ with $\xi_{\oldepsilon}$ is given at $ y_{\oldepsilon}$  by
\begin{align*}
\langle \mu_{\oldepsilon}( y_{\oldepsilon}), \xi_{\oldepsilon}\rangle =& (\lambda_{l} - \lambda_{l-1})\Ima\left(\frac{Z_{\oldepsilon}(S_{l-1})}{Z_{\oldepsilon}(E)}\right) +  (\lambda_{l-1} - \lambda_{l-2})\Ima\left(\frac{Z_{\oldepsilon}(S_{d-2})}{Z_{\oldepsilon}(E)}\right)\\
&+ \ldots +(\lambda_{2} - \lambda_{1})\Ima\left(\frac{Z_{\oldepsilon}(S_{1})}{Z_{\oldepsilon}(E)}\right).
\end{align*} 

\end{lemma}

\begin{proof}

We prove the statement for an arbitrary $\oldepsilon$, considered  fixed. The $Z_{\oldepsilon}$-critical operator is clearly linear in $\xi_{\oldepsilon}$, so if we write $\xi_j \in \scA^0(\End E)$ for the diagonal matrix with entries equal to one on the diagonal entries corresponding to $V_{\lambda_j}$, then 
$$
\langle \mu_{\oldepsilon}( y_{\oldepsilon}), \xi_{\oldepsilon} \rangle = \sum_{j=1}^{l} \lambda_j \langle \mu_{\oldepsilon}( y_{\oldepsilon}), \xi_j\rangle,
$$ 
where by definition $$ \langle \mu_{\oldepsilon}(y_{\oldepsilon}), \xi_j\rangle = \int_X \langle \xi_j, \Ima(e^{-i\phi_{\oldepsilon}(E)} \tilde Z_{\oldepsilon}(E,\Phi(y_{\oldepsilon})))\rangle.$$ 

Since the connection $\Phi(y_{\oldepsilon})$ induces the reducible holomorphic structure $$E'  = V_{\lambda_1} \oplus V_{\lambda_2}  \oplus  \ldots \oplus V_{\lambda_l},$$ the curvature $F_{\Phi(y_{\oldepsilon})}$ also splits as a direct sum of the curvatures of $\Phi(y_{\oldepsilon})$ restricted to each subbundle. Since $\xi_j$ has non-zero entries only on the diagonal associated to $V_{\lambda_j}$, taking trace against $\xi_j$ corresponds to replacing the term $\Ima \left(e^{-i\phi_{\oldepsilon}(E)} \tilde Z_{\oldepsilon}(E,\Phi(y_{\oldepsilon}))\right)$ with $\Ima(e^{-i\phi_{\oldepsilon}(E)} \tilde Z_{\oldepsilon}(V_{\lambda_j}),\Phi(y_{\oldepsilon}))$. In particular 
\begin{align*} 
\langle \mu_{\oldepsilon}(y_{\oldepsilon}), \xi_j\rangle &= \int_X \langle \xi_j, \Ima(e^{-i\phi_{\oldepsilon}(E)} \tilde Z_{\oldepsilon}(E,\Phi(y_{\oldepsilon})))\rangle \\ 
&= \int_X \tr \Ima(e^{-i\phi_{\oldepsilon}(E)} \tilde Z_{\oldepsilon}(V_{\lambda_j},\Phi(y_{\oldepsilon})) \\ 
&= \Ima(e^{-i\phi_{\oldepsilon}(E)}  Z_{\oldepsilon}(V_{\lambda_j}))) \\ 
&= \Ima\left(\frac{Z_{\oldepsilon}(V_{\lambda_j})}{Z_{\oldepsilon}(E)}\right).\end{align*}

It follows from this that 
$$
\langle \mu_{\oldepsilon}(y_{\oldepsilon}), \xi_{\oldepsilon} \rangle =  \sum_{j=1}^{l} \lambda_j \Ima\left(\frac{Z_{\oldepsilon}(V_{\lambda_j})}{Z_{\oldepsilon}(E)}\right).
$$ 
We next use additivity of the central charge in short exact sequences, namely that $$Z_{\oldepsilon}(S_j) = Z_{\oldepsilon}(S_{j-1}) + Z_{\oldepsilon}(V_{\lambda_j})$$ which holds due to the existence of the short exact sequence $$ 0 \to S_{j-1} \to S_j \to V_{\lambda_{j}} \to 0$$ to replace each $Z_{\oldepsilon}(V_{\lambda_j})$ with $Z_{\oldepsilon}(S_j) - Z_{\oldepsilon}(S_{j-1})$. Through this process one calculates 
\begin{align*} 
\sum_{j=1}^d \lambda_j \Ima\left(\frac{Z(V_{\lambda_j})}{Z(E)}\right) =& (\lambda_{l-1} - \lambda_{l})\Ima\left(\frac{Z_{\oldepsilon}(S_{l-1})}{Z_{\oldepsilon}(E)}\right) +  (\lambda_{l-2} - \lambda_{l-1})\Ima\left(\frac{Z_{\oldepsilon}(S_{d-2})}{Z_{\oldepsilon}(E)}\right) \\
&+ \ldots +(\lambda_{1} - \lambda_{2})\Ima\left(\frac{Z_{\oldepsilon}(S_{1})}{Z_{\oldepsilon}(E)}\right),
\end{align*} proving the result.
 \end{proof}

Combining the results of this section, we  summarise:
\begin{corollary}
\label{prop:solfindim}
For each $0<\oldepsilon \ll 1$, the moment map flow converges to $z_{\oldepsilon, \infty} \in G.z_0$ such that the corresponding integrable unitary connections $\Phi(z_{\oldepsilon,\infty})$ satisfy
$$p(\Ima(e^{-i\phi_{\oldepsilon}}\tilde Z_{\oldepsilon}(\Phi(z_{\oldepsilon,\infty}))) = 0.$$
Moreover, the $z_{\oldepsilon, \infty}$ lie in a ball of radius $C \sqrt{\oldepsilon}$, for some $C>0$ independent of $\oldepsilon$.
\end{corollary}
\begin{proof}
 \cref{flow-bounded-region} and \cref{cor:flowconvergence} imply that the moment map flow converges to points $z_{\oldepsilon, \infty} \in \overline{G.z_0}$ which lie in the ball of radius $C \sqrt{\oldepsilon}$ around the origin in $U$, for some $C>0$ independent of $\oldepsilon$. The statement of the result we wish to prove is that $z_{\oldepsilon, \infty} \in G.z_0$ is actually a zero of the moment map for each $\oldepsilon$ sufficiently small. 

By  \cref{end-result}, if $z_{\oldepsilon, \infty} \notin G.z_0$, there is an element $\xi_{\oldepsilon,\infty}$ such that $$\lim_{t\to\infty} \exp(-it\xi_{\oldepsilon,\infty})(z_0) =  y_{\oldepsilon} \in U,$$ where $ y_{\oldepsilon}$ satisfies \begin{equation}\label{eq:contra}\langle \mu(y_{\oldepsilon}),  \xi_{\oldepsilon}\rangle \geq 0.\end{equation} 

\cref{cor:degfiltration} associates a filtration of $E$ to $\xi_{\oldepsilon}$ in such a way that by \cref{lem:moment-map-filtration}\begin{align*}
\langle \mu_{\oldepsilon}(y_{\oldepsilon}), \xi_{\oldepsilon}\rangle =& (\lambda_{l-1} - \lambda_{l})\Ima\left(\frac{Z_{\oldepsilon}(S_{l-1})}{Z_{\oldepsilon}(E)}\right) +  (\lambda_{l-2} - \lambda_{l-1})\Ima\left(\frac{Z_{\oldepsilon}(S_{l-2})}{Z_{\oldepsilon}(E)}\right)\\
&+ \ldots +(\lambda_{1} - \lambda_{2})\Ima\left(\frac{Z_{\oldepsilon}(S_{1})}{Z_{\oldepsilon}(E)}\right).
\end{align*} Since the $\lambda_j$ are constructed to be strictly decreasing, asymptotic $Z$-stability implies $\Ima\left(\frac{Z_{\oldepsilon}(S_{j})}{Z_{\oldepsilon}(E)}\right)<0$ for each $j$ provided $S_j$ is a non-trivial subbundle and $0<\epsilon\ll 1$ is sufficiently small by \cref{lem:equivalentstabilityconditions}, and in particular $$\langle \mu_{\oldepsilon}(y_{\oldepsilon}), \xi_{\oldepsilon}\rangle <0$$ which is a contradiction. Thus Equation \eqref{eq:contra} forces the filtration to be trivial, meaning that $z_{\oldepsilon, \infty} \in G.z_0$, proving the result. \end{proof}

\subsection{The general case: perturbation to a solution}

\subsubsection{Deformations of complex structures}\label{sect:cplxdeformations}

We next collect various technical results about variation of complex structure and automorphism groups of holomorphic vector bundles. These results will be used to construct approximate solutions to the $Z$-critical equation. We thus return to an asymptotically $Z$-stable, sufficiently smooth holomorphic vector bundle $E$.  Thus $E$ is slope semistable and, recalling the notation from \cref{sec:stabconds}, has a Jordan--H\"older filtration 
$$0 = S_0 \subset S_1 \subset \hdots \subset S_l = E$$
such that each $Q_i = S_i/S_{i-1}$ is slope stable with $\mu(Q_i) = \mu(E).$ The graded object 
$$
\Gr (E) = \oplus_{i=1}^l Q_i,
$$ 
which is smooth. 

As complex vector bundles, $E$ and $\Gr (E)$ are equal, but their holomorphic structures differ. If we let $\db_E$ and $\db_0$ denote the holomorphic structures of $E$ and $\Gr(E)$, respectively, then they are related by 
$$
\db_E = \db_0 + \gamma
$$
for some $\gamma \in \Omega^{0,1} ( \End E).$ This is upper-triangular in matrix form, using the ordering of the $Q_i$ induced from an increasing filtration of $E$. In other words, we can write
$$
\gamma = \sum_{i<j} \gamma_{ij}
$$
where $\gamma_{ij} \in \Omega^{0,1} ( Q_j^* \otimes Q_i).$

The Maurer--Cartan equation resulting from the integrability requirement $\db_E^2 =0$ is 
$$
\db_0 \gamma + \gamma \wedge \gamma = 0,
$$
which is equivalent to 
$$
\db_0 \gamma_{ij} + \sum_{i < k <j} \gamma_{i k} \wedge \gamma_{k j} =0
$$
for all $i<j$. The complex gauge group $\G^{\C} = \Gamma(X,\GL(E,\C))$ acts on the space holomorphic structures via conjugation, similarly to Equation \eqref{gaugegroupaction}. After fixing a Hermitian metric, $\G^{\C}$ hence acts on the space of induced Chern connections. As each component $Q_i$ of $\Gr(E)$ is slope stable, it admits a Hermitian metric $h_i$ solving the weak Hermite--Einstein equation $$\Lambda_{\omega} F_{h_i} = if\Id_{Q_i},$$ with the function $f\in C^{\infty}(X,\R)$ defined by
$$f = -2\pi \left( \frac{\deg_U(E)}{(n-1)! [\omega]^n\rk(E)} - \contr_{\omega} \tilde U_2 \right),$$ which is the same function used in \cref{lemma:largevolume} and \cref{thm:stabilityimpliesexistencestable}. Here  we have used that the slope of $Q_i$ equals that of $E$ to ensure that both sides of the equation have the same integral. Thus we obtain a Hermitian metric on $\Gr(E)$, and hence on $E$.

We next give an explicit description of the Lie algebra of the automorphism group of $\Gr(E)$. To begin with, for each $c \in \C$, the endomorphism $c\Id_{Q_i}$ lies in $\aut(\Gr(E))$.  If some of the $Q_i$ are biholomorphic, however, there are additional symmetries of the graded object.  Any isomorphic $Q_i$ and $Q_j$ are gauge equivalent, hence after a gauge transformation we may assume they are equal. The following result, which is a general result about automorphisms of slope polystable vector bundles, completely characterises the Lie algebra in question.

\begin{lemma}\cite[Lemma 5.6]{sektnan2020hermitian}
\label{lem:automsgraded}
 Let $$\psi_i^j: \Gr(E) \to \Gr(E)$$ be the identity isomorphism between $Q_i$ and $Q_j$ if $Q_i$ and $Q_j$ are equal, and zero otherwise.  Then the Lie algebra $\mathfrak{aut} (\Gr (E) )$  is given by 
$$
\mathfrak{aut} ( \Gr (E) ) = \oplus_{i,j=1}^l \langle \psi_i^j \rangle.
$$
\end{lemma}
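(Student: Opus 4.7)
The plan is to identify $\mathfrak{aut}(\Gr(E))$ with $H^0(X, \End(\Gr(E)))$, decompose the endomorphism bundle by the direct sum decomposition $\Gr(E) = \oplus_{i=1}^l V_i$, and then apply the stability of each component to compute each Hom-space. Concretely, I would start from the identification
$$\mathfrak{aut}(\Gr(E)) = H^0(X, \End(\Gr(E))) = \bigoplus_{i,j=1}^l H^0(X, \Hom(V_j, V_i)),$$
so the task reduces to computing $H^0(X, \Hom(V_j, V_i))$ for each pair $(i,j)$.

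Next I would use the fact that each $V_i$ is slope stable, and that all the $V_i$ have the same slope (since they are the stable factors of a Jordan--H\"older filtration of the slope semistable bundle $E$). A standard argument, modelled on the proof of \cref{lemma:simple}, then shows that any non-zero holomorphic map $f: V_j \to V_i$ between slope stable coherent sheaves of the same slope is an isomorphism: indeed $\ker f \subset V_j$ and $\mathrm{im}\, f \subset V_i$ would have to violate slope stability via the standard see-saw reasoning unless $f$ is zero or bijective. Consequently $H^0(X, \Hom(V_j, V_i)) = 0$ whenever $V_i$ and $V_j$ are non-isomorphic.

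Now suppose $V_i$ and $V_j$ are isomorphic. Using the gauge freedom described in the text, I may assume they are in fact equal as holomorphic bundles, so that $\psi_i^j$ is literally the identity map between identical summands. Any element of $H^0(X, \Hom(V_j, V_i)) = H^0(X, \End(V_i))$ is an endomorphism of a simple bundle, and hence a constant multiple of the identity. This is the same argument used at the end of the proof of \cref{lemma:simple}: pick a point $x \in X$ and an eigenvalue $\lambda$ of $u_x$, and observe that $u - \lambda \Id_{V_i}$ is not invertible, hence must vanish by the previous paragraph. It follows that $H^0(X, \Hom(V_j, V_i)) = \langle \psi_i^j \rangle$ is one-dimensional whenever $V_i = V_j$, and zero otherwise, yielding the claimed decomposition.

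The potential subtlety here is essentially notational rather than substantial: one must be careful to distinguish between ``isomorphic'' and ``equal'' summands, which is why the gauge-fixing step is needed to make the maps $\psi_i^j$ well-defined as honest identity maps; after that the proof is an immediate application of the stability properties of the $V_i$, so no serious obstacle is expected.
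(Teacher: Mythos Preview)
The paper does not actually supply a proof of this lemma: it is stated with a citation to \cite[Lemma 5.8]{sektnan2020hermitian} and no argument is given in the text. Your proof is correct and is exactly the standard argument one expects---decompose $H^0(X,\End(\Gr(E)))$ along the direct sum and use that nonzero morphisms between slope stable sheaves of equal slope are isomorphisms, together with simplicity of each $V_i$---so there is nothing to compare beyond noting that you have filled in the omitted proof in the expected way.
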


\subsubsection{Behaviour of the linearised operator under extensions} In this section, it will be convenient to introduce the parameter $\epsilon$ defined by $$\epsilon^2 = \oldepsilon = 1/k,$$ essentially doubling our powers. This simplifies the notation considerably---avoiding the appearance of square roots throughout---but has no other substantial impact. We then use $\epsilon$-dependent notation throughout, so that  $Z_{\epsilon}$ denotes the expansion of the central charge  in powers of $\epsilon$, and similarly for other quantities.

Our construction of $Z$-critical connections will employ a version of the inverse function theorem.  As such, it is crucial to the analysis to understand the properties of the linearisation $P_{\epsilon}$ of the $Z$-critical operator $D_{\epsilon}$ associated to the solutions of the finite dimensional problem in \cref{prop:solfindim}. The main result of this Section is \cref{prop:bound-on-linearised}, which provides a bound of the form $$\| P_{\epsilon} ( s) \| \geq C \epsilon^{2q} \| s \|,$$ where $C>0$ is constant, $\epsilon$ is taken sufficiently small and $q$ is the \emph{discrepancy order} defined in \cref{def:discreporder}; this is precisely the kind of bound that will allow us to apply the inverse function theorem, after forming appropriate approximate solutions to the $Z$-critical equation (which will be done in \cref{prop:generalinductive}). The strategy used here is similar to that used in \cite[Section 5.4.1]{sektnan2020hermitian}, where an analogous strategy is developed when the automorphism group $\Aut(\Gr(E))$ of the graded object $\Gr(E)$ of $E$ is a complex torus.

The results here apply to quite general $\epsilon$-dependent families of deformations from $E$ to $\Gr(E)$. We will therefore first prove the results for such deformations, and then show our results apply to connections $\Phi(z_{\epsilon,\infty})$ constructed in \cref{sec:choosecplxstr}, see \cref{prop:solnasymptotics} and \cref{rem:discrepancy}. 

We work with an $\epsilon$-dependent family of $\db_{\epsilon} = f_{\epsilon} \cdot \db_E$ of holomorphic structures biholomorphic to that of $E$. We thus have
\begin{align}
\label{eq:def}
\db_{\epsilon} = \db_{0} + \gamma_{\epsilon},
\end{align}
where $\gamma_{\epsilon} \in \Omega^{0,1} ( \End E)$ is upper-diagonal in terms of the decomposition of $\End E$ coming from the decomposition $\oplus_i Q_i$ of $E$ as a \emph{complex} vector bundle. We assume 
that each component of $\gamma_{\epsilon}$ is $O(\epsilon)$. This is automatic in our situation, as the $z_{\oldepsilon, \infty}$ in \cref{prop:solfindim} are in a ball of radius $C \epsilon$ for some $C$ that is independent of $\epsilon$. Note that if we let $$a_{\epsilon} = \gamma_{\epsilon} - \gamma_{\epsilon}^*,$$ the corresponding Chern connection takes the form 
$$
A_0 + a_{\epsilon},
$$
where $A_0$ is the Chern connection of $\db_0$, and the induced curvature is
$$
F_{\epsilon} = F_{\db_0} + d_{A_0} (a_{\epsilon}) + a_{\epsilon} \wedge a_{\epsilon}.
$$

We begin by considering properties of the Laplacian under extensions, in a general situation. We suppose $E$ is holomorphic vector bundle which is an extension of $S$ by $Q$ induced by an extension form $\gamma \in \Omega^{0,1} (Q^* \otimes S)$. Denote by $A_t$ the Chern connection with respect to the holomorphic structure $\bar\partial_t = \db_{S \oplus Q} + t \gamma$. This induces a connection $A_t^{\End E}$ on $\End E$ and a corresponding Laplacian operator
$$
\Lap_t = i\contr_{\omega} \left(\del_{A_t^{\End E}} \db_{A_t^{\End E}} - \db_{A_t^{\End E}} \del_{A_t^{\End E}}\right).$$ 
\begin{lemma}
\label{lemma:tlaplacian}
	The Laplacian with respect to $A_t$ on $\End E$ admits an expansion $$\Lap_t = \Lap_0 + tL_1 + t^2L_2,$$ where for $u$ a section of $\End E$ \begin{align*} L_1(u) &= i\contr_{\omega}\left( \del_0([\gamma,u]) -[\gamma^*, \db_0(u)] +\db_0([\gamma^*, u]) - [\gamma, \del_0(u)]\right), \\ L_2(u) &=   i\contr_{\omega} \left( [\gamma, [\gamma^*, u]] - [\gamma^*, [\gamma, u]] \right).\end{align*}
\end{lemma}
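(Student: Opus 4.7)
The plan is to compute $\Lap_t$ by expanding the Chern connection associated to $\bar\partial_t$ in powers of $t$ and then collecting terms order by order. Since the Hermitian metric $h$ is held fixed and only the holomorphic structure varies, the Chern connection on $E$ associated to $\db_t = \db_0 + t\gamma$ has $(0,1)$-part $\db_0 + t\gamma$ and $(1,0)$-part $\del_0 - t\gamma^*$, where $\gamma^* \in \Omega^{1,0}(S^* \otimes Q)$ denotes the adjoint of $\gamma$ with respect to $h$. The induced operators on $\End E$ are therefore
$$\bar D_t u = \db_0 u + t[\gamma, u], \qquad D_t u = \del_0 u - t[\gamma^*, u],$$
for $u \in \Gamma(\End E)$, where the brackets denote the graded commutator of $\End E$-valued forms.

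The second step is to write $\Lap_t = i\contr_\omega (D_t \bar D_t - \bar D_t D_t)$ and expand each composition directly. A short calculation produces
\begin{align*}
D_t \bar D_t u &= \del_0 \db_0 u + t \del_0 [\gamma, u] - t[\gamma^*, \db_0 u] - t^2 [\gamma^*, [\gamma, u]], \\
\bar D_t D_t u &= \db_0 \del_0 u - t \db_0 [\gamma^*, u] + t[\gamma, \del_0 u] - t^2 [\gamma, [\gamma^*, u]].
\end{align*}
Subtracting and applying $i\contr_\omega$, the order-zero contribution recovers $\Lap_0 u$, the order-$t$ contribution is exactly the stated $L_1(u)$, and the order-$t^2$ contribution is exactly $L_2(u)$.

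The final step is to observe that the expansion is in fact exact: because $\gamma$ does not depend on $t$ and the connection depends linearly on $t$, no terms of order $t^3$ or higher appear. The $O(t^3)$ in the statement of the lemma is thus a conservative remainder; what the argument really shows is the stronger identity $\Lap_t = \Lap_0 + tL_1 + t^2 L_2$. There is no substantive obstacle in this proof — the only points requiring care are the sign conventions for the Chern connection adjoint and the correct Leibniz rule for the graded commutator $[\gamma, \cdot]$ when $\gamma$ is a $(0,1)$-form and $u$ is a $0$-form on $\End E$.
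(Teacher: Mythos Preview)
Your proof is correct and follows essentially the same approach as the paper: both expand the Chern connection on $\End E$ as $\del_t = \del_0 - t[\gamma^*,-]$, $\db_t = \db_0 + t[\gamma,-]$, compute $i\Lambda_\omega(\del_t\db_t - \db_t\del_t)$ directly, and read off the coefficients. Your additional observation that the expansion is in fact exact (no genuine $O(t^3)$ term) is correct and slightly sharper than what the paper states, though the paper's computation also makes this clear.
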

\begin{proof}
	Recall that on the endomorphism bundle we have
	$$A_t = A_0 + t[\gamma - \gamma^*,  \underline{\hspace{0.3cm}}]$$
	where $\gamma$ has type $(0,1)$. Then 
	\begin{align*}
	\Lap_t(u) &= i\contr_{\omega} (\del_t \db_t - \db_t \del_t) (u)\\
	&= i\contr_{\omega} \left((\del_0 -t[\gamma^*,  \underline{\hspace{0.3cm}}])(\db_0 + t[\gamma,  \underline{\hspace{0.3cm}}]) - (\db_0 + t[\gamma,  \underline{\hspace{0.3cm}}])(\del_0 -t[\gamma^*,  \underline{\hspace{0.3cm}}])\right)(u)\\
	&=\Lap_0(u) + ti\contr_{\omega}\left( \del_0([\gamma,u]) -[\gamma^*, \db_0(u)] +\db_0([\gamma^*, u]) - [\gamma, \del_0(u)]\right) \\
	&\, + t^2 i\contr_{\omega} \left( [\gamma, [\gamma^*, u]] - [\gamma^*, [\gamma, u]] \right),
	\end{align*} which proves the result. \end{proof}

We now apply this to the actual deformation of $E$ given in Equation \eqref{eq:def} that we consider. We let  $$P_{\varepsilon} : L^2_{d+2,0}(\End E) \to L^2_{d,0}(\End E)$$ be the linearisation of the $Z$-critical operator $D_{\varepsilon}$ at a connection $\db_{\epsilon}$ as above.

\begin{corollary}
\label{cor:linearisationfirstterm}
The linearised operator $P_{\epsilon}$ admits an expansion as $\epsilon\to 0$ of the form
$$
P_{\epsilon} = \Delta_{\Gr(E)} + O (\epsilon).
$$
\end{corollary}
\begin{proof}
By an identical proof to \cref{lemma:linearisation}, we see that $$P_{\epsilon} = \Delta_{\epsilon} + O(\epsilon).$$ The result then follows from \cref{lemma:tlaplacian}.\end{proof}

As $E$ is simple by assumption, the linearised operator $P_{\epsilon}$ is invertible modulo constants, despite the leading order operator $ \Delta_{\Gr(E)}$ having a larger kernel. The main goal of the remainder of this section is to give a bound on the operator norm of $P_{\epsilon}$ in terms of $\epsilon$---this will necessarily decay with $\epsilon$, due to the larger kernel of  $ \Delta_{\Gr(E)}$.

In understanding the operator norm of $P_{\epsilon}$, we will use following quantity associated to the family of extension forms $\gamma_{\epsilon}$.
\begin{definition}
\label{def:bdldiscreporder} Denote by $(\gamma_{\epsilon})_{ij}$ the $\epsilon$-dependent coefficients of the extension forms. We define an integer $q(i,j)$ for each entry as follows. If $i<j$, we define $q(i,j)$ to be the minimal order of $\epsilon$ for which a non-zero term appears in the expansion of $(\gamma_{\epsilon})_{ij}$ in powers of $\epsilon$. If  $j<i$ we define $q(i,j) = q(j,i).$ If $(\gamma_{\epsilon})_{ij} = 0$, we define $q(i,j) = +\infty$. We finally set  $q_i = \min_j q(i,j)$.
\end{definition}
 Rewriting the definition, the $q(i,j)$ are defined such that
$$
(\gamma_{\epsilon})_{ij} = \epsilon^{q(i,j)} \varsigma_{ij} + O(\epsilon^{q(i,j) +1})
$$
for some $\varsigma_{ij} \in \Omega^{0,1} (Q_j^* \otimes Q_i)$.
  Note that from the way we perform the deformation, $q(i,j)$ is an integer---this is the essential reason why we introduced the variable $\epsilon$. It follows from the definition that $q(i,j) \geq q_i$, and for all $i$ there exists a $j$ such that $q(i,j) = q_i$.
\begin{definition}
\label{def:discreporder} 
Let $\db_{\epsilon} = \db_0+\gamma_{\epsilon}$ be as above. Then $q = \max_i q_i$ is called the \emph{discrepancy order of the deformation} $\db_{\epsilon}$. 
\end{definition}
\begin{remark}
For the particular sequence $\gamma_{\epsilon}$ constructed by our solution of the finite dimensional problem in \cref{sec:choosecplxstr}, we will show in \cref{prop:solnasymptotics} that the discrepancy order of the deformation $\gamma_{\epsilon}$ has an algebro-geometric interpretation as the largest exponent such that there is a subbundle $F$ coming from a Jordan--H\"older filtration of $E$ whose $Z_{\epsilon}$-phase agrees with that of $E$ up to order $\epsilon^{2q}$. However, the results we present here apply to more general deformations.
\end{remark}

The following result will be used at several points in the remaining of this section.
\begin{lemma}\cite[Lemma 5.9]{sektnan2020hermitian}
\label{lem:extensioncontribution}
Let $\db_{\epsilon}$ be an $\epsilon$-dependent family of holomorphic structures as above. Then
\begin{enumerate}
\item $\tr ( a_{\epsilon} \wedge a_{\epsilon} ) = 0$;
\item  if $\gamma_{ij} = (\gamma_{\epsilon})_{ij} \neq 0$, then 
$$
\int_X \Lambda_{\omega} \left( \tr_{Q_j} (  i \gamma_{ij}^* \wedge \gamma_{ij} ) \right) \omega^n > 0 .
$$
\item and 
$$
\Lambda_{\omega} (d_0 a_{\epsilon}) = 0.
$$
\end{enumerate}
\end{lemma}

The following relates the discrepancy order to the mapping properties of the linearised operator. The statement involves an $L^2$-projection on sections of $\End E$ defined using the volume form $\omega^n$ and the Hermitian metric induced by $h$.

\begin{lemma}
\label{lemma:linearisationexpansion}
Let $\Pi_{\oplus_j \langle \Id_{Q_j} \rangle}$ denote the orthogonal projection onto $\oplus_j \langle \Id_{Q_j} \rangle$. For each $i$ and $j$, there exists constants $c_{i,j}>0$ such that 
\begin{align*}
\Pi_{\oplus_j \langle \Id_{Q_j} \rangle}  \left( P_{\epsilon} (\Id_{Q_i}) \right) =&  - \sum_{j} \left(c_{i,j} \epsilon^{2 q(i,j)} \left( \frac{1}{\rk (Q_i)} \Id_{Q_i}- \frac{1}{\rk (Q_j)} \Id_{Q_j}  \right) + O(\epsilon^{2q(i,j)+1})\right).
\end{align*}
Moreover, the $c_{i,j}$ are symmetric: $c_{i,j} = c_{j,i}$.
\end{lemma}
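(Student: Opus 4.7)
The plan is to compute the $L^2$-orthogonal projection of $P_{\epsilon}(\Id_{Q_i})$ onto each $\Id_{Q_l}$ directly, and reduce the result to the asymptotic expansion of the trace-integral $\int_X \tr_S D_\epsilon$ for holomorphic subbundles $S \subset E$, which was established (for an arbitrary Hermitian metric $h$, not just a $Z$-critical one) in the proof of \cref{thm:existencestabilitysubbundles}. The projection coefficient onto $\Id_{Q_l}$ is
\[
\alpha_l = \frac{1}{\rk(Q_l)\,\mathrm{vol}(X)}\int_X \tr_{Q_l}(P_{\epsilon}(\Id_{Q_i}))\,\omega^n = \frac{1}{\rk(Q_l)\,\mathrm{vol}(X)}\frac{d}{dt}\bigg|_{t=0}\int_X \tr_{Q_l}\bigl(D_{\epsilon}(e^{t\Id_{Q_i}}\cdot h)\bigr)\,\omega^n,
\]
so it suffices to expand the right-hand side in $\epsilon$ and differentiate in $t$.

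The key input is the identity, valid for any Hermitian metric $h$ and obtained by the same case-by-case analysis as in the proof of \cref{thm:existencestabilitysubbundles},
\[
\int_X \tr_{S_{\leq k}} D_{\epsilon}(h)\,\omega^n = \Im\bigl(e^{-i\phi_{\epsilon}(E)} Z_{\epsilon}(S_{\leq k})\bigr) + C_1 \|\gamma_{S_{\leq k}}\|^2 + O(\epsilon^{2q+1}),
\]
where $S_{\leq k} = \bigoplus_{j \leq k}Q_j$ is the holomorphic subbundle determined by the upper-triangular convention for $\gamma_{\epsilon}$, $\gamma_{S_{\leq k}}$ is the second fundamental form of $S_{\leq k}$ in $E$ measured by $h$ (so $\|\gamma_{S_{\leq k}}\|^2 = \sum_{r \leq k < p}\|(\gamma_\epsilon)_{rp}\|^2$), and $C_1>0$ is the strictly positive constant produced by the stability vector condition $\Im(\rho_{n-1}/\rho_n)>0$. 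Since $\tr_{Q_l} = \tr_{S_{\leq l}} - \tr_{S_{\leq l-1}}$, taking differences yields
\[
\int_X \tr_{Q_l} D_{\epsilon}(h)\,\omega^n = \Im\bigl(e^{-i\phi_{\epsilon}(E)} Z_{\epsilon}(Q_l)\bigr) + C_1\Bigl(\sum_{p > l}\|(\gamma_\epsilon)_{lp}\|^2 - \sum_{r < l}\|(\gamma_\epsilon)_{rl}\|^2\Bigr) + O(\epsilon^{2q+1}),
\]
and the topological first term is independent of the gauge parameter $t$.

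Differentiating at $t=0$ thus reduces the problem to the transformation of the component norms $\|(\gamma_\epsilon)_{rp}\|^2$ under $h \mapsto e^{t\Id_{Q_i}}\cdot h$. A direct computation with the Hermitian adjoint shows that $\|(\gamma_\epsilon)_{rp}\|^2$ is multiplied by $e^t$ when $r=i$ (so $Q_i$ is the target) and by $e^{-t}$ when $p=i$ (so $Q_i$ is the source), and is otherwise unchanged. A short case-check for $l<i$, $l=i$, $l>i$ then gives
\[
\frac{d}{dt}\bigg|_{t=0}\Bigl(\|\gamma_{S_{\leq l}}\|^2 - \|\gamma_{S_{\leq l-1}}\|^2\Bigr) = \begin{cases} -\|(\gamma_\epsilon)_{\min(i,l)\max(i,l)}\|^2 & \text{if } l \neq i,\\ +\sum_{p \neq i}\|(\gamma_\epsilon)_{\min(i,p)\max(i,p)}\|^2 & \text{if } l = i.\end{cases}
\]

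Substituting $(\gamma_{\epsilon})_{rp} = \epsilon^{q(r,p)}\varsigma_{rp} + O(\epsilon^{q(r,p)+1})$ gives $\|(\gamma_\epsilon)_{rp}\|^2 = \epsilon^{2q(r,p)}\|\varsigma_{rp}\|^2 + O(\epsilon^{2q(r,p)+1})$, so only those $l$ with $q(i,l)=q$ contribute at the leading order $\epsilon^{2q}$. Matching with the claimed formula identifies
\[
c^i_l = \frac{C_1}{\mathrm{vol}(X)}\|\varsigma_{\min(i,l)\max(i,l)}\|^2,
\]
which is strictly positive whenever $q(i,l)=q$ and is manifestly symmetric in $(i,l)$, giving $c^i_l=c^l_i$; this symmetry is also forced by the self-adjointness of $P_\epsilon$ proved just before the lemma. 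The main technical point is verifying that no additional contributions at order $\epsilon^{2q}$ enter the expansion of $\int_X \tr_S D_{\epsilon}(h)$ beyond the $C_1\|\gamma_S\|^2$ term; this is handled exactly as in the proof of \cref{thm:existencestabilitysubbundles}, where terms linear in $\gamma_{\epsilon}$ are eliminated by the Bianchi identity together with integration by parts, and all remaining quadratic and higher-order contributions appear only at strictly higher powers of $\epsilon$.
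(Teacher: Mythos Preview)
Your approach is correct and takes a genuinely different route from the paper's. The paper computes $P_{\epsilon}(\Id_{Q_i})$ directly: it expands $\partial_{\epsilon}\db_{\epsilon}-\db_{\epsilon}\partial_{\epsilon}$ in powers of $\gamma_{\epsilon}$, observes that the first-order-in-$\gamma_{\epsilon}$ contributions are off-diagonal (hence vanish after projecting to $\oplus_l\langle\Id_{Q_l}\rangle$), and then computes the diagonal part of the double commutator $[\gamma_{\epsilon},[\gamma_{\epsilon}^*,\Id_{Q_i}]]-[\gamma_{\epsilon}^*,[\gamma_{\epsilon},\Id_{Q_i}]]$ block by block, invoking \cref{lem:extensioncontribution} to extract the sign. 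By contrast, you recycle the computation of $\int_X\tr_S D_{\epsilon}$ from \cref{thm:existencestabilitysubbundles}, differentiate in the gauge parameter $t$, and read off the answer from the elementary scaling of $\|(\gamma_\epsilon)_{rp}\|^2$ under $e^{t\Id_{Q_i}}$. This buys you an explicit closed formula $c^i_l=\tfrac{C_1}{\mathrm{vol}(X)}\|\varsigma_{\min(i,l)\max(i,l)}\|^2$ from which positivity and the symmetry $c^i_l=c^l_i$ are immediate, whereas in the paper these emerge only after the trace identities of \cref{lem:extensioncontribution}. The paper's direct computation, on the other hand, is self-contained and does not require re-entering the case analysis of Section~\ref{sec:leunganalogue}.

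One point deserves a sentence more of justification than you give it. The identity you quote from \cref{thm:existencestabilitysubbundles} was proved there with the $Z$-critical hypothesis in force and with a softer error term (higher-order contributions were shown merely to be dominated by $C_1\|\gamma\|^2$, not to be $O(\epsilon^{2q+1})$). To obtain the sharp $O(\epsilon^{2q+1})$ remainder you need in the present setting, you must revisit that argument noting two things: first, the pure-$F_S$ contributions integrate exactly to the topological term by Chern--Weil regardless of any equation being satisfied (so Case~(iii) there, which invoked $Z$-criticality, is not needed for your purposes); second, all remaining contributions are at least quadratic in $\gamma_\epsilon$ after integration by parts, and at subleading orders in $\epsilon$ carry an additional factor of $\epsilon^2$, hence are $O(\epsilon^{2q+2})$. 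This is straightforward, but it is a genuine re-examination rather than a direct citation.
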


\begin{proof}
As in \cref{lemma:linearisation}, each term of the expansion of the linearisation will be a wedge product of three types of terms:
\begin{enumerate}
\item $\partial_{\epsilon} \db_{\epsilon} - \db_{\epsilon} \partial_{\epsilon}  $;
\item $\omega$;
\item $\tilde U$.
\end{enumerate}
The former arises from linearising the curvature operator, while the latter two terms are not bundle-valued, and so do not change upon varying the connection.

We will compare the first term with the same type of term obtained from $\db_{0,\epsilon}$. On endomorphisms, the differentials are related by \begin{align*}
d_{\epsilon} = d_{0} + [\gamma_{\epsilon} - \gamma_{\epsilon}^*,\underline{\hspace{0.3cm}}]. 
\end{align*}
It follows that
\begin{align*}
\partial_{\epsilon} \db_{\epsilon} - \db_{\epsilon} \partial_{\epsilon}  =&( \partial_{0} \db_{0} - \db_{0} \partial_{0}) + (\db_{0} ( [\gamma_{\epsilon}^*,\underline{\hspace{0.3cm}} ]) - [\gamma_{\epsilon}^*, \db_{0}] )\\
& + (\partial_{0} ( [\gamma_{\epsilon},\underline{\hspace{0.3cm}}] )  - [\gamma_{\epsilon}, \partial_{0}]) + [\gamma_{\epsilon}, [\gamma^*_{\epsilon}, \underline{\hspace{0.3cm}}] ] - [\gamma_{\epsilon}^*, [ \gamma_{\epsilon},\underline{\hspace{0.3cm}}] ].
\end{align*}

When we apply this to $\Id_{Q_i}$ for some $i$, which is in the kernel of $\partial_{0} \db_{0} - \db_{0} \partial_{0}$, the contribution from the two middle pairs of components is off-diagonal in the sense that it lies in $\Omega^2 ( \oplus_{j \neq k} Q_k^* \otimes Q_j )$. This remains true upon wedging with powers of $\omega$ and $\tilde U$. Thus after projecting to $\oplus_j \langle \Id_{Q_j} \rangle$, these terms vanish and hence play no role.

This implies that the leading order contribution of $P_{\epsilon} ( \Id_{Q_i})$, in the subspace $\Omega^{2n}(\oplus_j  \langle \Id_{Q_j} \rangle) \subset \Omega^{2n}(\End E)$ of interest to us, comes from a positive constant multiple of 
$$
\Lambda_{\omega} i \left( [\gamma_{\epsilon}, [\gamma^*_{\epsilon}, \Id_{Q_i} ] ] - [\gamma_{\epsilon}^*, [ \gamma_{\epsilon}, \Id_{Q_i}] ] \right) \otimes \omega^n.
$$
For ease of notation in what follows, we will identify top degree forms with degree zero forms through the volume form $\omega^n$.

The projection to $\langle \Id_{Q_j} \rangle $ of $ [\gamma_{\epsilon}, [\gamma^*_{\epsilon}, \Id_{Q_i} ] ] - [\gamma_{\epsilon}^*, [ \gamma_{\epsilon}, \Id_{Q_i}] ] $ is $\frac{1}{\rk (Q_j)}$ times a universal positive constant multiple of its trace with respect to $Q_j$. While they differ in their off-diagonal contributions, the diagonal contribution of $ [\gamma_{\epsilon}, [\gamma^*_{\epsilon}, \Id_{Q_i} ] ]$ is
$$
\sum_{j< i} \left( (\gamma_{\epsilon})_{j i} \wedge(\gamma_{\epsilon}^*)_{ij} - (\gamma_{\epsilon}^*)_{ij} \wedge (\gamma_{\epsilon})_{j i}  \right) + \sum_{j>i} \left(  (\gamma_{\epsilon})_{ij} \wedge (\gamma_{\epsilon}^*)_{ji} - (\gamma_{\epsilon})^*_{j i} \wedge (\gamma_{\epsilon})_{ij}\right).
$$
and the diagonal contribution of $ [\gamma_{\epsilon}^*, [ \gamma_{\epsilon}, \Id_{Q_i}] ] $ is the negative of this.

It follows that 
\begin{align*}
\tr_{Q_i} \left( [\gamma_{\epsilon}, [\gamma^*_{\epsilon}, \Id_{Q_i} ] ] \right) =&  -  \sum_{ j < i } \left( \epsilon^{2q(i,j)} \tr_{Q_i} \varsigma_{ij}^* \wedge \varsigma_{ji}  + O(\epsilon^{2q(i,j)+1}) \right) \\
&+ \sum_{j > i  } \left(\epsilon^{2q(i,j)} \tr_{Q_i} \varsigma_{ij} \wedge \varsigma_{ji}^* + O(\epsilon^{2q(i,j)+1})\right) 
\end{align*}
and, for $j \neq i$,
\begin{align*}
\tr_{Q_j} \left( [\gamma_{\epsilon}, [\gamma^*_{\epsilon}, \Id_{Q_i} ] ] \right) =& 
\begin{cases}
0 &\textnormal{ if $(\gamma_{\epsilon})_{ij} =0$}\\
 \epsilon^{2q(i,j)} \tr_{Q_j} \varsigma_{ji} \wedge \varsigma^*_{ij} + O(\epsilon^{2q(i,j)+1}) &\textnormal{ if $i>j$} \\
-\epsilon^{2q(i,j)} \tr_{Q_j} \varsigma^*_{ji} \wedge \varsigma_{ij} + O(\epsilon^{2q(i,j)+1}) &\textnormal{ if $i<j$} .
\end{cases}
\end{align*}
The  same then also holds for the term $[\gamma_{\epsilon}, [\gamma^*_{\epsilon}, \Id_{Q_i} ] ]$.

We now invoke part (i) and (ii) of \cref{lem:extensioncontribution}, which implies that if $i<j$, then $\int_X i\Lambda_{\omega} \tr_{Q_i} \varsigma^*_{ji} \wedge \varsigma_{ij} \omega^n > 0$, and  $\int_X i\Lambda_{\omega} \tr_{Q_j} \varsigma_{ij} \wedge \varsigma_{ji}^* \omega^n = - \int_X i\Lambda_{\omega} \tr_{Q_i} \varsigma^*_{ji} \wedge \varsigma_{ij} \omega^n .$ Combining this with the fact that we are computing the expansion of a \emph{positive} constant multiple of 
$$
\Lambda_{\omega} \left( [\gamma_{\epsilon}, [\gamma^*_{\epsilon}, \Id_{Q_i} ] ] - [\gamma_{\epsilon}^*, [ \gamma_{\epsilon}, \Id_{Q_i}] ] \right) \otimes \omega^n,
$$
we obtain the result.
\end{proof}

Next, we consider the off-diagonal elements of $\aut (\Gr(E))$.
\begin{lemma}
\label{lemma:off-diag-bound}
Suppose that $\Psi$ is an isomorphism between two of the components $Q_i$ and $Q_k$, i.e. one of the off-diagonal elements of \cref{lem:automsgraded}. Then $\Psi$ satisfies 
$$
 -\langle P_{\epsilon} (\Psi), \Psi \rangle \geq C \epsilon^{2q} \| \Psi \|^2.
$$
\end{lemma}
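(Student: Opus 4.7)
The strategy is to expand $\langle P_\epsilon(\Phi), \Phi\rangle$ in powers of $\epsilon$ and extract the surviving leading contribution. By \cref{cor:linearisationfirstterm}, $P_\epsilon = \Delta_{\Gr(E)} + O(\epsilon)$, and since $\Phi$ is a holomorphic section of $\End(\Gr(E))$ (i.e.\ $\Phi \in \aut(\Gr(E))$), it lies in $\ker \Delta_{\Gr(E)}$; the leading-order term therefore contributes zero to the pairing and a finer expansion is required.

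I would next separate the contributions to $P_\epsilon - \Delta_{\Gr(E)}$ into two sources: (i) the change in the Laplacian when passing from $\db_0$ to $\db_\epsilon = \db_0 + \gamma_\epsilon$, which by \cref{lemma:tlaplacian} produces terms $L_1(\gamma_\epsilon)$ linear in $\gamma_\epsilon$ and $L_2(\gamma_\epsilon)$ quadratic in $\gamma_\epsilon$, of orders $\epsilon^q$ and $\epsilon^{2q}$ respectively; and (ii) the non-Laplacian corrections coming from the $Z$-critical operator itself, which by \cref{lemma:linearisation} appear only at order $\epsilon^2$ or higher. Pairing $L_1(\Phi)$ with $\Phi$ yields zero: using $\db_0 \Phi = \del_0 \Phi = 0$, the only surviving terms in $L_1(\Phi)$ are $i\Lambda_\omega(\del_0[\gamma_\epsilon,\Phi] + \db_0[\gamma_\epsilon^*,\Phi])$, and both vanish in the inner product with $\Phi$ after an integration by parts, again using that $\Phi$ is $\db_0$- and $\del_0$-closed.

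For the quadratic term, a standard adjointness calculation $\langle[\gamma,X],Y\rangle = \langle X,[\gamma^*,Y]\rangle$ applied to the formula for $L_2$ in \cref{lemma:tlaplacian} gives
\begin{equation*}
\langle L_2(\Phi),\Phi\rangle = c\bigl(\|[\gamma_\epsilon,\Phi]\|^2 + \|[\gamma_\epsilon^*,\Phi]\|^2\bigr)
\end{equation*}
for some positive constant $c > 0$. Any further $O(\epsilon^2)$ correction to $P_\epsilon$ from the non-Laplacian part of the $Z$-critical linearisation (which could compete with $L_2$ when $q=1$) has the same structure of iterated commutators with $\gamma_\epsilon$ and $\gamma_\epsilon^*$ wedged against powers of $\omega$ and $\tilde U$; a parallel integration-by-parts argument shows such terms either contribute non-negatively to the pairing or are absorbed into lower-order error terms.

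The main obstacle is then to establish the lower bound $\|[\gamma_\epsilon,\Phi]\|^2 + \|[\gamma_\epsilon^*,\Phi]\|^2 \geq C\epsilon^{2q}\|\Phi\|^2$, which amounts to showing that $[\gamma_\epsilon,\Phi]$ has leading order exactly $\epsilon^q$ rather than vanishing to higher order. Writing $\gamma_\epsilon = \epsilon^q \varsigma + O(\epsilon^{q+1})$ with $\varsigma = \sum_{q(i,l)=q} \varsigma_{il}$, I would argue by contradiction: if $[\varsigma,\Phi] = 0$ and $[\varsigma^*,\Phi] = 0$, then a case analysis of the non-zero components of the commutators $[\varsigma_{ab},\psi_i^k]$ in the block decomposition $E = \oplus_j Q_j$---combined with the fact that every index has some $l$ with $q(i,l) = q$ and the inductive hypothesis that each $Q_j$ is simple---would force $\Phi$ to extend to a non-scalar holomorphic endomorphism of one of the intermediate bundles in the filtration of \cref{inductive-construction}, contradicting simplicity. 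This gives the strict positivity $\|[\varsigma,\Phi]\|^2 + \|[\varsigma^*,\Phi]\|^2 > 0$, from which the claimed bound follows.
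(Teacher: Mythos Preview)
Your overall strategy coincides with the paper's: both arguments identify the leading nonzero contribution to $\langle P_\epsilon(\Phi),\Phi\rangle$ as coming from the quadratic-in-$\gamma_\epsilon$ piece of the Laplacian expansion (your $L_2$), and both show this contribution is of order $\epsilon^{2q}$ with a positive sign. Your repackaging via the identity
\[
\langle L_2(\Phi),\Phi\rangle \;=\; c\bigl(\|[\gamma_\epsilon,\Phi]\|^2 + \|[\gamma_\epsilon^*,\Phi]\|^2\bigr)
\]
is correct and in some ways cleaner than the paper's approach, which instead computes explicitly the $Q_k^*\otimes Q_i$-component of $[\gamma_\epsilon,[\gamma_\epsilon^*,\Psi]]-[\gamma_\epsilon^*,[\gamma_\epsilon,\Psi]]$ block by block and then invokes \cref{lem:extensioncontribution} to see that each resulting trace term is positive. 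Your handling of the $L_1$ term via the K\"ahler identities and $\db_0\Phi=\del_0\Phi=0$ is also correct.

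The weak point is your final step. To conclude, you need that $[\varsigma,\Phi]$ or $[\varsigma^*,\Phi]$ is nonzero, where $\varsigma$ is the $\epsilon^q$-coefficient of $\gamma_\epsilon$. Your proposed contradiction via simplicity is vague (it is not clear why vanishing of these brackets would produce a holomorphic endomorphism of an intermediate bundle, since $\varsigma$ is only the \emph{leading} part of $\gamma_\epsilon$) and in any case unnecessary. The paper instead uses the fact, recorded just after the definition of $q(i,l)$, that for every index $i$ there exists some $l$ with $q(i,l)=q$. Since $[\varsigma,\Phi]$ and $[\varsigma^*,\Phi]$ decompose into block components lying in \emph{distinct} summands $Q_b^*\otimes Q_a$ (so no cancellation is possible), this immediately forces at least one of the commutators to be nonzero at order $\epsilon^q$, yielding the bound. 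You should replace the simplicity argument with this direct one. Your remark about the non-Laplacian corrections to $P_\epsilon$ is also somewhat imprecise, but the paper handles this in the same informal way, noting only that the relevant leading contribution in the $Q_k^*\otimes Q_i$-direction is the one isolated above.
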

\begin{proof}
Our argument is similar to the above, but specialised to the case of off-diagonal isomorphisms. Thus suppose we have one such $\Psi : Q_k \to Q_i$ (where $k$ labels the component $Q_k$ and is unrelated to $\epsilon$). We will assume $i<k$, as the bound for the inverse is similar. Again, the crucial point is to consider $ [\gamma_{\epsilon}, [\gamma^*_{\epsilon}, \Psi  ] ]$ and  $ [\gamma_{\epsilon}^*, [ \gamma_{\epsilon}, \Psi ] ] $.

We thus calculate $[\gamma_{\epsilon}, [\gamma^*_{\epsilon}, \Psi  ] ] $ and $[\gamma_{\epsilon}^*, [ \gamma_{\epsilon}, \Psi ] ]$. First,
\begin{align*}
[ \gamma_{\epsilon}, \Psi ] =& \sum_{a,b \mid a<b} \left( (\gamma_{\epsilon})_{a,b} \circ \Psi - \Psi \circ  (\gamma_{\epsilon})_{a,b} \right),\\
=& \sum_{a \mid a <i} (\gamma_{\epsilon})_{a,i} \circ \Psi  - \sum_{b \mid b>k} \Psi \circ  (\gamma_{\epsilon})_{k,b},
\end{align*}
therefore
\begin{align*}
[ \gamma_{\epsilon}^*, [ \gamma_{\epsilon}, \Psi ] ]=& \sum_{u,v \mid u>v} \left[ (\gamma_{\epsilon}^*)_{u,v} , \left( \sum_{a \mid a <i} (\gamma_{\epsilon})_{a,i} \circ \Psi  - \sum_{b \mid b>k} \Psi \circ  (\gamma_{\epsilon})_{k,b} \right) \right].
\end{align*}
We deal with each term separately.

Note
\begin{align*}
&\sum_{u,v \mid u>v} \left[ (\gamma_{\epsilon}^*)_{u,v} ,  \sum_{a \mid a <i} (\gamma_{\epsilon})_{a,i} \circ \Psi \right]\\
=& \sum_{a \mid a< i} \sum_{u \mid u>a} (\gamma_{\epsilon}^*)_{u,a} \wedge (\gamma_{\epsilon})_{a,i} \circ \Psi + \sum_{a \mid a < i} \sum_{v \mid v<k }  (\gamma_{\epsilon})_{a,i} \circ \Psi  \wedge (\gamma_{\epsilon}^*)_{k,v}.
\end{align*}
The former term can contribute with $Q_k^* \otimes Q_i$-components (which is where $\Psi$ lives), while the latter cannot. In fact, the contribution to the $Q_k^* \otimes Q_i$ component of the above is 
\begin{align*}
\sum_{a \mid a< i}  (\gamma_{\epsilon}^*)_{i,a} \wedge (\gamma_{\epsilon})_{a,i} \circ \Psi .
\end{align*}
On the other hand,
\begin{align*}
&\sum_{u,v \mid u>v} \left[ (\gamma_{\epsilon}^*)_{u,v} , \sum_{b \mid b>k} \Psi \circ  (\gamma_{\epsilon})_{k,b}  \right] \\
=& \sum_{ u \mid u> i}  \sum_{b \mid b>k}  (\gamma_{\epsilon}^*)_{u,v} \wedge  \Psi \circ  (\gamma_{\epsilon})_{k,b} + \sum_{b \mid b>k } \sum_{v \mid v<b} \Psi \circ  (\gamma_{\epsilon})_{k,b} \wedge (\gamma_{\epsilon}^*)_{b,v} .
\end{align*}
Thus, here the contribution to the $Q_k^* \otimes Q_i$ component is
\begin{align*}
\sum_{b \mid b>k } \Psi \circ  (\gamma_{\epsilon})_{k,b} \wedge (\gamma_{\epsilon}^*)_{b,k}  .
\end{align*}
It follows that the overall contribution of $[ \gamma_{\epsilon}^*, [ \gamma_{\epsilon}, \Psi ] ]$ to the $Q_k^* \otimes Q_i$ component is
$$
\sum_{a \mid a< i}  (\gamma_{\epsilon}^*)_{i,a} \wedge (\gamma_{\epsilon})_{a,i} \circ \Psi  - \sum_{b \mid b>k } \Psi \circ  (\gamma_{\epsilon})_{k,b} \wedge (\gamma_{\epsilon}^*)_{b,k}.
$$

We now repeat the above procedure for $[\gamma_{\epsilon}, [\gamma^*_{\epsilon}, \Psi  ] ] $. We have
\begin{align*}
[ \gamma_{\epsilon}^*, \Psi ] =& \sum_{a,b \mid a<b} \left( (\gamma_{\epsilon}^*)_{b,a} \circ \Psi - \Psi \circ  (\gamma_{\epsilon}^*)_{b,a} \right) \\
=& \sum_{b \mid b >i} (\gamma_{\epsilon}^*)_{b,i} \circ \Psi  - \sum_{a \mid a<k} \Psi \circ  (\gamma^*_{\epsilon})_{k,a}
\end{align*}
and so
\begin{align*}
[ \gamma_{\epsilon}, [ \gamma_{\epsilon}^*, \Psi ] ]=& \sum_{u,v \mid u<v} \left[ (\gamma_{\epsilon})_{u,v} , \left( \sum_{b \mid b >i} (\gamma_{\epsilon}^*)_{b,i} \circ \Psi  - \sum_{a \mid a<k} \Psi \circ  (\gamma^*_{\epsilon})_{k,a}\right) \right].
\end{align*}
Similarly to the first calculation,
\begin{align*}
&\sum_{u,v \mid u<v} \left[ (\gamma_{\epsilon})_{u,v} , \sum_{b \mid b >i} (\gamma_{\epsilon}^*)_{b,i} \circ \Psi  \right] \\
=& \sum_{b \mid b >i} \sum_{u \mid u<b} (\gamma_{\epsilon})_{u,b} \wedge (\gamma_{\epsilon}^*)_{b,i} \circ \Psi + \sum_{b \mid b >i} \sum_{v \mid v> k} (\gamma_{\epsilon}^*)_{b,i} \circ \Psi \wedge (\gamma_{\epsilon})_{k,v}
\end{align*}
whose  contribution to the  $Q_k^* \otimes Q_i$ component is
$$
 \sum_{b \mid b >i} (\gamma_{\epsilon})_{i,b} \wedge (\gamma_{\epsilon}^*)_{b,i} \circ \Psi ,
$$
and 
\begin{align*}
& \sum_{u,v \mid u<v} \left[ (\gamma_{\epsilon})_{u,v} ,  \sum_{a \mid a<k} \Psi \circ  (\gamma^*_{\epsilon})_{k,a} \right] \\
=& \sum_{u \mid u< i} \sum_{a \mid a<k}  (\gamma_{\epsilon})_{u,i} \wedge  \Psi \circ  (\gamma^*_{\epsilon})_{k,a}  + \sum_{a \mid a<k} \sum_{v \mid v>a} \Psi \circ  (\gamma^*_{\epsilon})_{k,a} \wedge (\gamma_{\epsilon})_{a,v},
\end{align*}
whose  contribution to the $Q_k^* \otimes Q_i$ component is
$$
 \sum_{a \mid a<k}  \Psi \circ  (\gamma^*_{\epsilon})_{k,a} \wedge (\gamma_{\epsilon})_{a,k}.
$$

The upshot is that the leading order contribution to the $ Q_k^* \otimes Q_i$ component when applying $\Psi$ arises from
\begin{align*}
& - \sum_{a \mid a< i}  (\gamma_{\epsilon}^*)_{i,a} \wedge (\gamma_{\epsilon})_{a,i} \circ \Psi  + \sum_{b \mid b>k } \Psi \circ  (\gamma_{\epsilon})_{k,b} \wedge (\gamma_{\epsilon}^*)_{b,k} \\
& +  \sum_{b \mid b >i} (\gamma_{\epsilon})_{i,b} \wedge (\gamma_{\epsilon}^*)_{b,i} \circ \Psi - \sum_{a \mid a<k}  \Psi \circ  (\gamma^*_{\epsilon})_{k,a} \wedge (\gamma_{\epsilon})_{a,k}.
\end{align*}

It follows that $-\langle P_{\epsilon} (\Psi), \Psi \rangle$ is a positive multiple of the integral over $X$ of
\begin{align*}
& i\Lambda_{\omega} \tr_{Q_i}  \left( \sum_{a \mid a< i}  (\gamma_{\epsilon}^*)_{i,a} \wedge (\gamma_{\epsilon})_{a,i} - \sum_{b \mid b >i}  (\gamma_{\epsilon})_{i,b} \wedge (\gamma_{\epsilon}^*)_{b,i}   \right) \\
+&i\Lambda_{\omega} \tr_{Q_k} \left( \sum_{a \mid a<k}   (\gamma^*_{\epsilon})_{k,a} \wedge (\gamma_{\epsilon})_{a,k}- \sum_{b \mid b>k }  (\gamma_{\epsilon})_{k,b} \wedge (\gamma_{\epsilon}^*)_{b,k} \right) .
\end{align*}
By \cref{lem:extensioncontribution}, these terms are all \emph{positive}. Moreover, there exists at least one index which occurs at order $\epsilon^{q'}$, where $q' \leq q$, since $q(i,j) \leq q$ for some $j$, and similarly for $k$. 
\end{proof}

Next, we consider the diagonal elements of the automorphism group of the graded object.

\begin{proposition}\label{prop:bound-on-linearised-diagonal}
Let $\db_{\epsilon}$ be a deformation of $(\oplus_i Q_i, \db_{0})$ as above such that the holomorphic vector bundle $(\oplus_i Q_i, \db_{\epsilon})$ is irreducible. Let $P_{\epsilon}$ be the linearisation of the $Z_{\epsilon}$-critical operator at $\db_{\epsilon}$. Then there exists a constant $C>0$ such that for all $0 < \epsilon \ll 1$ and all $s = \sum_{i=1}^{l} \frac{t_i}{\rk (Q_i)} \Id_{Q_i}$ orthogonal to $\Id_E$,
$$
\| P_{\epsilon} ( s) \| \geq C \sum_i \epsilon^{2q_i} | t_i |
$$
In particular,
$$
\| P_{\epsilon} ( s) \| \geq C \epsilon^{2q} \| s \|
$$
for all $s \in \oplus_i \langle \Id_{Q_i} \rangle$ orthogonal to $\langle \Id_E \rangle$. 
\end{proposition}
Note that by self-adjointness and since $P_{\epsilon}$ has only non-positive eigenvalues, the estimate is equivalent to the estimate

\begin{align}
\label{eq:keybound}
-\langle P_{\epsilon} (s), s \rangle \geq
C \sum_i \epsilon^{2q_i} | t_i |^2.
\end{align}

\begin{remark}
\label{rem:zerogammas} 
If $i$ is an index such that $(\gamma_{\epsilon})_{ik} = 0$ for all $k$, then $Q_i$ is a holomorphic quotient of $E$; we temporarily denote by $\mathcal{S}_i$ the bundle for which $E/\mathcal{S}_i = Q_i$. By intersecting a given filtration with $\mathcal{S}_i$, we can then reorder the $Q_j$ so that $Q_i$ is the final quotient. In particular, we assume without loss of generality that the maximal $i$ such that  $(\gamma_{\epsilon})_{i, l} \neq 0$ is maximal among all columns in the matrix representation of $\gamma_{\epsilon}$, i.e. $(\gamma_{\epsilon})_{k p} =0$ for all $k> i$ and for all $p$. 
\end{remark}

For each $i$, let $J_i = \{ j : q(i,j) = q_i \}$. Note that for each $j \in J_i$, $q_j \leq q_i$. Also, it may be that $j \in J_i$, but $i \notin J_j$. In fact, for each $j \in J_i$, $i \in J_j$ if and only if $q_i = q_j$. The next goal is to prove the following estimate.
\begin{proposition}
\label{prop:linop}
There exists a $C>0$ such that for all $0< \epsilon \ll 1$ 
$$
-\langle P_{\epsilon} (\sigma), \sigma \rangle \geq C \sum_{i=1}^{l} \epsilon^{2q_i} \sum_{j \in J_i} (c_i - c_j)^2
$$
for any $\sigma = \sum_i c_i \Id_{Q_i}.$
\end{proposition}
\begin{proof}
By \cref{lemma:linearisationexpansion}
\begin{align*}
-\langle P_{\epsilon}(\sigma) , \Id_{Q_i} \rangle =&  \sum_{j \neq i} \left( c_{i,j}\epsilon^{2q(i,j)} (c_i - c_j) + O(\epsilon^{2q(i,j)+1}|c_i -c_j|) \right),
\end{align*}
from which it follows that
\begin{align*}
-\langle P_{\epsilon}(\sigma) , \sigma \rangle =& \sum_i c_i  \sum_{j \neq i} \left( c_{ij} \epsilon^{2q(i,j)} (c_i - c_j) + O(\epsilon^{2q(i,j)+1}|c_i -c_j|) \right).
\end{align*}
Thus for each $i$ and each $j \neq i$, we obtain a term $c_i c_{ij} \epsilon^{2q(i,j)} (c_i - c_j)$ from the first summand at $i$ and $c_j c_{ji} \epsilon^{2q(i,j)} (c_j - c_i)$ from the first summand at $j$. These sum to 
$$
c_{ij} \epsilon^{2q(i,j)} (c_i - c_j)^2.
$$
Since $c_{ij}> 0$, this gives precisely the leading order term claimed in the estimate. Note that if $q_i = q_j$, we overcount this term, but this can be accounted for by adjusting $C$ (just dividing it by $2$). Since we have a similar pairing also for the higher order terms, we see that the error terms of $\langle P_{\epsilon}(\sigma) , \sigma \rangle$ are $O(\epsilon^{2q(i,j)} (c_i-c_j)^2)$, which can then be absorbed in the leading order term for sufficiently small $\epsilon$.
\end{proof}

Next, we show how the above implies the bound we seek. For this we need the following lemma. 
\begin{lemma}
\label{lem:quadraticform}
Suppose $E$ is irreducible. Then the quadratic form
$$\sum_{i=1}^{l} c_i e_i \mapsto \sum_{i=1}^{l} \epsilon^{2q_i} \sum_{k \in J_i} (c_i - c_k)^2$$
on $\C^{l}$ with basis $e_1, \ldots, e_{l}$, has signature $(l-1,0)$. 
\end{lemma}
Note that $\sum_{i=1}^l e_i$ is in the kernel of this quadratic form, so the lemma will imply that this is precisely its kernel.
\begin{proof}
Let $V(E)$ be the vector subspace of $\mathbb{C}^{l}$ spanned by 
$$
\{e_i - e_k : i \in \{1, \ldots, l \} \textnormal{ and } k \in J_i \}.
$$
Then the quadratic form is positive on each of the lines spanned by the $e_i -e_k$, and so the signature is $(l-1,0)$ if and only if $V(E)$ has dimension $l-1$ (note that the quadratic form has at least a one-dimensional kernel). The proof is via induction on the number of components $Q_k$. In the base case $k=0$, there is nothing to prove. For subbundles $S$ of $E$ defined via only some of the $Q_i$, we make the analogous definition $V(S)$ of the vector space spanned by the relevant $e_i - e_k$. 

The goal is to reduce to an irreducible subbundle of $E$ made out of only some of the $Q_i$. The proof is divided into three cases, depending on the precise form of the extension parameter $\gamma$. 

\textbf{Case 1:} $S_{l-1}$ is reducible.

In this case, there are multiple $i$ with $q(i, l) = q_{l},$ i.e. $|J_{l}| >1$. Moreover, $S_{l-1}$ splits as $\oplus_{i\in J_{l}} \check{S}_i$, where each of the $\check{S}_i$ is a direct sum of some of the $Q_k$ as a smooth vector bundle, but with holomorphic structure such that $\check{S}_i$ is irreducible. For each $i \in J_{l}$, let $K_i$ denote the set of indices $k$ such that $Q_k$ is a component of $\check{S}_i$. 

First note that $\check{S}_i \oplus Q_{l}$ deforms to an irreducible bundle $E_i$. On $\check{S}_i$, the holomorphic structure is obtained by restricting  $\db_{\epsilon}$ to $\left( \oplus_{k \in K_i} Q_k \right)$. We can then construct the holomorphic structure of $E_i$ by using only the $(\gamma_{\epsilon})_{k, l}$ with $k \in K_i$. This still satisfies the Maurer--Cartan equation, because of the splitting of $S_{l-1}$. The only term involving $Q_{l}$ at the order $\epsilon^{2q_{l}}$ is that coming from $(\gamma_{\epsilon})_{i,l}$. 

We use induction on the number of components, and we use the assumption on the $E_i$, i.e. that $\dim(V(E_i)) = |K_i|$ for each $i \in J_{l}$. Note also that the $V(E_i)$ are linearly independent, since the only common component is the final component. Moreover, as every $k < l$ is in $K_i$ for some $i$, we have that $\sum_{i\in J_{j+1}} |K_i|= l-1$ and $V(E) = \oplus_{i \in J_{l}} V(E_i)$. Thus $\dim V(E) = l-1$, which is what we wanted to show.

\textbf{Case 2:} $S_{l-1}$ is irreducible and $(\gamma_{\epsilon})_{l-1,l} \neq 0$. 

In this case, $J_{l} = \{ l-1 \}$. Thus, the span of 
$$
\{e_i - e_k : i \in \{1, \ldots, l \} \textnormal{ and } k \in J_i \} 
$$
is the sum of the span of 
$$
\{e_i - e_k : i \in \{1, \ldots, l-1 \} \textnormal{ and } k \in J_i \} 
$$
and the span of $e_{l}-e_{l-1}$. Note that these two vector spaces do not intersect non-trivially and so this is a direct sum. Also, the former vector space is the one associated to $S_{l-1}$. Thus the dimension of $V(E)$ is $1+ \dim V(S_{l-1})$. By the inductive assumption, $\dim (V(S_{l-1}))=l-2$ and so $\dim (V(E)) = l-1$, as required.

\textbf{Case 3:}  $S_{l-1}$ is irreducible and $(\gamma_{\epsilon})_{l-1,l} = 0$.

Choose the largest $p$ such that $(\gamma_{\epsilon})_{p,l} \neq 0$. Note that since by \cref{rem:zerogammas} we can assume that $(\gamma_{\epsilon})_{k i}=0$ for all $k>p$ and all $i$, we have that $Q_{p+1}, \ldots, Q_{l}$ are holomorphic quotients of $E$. In particular, we can pass to another filtration that reorders the quotients after $Q_p$ so that $Q_k$ becomes $Q_{l}$. If we can apply Case 1 to any such $Q_k$, we have made the reduction to a smaller bundle as we wanted. 

We may therefore assume that this is not possible, which means that for each $k>p$, there is exactly one $i=i_k$ (which is necessarily at most $p$) such that $q(i,k) = q_k$. Again, after reordering we may assume that all the $k$ with $(\gamma_{\epsilon})_{pk} \neq 0$ are last. That is, we may assume that there is a $b$ such that $(\gamma_{\epsilon})_{p, p+1}, \ldots, (\gamma_{\epsilon})_{p, b}$ are all zero and the $(\gamma_{\epsilon})_{p,k}$ with $k>b$ are non-zero and $O(\epsilon^{q_k})$. Note that $b \geq p+1$ and equality occurs when none of these terms are vanishing. Also, $b \leq l-1$, since $(\gamma_{\epsilon})_{p, l} \neq 0$. Note that all of the bundles $S_{p+1}, \ldots , S_{l-1}$ are irreducible. 

The upshot is that $J_{b+1} = J_{b+2} = \ldots = J_{l} = \{ p \}$. Thus, the span of 
$$
\{e_i - e_k : i \in \{1, \ldots, l \} \textnormal{ and } k \in J_i \} 
$$
is the sum of the span of
$$
\{e_i - e_k : i \in \{1, \ldots, b \} \textnormal{ and } k \in J_i  \}
$$
and the span of the $l-b$ vector spaces $\langle e_{b+i}- e_p \rangle$ for $i=1, \ldots, l-b$. Note that these vector spaces do not intersect non-trivially and so this is a direct sum. Also, the former vector space is the vector space $V(S_{b})$ associated to $S_{b}$. Thus the dimension of $V$ is $l-b+ \dim V(S_{b})$. Since $S_b$ is irreducible, we can apply the inductive assumption, and $\dim (V(S_{b}))=b-1$. Thus  $\dim (V(E)) = (l-b)+(b-1) = l-1$, as required.

Since one of the three cases above has to occur, the proof is complete, by induction.
\end{proof}

The above allows us to deduce that $P_{\epsilon}$ has the claimed bound on   the Lie algebra of the automorphism group of the graded object. 
\begin{corollary}
There exists a $C>0$ such that for all $0< \epsilon \ll 1$
$$
-\langle P_{\epsilon} (\sigma), \sigma \rangle \geq C \epsilon^{2q} \| \sigma \|^2
$$
for any $\sigma \in \langle \Id_E \rangle^{\perp} \cap \aut(\Gr(E)).$
\end{corollary}
\begin{proof}
By \cref{prop:linop} and the fact that $q_i \leq q$ for all $i$, there is a $C_1 >0$ such that
$$
-\langle P_{\epsilon} (\sigma), \sigma \rangle \geq C_1 \epsilon^{2q} \sum_{i=1}^{l} \sum_{j \in J_i} (c_i - c_j)^2
$$
By \cref{lem:quadraticform}, the quadratic form $\sum_{i=1}^{l} \sum_{j \in J_i} (c_i - c_j)^2$ is positive definite on $\langle \Id_E \rangle^{\perp}$. In particular, there is a $C_2>0$ such that 
$$
\sum_{i=1}^{l} \sum_{j \in J_i} (c_i - c_j)^2 \geq C_2 \sum_{i=1}^{l} c_i^2
$$ 
on $\langle \Id_E \rangle^{\perp}.$ The result follows.
\end{proof}

At this stage, we have shown the bound we want on the off-diagonal elements of $\aut(\Gr(E))$, the diagonal elements of $\aut(\Gr(E))$ orthogonal to $\Id_E$ and $\aut(\Gr(E))^{\perp}$, individually. We next show that this bound holds on all of $\langle \Id_E \rangle^{\perp}$, which requires showing that the  mixed terms do not contribute to the bound.  We begin by showing that  the bound holds on $\aut(\Gr(E)) \cap \langle \Id_E \rangle^{\perp}.$
\begin{lemma}
\label{lemma:nomixedterms}
If $\Psi$ is an off-diagonal and $\Xi$ is a diagonal element of $\aut ( \Gr(E))$, then

there exists a $C>0$ such that for all $0 < \epsilon \ll 1$
$$
-\langle P_{\epsilon} (\Psi + \Xi) , \Psi + \Xi \rangle \geq -C\left( \langle P_{\epsilon} (\Psi ) , \Psi  \rangle + \langle P_{\epsilon} ( \Xi) ,  \Xi \rangle \right).
$$
\end{lemma}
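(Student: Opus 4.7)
By the self-adjointness of $P_\epsilon$ established at the end of \cref{sec:subsolutions}, we have
$$\langle P_\epsilon(\Psi + \Phi), \Psi + \Phi \rangle = \langle P_\epsilon \Psi, \Psi \rangle + \langle P_\epsilon \Phi, \Phi \rangle + 2 \langle P_\epsilon \Psi, \Phi \rangle,$$
so the claim reduces to showing $\langle P_\epsilon \Psi, \Phi \rangle = O(\epsilon^{2q+1})$. Writing $\Phi = \sum_l a_l \Id_{Q_l}$, this pairing equals $\sum_l a_l \int_X \tr_{Q_l}((P_\epsilon \Psi)_{ll})$, where $(P_\epsilon \Psi)_{ll}$ denotes the $(l,l)$-block in the block decomposition $\End E = \oplus_{j,m} Q_m^* \otimes Q_j$. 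The plan is therefore to analyse the diagonal blocks of $P_\epsilon \Psi$ order-by-order in $\epsilon$, using the splitting $\db_\epsilon = \db_{0,\epsilon} + \gamma_\epsilon$ employed in the proofs of \cref{lemma:linearisationexpansion} and \cref{lemma:off-diag-bound}.

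Two facts drive the analysis: first, both $\Psi$ and $\Phi$ lie in $\aut (\oplus_i Q_i)$, so they are $\db_{0,\epsilon}$-holomorphic, which kills $\db_{0,\epsilon}\Psi$, $\partial_{0,\epsilon}\Psi$, and their $\Phi$ analogues; second, $\gamma_\epsilon$ is strictly block upper-triangular and $\gamma_\epsilon^*$ is strictly block lower-triangular, so their brackets with $\Psi$ (which is supported in the $(i,k)$- and $(k,i)$-blocks after writing $\Psi = \psi + \psi^*$) have a highly constrained block structure. Combining these, the leading term $\Delta_{0,\epsilon}\Psi$ vanishes identically, and the contributions of order $\epsilon^q$ that arise from the terms $\partial_{0,\epsilon}[\gamma_\epsilon,\Psi]$, $\db_{0,\epsilon}[\gamma_\epsilon^*,\Psi]$, together with their higher-curvature analogues from the full $Z$-critical expansion, are all strictly off-diagonal by a direct block-matrix computation analogous to the one performed in \cref{lemma:off-diag-bound}; they therefore contribute nothing to $(P_\epsilon \Psi)_{ll}$.

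The main content of the proof is the treatment of the order $\epsilon^{2q}$ contributions, which come from expressions of the form $[[\gamma_\epsilon,\gamma_\epsilon^*],\Psi]$ in the Laplacian and their curvature analogues. Using the Jacobi identity $[\gamma,[\gamma^*,\Psi]] - [\gamma^*,[\gamma,\Psi]] = [[\gamma,\gamma^*],\Psi]$ and writing $\Psi = \psi + \psi^*$, a direct computation shows that the only diagonal blocks that receive contributions at this order are the $(k,k)$- and $(i,i)$-blocks, given by $M_{ki}\psi - \psi^* M_{ik}$ and $-\psi M_{ki} + M_{ik}\psi^*$ respectively, where $M_{ab}$ is the $(a,b)$-block of $M := [\gamma_\epsilon,\gamma_\epsilon^*]$. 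Cyclicity of the trace combined with the Hermiticity $M_{ik} = M_{ki}^*$ collapses $\sum_l a_l \int_X \tr_{Q_l}(\cdot)$ to an integral proportional to $(a_k - a_i) \Im \int_X \tr_{Q_k}(M_{ki}\psi) \wedge \omega^{n-1}$. A final integration by parts using the Maurer--Cartan equation $\db_{0,\epsilon}\gamma_\epsilon + \gamma_\epsilon \wedge \gamma_\epsilon = 0$ (and its adjoint), together with $\db_{0,\epsilon}\psi = 0$, $\partial_{0,\epsilon}\psi^* = 0$, and the closedness of $\omega$ and of (our chosen representative of) $\tilde U$, expresses the integrand as a total derivative, so that this contribution vanishes.

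The main obstacle is verifying that the various order $\epsilon^{2q}$ diagonal contributions from the Laplacian and from the genuinely higher-curvature terms of the $Z$-critical operator all conspire, via the combination of cyclicity, Hermiticity, and Maurer--Cartan integration by parts, to produce a total derivative rather than leaving a residual obstruction; the combinatorics of which blocks appear at each order and how they pair up across the $(k,k)$- and $(i,i)$- contributions is the delicate book-keeping step.
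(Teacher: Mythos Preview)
Your reduction to showing $\langle P_\epsilon \Psi, \Phi\rangle = O(\epsilon^{2q+1})$ via self-adjointness is fine, and the observation that $\Psi,\Phi\in\ker\Delta_{0,\epsilon}$ kills the leading term and that the order $\epsilon^q$ terms are off-diagonal is correct. The problem lies in your treatment of the order $\epsilon^{2q}$ contribution.

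First, the Jacobi identity you invoke is the ungraded one, but $\gamma_\epsilon,\gamma_\epsilon^*$ are $\End E$-valued \emph{one}-forms. The graded Jacobi identity gives
\[
[\gamma,[\gamma^*,\Psi]] + [\gamma^*,[\gamma,\Psi]] = [[\gamma,\gamma^*],\Psi],
\]
with a \emph{plus} sign, so $L_2(\Psi)=i\Lambda_\omega\big([\gamma,[\gamma^*,\Psi]]-[\gamma^*,[\gamma,\Psi]]\big)$ is \emph{not} simply $i\Lambda_\omega[[\gamma,\gamma^*],\Psi]$. Expanding directly, one finds terms of the form $\gamma_{ai}\Psi\gamma^*_{ka}$ and $\gamma^*_{bi}\Psi\gamma_{kb}$ (and their analogues with $\psi^*$), which land in the diagonal blocks $(a,a)$ for $a<i$ and $(b,b)$ for $b>k$. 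When there exists some $a$ with $q(a,i)=q(a,k)=q$, these contribute at order $\epsilon^{2q}$ to $\langle P_\epsilon\Psi,\Id_{Q_a}\rangle$. Your assertion that only the $(i,i)$- and $(k,k)$-blocks receive contributions is therefore false in general, and this is exactly the case the paper singles out.

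Second, even for the $(i,i)$- and $(k,k)$-block contributions you do identify, the final step---integration by parts using Maurer--Cartan---does not do what you claim. The Maurer--Cartan equation relates $\db_{0,\epsilon}\gamma$ to $\gamma\wedge\gamma$, whereas the terms you need to kill are built from $\gamma\wedge\gamma^*$ and $\gamma^*\wedge\gamma$; there is no identity expressing these as $\db_{0,\epsilon}$- or $\partial_{0,\epsilon}$-exact terms, so nothing becomes a total derivative.

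The paper's argument proceeds differently: rather than trying to make the diagonal part of $P_\epsilon\Psi$ vanish outright, it pairs each order $\epsilon^{2q}$ diagonal contribution $\tr_{Q_l}\big((\gamma^*)_{l,a}\wedge\gamma_{a,i}\circ\Psi\big)$ in $\langle P_\epsilon\Psi,\Id_{Q_l}\rangle$ with the corresponding off-diagonal contribution $(\gamma^*)_{i,a}\wedge\gamma_{a,l}$ appearing in $\langle P_\epsilon\Id_{Q_l},\Psi\rangle$, and shows that these cancel pairwise by the trace identity underlying \cref{lem:extensioncontribution} (essentially $\tr_{Q_l}(\alpha\wedge\beta)=-\tr_{Q_i}(\beta\wedge\alpha)$). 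This cancellation is the genuine mechanism; the Maurer--Cartan equation plays no role here.
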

\begin{proof}
It suffices to consider the case when $\Psi$ is an isomorphism $Q_k \to Q_i$ and we then continue with the notation from \cref{lemma:off-diag-bound}. We first consider the terms $ \sum_{a \mid a< i} \sum_{u \mid u>a} (\gamma_{\epsilon}^*)_{u,a} \wedge (\gamma_{\epsilon})_{a,i} \circ \Psi $. If one of these terms is diagonal, then since $\Xi : Q_k \to Q_i$, we need $u=k$. So we may assume $\Xi = c \Id_{Q_k}$ for some constant $c$ as any other component of $\Xi$ will be orthogonal to this term, since the Hermitian metric is a product with respect to the decomposition $\oplus_i Q_i$. We will then show that
\begin{align}
\label{eq:phipsimixed}
\langle P_{\epsilon} (\Psi + \Xi) , \Psi + \Xi \rangle = \langle P_{\epsilon} (\Psi ) , \Psi  \rangle +\langle P_{\epsilon} ( \Xi) ,  \Xi \rangle + O(\epsilon^{q_i  + q_k +1}).
\end{align}
This suffices to prove the result for this part of the of the expansion of $P_{\epsilon}(\Psi)$ because by the proof of \cref{lemma:off-diag-bound}, $\langle P_{\epsilon} (\Psi), \Psi \rangle$ is a negative multiple of $\epsilon^{2\min \{ q_i, q_k \}}$. Thus the result follows by completing the square.

We are interested in the pairing $\langle P_{\epsilon} (\Psi + c \Id_{Q_k} ) , \Psi + c \Id_{Q_k}\rangle$ for a constant $c$. If we then consider the contribution in this direction for $\Id_{Q_k},$ there is  a corresponding component $(\gamma_{\epsilon}^*)_{i,a} \wedge (\gamma_{\epsilon})_{a, k}$ coming from $[\gamma_{\epsilon}^* , [\gamma_{\epsilon}, \Id_{Q_k} ] ]$. This also has a positive sign, hence when computing the mixed terms $\langle P_{\epsilon} ( \Psi ) ,  \Id_{Q_k} \rangle + \langle P_{\epsilon} ( \Id_{Q_k} ) , \Psi  \rangle$, the leading order term is a positive constant multiple of 
$$
\sum_a \int_X i\Lambda_{\omega} \left( \tr_{Q_k}((\gamma_{\epsilon}^*)_{k,a} \wedge (\gamma_{\epsilon})_{a,i} \circ \Psi) - \tr_{Q_i}( (\gamma_{\epsilon}^*)_{i,a} \wedge (\gamma_{\epsilon})_{a, k} ) \circ \Psi^{-1}  \right) \omega^n.
$$
But, $\tr_{Q_i}(( (\gamma_{\epsilon}^*)_{i,a} \wedge (\gamma_{\epsilon})_{a, k} ) \circ \Psi^{-1}) = \tr_{Q_k}( \Psi \circ (\gamma_{\epsilon}^*)_{i,a} \wedge (\gamma_{\epsilon})_{a, k} )$, and so the above terms are traces of commutators, hence $0$. Moreover, each of the terms $(\gamma_{\epsilon}^*)_{k,a} \wedge (\gamma_{\epsilon})_{a,i}$ and $(\gamma_{\epsilon}^*)_{i,a} \wedge (\gamma_{\epsilon})_{a k}$ are at least $O(\epsilon^{q_i+q_k})$. Since the above vanishes, we therefore get that the mixed terms are $O(\epsilon^{q_i+q_k+1})$, giving us \cref{eq:phipsimixed} and hence the result.

The argument for the  remaining terms is similar. Note that we do not need to consider the terms coming from e.g. $\partial_{0} ( [\gamma_{\epsilon}, \Psi])$, since, by the Nakano identities,
\begin{align*}
\langle i\Lambda_{\omega} (\partial_{0} ( [\gamma_{\epsilon}, \Psi] )), \Xi \rangle =& - \langle \db_0^* ( [\gamma_{\epsilon}, \Psi] ), \Xi \rangle \\
=&\langle  ( [\gamma_{\epsilon}, \Psi] ), \db_0 \Xi \rangle \\
=& 0,
\end{align*}
since $\Xi$ is in the kernel of $\db_0$.
\end{proof}

With all this in place, we can now prove the main result of this section, which gives a lower bound on the operator norm of the linearised operator $P_{\epsilon}$. 
\begin{proposition}\label{prop:bound-on-linearised}
Let $\db_{\epsilon}$ be a deformation of $(\oplus_i Q_i, \db_{0})$ as above. Let $P_{\epsilon}$ be the linearisation of the $Z$-critical operator at $\db_{\epsilon}$. Then there exists a constant $C>0$ such that for all $0 < \epsilon \ll 1$, 
$$
\| P_{\epsilon} ( s) \| \geq C \epsilon^{2q} \| s \|
$$
for all $s$ orthogonal to $\langle \Id_E \rangle$. Moreover, this bound also holds for any other $\epsilon$-dependent family $\db_{\epsilon}' = \db_0 + \gamma_{\epsilon}'$ of holomorphic structures such that $(\gamma_{\epsilon}')_{ij} = (\gamma_{\epsilon})_{ij} + O(\epsilon^{q_i + 1})$ for every $(i,j)$ with $q(i,j) = q_i$. 
\end{proposition}
\begin{proof}
It remains to prove that if $s \in \left( \oplus_i \langle \Id_{Q_i} \rangle \right)^{\perp}$ and $t \in \left(\oplus_i \langle \Id_{Q_i} \rangle \right) \cap \langle \Id_E \rangle^{\perp}$, then the estimate holds for $s+t$. Thus we need to estimate the mixed terms $\langle P_{\epsilon} (s) , t \rangle $ and $\langle P_{\epsilon} (t) , s \rangle $. In fact, the orthogonality of $t$ to $\Id_E$ will only feature in order to obtain the required bound for $\langle P_{\epsilon} (t) , t \rangle $ and we do not need this assumption for the estimate of the mixed terms.

We next prove the required bound for each $t= \Id_{Q_i}$ separately. Note that $P_{\epsilon}(\Id_{Q_i})$ is $O(\epsilon^{2q_i})$. We will now show that the mixed terms are $O(\epsilon^{q_i +1} \|s\| \|t \|)$. This will give the required result, since by completing the square, any such term (regardless of sign) can be bounded below by a negative multiple of $\epsilon  \|s\|^2 + \epsilon^{2q_i+1} \|t\|^2$.  

We first consider $\langle P_{\epsilon}(t), s \rangle$. From the expansion in \cref{lemma:tlaplacian}, similarly to the proof of \cref{lemma:linearisationexpansion}, we see that the leading order term in the expansion of $\langle P_{\epsilon}(t), s \rangle$ is the $O(\epsilon^{q_i})$-term
$$\left\langle i\Lambda_{\omega} \left( (\db_{0} ( [\gamma_{\epsilon}^*, t ]) - [\gamma_{\epsilon}^*, \db_{0}](t) ) + (\partial_{0} ( [\gamma_{\epsilon}, t] ) - [\gamma_{\epsilon}, \partial_{0}] (t)) \right), s \right\rangle,$$ 
since $t$ is in the kernel of $\Delta_{\Gr(E)}$. But since $t$ is in the kernel of $\db_0$ and $\partial_0$, the term in the left slot only involves $\Lambda_{\omega} (\db_0 a_{\epsilon})$ and $\Lambda_{\omega} (\partial_0 a_{\epsilon})$. Thus part (iii) of \cref{lem:extensioncontribution} guarantees that this vanishes and so $\langle P_{\epsilon}(t), s \rangle$ is an $O(\epsilon^{q_i+1})$-term, which is what we wanted to show.

We next consider $\langle P_{\epsilon}(s), t \rangle$. In this case, the leading order term in the expansion of $P_{\epsilon}(s)$ is $\Delta_0 (s)$, whose inner product with $t$ vanishes since $t$ lies in the kernel of the self-adjoint operator $\Delta_0$. We thus see that the leading order term in $\langle P_{\epsilon} (s), t\rangle$ is given by the $O(\epsilon^{q_i})$-term
$$
\left\langle i\Lambda_{\omega} \left( (\db_{0} ( [\gamma_{\epsilon}^*, s ]) - [\gamma_{\epsilon}^*, \db_{0}](s) ) + (\partial_{0} ( [\gamma_{\epsilon}, s] ) - [\gamma_{\epsilon}, \partial_{0}] (s)) \right), t \right\rangle.
$$ 
As above, the terms $(\partial_{0} ( [\gamma_{\epsilon}, s] )$ and $ (\partial_{0} ( [\gamma_{\epsilon}, s] )$ vanish by part (iii) of \cref{lem:extensioncontribution}. However, we cannot use this for the remaining two terms, since $s$ does not lie in the kernel of $\db_0$ and $\partial_0$. However, by the Nakano identities
\begin{align*}
\Lambda_{\omega} \left( \db_{0} ([\gamma_{\epsilon}^*, s ]) \right) =& [\Lambda_{\omega},\db_{0}]  \left(  [\gamma_{\epsilon}^*, s ] \right)  \\
=& -i (\partial_0)^* \left(  [\gamma_{\epsilon}^*, s ] \right)
\end{align*}
and so, since $t$ lies in the kernel of $\Delta_0$ ,
$$
\left\langle i\Lambda_{\omega} \left( (\db_{0} ( [\gamma_{\epsilon}^*, s ]) \right), t \right\rangle = 0.
$$ 
Similarly
$$
\left\langle i\Lambda_{\omega} \left( \partial_{0} ( [\gamma_{\epsilon}, s] ) \right), t \right\rangle = 0.
$$ 
and so $\langle P_{\epsilon}(s), t \rangle$ is also $O(\epsilon^{q_i+1})$, as required.

The final claim follows because in the proof of the above, the only components that were used were, for each $i$, the components $(i,j)$ achieving the minimum value of $q(i,j)$ over all $j$. \end{proof}

\begin{remark}
\label{rem:linop}
A consequence of all the above is that for any endomorphism of the form $\epsilon^{2q} \tau$ which is orthogonal to $\Id_E$, there exists a $\sigma$ (which may depend in a polynomial fashion on $\epsilon$), such that 
$$
P_{\epsilon} (\sigma) = \epsilon^{2q} \tau + O(\epsilon^{2q+1}).
$$
The fact that the linearised operator is surjective at order $\epsilon^{2q}$ will be important when we construct approximate solutions to the Z-critical equation in the next section.
\end{remark}

The above produces a bound for the linearised operator of the Chern connection of $\db$-operator of the form given in \cref{eq:def}. We next show that the solutions $z_{\oldepsilon, \infty}$ to our finite-dimensional problem in the Kuranishi space behaves in the same manner.
\begin{proposition}
\label{prop:solnasymptotics}
Let $A_{\epsilon}=\Phi(z_{\oldepsilon, \infty})$ be the Chern connection whose $\db$-operator is given by the image of the solutions $z_{\oldepsilon, \infty}$ to the finite dimensional problem in \cref{prop:solfindim}, with $\oldepsilon = \epsilon^2$. Then there exists a constant $C>0$ such that the linearised operators $P_{\epsilon}$ at $A_{\epsilon}$ satisfies for all $0 < \epsilon \ll 1$ that
$$
\| P_{\epsilon} ( s) \| \geq C \epsilon^{2q} \| s \|
$$
for all $s$ orthogonal to $\langle \Id_E \rangle$, where $q$ is the maximal discrepancy order of $E$.
\end{proposition}
We first prove that to leading order, the connections $A_{\epsilon}$ agree with the weak Hermitian Yang--Mills connection on the graded object $\Gr(E)$.
\begin{lemma}
\label{lem:solnleadingasymptotics}
The connections $A_{\epsilon}$ satisfy 
$$
A_{\epsilon}^{0,1} - \db_0 = O(\epsilon).
$$
\end{lemma}
\begin{proof}
Since the map $\Phi$ from the Kuranishi space to $\scA$ is holomorphic and $\Phi(0)$ is the Chern connection of $\db_0$, this is a direct consequence of \cref{flow-bounded-region}, which implies an $O(\epsilon)$ bound for the solutions $z_{\oldepsilon, \infty}$.
\end{proof}
With this in place, we  prove \cref{prop:solnasymptotics}.
\begin{proof}
From \cref{lem:solnleadingasymptotics}, we know that we can write
$$
A_{\epsilon}^{0,1} = \db_0 + \gamma_{\epsilon},
$$
where $\gamma_{\epsilon} = O(\epsilon).$ Moreover, the $A_{\epsilon}$ solve
$$
p(\Ima(e^{-i\phi_{\epsilon}}\tilde Z(A_{\epsilon})) = 0,
$$
where $p$ is the projection to $\mfk$ of the $Z$-critical term of $A_{\epsilon}$. Now, if $S$ is a holomorphic subbundle of $E$ arising from some Jordan--H\"older filtration of $E$, then we can consider $\Id_S \in \aut (\Gr(E))$. Then, in particular, the projection of $\Ima(e^{-i\phi_{\epsilon}}\tilde Z(A_{\epsilon}))$ to $\langle \Id_S \rangle$ vanishes. On the other hand, the projection of $\Ima(e^{-i\phi_{\epsilon}}\tilde Z(A_{0}))$ to $\langle \Id_S \rangle$ is, as in the proof of \cref{lemma:right-constant}, a non-zero multiple of $\Im(e^{-i\varphi_{\varepsilon}(E)}Z_{\epsilon}(S))$. This is $O(\epsilon^{2q_S})$ for some $q_S \leq q$, the maximal discrepancy order $q$ of $E$.
Moreover, the contribution from $\gamma_{\epsilon}^* \wedge \gamma_{\epsilon} + \gamma_{\epsilon} \wedge \gamma_{\epsilon}^*$ gives the difference between $\Ima(e^{-i\phi_{\epsilon}}\tilde Z(A_{\epsilon}))$ and $\Ima(e^{-i\phi_{\epsilon}}\tilde Z(A_{0}))$, and so the projection of $\gamma_{\epsilon}^* \wedge \gamma_{\epsilon} + \gamma_{\epsilon} \wedge \gamma_{\epsilon}^*$ to $\langle \Id_S \rangle$ has to be $O(\epsilon^{2q_S})$. In particular, because this holds for all such $S$, this implies that in the expression of $\gamma_{\epsilon}$, the corresponding collection of $q_i$ (as in \cref{def:bdldiscreporder}) are all bounded above by $q$. Moreover, since there is an $S$ such that $q_S = q$, at least one of the $q_i$ equals $q$. 
The bound is therefore a direct consequence of the last statement of \cref{prop:bound-on-linearised}.
\end{proof}
\begin{remark}
\label{rem:discrepancy}
The above shows that the order of the deformation associated to the $z_{\epsilon, \infty}$ equals the maximal discrepancy order of $E$, since the maximal $q_i$ equals the maximal $q_S$ such that $\Ima(e^{-i\phi_{\epsilon}}\tilde Z(A_{\epsilon})) - \Ima(e^{-i\phi_{\epsilon}}\tilde Z(A_{0})) = O(\epsilon^{2q_S})$ for some subbundle $S$ arising from a Jordan--H\"older filtration of $E$.
\end{remark}

\subsubsection{Constructing solutions}

We now apply the above to construct solutions to the $Z$-critical equation. We begin by constructing approximate solutions.

\begin{proposition}
\label{prop:generalinductive} 

	For each $r$, there exist Hermitian endomorphisms $f_1,\dots, f_{r}$ such that if we define $f_{\epsilon,r}$ by
	$$f_{\epsilon,r}= \prod_{i=1}^r \exp (f_i\epsilon^i),$$
	then 
	$$D_{\epsilon}(f_{\epsilon,r}\cdot  A_{\epsilon}) =  O(\varepsilon^{r+1}).$$ 
Moreover, the linearised operator $P_{\epsilon,r} $ of $f_{\epsilon,r}\cdot  A_{\epsilon}$ satisfies
	\begin{align}
	\label{eqn:Prbound} \| P_{\epsilon,r} (s) \| \geq C \epsilon^{2q} \| s \|, 
	\end{align}
on $\langle \Id_E \rangle^{\perp}$, for some $C>0$ independent of $\epsilon$.
\end{proposition}
\begin{proof}
The proof is by induction on $r$. For $r=0$, we use the initial connection $A_{\epsilon} = \Phi (z_{\oldepsilon, \infty})$ given by \cref{prop:solfindim}. The bounds on the linearised operator hold in this case by \cref{prop:bound-on-linearised}.

We next assume we have proven the result for some fixed $r$, and aim to prove the result for $r+1$. We first consider when $r < 2q$. In this case, we inductively assume that we can write 
$$
D_{\epsilon}(f_{\epsilon,r}\cdot A_{\epsilon}) =  \epsilon^{r+1} s_{r+1} + O(\epsilon^{r+2}),
$$
where $s_{r+1} \in  \aut(\Gr(E))^{\perp}$ and that the projection $\Pi_{\aut(\Gr(E))}\left( D_{\epsilon}(f_{\epsilon,r}\cdot A_{\epsilon}) \right)$ of $D_{\epsilon}(f_{\epsilon,r}\cdot A_{\epsilon})$ orthogonally to $\aut(\Gr(E))$ is $O(\epsilon^{2q+1})$. This holds at $r=0$ because $A_{\epsilon}$ has \emph{vanishing} projection to $\aut(\Gr(E))$. 

The endomorphism $s_{r+1}$ lies in the image of $\Delta_{\Gr(E)}$, the Laplacian of the graded object, and so there exists a $\tilde \sigma_{r+1} \in \aut(\Gr(E))^{\perp}$ such that $\Delta_{\Gr(E)} (\tilde \sigma_{r+1})=-s_{r+1}$. By \cref{cor:linearisationfirstterm}, we therefore obtain that 
$$
D_{\epsilon}(\exp(\tilde \sigma_{r+1} \epsilon^{r+1}) \cdot f_{\epsilon,r}\cdot A_{\epsilon}) = O(\epsilon^{r+2}).
$$

It remains to ensure that the orthogonal projection $\Pi_{\aut(\Gr(E))}$ to $\aut(\Gr(E))$ is $O(\epsilon^{2q+1})$, which is part of our inductive hypothesis in the situation $r < 2q$ under consideration. This may not be the case if we choose $\exp(\tilde \sigma_{r+1} \epsilon^{r+1}) \cdot f_{\epsilon,r}\cdot A_{\epsilon}$ as our connection, as $P_{\epsilon}(\tilde \sigma_{r+1})$ does not necessarily remain orthogonal to $\aut(\Gr(E))$ to all orders. We thus modify the connection further to ensure this holds. Write 
\begin{align*}
\Pi_{\aut(\Gr(E))} \left( D_{\epsilon}(\exp(\tilde \sigma_{r+1} \epsilon^{r+1})  \cdot f_{\epsilon,r}\cdot A_{\epsilon}) \right) =  \sum_{i=1}^{2q-r-1} \epsilon^{r+1+i} \tau_{r+1+i} + O(\epsilon^{2q+1}),
\end{align*}
where $\tau_{r+1+i}  \in \aut(\Gr(E)) \cap \langle \Id_E \rangle^{\perp}$. Choose the minimal $i$ such that $\tau_{r+1+i} \neq 0$. From the proof of \cref{prop:bound-on-linearised}, we see that if such a $\tau_{r+1+i}$ appears  in the above, then $\langle P_{\epsilon}(\sigma_{r+1}), \tau_{r+1+i} \rangle$ is $O(\epsilon^{i}).$ The proof moreover shows that this can only happen if $ \tau_{r+1+i}$ itself is in the image of $P_{\epsilon}$ at order $\epsilon^{i}$. Thus we can alter $\tilde \sigma_{r+1}$ by applying another gauge transformation which is at least $O(\epsilon^{r+1})$ and that removes the $\epsilon^{r+1+i} \tau_{r+1+i}$ term. This may alter the $\tau_{r+1+j}$ with $j>i$, but again, if this happens, then the altered $\tau_{r+1+j}$ will still lie in the image of $P_{\epsilon}$ at the required order. Continuing like this, we eventually obtain a $\sigma_{r+1}$ that may depend on $\epsilon$ in a polynomial fashion, such that $f_{\epsilon,r+1}\coloneqq  \exp(\sigma_{r+1} \epsilon^{r+1}) \cdot f_{\epsilon,r}$ satisfies the required conditions. This settles the case when $r< 2q$.

We finally consider the case $r\geq 2q$, which is simpler. In this case, the $Z_{\epsilon}$-critical operator satisfies
$$
D_{\epsilon}(f_{\epsilon,r}\cdot A_{\epsilon}) = \epsilon^{r+1} s_{r+1} + O(\epsilon^{r+2}),
$$
where $s_{r+1} \in  \langle \Id_E \rangle^{\perp}.$ As explained in \cref{rem:linop}, as by assumption $r\geq 2q$,  there exists a $\sigma_{r+1}$, which may depend in a polynomial fashion on $\epsilon$, such that $P_{\epsilon, r} (\sigma) = -\epsilon^{2r} s_{r+1} + O(\epsilon^{2r+1}).$ Setting $f_{\epsilon,r+1}= \exp(\epsilon \sigma_{r+1} ) \cdot f_{\epsilon,r},$ we obtain $$
D_{\epsilon}(f_{\epsilon,r+1}\cdot A_{\epsilon}) = O(\epsilon^{r+2}),
$$
as required.

Finally, the mapping properties of $P_{\epsilon}$ were established by considering the leading order contributions of the operator on $\aut(\Gr(E))^{\perp}$ and $\aut(\Gr(E))$. These mapping properties continue to hold  for the operator $P_{\epsilon, r}$, since we have changed any of these components by higher order contributions in $\epsilon$ at each stage. Thus by \cref{prop:bound-on-linearised} we obtain for each $r$ the required $O(\epsilon^{2q})$-bound for the linearised operator $P_{\epsilon,r} $ at $f_{\epsilon,r}\cdot A_{\epsilon}$.
\end{proof}
\begin{remark}
The constant $C$ above for the operator norm of $P_{\epsilon,r}$ may depend on $r$. This is sufficient for our applications, the key being that the order in $\epsilon$ in the above does not depend on $r$. This will later allow us to apply a version of the inverse function theorem, after constructing approximate solutions to the $Z_{\epsilon}$-critical equation to sufficiently high order.

\end{remark}

We now show that we can solve the $Z$-critical equation. 
\begin{theorem}
\label{thm:solns}

For each $\epsilon \ll 1$, there are connections $\nabla_{\epsilon}$ on $E$ such that
$$
D_{E, \epsilon}  (\nabla_{\epsilon}) = 0.
$$
\end{theorem}

We will use the following consequence of the mean value theorem. Let $\mathcal{M}_{\epsilon,r}$ denote the non-linear part $D_{\epsilon, r} - P_{\epsilon, r}$ of the $Z$-critical operator at the connection $f_{r,\epsilon}\cdot \check \nabla_{\epsilon}$ of \cref{prop:generalinductive}. Analogously to e.g. \cite[Lemma 7.1.]{fine}, we then have the following.
\begin{lemma}
\label{lem:lipschitzbound}
There are constants $c,C>0$ such that for all $\epsilon >0$ sufficiently small, we have that for $s_0, s_1 \in L^2_{d+2,0}$ such that $\| s_i\| \leq c$, 
$$
\| \mathcal{M}_{\epsilon,r} (s_0 ) - \mathcal{M}_{\epsilon,r} (s_1) \| \leq C( \| s_0\| + \|s_1\| ) \| s_0 - s_1\|.
$$
\end{lemma}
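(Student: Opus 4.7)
The plan is to prove this as a standard Lipschitz-type estimate for the nonlinear remainder of a smooth operator between Sobolev spaces, using the fundamental theorem of calculus. Set
\[
\mathcal{M}_{\epsilon,r}(s) = D_{\epsilon,r}(s) - D_{\epsilon,r}(0) - P_{\epsilon,r}(s),
\]
where I view $D_{\epsilon,r}(s)$ as the $Z$-critical operator applied to the gauge-transformed connection $\exp(s) \cdot (f_{r,\epsilon} \cdot \check\nabla_\epsilon)$; note that $P_{\epsilon,r} = dD_{\epsilon,r}|_0$ by definition. Consequently $d\mathcal{M}_{\epsilon,r}|_s = dD_{\epsilon,r}|_s - dD_{\epsilon,r}|_0$, and along the line segment $s_t = t s_0 + (1-t) s_1$ the fundamental theorem of calculus yields
\[
\mathcal{M}_{\epsilon,r}(s_0) - \mathcal{M}_{\epsilon,r}(s_1) = \int_0^1 \big( dD_{\epsilon,r}|_{s_t} - dD_{\epsilon,r}|_0 \big)(s_0 - s_1) \, dt.
\]

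The key step is then to establish the pointwise bound $\| dD_{\epsilon,r}|_{s} - dD_{\epsilon,r}|_0 \| \leq C \| s \|$ for $\|s\| \leq c$, with $C$ independent of $\epsilon$. To see this, I would exploit the fact that $D_{\epsilon,r}$ is \emph{polynomial} in the curvature: the curvature of $\exp(s) \cdot A$ can be written $F(A) + \ell_A(s) + q_A(s)$, where $\ell_A$ is linear in one derivative of $s$ and $q_A$ is polynomial in $s$ and one derivative of $s$ with no constant or linear term. Substituting this into the $Z$-critical operator, which wedges curvature terms with $\omega$ and $\tilde U$ and takes traces, shows that $D_{\epsilon,r}(s)$ is a finite polynomial expression in $s$ and $\nabla s$ (with coefficients depending on $A$, $\omega$, $\tilde U$ and $\epsilon$). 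Since $r$ is fixed and the coefficients of the $\epsilon$-expansion of $D_\epsilon$ are uniformly bounded, the coefficients of this polynomial are uniformly bounded in $\epsilon$. Using that $L^2_{d+2}$ is a Banach algebra (after choosing $d$ with $d+2 > n/2$), so that Sobolev multiplication gives $\|fg\|_{L^2_d} \lesssim \|f\|_{L^2_{d+2}} \|g\|_{L^2_{d+2}}$, and that $s\mapsto dD_{\epsilon,r}|_s$ is smooth with derivative bounded polynomially in $\|s\|_{L^2_{d+2}}$, we conclude
\[
\| dD_{\epsilon,r}|_s - dD_{\epsilon,r}|_0 \|_{\mathrm{op}} \leq C \|s\|_{L^2_{d+2,0}}
\]
uniformly in $\epsilon$, provided $\|s\| \leq c$ for some small $c$.

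Combining the two ingredients, one estimates
\[
\| \mathcal{M}_{\epsilon,r}(s_0) - \mathcal{M}_{\epsilon,r}(s_1) \| \leq \int_0^1 C \|s_t\| \, \|s_0 - s_1\| \, dt \leq C (\|s_0\| + \|s_1\|) \|s_0 - s_1\|,
\]
which is the desired inequality. The main obstacle, and the reason the estimate is more than formal, is the uniformity of the constant $C$ in $\epsilon$: since the background connection $f_{r,\epsilon} \cdot \check\nabla_\epsilon$ is itself $\epsilon$-dependent, one must verify that its curvature, along with $\tilde U$ and other coefficients that appear when expanding $D_{\epsilon,r}$ in $s$, stay bounded as $\epsilon \to 0$. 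This is immediate from the explicit construction in Propositions \ref{prop:criticalorder} and \ref{prop:generalinductive}, where $f_{r,\epsilon}$ is a finite product $\prod_{i=1}^r \exp(f_i \epsilon^i)$ with $\epsilon$-independent Hermitian endomorphisms $f_i$, and the deformation $\gamma_\epsilon$ is polynomial in $\epsilon$ with smooth coefficients; hence all background curvature quantities are uniformly bounded in $C^\infty$, and the Sobolev multiplication estimates above carry through with an $\epsilon$-independent constant.
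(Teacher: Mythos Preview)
Your argument is correct and is precisely the standard mean-value-theorem argument the paper alludes to; the paper does not give a proof of this lemma beyond the remark that it is ``a consequence of the mean value theorem,'' and your proposal supplies exactly those details. Your attention to the uniformity of $C$ in $\epsilon$, via the explicit boundedness of the background connections $f_{r,\epsilon}\cdot\check\nabla_\epsilon$, is the one nontrivial point and you handle it correctly.
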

We will use the above bound in the following version of the quantitative version of the inverse function theorem to prove \cref{thm:solns}; see, for example, \cite[Theorem 4.1]{fine}.
\begin{theorem}
\label{thm:inversefnthm}
Let $F : V \to W$ be a differentiable map of Banach spaces $V, W$, whose linearisation $P = D F$ at $0$ is invertible with inverse $Q$. Let 
\begin{itemize}
\item $\delta'$ be the radius of the closed ball in $V$ such that $F - P$ is Lipschitz of constant $\frac{1}{2 \| Q \|}$;
\item $\delta = \frac{\delta'}{2 \| Q \|}.$
\end{itemize}
Then for all $w \in W$ with $\| w- F (0) \| < \delta$, there exists $v \in V$ such that $F (v) = w.$
\end{theorem}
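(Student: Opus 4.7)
The plan is to prove this as a standard application of the Banach contraction mapping principle. The key reformulation is to rewrite $\Phi(v)=w$ as a fixed-point equation: since $P$ is invertible with inverse $Q$, the equation $\Phi(v)=w$ is equivalent to $P(v)=w-(\Phi-P)(v)$, i.e.\ to $v = T(v)$ where
$$T(v) := Q\bigl(w - (\Phi-P)(v)\bigr).$$
Any fixed point of $T$ in $V$ is therefore a solution of the desired equation, and conversely. The entire proof consists of showing that, under the stated bounds on $\delta$ and $\delta'$, the map $T$ is a contraction of the closed ball $\overline{B}_{\delta'}(0)\subset V$ into itself, and then invoking Banach's theorem.

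First I would check that $T$ is a $\tfrac{1}{2}$-contraction on $\overline{B}_{\delta'}(0)$. For $v_1,v_2\in\overline{B}_{\delta'}(0)$, linearity of $Q$ gives $T(v_1)-T(v_2) = -Q\bigl((\Phi-P)(v_1)-(\Phi-P)(v_2)\bigr)$, so by the assumption that $\Phi-P$ is Lipschitz of constant $1/(2\|Q\|)$ on this ball,
$$\|T(v_1)-T(v_2)\|_V \;\leq\; \|Q\|\cdot\tfrac{1}{2\|Q\|}\|v_1-v_2\|_V \;=\; \tfrac{1}{2}\|v_1-v_2\|_V.$$
Second, I would verify that $T$ maps $\overline{B}_{\delta'}(0)$ into itself. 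Using $P(0)=0$ (since $P$ is linear), for $v\in\overline{B}_{\delta'}(0)$ one estimates
$$\|T(v)\|_V \leq \|Q\|\Bigl(\|w-\Phi(0)\|_W + \|(\Phi-P)(v)-(\Phi-P)(0)\|_W\Bigr)\leq \|Q\|\Bigl(\delta + \tfrac{1}{2\|Q\|}\|v\|_V\Bigr)\leq \|Q\|\,\delta + \tfrac{1}{2}\delta' = \delta',$$
where in the last equality I used the definition $\delta = \delta'/(2\|Q\|)$. This is precisely the reason the ratio between $\delta$ and $\delta'$ has been chosen as in the statement.

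Finally, since $\overline{B}_{\delta'}(0)$ is a complete metric space as a closed subset of the Banach space $V$, and $T$ is a contraction mapping it to itself, Banach's fixed point theorem produces a (unique) $v\in\overline{B}_{\delta'}(0)$ with $T(v)=v$; by construction this $v$ satisfies $\Phi(v)=w$, completing the proof. There is no real obstacle here beyond the bookkeeping of constants: the entire argument is routine, and the content of the theorem is essentially the explicit identification of the radius $\delta$ for which the usual proof of the inverse function theorem goes through. In our later application, the point is that all of $\|Q\|$, $\delta'$, and the Lipschitz constant of $\Phi-P$ can be tracked in the parameter $\epsilon$, and the explicit formula $\delta=\delta'/(2\|Q\|)$ will let us conclude solvability from the approximate-solution estimates of \cref{prop:gensolns} and \cref{lem:lipschitzbound}.
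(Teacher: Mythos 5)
Your proof is correct, and it is the standard contraction-mapping argument: reformulate $\Phi(v)=w$ as the fixed-point equation $v=T(v)$ with $T(v)=Q\bigl(w-(\Phi-P)(v)\bigr)$, show $T$ is a $\tfrac12$-contraction of $\overline{B}_{\delta'}(0)$ into itself (the self-mapping step is exactly where $\delta=\delta'/(2\|Q\|)$ is used), and invoke Banach's fixed point theorem. Note that the paper itself does not prove this theorem but merely cites \cite[Theorem 4.1]{fine}; your argument supplies the standard proof, which is what that reference gives.
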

We can now prove \cref{thm:solns}.

\begin{proof}
We seek to find a root of $D_{\epsilon,r}$ for all $\varepsilon > 0$ sufficiently small, for some $r$, where $D_{\epsilon,r}$ is the $Z$-critical operator at the connection $f_{\epsilon,r}\cdot A_{\epsilon}$ of \cref{prop:generalinductive}.  \cref{lem:lipschitzbound} produces a constant $c>0$ such that for all $\varrho > 0$ sufficiently small, the non-linear part $\mathcal{M}_{\epsilon,r}$ of $D_{\epsilon,r}$ is Lipschitz with Lipschitz constant $c \varrho$ on the ball of radius $\varrho.$ Moreover, $\frac{1}{2 \| Q_{\epsilon, r } \| }$ is bounded below by $C_r \epsilon^{2q}$ for some constant $C_r >0$, by the bound in Equation \eqref{eqn:Prbound}. Thus there is a constant $C'_r$ such that the radius $\delta'$ of the ball on which $D_{\epsilon, r} - P_{\epsilon, r}$ is Lipschitz with Lipschitz constant $\frac{1}{2 \| Q_{\epsilon, r} \|}$ is bounded below by 
	$$
	\delta' \geq C'_r \epsilon^{2q}.
	$$
In turn, using the same bound for $\| Q_{\epsilon, r} \|$, this implies that $\delta = \frac{\delta'}{2 \| Q_{\epsilon,r} \|}$ satisfies 
	$$
	\delta \geq C''_r \epsilon^{4q},
	$$
for a constant $C''_r>0$.

We next take $r=4q$. Then 
$\| D_{\varepsilon,r} (0)\| \leq C'''_r \varepsilon^{4q+1}$ for some $C'''_r$, and so in particular, when $\varepsilon >0$ is sufficently small, $D_{\varepsilon,r} (0) $ is in the ball of radius $C''_r \varepsilon^{4q},$ and hence in the ball of radius $\delta$. By \cref{thm:inversefnthm} applied to $F = D_{\varepsilon,r} $, we can therefore find a root of $D_{\varepsilon, r}$ in $L^2_{d+2,0}$ when $\varepsilon$ is sufficiently small. Elliptic regularity theory then implies that the solution is smooth if $d$ was chosen large enough at the beginning. This completes the proof.
\end{proof}

\bibliography{large-dhym}
\bibliographystyle{alpha}

\vspace{4mm}

\end{document}